\newtheorem{thm}{Theorem}[section]
\newtheorem{lem}[thm]{Lemma}
\newtheorem{cor}[thm]{Corollary}
\newtheorem{prop}[thm]{Proposition}
\theoremstyle{definition}
\newtheorem{df}[thm]{Definition}
\theoremstyle{remark}
\newtheorem{rem}[thm]{Remark}
\newtheorem{example}[thm]{Example}
\numberwithin{equation}{section}
\newtheorem*{ack}{Acknowledgements}
\newcommand{\bC}{{\mathbb C}}
\newcommand{\bK}{{\mathbb K}}
\newcommand{\bN}{{\mathbb N}}
\newcommand{\bR}{{\mathbb R}}
\newcommand{\bQ}{{\mathbb Q}}
\newcommand{\bZ}{{\mathbb Z}}
\newcommand{\bT}{{\mathbb T}}
\newcommand{\cA}{{\mathcal A}}
\newcommand{\cE}{{\mathcal E}}
\newcommand{\cF}{{\mathcal F}}
\newcommand{\cH}{{\mathcal H}}
\newcommand{\cO}{{\mathcal O}}
\newcommand{\Hom}{{\rm{Hom}}}
\newcommand{\Sp}{{\rm{Spec}}}
\newcommand{\wti}{\widetilde}
\newcommand{\Gr}{\text{\rm Gr}}
\newcommand{\codim}{\hbox{\rm codim}\,}
\newcommand{\lra}{\longrightarrow}
\newcommand{\con}{{\it Cone}}
\newcommand{\Int}{{\rm Int}}
\newcommand{\Relint}{{\rm Relint}}
\newcommand{\ch}{{\rm ch}}
\newcommand{\td}{{\rm td}}
\newcommand{\mult}{{\rm mult}}
\newcommand{\os}{$O_{\sigma}$}
\newcommand{\vs}{$V_{\sigma}$}
\newcommand{\bb}[1]{\mbox{$\mathbb{#1}$}}
\def\sig{\sigma}
\def\Sig{\Sigma}
\def\be{\begin{equation}}
\def\ee{\end{equation}}
\def\bt{\begin{thm}}
\def\et{\end{thm}}
\def\bc{\begin{cor}}
\def\ec{\end{cor}}
\def\br{\begin{rem}}
\def\er{\end{rem}}
\def\bp{\begin{prop}}
\def\ep{\end{prop}}
\def\bl{\begin{lem}}
\def\el{\end{lem}}
\def\bn{\begin{enumerate}}
\def\en{\end{enumerate}}
\def\bex{\begin{example}}
\def\eex{\end{example}}
\def\bd{\begin{df}}
\def\ed{\end{df}}
\begin{document}                        


\title[Euler-Maclaurin formulae]{Equivariant toric geometry \\ and Euler-Maclaurin formulae }

\author[S. E. Cappell ]{Sylvain E. Cappell}
\address{S. E. Cappell: Courant Institute, New York University, 251 Mercer Street, New York, NY 10012, USA}
\email {cappell@cims.nyu.edu}

\author[L. Maxim ]{Lauren\c{t}iu Maxim}
\address{L. Maxim : Department of Mathematics, University of Wisconsin-Madison, 480 Lincoln Drive, Madison WI 53706-1388, USA, \newline
{\text and} \newline Institute of Mathematics of the Romanian Academy, P.O. Box 1-764, 70700 Bucharest, ROMANIA.}
\email {maxim@math.wisc.edu}

\author[J. Sch\"urmann ]{J\"org Sch\"urmann}
\address{J.  Sch\"urmann : Mathematische Institut,
          Universit\"at M\"unster,
          Einsteinstr. 62, 48149 M\"unster,
          Germany.}
\email {jschuerm@math.uni-muenster.de}

\author[J. L. Shaneson ]{Julius L. Shaneson}
\address{J. L. Shaneson: Department of Mathematics, University of Pennsylvania, 209 S 33rd St., Philadelphia, PA 19104, USA}
\email {shaneson@sas.upenn.edu}

\keywords{Toric varieties, lattice polytopes, lattice points, equivariant motivic Chern and Hirzebruch classes, equivariant Hirzebruch-Riemann-Roch, Lefschetz-Riemann-Roch, localization, Euler-Maclaurin formulae}

\subjclass[2020]{14M25, 14C17, 14C40, 52B20, 65B15, 19L47, 55N91}

\date{\today}

\begin{abstract}
We first investigate torus-equivariant motivic characteristic classes of  toric varieties, and then apply them via the equivariant Riemann-Roch formalism to prove very general Euler-Maclaurin type formulae for full-dimensional simple lattice polytopes.

We consider $\bT$-equivariant versions $mC_y^\bT$ and $T_{y*}^\bT:=\td_*^\bT \circ mC_y^\bT$ of the {\it motivic Chern} and, resp.,  {\it Hirzebruch characteristic classes} of a  toric variety $X$ (with  corresponding torus $\bT$), and extend many known results from the non-equivariant context to the equivariant setting. For example, the equivariant motivic Chern class $mC_y^\bT(X)$ is computed as the sum of the equivariant Grothendieck classes of the $\bT$-equivariant sheaves of {\it Zariski $p$-forms} $\widehat{\Omega}^{p}_X$ weighted by $y^p$. Using the motivic, as well as the characteristic class nature of $T_{y*}^\bT(X)$, the corresponding generalized {\it equivariant Hirzebruch $\chi_y$-genus} $\chi^\bT_y(X,\cO_{X}(D))$ of a $\bT$-invariant Cartier divisor $D$  on $X$ is also calculated.

Further global formulae for $T_{y*}^\bT(X)$  are obtained in the simplicial context  based on the Cox construction and the {\it equivariant Lefschetz-Riemann-Roch theorem} of Edidin-Graham. Alternative proofs of all these results are given via  
 {\it localization techniques} at the torus fixed points in $\bT$-equivariant $K$- and, resp., homology theories of toric varieties, due to Brion-Vergne and, resp., Brylinski-Zhang. These localization results apply to any  toric variety with a torus fixed point. In localized $\bT$-equivariant $K$-theory, we extend a classical {\it formula of Brion} for a full-dimensional lattice polytope $P$ to a weighted version. We also generalize  the {\it Molien formula} of Brion-Vergne for the localized class of the structure sheaf of a  simplicial toric variety  $X$ to the context  of  $mC_y^\bT(X)$. Similarly,  we calculate the {\it localized Hirzebruch class}  in localized  $\bT$-equivariant  homology, extending the corresponding results of  Brylinski-Zhang for the {\it localized Todd class} (fitting with the equivariant Hirzebruch class for $y=0$).

As main applications of our equivariant characteristic class formulae, we provide a geometric perspective on several  {\it weighted Euler-Maclaurin type formulae for full-dimensional simple lattice polytopes} (corresponding to simplicial toric varieties), coming from the 
 {\it equivariant toric  geometry via the equivariant Hirzebruch-Riemann-Roch} (for an ample torus invariant Cartier divisor).  Our main results even provide generalizations to {\it arbitrary equivariant coherent sheaf coefficients}, including algebraic geometric proofs of  (weighted versions of)  the Euler-Maclaurin formulae of Cappell-Shaneson, Brion-Vergne, Guillemin, etc. (all of which correspond to the choice of the structure sheaf), via the equivariant Hirzebruch-Riemann-Roch formalism. 
 In particular, we give a first complete proof of the Euler-Maclaurin formula of Cappell-Shaneson. Our approach, based on {\it motivic characteristic classes}, allows us to obtain such Euler-Maclaurin formulae also for (the interior of) a face, as well as for the polytope with several facets (i.e., codimension one faces) removed, e.g., for the interior of the polytope (as well as for equivariant characteristic class formulae for locally closed $\bT$-invariant subsets of a toric variety).  Moreover, we prove such results also in the weighted context, as well as for $\bN$-Minkowski summands of the given full-dimensional lattice polytope (corresponding to globally generated torus invariant Cartier divisors in the toric context). Some of these results are extended to local Euler-Maclaurin formulae for the tangent cones at the vertices of the given  full-dimensional lattice polytope (fitting with the localization at the torus fixed points in equivariant $K$-theory and equivariant (co)homology). Finally, we also give an application of our abstract Euler-Maclaurin formula to generalized reciprocity for Dedekind sums.
\end{abstract}

\maketitle

 \tableofcontents



\section{Introduction}\label{intro}

Many problems in mathematics, computer science or engineering can be cast in terms of counting the number of elements of a finite set. For instance,  in view of its connection with integer programming, counting lattice points in (full-dimensional) convex polytopes is an important problem in computational and discrete geometry, as well as in operation research and computer science.
Furthermore, since in many applications such polytopes come endowed with some continuous function $f$ (e.g., a probability distribution, or color intensity in a picture), it is often the case that one is interested in computing the sum of the values of $f$ at the lattice points contained in the polytope. The resulting expression is called the \index{Euler-Maclaurin formula} {\it Euler-Maclaurin sum} for $f$ and $P$. The
lattice point counting problem then corresponds to the choice of the constant function $f=1$. 
An {\it Euler-Maclaurin formula} computes the Euler-Maclaurin sum for a suitable function $f$ on $P$ in terms of integrals over the polytope and/or its faces, 
thus establishing a powerful connection between sums (which encode discrete information) and integrals (which are continuous quantifiers); e.g., see \cite[Section 2]{KSW} for an overview.
A closely related problem is the computation of the \index{volume} {\it volume} of a lattice polytope $P$ (i.e., vertices of $P$ are lattice points)  in terms of discrete data. For example, Pick's formula computes the area of a planar triangle in terms of the number of lattice points in the triangle and those on its boundary. More generally, it is well-known that for a full-dimensional  lattice polytope $P$ the number of lattice points in the dilated polytope $\ell P$ (for $\ell \in \bZ_{>0}$) is a polynomial $E_P(\ell)$ in
the dilation factor $\ell$, called the Ehrhart polynomial of $P$, whose highest coefficient is the volume of $P$.

\medskip

As observed by Danilov \cite{D}, the lattice point counting problem is closely related to the celebrated \index{Riemann-Roch theorem} Riemann-Roch theorem \cite{BFM} from algebraic geometry, via the correspondence between lattice polytopes and \index{toric variety} {\it toric varieties}. An $n$-dimensional {toric variety} $X=X_\Sigma$ is an irreducible normal variety on which the complex affine $n$-torus $\bT\simeq (\bC^*)^n$ acts with an open orbit, e.g., see \cite{CLS, D,F1}. Toric varieties arise from combinatorial objects $\Sigma \subset N \otimes \bR \simeq \bR^n$ called \index{fan} {\it fans}, which are collections of cones in a lattice $N \simeq \bZ^n$. Here $N$ corresponds to one-parameter subgroups of $\bT$. 
Let $M \simeq \bZ^n$ be the character lattice of $\bT$. 
From a full-dimensional lattice polytope $P\subset M \otimes \bR\simeq \bR^n$ one constructs (via the associated \index{inner normal fan} {\it inner normal fan} $\Sigma_P$ of $P$) a {toric variety} $X_P:=X_{\Sig_P}$, together with an ample Cartier divisor $D_P$, so that  the number of lattice points of $P$, i.e., points of $M \cap P$, is computed by the holomorphic Euler characteristic $\chi(X_P,\cO(D_P))$.  The Riemann-Roch theorem of \cite{BFM} expresses the latter in terms of  the Chern character of $\cO(D_P)$ and the Baum-Fulton-MacPherson homology \index{Todd class} Todd class $\td_*(X_P):=\td_*([\cO_{X_P}])$ of the toric variety $X_P$, thus reducing  the lattice point counting problem to a characteristic class computation (see formulae \eqref{4a} and \eqref{4b}). Similarly, a more intricate relation between lattice point counting and the theory of Goresky-MacPherson homology $L$-classes was noted by the first and fourth author in \cite{CS1}. Hence, the problem of finding explicit formulae for characteristic classes of toric varieties is of interest not only to topologists and algebraic geometers, but also to combinatorists, programmers, etc.

\medskip

In \cite{MS1, MS2}, the second and third author computed the \index{motivic Chern class} {\it motivic Chern class} $mC_y$ and, resp., homology \index{Hirzebruch class} {\it Hirzebruch classes} $T_{y*}$, $\widehat{T}_{y*}$ \cite{BSY}  of (possibly singular) toric varieties.  
As important special cases, they obtained new (or recovered well-known) formulae for the Baum-Fulton-MacPherson Todd  classes $\td_*(X)=T_{0*}(X)=\widehat{T}_{0*}(X)$ (or MacPherson-Chern classes  $c_*(X)=\widehat{T}_{-1*}(X)$) of a toric variety $X$, as well as for the Thom-Milnor L-classes $L_*(X)=\widehat{T}_{1*}(X)$ of a simplicial projective toric variety $X$.
For instance, by taking advantage of the torus-orbit decomposition and  the motivic properties of the homology  
Hirzebruch classes, one can express the latter in terms of the (dual) 
Baum-Fulton-MacPherson Todd classes of closures of orbits. (The same  method also applies to torus-invariant subspaces of a given toric variety.) As a consequence, one gets \index{Pick formula}
{\it generalized Pick-type formulae}, namely, generalizing Danilov's observation one shows that the weighted lattice point counting, where each point in a face $E$ of the polytope 
$P$ carries the weight $(1+y)^{\dim(E)}$, amounts to the computation of the Hirzebruch class $T_{y*}(X_P)$ of the associated toric variety (see Theorem \ref{wco} and Remark \ref{wcor} for a precise statement). In the case of 
 simplicial toric varieties (e.g., associated to a simple lattice polytope), these characteristic classes were computed in \cite{MS1, MS2} by using the \index{Lefschetz-Riemann-Roch theorem} Lefschetz-Riemann-Roch theorem of Edidin-Graham \cite{EG} in the context of the geometric quotient description of such varieties \cite{Cox}; see Subsection \ref{necl} for an overview of such formulae.

\medskip

A natural generalization of the counting lattice point problem is the Euler-Maclaurin formula, which relates the sum $\sum_{m\in P \cap M} f(m)$ of the values of a suitable function $f$ at the lattice points in a lattice polytope $P \subset M_\bR:=M\otimes\bb{R}$ to integrals over the polytope and/or its faces. In this paper, we consider $f$ to be either a polynomial on $M_\bR$ or an exponential function $f(m)=e^{\langle m, z \rangle}$, or products of these two types of functions, where $\langle \cdot , \cdot \rangle: M\times N\to \bb{Z}$ is the canonical pairing and $z \in N_\bC:=N \otimes_\bZ \bC=\Hom_\bR(M_\bR, \bC)$. 

Khovanskii and Pukhlikov \cite{KP} obtained an Euler-Maclaurin formula for the sum of the values of a polynomial over the lattice points in a {\it regular} lattice polytope (corresponding to a smooth projective toric variety). The first substantial advance for non-regular polytopes was a different type of Euler-Maclaurin formula for {\it simple} polytopes achieved by the first and fourth authors in \cite{CS2,S}, by using their theory of topological {\it characteristic $L$-classes} of singular spaces (agreeing with $\widehat{T}_{1*}$ in this context). 
A few years later, the Khovanskii-Pukhlikov formula was extended to {simple} lattice polytopes by Brion-Vergne \cite{BrV1} \cite{BrV2}, the latter using the {\it equivariant Hirzebruch-Riemann-Roch} theorem for the corresponding complete simplicial toric varieties, together with localization techniques in equivariant cohomology. Other Euler-Maclaurin type formulae 
were obtained by Guillemin \cite{G} by using methods from symplectic geometry and geometric quantization, by Karshon et al. \cite{KSW} by combinatorial means, etc. 
While most of above-mentioned Euler-Maclaurin formulae are obtained by integrating the function $f$ over a dilation of the polytope, the Cappell-Shaneson formula involves a summation over the faces of the polytope of integrals (over such faces) of linear differential operators with constant coefficients, applied to the function; see also \cite{BrV3} for some formulae of this type. Later on, these Euler-Maclaurin formulae have been extended (even for non-simplicial toric varieties, resp.,  non-simple lattice polytopes)
by Berline--Vergne \cite{BeV1, BeV2} and Garoufalidis--Pommersheim \cite{GP}, together with Fischer--Pommersheim \cite{FP}, to local formulae satisfying a {\it Danilov condition} (see also the work of Pommershein--Thomas \cite{PT} for such formulae for the Todd class in the non-equivariant context). In the geometric context of toric varieties, all of these Danilov type formulae depend in a crucial way on the birational invariance of the (equivariant) Todd class of the structure sheaf. However, this birational invariance property does not apply to the more general context studied in this paper.

\medskip

In the present work we consider $\bT$-equivariant versions $mC_y^\bT$ and $T_{y*}^\bT:=\td_*^\bT \circ mC_y^\bT$
of the motivic Chern and, resp.,  Hirzebruch characteristic classes of \cite{BSY}, and we extend the formulae from \cite{MS1, MS2} to the equivariant setting. Such constructions and results are first discussed in Section \ref{sec:3}. The proofs given in Section \ref{sec:3}  are based on the Cox construction and the equivariant Lefschetz-Riemann-Roch theorem of Edidin-Graham \cite{EG}, and they resemble those of \cite{MS1,MS2}, up to some technical modifications dictated by the equivariant context. Alternative proofs of all these results are given in Section \ref{reprloc} via localization at the torus fixed points.
Here we restrict ourselves for simplicity to the toric context, although many results hold in greater generality and have a long history.
We refer to Baum-Fulton-Quart \cite{BFQ} for the equivariant Lefschetz-Riemann-Roch theorem for finite group actions,
and to Goresky-MacPherson-Kottwitz \cite{GKM} for a review of the  localization at torus fixed points.
General versions of these localization results can be found in Anderson-Fulton \cite{AF}[Sections 5, 7 and 17] for equivariant (co)homology, Brion 
\cite{Br2}[Cor.2, Sec.2.3] for equivariant Chow groups, and Thomason \cite{Th}[Thm.2.1] for equivariant algebraic K-theory.
 
Furthermore, we explain the relation (cf. also \cite{BrV2}) between the  equivariant toric  geometry via the (equivariant) Hirzebruch-Riemann-Roch (abbreviated HRR for short) and Euler-Maclaurin type formulae for simple lattice polytopes (corresponding to  projective simplicial toric varieties), fitting into the following diagram:
\begin{equation*}\begin{CD}
\text{Euler-Maclaurin  formulae for $f$}@<<< \text{equivariant HRR for $D_P$}  \\
@V f= 1 VV @VVV \\
\text{counting lattice points} @<<< \text{ HRR  for $D_P$}\:,
\end{CD}\end{equation*}
with $D_P$ the $\bT$-invariant ample Cartier divisor associated to the simple lattice polytope $P$.

Our main results provide generalizations of the top row in the above diagram  to {\it arbitrary equivariant coherent sheaf coefficients}, which for natural choices related to the toric variety (or the polytope) give uniform geometric proofs of (weighted) Euler-Maclaurin formulae via the equivariant Hirzebruch-Riemann-Roch formalism. This includes, in particular, {\it both} versions of the Euler-Maclaurin formulae, i.e., of Brion-Vergne and, resp., Cappell-Shaneson (both of which correspond to the choice of the structure sheaf). 
Moreover, this provides a first complete proof of the Euler-Maclaurin formula of Cappell-Shaneson, which is different from the one envisioned in  \cite{CS2,S}.
Our approach, based on motivic characteristic classes, allows us to obtain such Euler-Maclaurin formulae also for (the interior of) a face, as well as for the polytope with several facets (i.e., codimension one faces) removed, e.g., for the interior of the polytope. 
Moreover, we prove such results also in the weighted context, as well as for $\bN$-Minkowski summands of the given full-dimensional lattice polytope (corresponding to globally generated torus invariant Cartier divisors in the toric context). Finally, some of these results are extended to local Euler-Maclaurin formulae for the tangent cones at the vertices of the given  full-dimensional lattice polytope (fitting with localization at the torus fixed points in equivariant $K$-theory and equivariant (co)homology).

\medskip

In what follows we give a brief overview of our main results. 
Later on  each section has its own  introduction with a more detailed description of the main results contained therein.
We first introduce some notations. For simplicity, in this introduction we formulate  most of the results for a  complete  simplicial toric variety (e.g., associated to a simple full-dimensional lattice polytope). 

\subsection{Rational equivariant cohomology} 
Let $X=X_\Sig$ be an $n$-dimensional complete  simplicial  toric variety with fan $\Sig \subset N_\bR=N \otimes \bR$ and torus $\bT=T_N$.
Denote by $H^*_\bT(X;\bb{Q})$ the (Borel-type) rational \index{equivariant cohomology} equivariant cohomology of $X$, and note that for a point space one has  
\[ H^*_\bT(pt;\bQ)\simeq \bQ[t_1,\ldots,t_n] =:(\Lambda_\bT)_{\bQ}.\]
Let $M$ be the dual lattice of $N\simeq \bZ^n$.
Viewing characters $m\in M$   (resp., $\chi^m\in \bZ[M]\simeq K_0^\bT(pt)$) of $\bT$ as $\bT$-equivariant line bundles $\bC_{\chi^m}$ over a point space $pt$ gives an isomorphism $M\simeq Pic_\bT(pt)$. Taking the first equivariant Chern class 
$c^1_\bT$ (or the dual $-c^1_\bT$) gives an  isomorphism
\[
c=c^1_\bT,  \:\text{resp.}, \:  s=-c^1_\bT :\:
M \simeq H_\bT^2(pt;\bb{Z}).
\]
Hence, upon choosing a basis $m_i$ ($i=1,\dots,n$) of $M\simeq \bb{Z}^n$, one has that $H_\bT^*(pt;\bb{Q})=(\Lambda_\bT)_{\bb{Q}}\simeq \bb{Q}[t_1,\dots,t_n],$  with
$t_i=\pm c^1_\bT(\bC_{\chi^{m_i}})$ for $i=1,\dots,n$. As explained in Subsection \ref{eqcoh}, $H^*_\bT(X;\bb{Q})$ can be described as a  $H_\bT^*(pt;\bb{Q})=(\Lambda_\bT)_{\bb{Q}}$-algebra. This fact plays an important role for proving Euler-Maclaurin type formulae. In fact, we will be working with the completions $\widehat{H}_\bT^*(X;\bb{Q})$ and $(\widehat{\Lambda}_\bT)_{\bb{Q}}\simeq \bQ[[t_1,\ldots,t_n]]$ of these rings. Moreover, the equivariant Chern character $\ch^\bT$ and Todd homology classes $\td_*^\bT$ take values (upon using the equivariant Poincar\'e duality for the latter) in an {\it analytic subring} 
(defined as the image of the {\it analytic Stanley-Reisner ring,} cf.  \eqref{ans})
$$(H^*_\bT(X;\bb{Q}))^{an}\subset  \widehat{H}^*_\bT(X;\bb{Q}) \:,$$
with $  (H^*_\bT(pt;\bb{Q}))^{an} \simeq \bb{Q}\{t_1,\dots,t_n\}=:(\Lambda^{an}_\bT)_{\bb{Q}}\subset (\widehat{\Lambda}_\bT)_{\bb{Q}}$ the subring of {\it convergent power series} (around zero) with rational coefficients, i.e., after pairing with $z\in N_\bC$ one gets a convergent power series {\it function} in $z$ around zero, whose corresponding Taylor polynomials have rational coefficients.

\subsection{Generalized equivariant HRR formula and applications}
Let $X=X_\Sig$ be a  complete  simplicial toric variety, with a $\bT$-equivariant coherent sheaf $\cF$. The cohomology spaces $H^i(X;\cF)$ are finite dimensional $\bT$-representations, vanishing for $i$ large enough. Using the corresponding $\bT$-eigenspaces $H^i(X;\cF)_{\chi^m}$ as in \eqref{eigen},  on which $t \in \bT$ acts by multiplication by ${\chi^m}(t)$, the (cohomological)  {\it Euler characteristic} of $\cF$ is defined by
\be\label{ie1}
\chi^\bT(X,\cF)=\sum_{m \in M} \sum_{i=0}^n (-1)^i \dim_\bC H^i(X;\cF)_{\chi^m} \cdot e^{c(m)} \in (\Lambda^{an}_\bT)_{\bb{Q}}
\subset  (\widehat{\Lambda}_\bT)_{\bb{Q}}\:.
\ee
For $\cE$, resp., $\cF$, a $\bT$-equivariant vector bundle, resp., coherent sheaf on $X$, one then has the following \index{equivariant Hirzebruch-Riemann-Roch formula} {\it equivariant Hirzebruch-Riemann-Roch formula} (cf. \cite{EG0}[Cor.3.1]): 
\be\label{eHRRi}
\chi^\bT(X,\cE \otimes \cF)=\int_X \ch^\bT(\cE) \cap \td^\bT_*([\cF]),
\ee
where $\int_X:\widehat{H}^*_\bT(X;\bb{Q}) \to \widehat{H}^*_\bT(pt;\bb{Q})=(\widehat\Lambda_\bT)_\bQ$ is the equivariant pushforward for the constant map $X \to pt$. 
Here, 
$$\td^\bT_*:K^\bT_0(X)  \lra \widehat{CH}_*^\bT(X)\otimes \bQ \lra \widehat{H}^\bT_{2*}(X;\bQ),$$
is the equivariant Riemann-Roch map of Edidin-Graham from \cite{EG0}[Thm.3.1]
composed with the equivariant cycle map \cite{EG-1}[Sec.2.8],
and we identify its target with the completed equivariant cohomology $\widehat{H}_\bT^{2*}(X;\bQ)$ via equivariant Poincar\'e duality.
The class $\td^\bT_*(X):=\td^\bT_*([\cO_X]_\bT])$ is the {\it equivariant Todd class} of $X$.
See Subsection \ref{seRR} for examples and applications of formula \eqref{eHRRi}.

In Section \ref{gehrr}, formula \eqref{eHRRi} is extended to a \index{generalized equivariant Hirzebruch-Riemann-Roch formula} {\it generalized equivariant Hirzebruch-Riemann-Roch formula}, cf. Theorem \ref{gHRR} (which is proved more generally for closed algebraic subsets of $X$ defined by $\bT$-invariant closed subsets, using the corresponding Ishida sheaves $\widetilde{\Omega}^{p}_X$ instead of the Zariski sheaves):
\be\label{gHRRi}
\begin{split}
\chi^\bT_y(X,\cO_{X}(D))&:=
\sum _{p=0}^{n} \chi^\bT(X,\widehat{\Omega}^{p}_X \otimes  \cO_{X}(D)) \cdot y^p\\
&=\int_{X} \ch^\bT(\cO_{X}(D)) \cap T^\bT_{y*}(X),
\end{split}
\ee
with $D$ a $\bT$-invariant Cartier divisor on $X$, $\ch^\bT$ the equivariant Chern character,  and $\widehat{\Omega}^{p}_X$ the sheaf of Zariski 
$p$-forms on $X$. Here  we use the following explicit description of the equivariant motivic Chern class $mC_y^\bT(X)$ and, resp.,  equivariant  Hirzebruch class $T^\bT_{y*}(X)$ of $X$ (which hold for {\it any} toric variety, see Proposition \ref{p32}):
\be\label{14i} mC^\bT_y(X)=\sum_{p=0}^{\dim(X)} [\widehat{\Omega}^p_X]_\bT \cdot y^p \in K_0^\bT(X)[y]
\quad \text{and} \quad 
{T}^\bT_{y*}(X)=\sum_{p=0}^{\dim(X)} \td_*^\bT([\widehat{\Omega}^p_X]_\bT) \cdot y^p.
\ee
(The reader unfamiliar with these characteristic class notions can take the formulae in \eqref{14i} as their definitions.)

Let now $P$ be a full-dimensional \index{simple lattice polytope} simple lattice polytope in $M_\bR\simeq \bR^n$ with associated toric variety $X=X_P$ with torus $\bT$, inner normal fan $\Sigma=\Sig_P$ and ample Cartier divisor $D=D_P$. 
As a consequence of \eqref{gHRRi}, we obtain the following weighted formula (see Corollary \ref{c38}):
\be\label{ic1}
\chi^\bT_y(X,\cO_{X}(D))= \sum_{E \preceq P} (1+y)^{\dim(E)} \cdot \sum_{m \in \Relint(E) \cap M} e^{s(m)},
\ee
where the first sum is over the faces $E$ of $P$ and $\Relint(E)$ is the relative interior of the face $E$.

Let us next consider a globally generated $\bT$-invariant Cartier divisor $D'$ on $X=X_\Sig$, with associated (not necessarily full-dimensional) lattice polytope $P_{D'} \subset M_\bR$. Let $X_{D'}$ be the toric variety of the lattice polytope $P_{D'}$, defined via the corresponding {generalized fan} $\Sig'$ as in \cite{CLS}[Prop.6.2.3]. There is a proper toric morphism $f:X \to X_{D'}$, induced by the corresponding surjective lattice projection $N \to  N_{D'}$ given by dividing out by the minimal cone of the generalized fan of $P_{D'}$. In particular, $f\colon X \to X_{D'}$ is a \index{toric fibration} {\it toric fibration}, with $ker(\bT\to \bT')$ connected. For $\sig'$ a cone in the generalized fan $\Sig'$  of $P_{D'}$, let
\be\label{bl1}
d_\ell(X/\sig'):=\vert \Sigma_\ell(X/\sig') \vert,\ee with  
\be\label{bl2} \Sigma_\ell(X/\sig'):=\{\sig \in \Sig \mid O_\sig \subset X, \ f(O_\sig)=O_{\sig'}, \ \ell=\dim(O_\sig) - \dim(O_{\sig'})\},\ee
where $O_\sig$ ($\sig \in \Sig$) and $O_{\sig'}$ ($\sig' \in \Sig'$) are $\bT$-, and resp. $\bT'$-orbits, 
and $\vert - \vert$ denoting the cardinality of a finite set.
If $E$ is the face of $P_{D'}$ corresponding to $\sig' \in \Sig'$, we denote these multiplicities by $d_\ell(X/E)$.
Then we have the following generalization of formula \eqref{ic1} (see Corollary \ref{gHRRnef} for a more general statement)
\be\label{ic2}
\chi^\bT_y(X,\cO_{X}(D'))= \sum_{E \preceq P_{D'}} \left( \sum_{\ell \geq 0} 
(-1)^\ell \cdot d_\ell(X/E) \cdot (1+y)^{\ell + \dim(E)}\right)  \cdot  \sum_{m \in \Relint(E) \cap M} e^{s(m)}.
\ee
By forgetting the $\bT$-action (i.e., setting $s(m)=0$ for all $m \in M$), 
 we get the following weighted lattice point counting for lattice polytopes associated to globally generated $\bT$-invariant Cartier divisors:
 \be\label{fort}
 \chi_y(X,\cO_{X}(D'))= \sum_{E \preceq P_{D'}} \left( \sum_{\ell \geq 0} 
(-1)^\ell \cdot d_\ell(X/E) \cdot (1+y)^{\ell +\dim(E)}  \right) \cdot  \vert \Relint(E) \cap M \vert .
\ee

\subsection{Equivariant motivic Chern and Hirzebruch classes of toric varieties}
In Section \ref{sec:3}, we extend the characteristic class formulae
 from \cite{MS1, MS2} for the motivic Chern and Hirzebruch classes to the equivariant setting. Here we use the global \index{Cox construction} Cox construction (see Subsection \ref{Cox}) and the equivariant Lefschetz-Riemann-Roch theorem of Edidin-Graham \cite{EG}. 
Let $X:=X_\Sig$ be an $n$-dimensional complete  simplicial  toric variety with fan $\Sig$. 
Then the \index{equivariant Hirzebruch class} equivariant Hirzebruch class ${T}^\bT_{y*}(X)$ is computed by (cf. Theorem \ref{eqHirz}):
\be\label{ieHirz0}
{T}^\bT_{y*}(X)= (1+y)^{n-r} \cdot \sum_{g \in G_{\Sig}}  \prod_{\rho \in \Sig(1)} \frac{ F_{\rho} \cdot 
\big( 1+y  \cdot a_{\rho}(g)  \cdot e^{-F_{\rho}}\big)}{1-a_{\rho}(g) \cdot e^{-F_{\rho}}}  \in \widehat{H}^*_\bT(X;\bQ)[y] \:,
\ee
with $r=\vert \Sigma(1)\vert$ the number of rays of $\Sig$, and $F_\rho=[D_\rho]_\bT$ denoting the equivariant fundamental class of the $\bT$-invariant divisor $D_\rho$ corresponding to the ray $\rho \in \Sig(1)$.  See Subsection \ref{Cox} for the meaning of $G_\Sig$ and $a_\rho(g)$ in the context of the Cox construction. Note that if $X$ is smooth, then $G_\Sig$ is just the identity element, and all $a_\rho(g)=1$. For $y=0$, with $T_{0*}^\bT(X)=\td_*^\bT(X)$ the \index{equivariant Todd class} equivariant Todd class of $X$, formula \eqref{ieHirz0} specializes to the classical counterpart of Brion-Vergne \cite{BrV2} for the equivariant Todd class of $X$. A more general statement is obtained in Theorem \ref{eqHirzcomp}, for the equivariant Hirzebruch classes of complements of $\bT$-invariant divisors in $X =X_\Sig$ a simplicial toric variety associated to a fan $\Sig$ of full-dimensional convex support.
Alternative proofs of all these characteristic class formulae are given in Subsection \ref{reprloc} via localization at the torus fixed points, as discussed below.

\subsection{Localized equivariant motivic Chern and Hirzebruch classes}
In Section \ref{sec:4}, we apply localization techniques in $\bT$-equivariant $K$- and (Borel-Moore) homology theories of toric varieties, due to Brion-Vergne \cite{BrV2} and, resp., Brylinski-Zhang \cite{BZ}, for the calculation of the $\bT$-equivariant motivic Chern and Hirzebruch classes in the toric context.  In this section the fan $\Sig$ need not to be simplicial.

Let $X=X_\Sig$ be an $n$-dimensional toric variety with torus $\bT=T_N$ such that the fixed-point set $X^\bT \neq \emptyset$, e.g., $X$ is projective. Let $x_\sig \in X^\bT$ be a fixed point corresponding to $\sig \in \Sig(n)$, with $U_\sig$ the corresponding $\bT$-invariant open affine variety. Consider the multiplicative subset $S\subset \bZ[M]=K_0^\bT(pt)$ generated by the elements $1-\chi^m$, for $0\neq m \in M$. Then we have the Brion-Vergne localization isomorphism $K_0^\bT(X^\bT)_S \simeq K_0^\bT(X)_S$ induced by the inclusion of fixed points. The projection map 
$$pr_{x_\sig} : K_0^\bT(X)_S \simeq K_0^\bT(X^\bT)_S \to  K_0^\bT(x_\sig)_S=\bZ[M]_S$$
can be calculated, after restriction to $U_\sig$, as $pr_{x_\sig}=\mathbb{S} \circ \chi^\bT_\sig$, with $\chi^\bT_\sig:K_0^\bT(U_\sig) \to \bZ[M]_{\rm sum} \subset \bZ[[M]]$  the local counterpart of the equivariant Euler characteristic \eqref{ie1} (with $\chi^m\in \bZ[M]$ instead of $e^{c(m)}$),  and $\mathbb{S}$ the corresponding summation map (defined on the subset $\bZ[M]_{\rm sum} \subset \bZ[[M]]$ of ``summable'' elements in $\bZ[[M]]$) as introduced by Brion-Vergne \cite{BrV2}. 
We then have (see formula \eqref{mclocal} for a more general version):
\be\label{imclocal}
\chi_\sig^\bT(mC^\bT_y(X)\vert_{U_\sig})=
\sum_{\tau \preceq \sigma} (1+y)^{\dim(O_\tau)} 
\sum_{m \in {\rm Relint}(\sigma^\vee \cap \tau^\perp) \cap M} \chi^{-m} \in  \bb{Z}[M]_{\rm sum} \otimes_\bZ \bZ[y],\ee
where the first sum is over the faces of $\sig$,  with $\sigma^\vee$  the dual cone and $\tau^\perp \subset M_\bR$ the orthogonal of $\tau \subset N_\bR$ with respect to the canonical pairing.  
For $y=0$, this specializes to $\chi_\sig^\bT(\cO_X\vert_{U_\sig})$, since $X$ has rational singularities, so that $mC_0^\bT(X)=[\cO_X]_\bT\in K_0^\bT(X)$. Similar results hold for a closed $\bT$-invariant algebraic subset of the toric variety $X$, see Remark \ref{Ishida-loc}.
As a consequence, we get the following weighted version of Brion's formula (see Corollary \ref{wBr}).
\bc\label{iwBr}
Let $P$ be a full-dimensional lattice polytope with associated projective toric variety $X=X_P$ and ample Cartier divisor $D=D_P$. For each vertex $v$ of $P$, consider the cone $C_v={\rm Cone}(P \cap M -v)=\sigma_v^\vee$, with faces $E_v={\rm Cone}(E \cap M -v)$ for $v\in E$. 
Then the following identity holds in $\bZ[M]_S \otimes_\bZ \bZ[y]$:
\be\label{if94} \chi^\bT(X, mC^\bT_y(X) \otimes \cO_X(D))= \sum_{v \ \text{\rm vertex}} \chi^{-v} \cdot \mathbb{S} \left( \sum_{v \in E \preceq P} (1+y)^{\dim(E)} \cdot \sum_{m \in \Relint(E_v) \cap M} \chi^{-m} \right).
\ee
\ec
Brion's formula \cite{Br1} is obtained from \eqref{if94} by specializing to $y=0$.

In the case of a \index{simplicial cone} simplicial cone, we get more explicit formulae by using a Lefschetz type variant 
$tr^{\bT'}_\sig$ of the Euler characteristic $\chi_\sig^\bT$, and a corresponding summation map. Let $\sig \in \Sig(n)$ be a simplicial cone with $u_1,\ldots, u_n \in N=N_\sig$ the generators of the the rays $\rho_j \in \sig(1)$, $j=1,\ldots,n$. Let $N'=N'_\sig$ be the finite index sublattice of $N$ generated by $u_1,\ldots, u_n$, and consider $\sig \in N'_{\bR}=N_\bR$ so that it is smooth with respect to the lattice $N'$. With $\bT$, $\bT'$ the corresponding $n$-dimensional tori of the lattices $N$, resp., $N'$, the inclusion $N' \hookrightarrow N$ induces a toric morphism $\pi: U'_\sig \to U_\sig$ of the associated affine toric varieties. Let $G_\sig$ be the finite kernel of the epimorphism $\pi:\bT'\to \bT$, so that $U'_\sig/G_\sig \simeq U_\sig$. 
Let $m'_{\sig,1},\ldots, m'_{\sig,n}$ be the dual basis in the dual lattice $M'=M_{\sig}'$ of $N'$, with corresponding characters $a_{\rho_j}:G_\sig \to \bC^*$ of $G_\sig$ as in the global context of the Cox construction mentioned before (see \eqref{a-intr}).  With these notations, we have (see Example \ref{extw} for details):
\be\label{isumsmusd}
\mathbb{S}\left((\chi_\sig^\bT(mC^\bT_y(X)\vert_{U_\sig})\right)=\frac{1}{\vert G_\sig \vert} \sum_{g \in G_\sig}  \prod_{i=1}^n \frac{1+y \cdot a_{\rho_i}(g^{-1}) \cdot \chi^{-m'_{\sig,i}}}{1-a_{\rho_i}(g^{-1}) \cdot \chi^{-m'_{\sig,i}}},
\ee
which can be regarded as a local $K$-theoretical version of the global formula \eqref{ieHirz0}. For $y=0$, this specializes to the \index{Molien formula} {\it Molien formula} of Brion-Vergne \cite{BrV2}, see \eqref{sumsmu}.
Similar results also hold for $\mathbb{S}\left((\chi_\sig^\bT(mC^\bT_y(V\hookrightarrow  X)\vert_{U_\sig})\right)$, with $V\subset X$ the open complement of some of the $\bT$-invariant divisors $D_\rho$, see \eqref{sumsmusf}.

We next discuss some (co)homological counterparts of the above localization formulae, again
for $X=X_\Sig$ an $n$-dimensional toric variety with torus $\bT=T_N$ such that the fixed-point set $X^\bT \neq \emptyset$.
Let $L\subset (\Lambda_\bT)_\bQ=H^{*}_\bT(pt;\bQ)$ be the multiplicative subset  generated by the elements $\pm c(m)$, for $0\neq m \in M$. With $x_\sig \in X^\bT$ a fixed point corresponding to $\sig \in \Sig(n)$, there is an associated \index{homological localization map}  {\it homological localization map} (for equivariant Borel-Moore homology) at $x_\sig $, 
$$pr_{x_\sig}:\widehat{H}_{*}^\bT(X;\bQ)_L \simeq \widehat{H}_{*}^\bT( X^\bT;\bQ)_L \lra \widehat{H}_{*}^\bT(x_\sig;\bQ)_L = L^{-1}(\widehat{\Lambda}_\bT)_\bQ.$$
These $K$-theoretic and (co)homological localization maps are compatible with the equivariant Todd class transformation of Edidin-Graham \cite{EG0} (and Brylinski-Zhang \cite{BZ}), in the following sense:
\bp\label{ipr47}
Let $\cF$ be a $\bT$-equivariant coherent sheaf on $X=X_\Sig$, and let $x_\sig \in X^\bT$ be a given fixed point of the $\bT$-action. Then:
 \be\label{if98}
\td_*^\bT([\cF])_{x_\sig}:= pr_{x_\sig} (\td_*^\bT([\cF]) = \ch^\bT \left(( \mathbb{S} \circ \chi_\sig^\bT)(\cF)\right)
\in  L^{-1}(\Lambda^{an}_\bT)_\bQ \subset L^{-1}(\widehat{\Lambda}_\bT)_\bQ,
 \ee
with $\ch^\bT:\bZ[M]_S \to L^{-1}(\Lambda^{an}_\bT)_\bQ$ induced by the $\bT$-equivariant Chern character on a point space.
\ep

As an example, we get by \eqref{imclocal} the following localized equivariant Hirzebruch class \index{localized equivariant Hirzebruch class} formula for the toric variety $X=X_\Sig$ (due to Weber \cite{W}[Thm.11.3], see also \cite{R}[Thm.6.1]):
\be\label{ieq147}\begin{split}
 T^\bT_{y*}(X)_{x_\sig}&:= \td_*^\bT\left([mC_y^\bT(X)]\right)_{x_\sig} =\sum_{\tau \preceq \sigma} (1+y)^{\dim(O_\tau)} 
\cdot (\ch^\bT \circ \mathbb{S}) \left( \sum_{m \in {\rm Relint}(\sigma^\vee \cap \tau^\perp) \cap M} \chi^{-m} \right) \\
&=\sum_{\tau \preceq \sigma} (1+y)^{\dim(O_\tau)} 
\cdot \mathbb{S} \left( \sum_{m \in {\rm Relint}(\sigma^\vee \cap \tau^\perp) \cap M} e^{s(m)} \right) \:.
\end{split}
 \ee
Specializing the above formula to $y=0$ reduces it to a corresponding localized Todd class formula of Brylinski-Zhang \cite{BZ} for $\td_*^\bT(X)_{x_\sig}=T^\bT_{0*}(X)_{x_\sig}$. In the case of a simplicial cone $\sig \in \Sig(n)$ corresponding to $x_\sig \in X^\bT$, formula \eqref{ieq147} implies the following:
\be\label{isum}
T^\bT_{y*}(X)_{x_\sig}=\frac{1}{\vert G_\sig \vert} \sum_{g \in G_\sig}  \prod_{i=1}^n \frac{1+y \cdot a_{\rho_i}(g^{-1}) \cdot e^{-c(m'_{\sig,i})}}{1-a_{\rho_i}(g^{-1}) \cdot e^{-c(m'_{\sig,i})}}.
\ee
This is exactly the localization of the global formula \eqref{ieHirz0}, as explained in Subection \ref{reprloc}.
Similar results also hold for $T^\bT_{y*}(V\hookrightarrow X)_{x_\sig}$, with $V\subset X$ the open complement of some of the $\bT$-invariant divisors $D_\rho$, see \eqref{sumsmusg}.

\subsection{Euler-Maclaurin formulae for simple lattice polytopes}

We now describe applications of the above characteristic class results to Euler-Maclaurin formulae for full-dimensional simple lattice polytopes. 

\subsubsection{Euler-Maclaurin formulae via polytope dilation}
Let $P$ be a full-dimensional  simple lattice polytope in $M_\bR\simeq \bR^n$, with toric variety $X=X_P$, inner normal fan $\Sigma=\Sigma_P$ and ample Cartier divisor $D=D_P$. Let $\Sigma(1)$ be the set of rays of $\Sigma$, corresponding to the facets $F$ of $P$. For each ray $\rho \in \Sig(1)$, let $u_\rho \in N$ be the corresponding ray generator. We also let $F_\rho:=[D_\rho]_\bT$ be the equivariant fundamental class of the $\bT$-equivariant divisor $D_\rho$ on $X$ corresponding to the ray $\rho \in \Sig(1)$.
Let $P(h)$ be the \index{dilated polytope} dilation of $P$ with respect to the vector $h=(h_\rho)_{\rho \in \Sigma(1)}$ with real entries indexed by the rays of $\Sigma$. So, if $P$ is defined by inequalities of the form
$$\langle m, u_\rho\rangle +c_\rho \geq 0,$$
with $u_\rho$ the ray generators and $c_\rho \in \bZ$, for each $\rho \in \Sig(1)$, then $P(h)$  is defined by inequalities 
$$\langle m, u_\rho \rangle +c_\rho +h_\rho \geq 0,$$
for each $\rho \in \Sig(1)$. 
Then $P(h)$ is also a simple polytope for $h_\rho$ small enough ($\rho\in \Sig(1)$).
In these notations, we have that $D=D_P=\sum_{\rho \in \Sig(1)} c_\rho \cdot D_\rho$.

Using localization techniques and adapting arguments of Brion-Vergne to our setup, we get for small $h_\rho$ ($\rho\in \Sig(1)$ and  $z\in N_\bC:=Hom_\bR(M_\bR,\bC)$) with all ${\langle i_\sig^*F_\rho,  z \rangle}\neq 0$)
 the following key relation (see Theorem \ref{them1}) between the exponentional integral and 
localized cohomology classes at the torus fixed points,
 based on the equivariant Hirzebruch-Riemann-Roch theorem: 
\be \label{eq:Int1}
\int_{P(h)} e^{\langle  m, z\rangle} \ dm =
 \sum_{\sig \in \Sig(n)} \frac{ e^{\langle i_\sig^* c^\bT_1(\cO_X(D_{P})),z\rangle}}{\langle Eu^\bT_X(x_\sig),  z \rangle} \cdot 
e^{ \sum_\rho h_\rho \langle i_\sig^* F_\rho,z\rangle} \:.
\ee
Here, we use a more explicit description of the homological localization map $pr_{x_\sig}$ given as 
$pr_{x_\sig}=\frac{ i_\sig^*}{Eu^\bT_X(x_\sig)}$, with $i_\sig: \{x_\sig\}\to X$ the fixed point inclusion and
$Eu^\bT_X(x_\sig):=i_\sig^* \left( \mult(\sig) \cdot \prod_{\rho \in \sig(1)} F_\rho\right) $
its generalized Euler class  (see \eqref{f106} for more details). Compare also with Theorem \ref{them1loc} for a counterpart of exponentional
integrals over the dilated tangent cones of the vertices of $P$. Using this we get our  our first abstract Euler-Maclaurin formula \index{Euler-Maclaurin formula}
 (see Theorem \ref{abstrEM}, Corollary \ref{abstrEMc} and Proposition \ref{pr520}):

\bt\label{abstrEMi}
Let $[\cF]\in K^\bT_0(X)$ be fixed, and choose a convergent power series $p(x_\rho) \in \bQ\{x_\rho \mid \rho \in \Sig(1) \}$ so that $p(F_\rho)=\td^\bT_*([\cF]) \in \left( H^*_\bT(X;\bQ) \right)^{an}$. Then, with $p(\frac{\partial}{\partial h})$ the corresponding infinite order differential operator obtained from $p(x_\rho)$ by substituting $x_\rho \mapsto \frac{\partial}{\partial h_\rho}$, for all $\rho \in \Sig(1)$, we have for any polynomial  function $f$ on $M_\bR$:
\begin{multline}\label{f1134i}
p\left(\frac{\partial}{\partial h}\right) \left( \int_{P(h)} f(m) \cdot e^{\langle  m, z\rangle} \ dm \right)_{\vert_{h=0}} =\\
= \sum_{m\in M} \left( \sum_{i=0}^n (-1)^i \cdot \dim_\bC H^i(X;\cO_X(D) \otimes \cF)_{\chi^{-m}}\right) \cdot f(m) \cdot e^{\langle m, z \rangle}
\:,
\end{multline}
as analytic functions in $z\in N_\bC$ with $z$ small enough.

If $v \in P$  is a vertex with  tangent cone $Tan(P,v)$ and corresponding cone
$\sig \in \Sig(n)$,
then for $z$ small enough with $-z \in Int(\sig)$ we get:
\be\label{if1133b}
p\left(\frac{\partial}{\partial h}\right) \left( \int_{Tan(P,v)(h)} e^{\langle  m, z\rangle} \ dm \right)_{\vert_{h=0}} =
  \left\langle 
\frac{ e^{v} \cdot \left( i_\sig^* p(F_\rho)\right)}{Eu^\bT_X(x_\sig)}  
, z  \right\rangle 
= \langle  e^{v} \cdot \td^\bT_*([\cF])_{x_\sig}, z  \rangle 
\:.
\ee
\et

Here $\left( H^*_\bT(X;\bQ) \right)^{an}$ is the image of the convergent power series ring
$$\bQ\{x_\rho \mid \rho \in \Sig(1) \} \subset \bQ[[x_\rho \mid \rho \in \Sig(1) ]] \to \widehat{H}^*_\bT(X;\bQ) $$
under the surjective evaluation homomorphismus $x_{\rho}\mapsto F_{\rho}$ ($ \rho \in \Sig(1) $).
Then $[\cF]\in K^\bT_0(X)$ can be represented by such a convergent power series $p(x_\rho) \in \bQ\{x_\rho \mid \rho \in \Sig(1) \}$
thanks to Proposition \ref{toddan}.

It is important to note that the operator $p(\frac{\partial}{\partial h}) \left( \int_{P(h)} f(m) \cdot e^{\langle  m, z\rangle} \ dm \right)_{\vert_{h=0}}$ (with $f=1$ in the local case) depends only on
the class $\td^\bT_*([\cF]) \in \left( H^*_\bT(X;\bQ) \right)^{an}$ and not on the chosen convergent power series representative. Such a convergent power series representative exists since 
$\td^\bT_*([\cF]) \in \left( H^*_\bT(X;\bQ) \right)^{an}$.
If one uses only polynomials in formula \eqref{f1134i} (i.e., setting $z=0$), this formula even holds for $p(x_\rho) \in \bQ[[x_\rho \mid \rho \in \Sig(1) ]]$ a formal power series with $p(F_\rho)=\td^\bT_*([\cF]) \in \widehat{H}^*_\bT(X;\bQ)$.

\begin{rem}
The formulae of Theorem \ref{abstrEMi} hold with the same operator $p(\frac{\partial}{\partial h})$ (which is fixed by the choice of a convergent power series $p(x_\rho) \in \bQ\{x_\rho \mid \rho \in \Sig(1) \}$ so that $p(F_\rho)=\td^\bT_*([\cF]) \in \left( H^*_\bT(X;\bQ) \right)^{an}$), but applied to different full-dimensional simple lattice polytopes $P$ corresponding to different choices of ample divisors $D=D_P$ on the toric variety $X_\Sig$ (e.g., for dilations of a given polytope).
\end{rem}

Note that by evaluating 
formula \eqref{f1134i} at $z=0$ and for $f=1$ (i.e., forgetting the $\bT$-action), we get a generalized \index{volume formula} {\it volume formula}, namely, 
\[
p\left(\frac{\partial}{\partial h}\right) \left( {vol} \ {P(h)}  \right)_{\vert_{h=0}} = \chi(X,\cO_X(D) \otimes \cF),
\]
with ${vol} \ {P(h)} =\int_{P(h)} dm$ the volume of $P(h)$ and the Lebesgue measure normalized so that the unit cube in $M \subset M_\bR$ has volume $1$. See \cite{BrV1}[Thm.2.15] for the case when $\cF=\cO_X$ (corresponding to counting points in $P \cap M$) and $\cF=\omega_X$ (corresponding to counting points in $\Int(P) \cap M$).

\medskip

In Subsection \ref{eEM} we explain how formula \eqref{f1134i} can be specialized to yield old and new Euler-Maclaurin type formulae. In particular, we obtain an Euler-Maclaurin formula for a simple lattice polytope with some facets removed (generalizing the classical case of (the interior of) a polytope), see formula \eqref{EMgen}. We also obtain  Euler-Maclaurin formulae for a face $E$ of $P$ (see Theorem \ref{EMintfaceb}) and for the interior of $E$ (see Theorem \ref{EMintface}).

Another way to obtain examples of explicit weighted Euler-Maclaurin formulae is by 
considering the classes $[\cF]:=[mC_y^\bT(X)] \in K_0^\bT(X)[y]$, or twisting these 
by $\cO_X(D'-D)$, for $D=D_P$ the original ample divisor associated to the full-dimensional simple lattice polytope $P$, and $D'$ any $\bT$-invariant Cartier divisor on $X$ (see Theorem \ref{bgm1}, Corollary \ref{bgm11} and Example \ref{ex526b}).

Let $D'$ be a globally generated $\bT$-invariant Cartier divisor on $X$, with associated (not necessarily full-dimensional) lattice polytope $P_{D'} \subset M_\bR$. Let $D'-D=\sum_{\rho \in \Sig(1)} d_\rho D_\rho$ as a $\bT$-invariant Cartier divisor. 
Let $X_{D'}$ be the toric variety of the lattice polytope $P_{D'}$, defined via the corresponding {generalized fan}. 
Consider the infinite order differential operator
\begin{equation}\label{itodd222b}
T'_y\left(\frac{\partial}{\partial h}\right):= e^{\sum_{\rho \in \Sig(1)} d_\rho \cdot \frac{\partial}{\partial h_{\rho} }} \cdot T_y\left(\frac{\partial}{\partial h}\right)  \in \bQ\{ \frac{\partial}{\partial h_\rho} \mid \rho \in \Sig(1)\}[y],
\end{equation}
with $T_y(\frac{\partial}{\partial h})$ obtained by substituting $F_\rho \mapsto \frac{\partial}{\partial h_\rho}$ (for each $\rho \in \Sig(1)$) into the right-hand side of formula \eqref{ieHirz0} for the equivariant Hirzebruch class $T_{y*}^\bT(X)$.
For any polynomial $f$ on $M_\bR$, one then has by Theorem \ref{abstrEMi} the following new \index{weighted Euler-Maclaurin formula} weighted Euler-Maclaurin formula:
\begin{multline}\label{iwem01b}
T'_{y}\left(\frac{\partial}{\partial h}\right) \left( \int_{P(h)} f(m) \cdot e^{\langle  m, z\rangle} \ dm \right)_{|_{h=0}} = \\ =\sum_{E \preceq P_{D'}} \left( \sum_{\ell \geq 0} 
(-1)^\ell \cdot d_\ell(X/E) \cdot (1+y)^{\ell + \dim(E)}   \right) \cdot \sum_{m \in \Relint(E) \cap M} f(m) \cdot e^{\langle  m, z\rangle},
\end{multline}
with multiplicities $d_\ell(X/E)=d_\ell(X/\sig')$ as in \eqref{bl1}, and the face $E$ of $P_{D'}$ corresponding to the cone $\sig' \in \Sig'$.
Note that in this context $P_{D'}$ is an \index{$\bN$-Minkowski summand} $\bN$-Minkowski summand of the original polytope $P$.
Forgetting the $\bT$-action (i.e., for $f=1$ and $z=0$), one gets the following volume formula (fitting with \eqref{fort}):
\be
T'_{y}\left(\frac{\partial}{\partial h}\right) \left( vol \ P(h) \right)_{\vert_{h=0}}=
\sum_{E \preceq P_{D'}} \left( \sum_{\ell \geq 0} 
(-1)^\ell \cdot d_\ell(X/E) \cdot (1+y)^{\ell + \dim(E)}  \right) \cdot  \vert \Relint(E) \cap M \vert .
\ee

Formula \eqref{iwem01b} also provides a generalization of the classical Euler-Maclaurin formula of Brion-Vergne \cite{BrV2}. Indeed, 
specializing \eqref{iwem01b} to the case $D=D'$, with $P=P_{D'}$, one first gets the following weighted Euler-Maclaurin formula:
\be\label{iwem01bb}
T_{y}\left(\frac{\partial}{\partial h}\right) \left( \int_{P(h)} f(m) \cdot e^{\langle  m, z\rangle} \ dm \right)_{|_{h=0}} =\sum_{E \preceq P} (1+y)^{\dim(E)} \cdot \sum_{m \in \Relint(E) \cap M} f(m) \cdot e^{\langle  m, z\rangle}.
\ee
For $y=0$ this further reduces to the classical Euler-Maclaurin formula of Brion-Vergne \cite{BrV2} for simple lattice polytopes, and 
Khovanskii-Pukhlikov \cite{KP} for Delzant lattice polytopes 
(corresponding to smooth projective toric varieties):
\be\label{itodd}
Todd\left(\frac{\partial}{\partial h}\right) \left( \int_{P(h)} f(m) \cdot e^{\langle  m, z\rangle} \ dm \right)_{|_{h=0}} = \sum_{m \in P \cap M} f(m) \cdot e^{\langle  m, z\rangle},
\ee
with $Todd(\frac{\partial}{\partial h}) :=T_{0}(\frac{\partial}{\partial h}) $.

\medskip

For simplicity, we fix a basis $m_1,\dots,m_n$ of $M\simeq \bb{Z}^n$, with $t_i$ the corresponding coordinates on $M_\bK$ (for $\bK=\bQ,\bR$) with  respect to this basis, so that $\frac{\partial}{\partial t_i }=: \partial_i$ for $i=1,\dots,n$. 
Following \cite{KSW}, in Section \ref{emcs} we introduce 
power series of the form $$p(t_i,x_\rho):=\sum_{\alpha=(\alpha_i)\in \bb{N}_0^{n}}\: p_{\alpha}(x_\rho)\prod_{i=1}^n t_i^{\alpha_i}$$ in 
$\bQ\{t_1,\dots,t_n, x_\rho \mid \text{$\rho\in \Sigma(1)$}\},$ resp., $\bQ[[t_1,\dots,t_n, x_\rho \mid \text{$\rho\in \Sigma(1)$}]]$,
as convergent, resp., formal power series in the $t_i, x_\rho$, with corresponding differential operator
\begin{equation*}
\begin{split}
p\left(\partial_i, \frac{\partial}{\partial h}\right) &\left( \int_{P(h)} f(m) \cdot e^{\langle  m, z\rangle} \ dm \right)_{\vert_{h=0}}\\ & :=
\sum_{\alpha=(\alpha_i)\in \bb{N}_0^{n}}\:p_{\alpha}\left( \frac{\partial}{\partial h}\right) \left( \int_{P(h)} 
\left(\prod_{i=1}^n \partial_i^{\alpha_i} f(m) \cdot e^{\langle  m, z\rangle}\right)\ dm \right)_{\vert_{h=0}}
\end{split}
\end{equation*}
for $f$ a polynomial function on $M_\bR$, obtained by substituting $t_i\mapsto \partial_i, x_\rho \mapsto \frac{\partial}{\partial h_\rho}$ into the power series $p(t_i,x_\rho)$. We can now state our second abstract \index{Euler-Maclaurin formula} Euler-Maclaurin formula based on  equivariant Hirzebruch-Riemann-Roch, see Theorem \ref{abstrEM2} and Corollary  \ref{abstrEMc2}.
\bt\label{abstrEM2i}
Let $X=X_P$ be the projective simplicial toric variety associated to a full-dimensional simple lattice polytope $P\subset M_\bR$. Let $\Sig:=\Sig_P$ be the inner normal fan of $P$, and $D:=D_P$ the ample Cartier divisor associated to $P$. Let $[\cF]\in K^\bT_0(X)$ be fixed, and choose a convergent power series $p(t_i,x_\rho) \in \bQ\{t_1,\dots,t_n,x_\rho \mid \rho \in \Sig(1) \}$ so that $p(s(m_i),F_\rho)=\td^\bT_*([\cF]) \in \left( H^*_\bT(X;\bQ) \right)^{an}$. Then, for a polynomial  function $f$ on $M_\bR$,
\begin{multline}\label{cs1134i}
p\left(\partial_i,\frac{ \partial}{\partial h}\right) \left( \int_{P(h)} f(m) \cdot e^{\langle  m, z\rangle} \ dm \right)_{\vert_{h=0}} = \\
= \sum_{m\in M} \left( \sum_{i=0}^n (-1)^i \cdot \dim_\bC H^i(X;\cO_X(D) \otimes \cF)_{\chi^{-m}}\right) \cdot f(m) \cdot e^{\langle m, z \rangle}
\:,
\end{multline}
as analytic functions in $z$ with $z$ small enough.
\et

This time we take the $\widehat{H}^*_\bT(pt;\bQ)= (\widehat{\Lambda}_\bT)_\bQ$-algebra structure into account and present
$\left( H^*_\bT(X;\bQ) \right)^{an}$ as the image of the convergent power series ring (see Proposition \ref{SRnew})
$$\bQ\{t_1,\dots,t_n, x_\rho \mid \rho \in \Sig(1) \} \subset \bQ[[t_1,\dots,t_n, x_\rho \mid \rho \in \Sig(1) ]] \to \widehat{H}^*_\bT(X;\bQ) $$
under the surjective evaluation homomorphismus $t_i\mapsto s(m_i)=-c^1_{\bT}(\bC_{\chi^{m_i}})$ $(i=1,\dots,n$) and $x_{\rho}\mapsto F_{\rho}$ ($ \rho \in \Sig(1) $).\\

Of course, Theorem \ref{abstrEM2i} reduces to the first part of Theorem \ref{abstrEMi} in case $$p(t_i,x_\rho) \in \bQ\{x_\rho \mid \rho \in \Sig(1) \} \subset 
\bQ\{t_1,\dots,t_n,x_\rho \mid \rho \in \Sig(1) \}$$ does not depend on the variables $t_i$. As before, the operator $$p(\partial_i,\frac{ \partial}{\partial h}) \left( \int_{P(h)} f(m) \cdot e^{\langle  m, z\rangle} \ dm \right)_{\vert_{h=0}}$$ depends only on
the class $\td^\bT_*([\cF]) \in \left( H^*_\bT(X;\bQ) \right)^{an}$ and not on the chosen convergent power series representative. If one uses only polynomials in formula \eqref{cs1134i} (i.e., setting $z=0$), this formula even holds for $p(t_i,x_\rho) \in \bQ[[t_1,\dots,t_n,x_\rho \mid \rho \in \Sig(1)]]$ a formal power series with $p(s(m_i),F_\rho)=\td^\bT_*([\cF]) \in \widehat{H}^*_\bT(X;\bQ)$.

\subsubsection{Euler-Maclaurin formulae of Cappell-Shaneson type} 

Let $P\subset M_\bR\simeq \bR^n$ be as before a full-dimensional simple lattice polytope, with associated projective simplicial toric variety $X=X_P$,  inner normal fan $\Sigma=\Sigma_P$ and ample Cartier divisor $D=D_P$.
Instead of using a dilation $P(h)$ of the polytope $P$, the Euler-Maclaurin formula of Cappell-Shaneson \cite{CS2,S} uses a summation of integrals over the faces $E$ of $P$. In more detail, for any polynomial function $f$ on $M_\bR$ one has:
\begin{equation}\label{EM-CSi}
\sum_{m\in P\cap M}\; f(m) = \sum_{E\preceq P}\:\int_E \left(p_{E}(\partial_i) f(m)\right) dm \:,
\end{equation}
with $p_{E}(\partial_i)\in \bb{Q}[[\partial_1,\dots,\partial_n]]$ suitable infinite order differential operators with constant rational coefficients
in the partial derivatives with respect to the coordinates of the vector space $M_\bR$.
Here the Lebesgue measure $dm$ on $E$ is normalized so that the unit cube in the lattice $Span(E_0)\cap M$ has volume $1$, with $E_0:=E-m_0$ a translation of $E$ by a vertex $m_0 \in E$. 

The infinite order differential operators $p_{E}(\partial_i)$ appearing in \eqref{EM-CSi} are defined through some relations 
in what is called in \cite{KSW}[Sect.6] the {\it Cappell-Shaneson algebra} of $P$. In this paper, we give a geometric proof of a generalized Cappell-Shaneson type Euler-Maclaurin formula with arbitrary coherent sheaf coefficients.

For the use of integration over the faces $E$ of $P$, we prove in Theorem \ref{them2} an analogue of \eqref{eq:Int1} for faces of a polytope. Let $P(h)$ be the dilation of $P$ with respect to the vector $h=(h_\rho)_{\rho \in \Sigma(1)}$ with real entries indexed by the rays of $\Sigma$.  Similarly, we consider the dilation $E(h)$ of a fixed face $E$ of $P$, and let $\sig_E \in \Sig$ be the cone corresponding to $E$. Then we have for small $h_\rho$ ($\rho\in \Sig(1)$) and  $z\in N_\bC:=Hom_\bR(M_\bR,\bC)$  with all ${\langle i_\sig^*F_\rho,  z \rangle}\neq 0$:

\be\label{eq:Int2} 
\int_{E(h)} e^{\langle  m, z\rangle} \ dm =
 {\rm mult}(\sig_E) \cdot \sum_{\sig \in \Sig(n)} \frac{ e^{\langle (i_\sig^* c^\bT_1(\cO_X(D_{P})),z\rangle}}{\langle Eu^\bT_X(x_\sig),  z \rangle} \cdot  e^{ \sum_\rho h_\rho \langle i_\sig^* F_\rho,z\rangle} \cdot  \prod_{\rho \in \sig_E(1)} \langle i_\sig^* F_\rho, z\rangle \:.
\ee

 Let $V_E:=V_{\sig_E}$ be the closure of the $\bT$-orbit in $X$ corresponding to $\sig_E$. As explained in Remark \ref{lastrem}, the equivariant fundamental classes $[V_{E}]_\bT$ generate $\widehat{H}^*_\bT(X;\bQ)$ as a  
$\widehat{H}^*_\bT(pt;\bQ)= (\widehat{\Lambda}_\bT)_\bQ$-algebra.
Let $[\cF]\in K^\bT_0(X)$ be fixed, and choose 
elements $p_{E}(t_i)\in \widehat{H}^*_\bT(pt;\bQ)= (\widehat{\Lambda}_\bT)_\bQ\simeq \bQ[[t_1,\dots,t_n]]$ with
\begin{equation}\label{main-toddi}
\td_*^\bT([\cF])= \sum_{E\preceq P} \: p_{E}(t_i)[V_{E}]_\bT \in \widehat{H}^*_\bT(X;\bQ)\:.
\end{equation}
Then $\td_*^\bT([\cF])=p(s(m_i),F_\rho) \in \widehat{ H}^*_\bT(X;\bQ) $ for
\be\label{main-todd2i}
p(t_i,x_\rho):=\sum_{E\preceq P} \: \left( {\rm mult}(\sig_E) \cdot \prod_{\rho \in \sig_E(1)} x_\rho\right) \cdot p_{E}(t_i) \in \bQ[[t_1,\dots,t_n,x_\rho \mid \rho \in \Sig(1) ]]\:.
\ee
With these notations, we can now state our third and final abstract \index{Euler-Maclaurin formula} Euler-Maclaurin formula based on the equivariant Hirzebruch-Riemann-Roch theorem, which also provides a generalization, as well as a first complete proof, of the Cappell-Shaneson Euler-Maclaurin formula, see Theorem \ref{abstrEM3} and Remark \ref{r716}.
\bt\label{abstrEM3i}
Let $X=X_P$ be the projective simplicial toric variety associated to a full-dimensional simple lattice polytope $P\subset M_\bR$. Let $\Sig:=\Sig_P$ be the inner normal fan of $P$, and $D:=D_P$ the ample Cartier divisor associated to $P$. Let $[\cF]\in K^\bT_0(X)$ be fixed, and choose the formal power series $p(t_i, x_\rho) \in \bQ[[t_1,\dots,t_n,x_\rho \mid \rho \in \Sig(1) ]]$ as in \eqref{main-todd2i}. Then for a polynomial  function $f$ on $M_\bR$, we have:
\be\label{cs2137i}
\sum_{E\preceq P} \:  \int_{E} \left(p_{E}(\partial_i) f(m)\right) \ dm 
= \sum_{m\in M} \left( \sum_{i=0}^n (-1)^i \cdot \dim_\bC H^i(X;\cO_X(D) \otimes \cF)_{\chi^{-m}}\right) \cdot f(m)\:.
\ee
\et

\begin{rem} On the left hand side the (restriction of the) polynomial function $p_{E}(\partial_i) f(m)$ on $M_\bR$ is integrated over $E$.
Formula \eqref{cs2137i} of Theorem \ref{abstrEM3i} holds with the same operators $p_{\sig_E}(\partial_i)$ (which are fixed by formula \eqref{main-toddi}), but applied to different full-dimensional simple lattice polytopes $P$ corresponding to different choices of ample divisors $D=D_P$ on the toric variety $X_\Sig$ (e.g., for dilations of a given polytope).
\end{rem}

In the classical case $\cF:=\cO_X$, 
this is exactly Cappell-Shaneson's recipe for the definition of the differential operators $p_{E}(\partial_i)$, described here geometrically in terms of 
the equivariant Todd class $\td_*^\bT(X):=\td_*^\bT([\cO_X])\in  \widehat{H}^*_\bT(X;\bb{Q})$  (see \cite{CS2}[Thm.2] or \cite[Sect.6.2]{S}). 
In this case, \eqref{cs2137i} reduces to the Cappell-Shaneson formula \eqref{EM-CSi} (see Remark \ref{r716} for more details).

\subsubsection{Generalized reciprocity for Dedekind sums via Euler-Maclaurin formulae}
In Section \ref{sec7.3}, 
we give an application of formula \eqref{cs2137i} to {\it generalized reciprocity for Dedekind sums} (see Corollary \ref{corec} for a more general statement): 

\bc\label{coro8i} In the context of Theorem \ref{abstrEM3i}, one gets the following identity:
\be\label{274i}
\sum_{v \in P} \left( p_{{v}}(\partial_i) f \right) (0) = \sum_{m\in M} \left( \sum_{i=0}^n (-1)^i \cdot \dim_\bC H^i(X;\cF)_{\chi^{-m}}\right) \cdot f(m).
\ee
where the left hand sum is over the vertices of $P$.
\ec

If $\cF=\cO_X$ in \eqref{274i}, one gets for a polynomial  function $f$ on $M_\bR$ the following identity:
$$
\sum_{v \in P} \left( p_{{v}}(\partial_i) f \right) (0) = f(0).
$$
For instance, in the case of lattice polygons, this formula yields generalizations of \index{reciprocity law} reciprocity laws for classical \index{Dedekind sum} {\it Dedekind sums} (using, e.g., the explicit description of the operators $p_{{v}}(\partial_i)$ from \cite{CS2}).

\medskip

The interested reader may also want to consult our paper \cite{CMSSEM}, where a summary of results and explanation of key ideas of this paper are given. Further calculations of (equivariant) motivic Chern and Hirzebruch classes of (not necessarily simplicial) toric varieties are performed in \cite{MS3,MS4} via the theory of (equivariant) mixed Hodge modules on toric varieties. This allows us to also consider characteristic classes associated to the intersection cohomology complex.
These calculations are then applied to giving geometric proofs of several combinatorial results \cite{BGM} from weighted Ehrhart theory of (not necessarily simple) lattice polytopes, fitting also with the weights given by Stanley's g-polynomials.

\begin{ack} We thank the referee for some helpful comments and suggestions.
L. Maxim was partially supported by the Simons Foundation (Grant MPS-TSM-00007095), the MPIM-Bonn, and by the project ``Singularities and Applications'' - CF 132/31.07.2023 funded by the European Union - NextGenerationEU - through Romania's National Recovery and Resilience Plan.
J. Sch\"urmann was funded by the Deutsche Forschungsgemeinschaft (DFG, German Research Foundation) Project-ID 427320536 -- SFB 1442, as well as under Germany's Excellence Strategy EXC 2044 390685587, Mathematics M\"unster: Dynamics -- Geometry -- Structure. The authors also thank the University of M\"unster and the University of Wisconsin-Madison for funding our collaboration and for providing ideal working conditions.
\end{ack}


\section{Preliminaries}\label{prelim}

In this section, we collect preparatory material needed throughout the rest of the paper. In Subsection \ref{introtoric} we review basic terminology about toric varieties and lattice polytopes. In Subsection \ref{cTodd} we recall the relation, via the Riemann-Roch theorem, between counting lattice points in the (interior of a) lattice polytope and the (dual) Todd classes of the associated toric variety. This relation is generalized in Subsection \ref{wcHRR} to a weighted lattice point counting (with certain weights reflecting the face decomposition of a lattice polytope) by using the homology Hirzebruch classes and a generalized version of the Hirzebruch-Rieman-Roch theorem. In Subsection \ref{necl}, we recall from \cite{MS1} formulae for the Todd and Hirzebruch classes of a simplicial toric variety, which were deduced in loc.cit. via the Lefschetz-Riemann-Roch theorem. Subsection \ref{eqcoh} is devoted to introducing and describing the rational equivariant cohomology of a complete simplicial toric variety. The equivariant Euler characteristic is defined in Subsection \ref{esHRR}. Together with the equivariant Chern character; this is then used in Subsection \ref{seRR} to formulate the equivariant Riemann-Roch theorem of Edidin-Graham. 


\subsection{Background on toric varieties and lattice polytopes}\label{introtoric}
In this subsection we review some basic facts and terminology from the theory of toric varieties and lattice polytopes. Specifically, we recall the definition of toric varieties via fans (Subsection \ref{211}), we give a brief account of the Cox construction of simplicial toric varieties as geometric quotients (Subsection \ref{Cox}), and we describe the relation between lattice polytopes and toric varieties (Subsection \ref{213}).
For complete details, the interested reader is referred to  \cite{CLS,D,F1,O}.

\subsubsection{Toric varieties}\label{211}
Let $M \simeq \bZ^n$ be an $n$-dimensional lattice in $\bR^n$, with its dual lattice $N=\Hom(M,\bZ)$. Denote the natural pairing by $$\langle \cdot , \cdot \rangle: M \times N \to \bZ.$$
A \index{rational polyhedral cone} {\it rational polyhedral cone} $\sig \subset N_{\bR}:=N \otimes \bR$ is a cone  
$\sig=\con(S)$ 
on a finite set $S \subset N$. Such a cone $\sig$ is \index{strongly convex cone} {\it strongly convex} if $\sig \cap (-\sig)=\{0\}$. The dimension of a cone $\sig$ is the dimension of the subspace of $N_{\bR}$ spanned by $\sig$. 
A \index{face} {\it face} $\tau$ of a cone $\sig$ (we write $\tau \preceq \sig$) is a subset $$\tau :=\{ u \in \sig \ | \  \langle m,u \rangle=0 \} \subset \sig $$
for some $m \in M \cap \check{\sig}$, where 
$$\check{\sig}=\{ m \in M_{\bR} \ | \ \langle m,u \rangle \geq 0,  \forall u \in \sig \}$$
is the \index{dual cone} {\it dual cone} of $\sig$. 
 A one-dimensional face $\rho$ of a cone $\sig$ is called a \index{ray} {\it ray}. The collection of rays of a cone $\sig$ is denoted by $\sig(1)$. For each ray $\rho \in \sig(1)$, let $u_{\rho}$ be the unique \index{ray generator} generator of the semigroup $\rho \cap N$. The $\{ u_{\rho} \}_{\rho \in \sig(1)}$ are called the {\it generators} of $\sig$. A cone $\sigma$ is called \index{smooth cone} {\it smooth} if it is generated by a part of the $\bZ$-basis of $N$. A cone $\sigma$ is \index{simplicial cone} {\it simplicial} if its generators are linearly independent over $\bR$. The \index{multiplicity} {\it multiplicity} $\mult(\sig)$ of a simplicial cone $\sig$ with generators $u_{\rho_1}, \dots, u_{\rho_k}$ is $| N_{\sig}/{(u_{\rho_1}, \dots, u_{\rho_k})} |$, where $N_{\sig}$ is the sublattice of $N$ spanned by points in $\sig \cap N$, and $|-|$ denotes the cardinality of a (finite) set.

A \index{fan} {\it fan} $\Sig$ in $N_{\bR}$ is a finite collection of strongly convex rational polyhedral cones in $N_{\bR}$ so that any face of $\sig \in \Sig$ is also in $\Sig$, and if $\sig_1, \sig_2 \in \Sig$ then $\sig_1 \cap \sig_2$ is a face of each.
The \index{support of a fan} {\it support} of a fan $\Sig$ is the set $|\Sig |:=\bigcup_{\sig \in \Sig} \sig \subset N_{\bR}$. The fan is called \index{complete fan} {\it complete} if $|\Sig |=N_{\bR}$.  
 $\Sig$ is called \index{simplicial fan} {\it simplicial} if every cone in $\sigma \in \Sig$ is simplicial. 
We denote by $\Sigma(i)$ the set of $i$-dimensional cones of $\Sigma$, and similarly, by $\sig(i)$ the collection of $i$-dimensional faces of a cone $\sig$.

\medskip

To any fan $\Sig$ one associates a \index{toric variety} {\it toric variety} $X_{\Sigma}$ as follows. Each cone $\sigma \in \Sig$ gives rise to an affine toric variety $$U_{\sig}=\Sp\big(\bC[M \cap \check \sig]\big)$$
where $\bC[M \cap \check \sig]$ is the $\bC$-algebra with generators $\{ \chi^m \ | \ m \in M \cap \check \sig \}$. 
The toric variety $X_{\Sigma}$ is obtained by gluing these affine pieces together.
The affine toric variety corresponding to the trivial cone $\{0\}$ is the torus $T_N=N \otimes \bC^*=\Sp(\bC[M])$. It follows that $M$ can be identified with the character lattice of $T_N$, i.e., $M \simeq \Hom_{\bZ}(T_N,\bC^*)$, while $N\simeq \Hom_{\bZ}(\bC^*,T_N)$ is the lattice of one-parameter subgroups of $T_N$. 

The action of the torus $T_N$ on itself extends to an algebraic action of $T_N$ on $X_{\Sigma}$ 
with finitely many \index{torus orbit} orbits $O_\sig$, one for each cone $\sig \in \Sig$. In fact, by the orbit-cone correspondence, to a $k$-dimensional cone $\sigma \in \Sig$ there corresponds a $(n-k)$-dimensional torus-orbit \os $ \ \cong (\bC^*)^{n-k}$, and the \index{orbit closure} closure \vs \ of \os \   is itself a toric variety and a $T_N$-invariant subvariety of $X_\Sig$.
In particular, each ray $\rho \in \Sig(1)$ corresponds to an irreducible $T_N$-invariant divisor $D_{\rho}:=V_{\rho}$ on $X_{\Sigma}$.  
Moreover, if $\sig$ is a face of $\nu$, i.e., $\sig \preceq \nu$, then $O_{\nu} \subseteq \overline{O}_{\sig}=V_{\sig}$ and 
$$
V_{\sig}=\bigsqcup_{\sig \preceq \nu} O_{\nu}.
$$
So the toric variety $X_{\Sigma}$ is stratified by the orbits of the $T_N$-action.

Let us also describe here, for future reference, the fan of the toric variety $V_{\sigma}$, i.e., the closure of the orbit corresponding to the cone $\sigma \in \Sigma$. Let $$N(\sigma)=N/N_\sigma,$$
with $N_\sigma$ denoting as before the sublattice of $N$ spanned by the points in $\sigma \cap N$.  Let $T_{N(\sigma)}=N(\sigma) \otimes_\bZ \bC^*$ be the torus associated to  $N(\sigma)$. For each cone $\nu \in \Sigma$ containing $\sigma$, let $\overline{\nu}$ be the image cone in $N(\sigma)_\bR$ under the quotient map $N_\bR \to N(\sigma)_\bR$. Then 
\be \label{star}
Star(\sigma)=\{ \overline{\nu} \subseteq N(\sigma)_\bR \mid \sig \preceq \nu \}
\ee 
is a fan in $N(\sigma)_\bR$, with associated toric variety isomorphic to $V_{\sigma}$ (see \cite{CLS}[Prop.3.2.7]). Note that $T_N$ acts on $V_{\sigma}$ via the morphism \be\label{quot}T_N \to T_{N(\sigma)}\ee induced by the quotient map $N \to N(\sigma)$.

If $\Sig$ is a fan in $N_{\bR} \simeq \bR^n$, then the toric variety $X_{\Sigma}$ is a complex algebraic variety of dimension $n$, 
which is  complete if and only if $X_{\Sigma}$ is  complete.
The toric variety $X_{\Sigma}$ associated to a simplicial fan is called a \index{simplicial toric variety} {\it simplicial toric variety}. Such a variety is an orbifold and therefore also a rational homology manifold, so it satisfies Poincar\'e duality over $\bQ$.  The singular locus of a 
 toric variety $X_{\Sigma}$ is $\bigcup_{\sig \in \Sig_{\rm sing}} V_{\sigma}$, the union being taken over the collection $\Sig_{\rm sing}$ of all singular (non-smooth) cones in the fan $\Sigma$.


\subsubsection{Cox quotient construction of simplicial toric varieties}\label{Cox}
In this subsection, we recall the \index{Cox construction} Cox construction of simplicial toric varieties as geometric quotients, see \cite{Cox} and \cite{CLS}[Sect.5.1].

Let $\Sig$ be a fan in $N_{\bR} \simeq \bR^n$, with associated toric variety $X:=X_{\Sig}$ and torus $\bT:=T_N =(\bC^*)^n$. For each ray $\rho \in \Sig(1)$, denote by $u_{\rho}$ the corresponding ray generator. Let $r=| \Sig(1) |$ be the number of rays in the fan $\Sig$. For simplicity, we also assume that $X=X_{\Sig}$ contains {no torus factor}.

Using the fact that $N \simeq \Hom_{\bZ}(\bC^*,\bT)$ is identified with the one-parameter subgroups of $\bT$, define the map of tori $$\gamma:\widetilde{\bT}:=(\bC^*)^r \lra \bT \quad \text{by} \quad (t_{\rho})_{\rho} \mapsto \prod_{\rho \in \Sig(1)} u_{\rho}(t_{\rho}),$$ 
and let $G:=\ker (\gamma)$. Then $G$ is a product of a torus and a finite abelian group, so $G$ is reductive. Let $Z(\Sig) \subset \bC^r$ be the variety defined by the monoidal ideal $$B(\Sig):=\langle  \hat{x}_{\sig} : \sig \in \Sig \rangle \quad
\text{where} \quad \hat{x}_{\sig}:=\prod_{\rho \notin \sig(1)} x_{\rho}$$ 
and $(x_{\rho})_{\rho \in \Sig(1)}$ are coordinates on $\bC^r$. Then the variety 
$W:=\bC^r \setminus Z(\Sig)$
is a toric manifold, and there is a toric morphism $$\pi:W \to X.$$
The group $G$ acts on $W$ by the restriction of the diagonal action of $(\bC^*)^{r}$, and the toric morphism $\pi$ is constant on $G$-orbits. Moreover, Cox \cite{Cox} (see also \cite{CLS}[Thm.5.1.11]) proved that if $X=X_{\Sig}$ is a simplicial toric variety  containing no torus factor,
then $X$ is the \index{geometric quotient} geometric quotient $W/G$.

Under the quotient map $\pi:W \to X$, the coordinate hyperplane $\{ x_{\rho}=0 \}$ maps to the invariant Weil divisor $D_{\rho}$. More generally, if $\sig$ is a $k$-dimensional cone generated by rays $\rho_1,\dots,\rho_k$, then the orbit closure $V_{\sig}$ is the image under $\pi$ of the linear subspace $W_{\sig}$ defined by the ideal $(x_{\rho_1},\dots,x_{\rho_k})$ of the total coordinate ring $\bC\big[ x_{\rho} \mid \rho \in \Sig(1) \big]$. Let $G_{\sig}$ be the \index{stabilizer} stabilizer of $W_{\sig}$. Then, if  
\be \label{char-arho}
a_{\rho}:(\bC^*)^{r} \to \bC^*
\ee
 is the projection onto the $\rho$-th factor (and similarly for any restriction of this projection),  we have by definition that
\be\label{stab}
\begin{split}
G_{\sig}&=\{g \in G \ | \ a_{\rho}(g)=1, \forall \rho \notin \sig(1) \}\\
&\simeq \{(t_{\rho})_{\rho \in \sig(1)}  \ | \ t_{\rho}\in \bC^*, \prod_{\rho \in \sig(1)} u_{\rho}(t_{\rho})=1 \},
\end{split}
\ee
so $G_{\sig}$ 
depends only on $\sig$
(and not on the fan $\Sig$ nor the group $G$). Here $u_\rho \in N$ is the corresponding ray generator for $\rho \in \sig(1)$.

For a $k$-dimensional rational simplicial cone $\sig$ generated by the rays $\rho_1,\dots,\rho_k$ one can moreover show that 
\be\label{Gsig-int}
G_{\sig}\simeq N_{\sig}/( u_1,\dots,u_k),
\ee
so $|G_{\sig}|=\mult(\sig)$ is just the \index{multiplicity} multiplicity of $\sig$, with $\mult(\sig)=1$ exactly in the case of a smooth cone.
 Let $m_i\in M_{\sig}$ for $1\leq i\leq k$ be the unique primitive elements in the dual lattice $M_{\sig}$ of $N_{\sig}$ satisfying $ \langle m_i,u_j \rangle = 0$ for $i\neq j$ and
 $ \langle m_i,u_i \rangle > 0$, 
 so that the dual lattice $M_{\sig}'$ of $(u_1,\dots,u_k)$ is generated by the elements $\frac{m_j}{\langle m_j,u_j \rangle}.$
  Then, for $g=n+(u_1,\dots,u_k)\in G_{\sig}=N_{\sig}/(u_1,\dots,u_k)$, the character $a_{\rho_j}(g)$ is also given by (see \cite{F1}[page 34]):
 \be\label{a-intr}
 a_{\rho_j}(g)=\exp\left(2\pi i\cdot \gamma_{\rho_j}(g)
 \right), \quad \text{with} \quad \gamma_{\rho_j}(g):= 
 \frac{\langle m_j,n \rangle}{\langle m_j,u_j \rangle}\:.  
 \ee
 
Consider next the finite set $$G_{\Sig}:=\bigcup_{\sig \in \Sig} G_{\sig}.$$
In what follows we write as usual $\tau \preceq \sig$ for a face $\tau$ of $\sig$, and we use the notation $\tau \prec \sig$ for a proper face $\tau$ of $\sig$. Note that
$$\tau \preceq \sig  \Longrightarrow G_{\tau} \subseteq G_{\sig}.$$
Set \be\label{subs} G_{\sig}^{\circ}:=G_{\sig} \setminus \bigcup_{\tau \prec \sig} G_{\tau}=\{ g \in G_{\sig} \mid a_{\rho}(g) \neq 1, \forall \rho \in \sig(1) \}.\ee
Since $G_{\sig}=\{id_G\}$ for a smooth cone $\sig$, it follows that $G_{\sig}^{\circ}=\emptyset$ if $\sig$ is a smooth cone of positive dimension, while $G_{\{0\}}^{\circ}=G_{\{0\}}=\{id_G\}$.
Moreover, $${G_{\sig}=\bigsqcup_{\tau \preceq \sig}G_{\tau}^{\circ},}$$ (with $\bigsqcup$ denoting a disjoint union), 
so it follows that \be\label{last} G_{\Sig}=\bigcup_{\sig \in \Sig} G_{\sig}=\bigsqcup_{\sig \in \Sig} G_{\sig}^{\circ}=\{id_G\} \sqcup \bigsqcup_{\sig \in \Sig_{\rm sing}} G_{\sig}^{\circ},
\ee
with $\Sig_{\rm sing}$ the collection of singular cones in $\Sig$.


\subsubsection{Lattice polytopes and their associated toric varieties}\label{213}
A \index{polytope} {\it polytope} $P \subset M_{\bR}:=M \otimes \bR$ is the convex hull of a finite set $S \subset M_{\bR}$. $P$ is called a \index{lattice polytope} {\it lattice polytope} if its vertices lie in $M$. The dimension $\dim({P})$  of a polytope $P$ is the dimension of the smallest affine subspace of $M_{\bR}$ containing $P$. A polytope $P$ whose dimension equals $n=\dim(M_{\bR})$ is called {\it full-dimensional}. Faces of codimension one of $P$ are called \index{facet} {\it facets}.  A polytope $P$ is called \index{simple polytope} {\it simple} if every vertex of $P$ is the intersection of precisely $\dim({P})$ facets. 

Let $P$ be a full-dimensional lattice polytope. For each facet $F$ of $P$ there is a unique pair $(u_F,c_F) \in N \times \bZ$ so that $P$ is uniquely described by its \index{facet presentation} {\it facet presentation}:
\be \label{facet-presentation}
P=\{ m \in M_{\bR} \mid \langle m,u_F \rangle  +c_F \geq 0, \ {\rm for \ all \ facets} \ F \ {\rm of } \ P \}.
\ee
Here $u_F$ is the unique ray generator of the inward-pointing facet normal of $F$.

To a full-dimensional lattice polytope $P$ one associates a fan $\Sig_P$ in $N_\bR$, called the \index{inner normal fan}  {\it inner normal fan} of $P$, which is defined as follows: to each face $E$ of $P$ associate the cone $\sig_E$ by 
$$
\sig_E:=\con(u_F \mid F \ {\rm contains } \ E).
$$
In particular, $\rho_F:=\sig_F=\con(u_F)$ is the ray generated by $u_F$. Let 
$$ X_P:=X_{\Sig_P} $$
 be the corresponding toric variety, which is commonly referred to as the toric variety associated to the polytope $P$.
As $\Sig_P$ is a complete fan, it follows that $X_P$ is proper.  If $P$ is simple, then $X_P$ is simplicial. Moreover, if $P$ is a \index{Delzant polytope} {\it Delzant} (or regular) polytope, then $X_P$ is smooth.

For a cone $\sig_E \in \Sig_P$ associated to a face $E$ of $P$, the corresponding orbit closure $V_{\sig_E}$ can be identified with the toric variety $X_E$, with corresponding lattice polytope defined as follows (cf. \cite{CLS}[Prop.3.2.9]): translate $P$ by a vertex $m_0$ of $E$ so that the origin is a vertex of $E_0:=E-m_0$; while this translation by $m_0$ does not change $\Sigma_P$ or $X_P$, $E_0$ is now a full-dimensional polytope in $Span(E_0)$. So $E_0$ is a full-dimensional lattice polytope relative to $Span(E_0) \cap M$, and  $X_E$ is the associated toric variety.

With the above notations, the \index{torus-invariant Cartier divisor} {\it divisor $D_P$ of the polytope $P$} is defined as:
\be\label{dp}
D_P:=\sum_{F} c_F D_F,
\ee
where $F$ runs over the collection of facets of $P$ and $D_F$ is the torus-invariant divisor corresponding to the ray $\rho_F$. Then $D_P$ is a torus-invariant Cartier divisor on $X_P$, which  is ample and basepoint free (e.g., see \cite{CLS}[Prop.6.1.10]). Hence $X_P$ is a {projective} variety. 

\begin{rem} \label{polyhedra}
The above construction also works for $P\subset M_\bR$ a full-dimensional lattice \index{polyhedron} {\it polyhedron} as in \cite{CLS}[Def.7.1.3]. Here a polyhedron  $P$ is the intersection of finitely many closed half-spaces. $P$ is called a lattice polyhedron, if its recession cone is a strongly convex rational polyhedral cone and the non-empty set of vertices of $P$ belongs to $M$.
If $P$ is full-dimensional then it has a unique facet presentation as in \eqref{facet-presentation}. In this case, the toric variety $X_P$ has a fan $\Sig_P$ with full-dimensional convex support in the sense of  \cite{CLS}[page 265] (see \cite{CLS}[Thm.7.1.6]). Then the toric divisor $D_P$ of the polyhedron $P$ as in \eqref{dp} is a basepoint free (hence, by \cite{CLS}[Thm.6.3.12], a \index{nef Cartier divisor} {\it nef}) Cartier divisor (see \cite{CLS}[page 322 and Thm.7.2.2]). Moreover, the toric variety $X_P$ is quasi-projective, and therefore semi-projective in the sense of \cite{CLS}[Prop.7.2.9].
\end{rem}

If $P$ is a full-dimensional lattice polytope (or even a full-dimensional lattice polyhedron as introduced before), one has (e.g., see  \cite{CLS}[Prop.4.3.3])
\be\label{am1}
\Gamma(X_P;\cO_{X_P}(D_P)) = \bigoplus_{m \in P \cap M} \bC \cdot \chi^m \subset \bC[M],
\ee
where $\chi^m$ denotes the \index{character} character defined by $m \in M$, and $ \bC[M]$ the coordinate ring of the torus $T_N$.
The nefness of $D_P$ also yields that (e.g., see  \cite{CLS}[Prop.9.2.3])
\be\label{am2}
H^i(X_P;\cO_{X_P}(D_P))=0, \ {\rm for \ all } \ i>0.
\ee
Moreover, if $P$ is a full-dimesional lattice polytope, one has (e.g., see  \cite{CLS}[Prop.9.2.7])
\be\label{am3}
H^i(X_P;\cO_{X_P}(-D_P))=0, \ {\rm for \ all } \ i\neq n,
\ee
and 
\be\label{am4}
H^n(X_P;\cO_{X_P}(-D_P)) = \bigoplus_{m \in \Int(P) \cap M} \bC \cdot \chi^{-m} ,
\ee
with $\Int({P})$ denoting the \index{interior} interior of $P$. Also, by \index{toric Serre duality} toric Serre duality \cite{CLS}[Thm.9.2.10], 
\be\label{am4d}
\bigoplus_{m \in \Int(P) \cap M} \bC \cdot \chi^{m} = \left( H^n(X_P;\cO_{X_P}(-D_P)) \right)^\vee \simeq H^0(X_P;\cO_{X_P}(D_P) \otimes \omega_{X_P}),
\ee
with $\omega_{X_P}$ the \index{dualizing sheaf} {\it dualizing sheaf} of $X_P$, and $H^i(X_P;\cO_{X_P}(D_P) \otimes \omega_{X_P})=0$ for all $i \neq 0$.

\br
If one wishes to apply the above formulae to the toric variety $X_E$ associated to a face $E$ of a full-dimensional lattice polytope $P$ as previously described, one needs to work with the corresponding divisor \be\label{tran} D_{E_0}={D_{P-m_0}}\vert_{X_E}=D_P\vert_{X_E} + div(\chi^{m_0}).\ee
\er


\subsection{Counting lattice points via Todd classes}\label{cTodd}
Danilov used the Hirzebruch-Riemann-Roch theorem in \cite{D} to establish a direct connection between the problem of counting the number of lattice points in a lattice polytope and the Todd classes of the associated toric variety (see also \cite{Br1, BrV1}).

Let $P \subset M_{\bR} \simeq \bR^n$ be a full-dimensional lattice polytope with associated projective toric variety $X:=X_P$ and ample Cartier divisor $D:=D_P$. Then one has by (\ref{am1}) and (\ref{am2}) the following:
\be\label{4a} 
\begin{split}
| P \cap M | =
\chi(X,\cO_{X}(D)) & =\int_{X} \ch(\cO_{X}(D)) \cap \td_*(X),\\
&=\sum_{k\geq 0} \frac{1}{k!} \int_{X} [D]^k \cap \td_k(X),
\end{split}
\ee
with $\td_*:K_0(X) \to H_*(X)\otimes \bQ$ the Baum-Fulton-MacPherson \index{Todd class} Todd class transformation \cite{BFM} and $\td_*(X):=\td_*([\cO_X])$. Here, $K_0(X)$ is the Grothendieck group of coherent sheaves, 
and for $H_*(-)$ one can use either the Chow group $CH_*$ 
or the even degree Borel-Moore homology group $H^{BM}_{2*}$. The second equality of \eqref{4a} follows from the \index{module property} module property for $\td_*$, i.e., if 
$\beta \in K^0(X)$ and $\alpha \in K_0(X)$, then (cf. \cite{F0}[Thm.18.3]):
\be\label{modp}
\td_* (\beta \otimes \alpha)=\ch(\beta) \cap \td_*(\alpha).
\ee 
Similarly, (\ref{am4d}) and the module property \eqref{modp} yield:
\be\label{4b} 
\begin{split}
| \Int({P}) \cap M | =
\chi(X,\cO_{X}(D)\otimes \omega_X)
 =\int_{X} \ch(\cO_{X}(D)) \cap \td_*\left([\omega_{X}]\right).
\end{split}
\ee

It thus follows from (\ref{4a}) and (\ref{4b}) that counting lattice points in (the interior of) a  full-dimensional lattice polytope $P$ amounts to computing the Todd class $\td_*(X)$ (resp., the \index{dual Todd class} dual Todd class $\td_*\left([\omega_{X}]\right)$) of the associated projective toric variety $X=X_P$. 


\subsection{Weighted lattice point counting via the generalized Hirzebruch-Riemann-Roch}\label{wcHRR}
In this subsection, we recall how the homology Hirzebruch classes of \cite{BSY} can be used for counting lattice points in a full-dimensional lattice polytope $P$,  with certain weights reflecting the face decomposition
\be\label{fd} P = \bigcup_{E \preceq P}\: \Relint(E), \ee
with $\Relint(E)$ denoting the relative interior of a face $E$ of $P$. Recall that Brasselet-Sch\"urmann-Yokura defined in \cite{BSY} un-normalized homology Hirzebruch classes $T_{y*}(X)$ of a complex algebraic variety $X$ as the image of the distinguished element $[id_X]$ under a natural transformation:
$$T_{y*}:K_0(var/X) \lra H_*(X) \otimes \bQ[y]$$
defined on the relative Grothendieck group $K_0(var/X)$ of complex algebraic varieties over $X$. A normalized version $\widehat{T}_{y*}(X)$ is obtained by multiplying each degree $k$-piece of $T_{y*}(X)$ by $(1+y)^{-k}$. With these notations, one has the following \index{generalized Pick formula} {\it generalized Pick-type formula} (see \cite{MS1}[Thm.1.3]):
\bt\label{wco} Let $M$ be a lattice of rank $n$ and
let $P \subset M_{\bR}$ be a full-dimensional lattice polytope with associated projective toric variety $X:=X_P$ and ample Cartier divisor $D:=D_P$.
Then the following formula holds:
\be\label{i5}
\sum_{E \preceq P} (1+y)^{\dim(E)} \cdot | \Relint(E) \cap M |=\int_{X} \ch(\cO_{X}(D)) \cap T_{y*}(X),
\ee
where the summation on the left is over the faces $E$ of $P$, and the number of points inside a face $E$ is counted with respect to the lattice ${\rm Span}(E) \cap M$.
\et

\br\label{wcor} In the notations of Theorem \ref{wco}, we make the following observations:
\item[(a)] If $y=0$, then formula (\ref{i5}) reduces to (\ref{4a}); indeed, in the context of toric varieties, one has the specialization: $T_{y*}(X)\vert_{y=0}=\td_*(X)$ (see \cite{BSY}).
\item[({b})] The proof of (\ref{i5}) in \cite{MS1} uses motivic properties of the homology Hirzebruch classes (see \cite{MS1}[Thm.3.3]).
\item[({c})] The right-hand side of formula (\ref{i5}) is computed by the \index{generalized Hirzebruch-Riemann-Roch theorem} {\it generalized Hirzebruch-Riemann-Roch theorem} (see \cite{MS1}[Thm.2.4]), namely:
\be\label{ghrr}
\begin{split}
\chi_y(X,\cO_X(D)):=\sum_{p \geq 0} \chi(X,\widehat{\Omega}^p_X \otimes \cO_{X}(D)) \cdot y^p
&=\int_{X} \ch(\cO_{X}(D)) \cap T_{y*}(X)\\
&=\int_{X} \ch_{(1+y)}(\cO_{X}(D)) \cap \widehat{T}_{y*}(X),
\end{split}
\ee
with $\widehat{\Omega}^p_X:=j_*\Omega^p_{X_{\rm reg}}$ the \index{sheaf of Zariski $p$-forms} sheaf of Zariski $p$-forms on $X$ and $j:X_{\rm reg} \hookrightarrow X$ the inclusion of the non-singular locus. This follows from the following calculation of the un-normalized homology Hirzebruch class (see \cite{MS1}[(1.2)]): 
\be\label{19e}
 T_{y*}(X)=\sum_{p \geq 0} \td_*([\widehat{\Omega}^p_X]) \cdot y^p.
\ee
The last equality in \eqref{ghrr} is just a renormalization of the Chern character by powers of $(1+y)$, namely $\ch_{(1+y)}(\cO_{X}(D)):=e^{(1+y)c_1(\cO_{X}(D))}$, which makes up for the switch from ${T}_{y*}(X)$ to $\widehat{T}_{y*}(X)$.
\item[(d)] For future reference, let us elaborate here on the meaning of formula \eqref{19e}. First recall that the Hirzebruch class transformation $T_{y*}$ for a variety $X$ is defined by the  composition
$$T_{y*}:=\td_* \circ mC_y,$$
with $mC_y:K_0(var/X) \to K_0(X)[y]$ the \index{motivic Chern class transformation} {\it motivic Chern class transformation} from \cite{BSY}. Setting $mC_y(X):=mC_y([id_X])$, it was shown in \cite{BSY} that $mC_y(X)$ can be computed from the filtered \index{Du Bois complex} {\it Du Bois complex} $(\underline{\Omega}^{\bullet}_X,F)$ of $X$ \cite{DB}.  More precisely, the following identification holds:
\be\label{dbm} 
mC_y(X)=\sum_{p \geq 0} \ [ \ \uuline{\Omega}^{p}_X] \cdot y^p:=
\sum_{i,p \geq 0} (-1)^i \ [\cH^i(\uuline{\Omega}^{p}_X) ] \cdot y^p \in K_0(X)[y],\ee
where $$\uuline{\Omega}^{p}_X:=\Gr^p_F(\underline{\Omega}^{\bullet}_X)[p] \in D^b_{\rm coh}(X)$$ is a bounded complex of sheaves with coherent cohomology, which coincides with the sheaf of $p$-forms $\Omega^p_X$ on a smooth variety $X$.
If $X$ is a toric variety, there is a natural quasi-isomorphism 
(see \cite{Is}[p.119 and Prop.4.2])
$$ \uuline{\Omega}^{p}_X \simeq \widehat{\Omega}^p_X,$$
with $\widehat{\Omega}^p_X$ denoting as before the sheaf of Zariski $p$-forms on $X$. In particular, for a toric variety $X$ this further yields:
\be\label{mct}
mC_y(X)=\sum_{p \geq 0} \ [ \ \widehat{\Omega}^p_X] \cdot y^p
\ee
and \eqref{19e} is obtained by applying the Todd class transformation $\td_*$ to \eqref{mct}.
\er

\subsection{Todd and Hirzebruch classes of a simplicial toric variety}\label{necl}
In the context of a simplicial toric variety $X_{\Sig}$ (e.g., $X_P$ for $P$ a simple full-dimensional lattice polytope), a formula for the Todd class $\td_*(X_{\Sig})$ was obtained in \cite{BrV2} via equivariant cohomology, in \cite{EG} by using the Lefschetz-Riemann-Roch theorem, and in \cite{P} by using a resolution of singularities and the birational invariance of Todd classes. 

A formula for the \index{Todd class} Todd class $\td_*(X_{\Sig})$ of a simplicial toric variety $X_{\Sig}$ can also be deduced from a more general result of \cite{MS1} on the computation of the (homology) \index{Hirzebruch class} Hirzebruch classes of \cite{BSY} via the \index{Lefschetz-Riemann-Roc \} Lefschetz-Riemann-Roch theorem \cite{EG}. More precisely, with the notations from Subsection \ref{Cox}, the following result holds with the projection character $a_\rho: \widetilde{\bT}=(\bC^*)^r\to \bC^*$ as in \eqref{char-arho} from the Cox quotient construction, restricted here to $G_{\Sig}\subset\widetilde{\bT}$
(see \cite{MS1}[Thm.5.4]):
\bt\label{MSt1}
Let $X=X_{\Sig}$ be a $n$-dimensional simplicial toric variety, with $r=\vert \Sigma(1) \vert$.
The un-normalized and, resp., normalized Hirzebruch classes of  $X$ are computed by:
\be\label{unHirzc}
{T}_{y*}(X)= (1+y)^{n-r} \cdot \left( \sum_{g \in G_{\Sig}}  \prod_{\rho \in \Sig(1)} \frac{[D_{\rho}] \cdot 
\big( 1+y  \cdot a_{\rho}(g)  \cdot e^{-[D_{\rho}]}\big)}{1-a_{\rho}(g) \cdot e^{-[D_{\rho}]}} \right) \cap [X]\:.
\ee
\be\label{nHirzc}
\widehat{T}_{y*}(X)= \left( \sum_{g \in G_{\Sig}}  \prod_{\rho \in \Sig(1)} \frac{[D_{\rho}] \cdot 
\big( 1+y  \cdot a_{\rho}(g)  \cdot e^{-[D_{\rho}](1+y)}\big)}{1-a_{\rho}(g) \cdot e^{-[D_{\rho}](1+y)}} \right) \cap [X]\:.
\ee
\et
In particular, for $y=0$, either of \eqref{unHirzc} or (\ref{nHirzc}) yields the following formula for the homology Todd class $\td_*(X)$ of a simplicial toric variety $X=X_{\Sig}$ (compare with \cite{BrV2, EG}):
\be\label{toddc}
\td_*(X):=\left( \sum_{g \in G_{\Sig}}  \prod_{\rho \in \Sig(1)} \frac{[D_{\rho}]}{1-a_{\rho}(g)e^{-[D_{\rho}]}} \right)
\cap [X]\:.
\ee

A different formula for the Todd class $\td_*(X)$ of a simplicial toric variety $X:=X_{\Sig}$ was obtained in \cite{MS1}[Cor.1.16], by also using the motivic properties of Hirzebruch classes in the context of the orbit stratification of a toric variety. More precisely, with the notations from Subsection \ref{Cox}, we have the following:
\bt\label{24} Let $X=X_{\Sig}$ be a simplicial toric variety. Then:
\be\label{todd-EM-intro}
\td_*(X)= \sum_{\sig\in \Sig}\: \alpha(\sig)\cdot \left(
\sum_{\{\tau | \sig \preceq \tau\}}\: \mult(\tau)\prod_{\rho \in \tau(1)} \frac{1}{2}[D_{\rho}]
\prod_{\rho \notin \tau(1)} \frac{\frac{1}{2}[D_{\rho}]}{\tanh(\frac{1}{2}[D_{\rho}])}
\right) \cap [X],
\ee
where, for a singular cone $\sig \in \Sig_{\rm sing}$,
\be\label{alpha-intro} \begin{split} \alpha(\sig)&:= \frac{1}{\mult(\sig)} \cdot \sum_{g \in G_{\sig}^{\circ}}\prod_{\rho \in \sig(1)} 
\frac{ 1+ a_{\rho}(g)  \cdot e^{-[D_{\rho}]}}{1-a_{\rho}(g) \cdot e^{-[D_{\rho}]}}\\
&= \frac{1}{\mult(\sig)} \cdot \sum_{g \in G_{\sig}^{\circ}}\prod_{\rho \in \sig(1)} 
\coth \left( \pi i \cdot \gamma_{\rho}(g)+\frac{1}{2}[D_{\rho}] \right),
\end{split}
\ee
with 
$\alpha(\{0\}):=1$ and $\alpha(\sig):=0$ for any other smooth cone $\sig\in \Sig$. 
\et
Similar formulae were obtained by Cappell and Shaneson in the early 1990s in the case of complete simplicial toric varieties, see \cite{CS1,S}.
Note that Theorem \ref{24} is deduced in \cite{MS1}[Sec. 6] from the specialization for $y=1$ of \eqref{nHirzc}
(fitting for $X$ a  projective simplicial toric variety with the Thom-Milnor L-class $L_*(X)$ of $X$),
together with the use of the so-called Mock Hirzebruch and $T$-classes for switching between Todd and L-classes.


\subsection{Rational equivariant (co)homology of a toric variety}\label{eqcoh}

Let $X$ be a 
complex algebraic variety with an algebraic action of the torus $\bT$ (e.g., $X$ is a toric variety
 with torus $\bT=T_N=\Hom_{\bZ}(M,\bC^*)$). Recall that the (Borel-type) rational \index{equivariant cohomology}  {\it equivariant cohomology} of $X$ is defined as \be H^*_\bT(X;\bb{Q}):=H^*(E\bT\times_\bT X;\bb{Q}),\ee
where $\bT\hookrightarrow E\bT\to B\bT$ is the universal principal $\bT$-bundle, i.e., $B\bT=(B\bb{C}^*)^n=(\bC P^{\infty})^n$ and $E\bT$ is contractible. In particular, if $X=pt$ is a point space, 
\be (\Lambda_\bT)_{\bQ} := H^*_\bT(pt;\bQ)=H^*(B\bT;\bQ)\simeq \bQ[t_1,\ldots,t_n].\ee

For $m\in M$, with $M$ the character lattice of the torus $\bT$, one can view the corresponding character $\chi^m:\bT \to \bC^*$ as a $\bT$-equivariant line bundle $\bC_{\chi^m}$ over a point space $pt$, where the $\bT$-action on $\bC$ is induced via $\chi^m$.
This gives rise to an isomorphism
$$M\simeq Pic_\bT(pt) \:.
$$
Note that $m\mapsto -m$ corresponds to the duality involution $(-)^{\vee}$. Taking the first equivariant Chern class 
$c^1_\bT$ (or the dual $-c^1_\bT=c^1_\bT\circ (-)^{\vee}$) of $\bC_{\chi^m}$ gives an  isomorphism
\begin{equation}\label{1}
c=c^1_\bT,  \:\text{resp.}, \:  s=-c^1_\bT :\:
M \simeq H_\bT^2(pt;\bb{Z})
\end{equation}
and
\begin{equation}\label{2}
 c,  \:\text{resp.}, \: s: Sym_{\bb{Q}}(M)  \simeq  H_\bT^*(pt;\bb{Q})=(\Lambda_\bT)_{\bb{Q}}\:, 
\end{equation}
with $Sym_{\bb{Q}}(M)=\bigoplus_{k=0}^{\infty} Sym^k_{\bb{Q}}(M)$ the (rational) symmetric algebra of $M$.
So if $m_i$ ($i=1,\dots,n$) are a basis of $M\simeq \bb{Z}^n$, then  $H_\bT^*(pt;\bb{Q})=(\Lambda_\bT)_{\bb{Q}}\simeq \bb{Q}[t_1,\dots,t_n],$  with
$t_i=\pm c^1_\bT(\bC_{\chi^{m_i}})$ for $i=1,\dots,n$. 

\medskip

Note that $E\bT\times_\bT X$ is a fiber bundle over $B\bT$ with fiber $X$, so $H^*_\bT(X;\bb{Q})$ is a $H^*(B\bT;\bQ)=(\Lambda_\bT)_{\bQ}$-algebra. Furthermore, restriction to fibers defines a graded ring homomorphism $$i_X^*:H^*_\bT(X;\bb{Q}) \to H^*(X;\bb{Q})$$ called ``forgetting the $\bT$-action''.
For the definition of the $\bT$-equivariant Chow groups $CH^\bT_k(X)$ or  $\bT$-equivariant Borel-Moore homology groups
$H^{\bT, BM}_k(X)$ (or its versions with rational coefficients) one uses finite dimensional  approximations of $E\bT$ given by open subsets
$U\subset V$ in a linear $\bT$-representation $V$ of high dimension so that $\bT$ acts freely on $U$, with $U\subset V$ of sufficiently high codimension (compared to the given degree $k$), and defines (see e.g. \cite{EG-1, BZ} and \cite{AF}[Section 17]):
\be
CH^\bT_k(X):=CH_{k+\dim(V)-\dim(\bT)}(U\times_\bT X) \:\: \:\text{and}\:\:\: H^{\bT, BM}_k(X):=H^{\bT, BM}_{k+2(\dim(V)-\dim(\bT))}(U\times_\bT X)\:.
\ee
Here $U\times_\bT X$ will be in general only a complex algebraic space. These groups are (up to isomorphism) independent of all choices,
and satisfy all the usual properties, e.g., one has a cycle class map $CH^\bT_k(X)\to  H^{\bT}_k(X):= H^{\bT, BM}_{2k}(X)$, and for
$Z\hookrightarrow X$ a closed $\bT$-invariant algebraic subset with open complement $U$ one has short exact sequences
\be
CH^\bT_k(Z)\to CH^\bT_k(X)\to CH^\bT_k(U)\to 0
\ee
and long exact sequences
\be \label{long-BM}
\cdots \to H^{\bT, BM}_k(Z) \to H^{\bT, BM}_k(X) \to H^{\bT, BM}_k(U) \to \cdots \:,
\ee
and similarly also with rational coefficients. If $\bT$ acts on $X$ with only finitely many orbits (like for toric varieties), 
then $H^{\bT, BM}_k(X; \bQ)$
vanishes in odd degrees, and the last sequences (with $\bQ$-coefficients)  become short exact sequences in even degrees 
(see, e.g., \cite{BZ, W2}). In particular the cycle class map induces for $\bT$ acting on $X$ with finitely many orbits  a canonical isomorphism 
$$CH^\bT_*(X)\otimes \bQ \simeq H^\bT_*(X;\bQ) \:.$$
Similar results and definitions also hold for a general complex linear group $G$. Specific to the torus context is a result of Brion 
\cite{Br2}[Thm.2.1], giving a presentation of $CH^\bT_k(X)$ only in terms of $\bT$-invariant subsets of $X$.
Now we come back to $X$ a toric variety, working for this reason only with even degree equivariant Borel Moore homology
$H^\bT_*(X;\bQ):=H^{\bT,BM}_{2*}(X;\bQ) $.

\medskip

If $X$ is moreover a simplicial toric variety with torus $\bT$, then $X$ is a rational homology manifold so that equivariant Poincar\'e duality holds rationally. For any cone $\sig \in \Sig$, the orbit closure  $V_\sig=\overline{O}_\sig$ then defines via Poincar\'e duality an equivariant cohomology class $$[V_\sig]_\bT \in H^{2 \dim(\sig)}_\bT(X;\bQ).$$ We reserve the notation $[D_\rho]_\bT\in H^2_\bT(X;\bQ)$ for the equivariant cohomology class corresponding to a ray $\rho \in \Sig(1)$. Then one has the relation:
\be\label{23}
[V_\sig]_\bT=\mult(\sig) \cdot \prod_{\rho \in \sig(1)} [D_\rho]_\bT.
\ee

\medskip

We assume now that $X$ is a simplicial toric variety which has a  fan $\Sig$  of full-dimensional convex  support  (see \cite{CLS}[page 265]), e.g., $X=X_P$ for $P$ a full-dimensional lattice polyhedron. By \cite{CMM}[Thm.3.6], the cohomology ring $H^*(X;\bQ)$ is even (i.e., it vanishes in odd degrees). Then the proof of \cite{CLS}[Prop.12.4.7] yields an isomorphism of $(\Lambda_\bT)_{\bQ}$-modules
\be\label{mm}
H^*_\bT(X;\bb{Q}) \simeq (\Lambda_\bT)_{\bQ} \otimes_\bQ H^*(X;\bQ).
\ee
In particular, the equivariant cohomology ring $H^*_\bT(X;\bb{Q})$ is a free $(\Lambda_\bT)_{\bQ}$-module.
Moreover, in this case the ordinary cohomology ring $H^*(X;\bQ)$ is determined by the $(\Lambda_\bT)_{\bQ}$-algebra structure of $H^*_\bT(X;\bb{Q})$ via the isomorphism
\be\label{ki} H^*(X;\bQ) \simeq H^*_\bT(X;\bb{Q})/{I_\bT H^*_\bT(X;\bb{Q})},\ee
where $I_\bT \subset (\Lambda_\bT)_{\bQ}$ is the ideal generated by positive degree elements, as in \cite{BrV2}[Prop.3.2] or \cite{CLS}[Cor.12.4.8]. (In particular, the quotient $(\Lambda_\bT)_{\bQ}/{I_\bT} \simeq \bQ$ gives $\bQ$ the structure of a $(\Lambda_\bT)_{\bQ}$-module.)
See also \cite{GKM}[Thm.14.1] for more general results of this type for complex varieties with a torus action,
e.g., \eqref{mm} holds by \cite{GKM}[Thm.14.1 (8)] for a complete complex algebraic variety $X$ which is a rational homology manifold.

\medskip

For the remaining of this Subsection \ref{eqcoh}, we assume moreover that $X$ is a {\it complete simplicial} toric variety. 

There are several different descriptions of the rational equivariant cohomology $H^*_\bT(X;\bb{Q})$ of a complete simplicial toric variety $X=X_\Sigma$ associated to the fan $\Sigma$ in $N_\bR=N\otimes \bb{R}$, e.g., see \cite{Br2}, \cite{CLS}[Sect.12.4]
or \cite{AF}[Chapter 8]. Such descriptions, which are recalled below, are related to various aspects of the Euler-Maclaurin formulae for simple lattice polytopes (see Sections \ref{emf} and \ref{emcs} below):

\smallskip

\noindent({\rm a})
$H^*_\bT(X;\bb{Q})$ can be realized as the \index{Stanley-Reisner ring} {\it Stanley-Reisner ring} $SR_{\bb{Q}}(\Sigma)$ of the fan $\Sigma$, i.e., (see, e.g., \cite{CLS}[Sect.12.4]),  
$$SR_{\bb{Q}}(\Sigma):=
\bb{Q}[x_1,\dots,x_r]/\sim_{SR}\:\: \simeq H^*_\bT(X;\bb{Q})$$ 
$$x_i\longmapsto [D_i]_\bT=c^1_\bT(\mathcal{O}(D_i)) \:,$$
with $x_i$ of degree $2$, and $D_i$ the $\bT$-invariant divisor corresponding to the ray $\rho_i= \langle u_i \rangle \in \Sigma(1)$ with ray generator $u_i$
($i=1,\dots, r$). The \index{Stanley-Reisner relation} Stanley-Reisner relation $\sim_{SR}$ 
 is generated by 
$$x_{i_1}\cdots x_{i_s}=0,$$ for distinct $i_j$ with $\{\rho_{i_j}| j=1,\dots, s\}$ not spanning  a cone of $\Sigma$.
 
\smallskip
\noindent({\rm b}) $H^*_\bT(X;\bb{Q})$ can be realized as the ring $R_{\Sigma}$ of continuous  piecewise polynomial functions (aka, polynomial splines) with rational coefficients on the fan $\Sigma$ (see, e.g., \cite{BrV2}[Prop.3.2] or \cite{CLS}[page 606]). This can be seen by starting with  the following  exact sequence in equivariant cohomology (as appearing in the proof of \cite{CLS}[Lem.12.4.17]):
\begin{equation}\label{loc}
0\to H^*_\bT(X;\bb{Q}) \to \bigoplus_{\sigma\in \Sigma(n)} H^*_\bT(U_{\sigma};\bb{Q}) \to \bigoplus_{\tau \in \Sigma(n-1)} H^*_\bT(U_{\tau};\bb{Q}) \:,
\end{equation}
where $U_{\sigma}\subset X_\Sigma$ denotes as usual the unique $\bT$-invariant open affine subset containing the corresponding  $\bT$-orbit
$O_\sigma$, and $O_\sigma=x_{\sigma}\in X$ are the \index{torus-fixed point} $\bT$-fixed points in the case $\sigma\in \Sig(n)$ (i.e., $\sigma$ is of top dimension
$n$). Note also that $x_{\sigma}$ corresponds to a vertex of the lattice polytope $P$ in the case when $\Sigma=\Sigma_P$ is the corresponding inner normal fan. There are equivariant deformation retracts $U_\sig \to O_\sig$ for any $\sig \in \Sig$ (e.g., see \cite{CLS}[Prop.12.1.9(a)]), hence 
$$H^*_\bT(U_{\sigma};\bb{Q})  \simeq  H^*_\bT(O_{\sigma};\bb{Q}).$$
Moreover, for $\sigma\in \Sigma(n)$ one gets by (\ref{2}): $$H^*_\bT(O_\sig;\bQ) \simeq H^*_\bT(x_{\sig};\bQ) \simeq Sym_{\bQ}(M),$$
and for $\tau \in \Sig(n-1)$ one has an isomorphism $$H^*_\bT(O_\tau;\bQ) \simeq Sym_\bQ(M_\tau),$$ for $M_\tau=M/{\tau^{\perp} \cap M}$. 
So the localization sequence (\ref{loc}) translates into the exact sequence
\begin{equation}\label{pp}
0\to H^*_\bT(X;\bb{Q}) \to \bigoplus_{\sigma\in \Sigma(n)} Sym_{\bb{Q}}(M) \to \bigoplus_{\tau \in \Sigma(n-1)} 
Sym_{\bb{Q}}(M_{\tau})\:,
\end{equation}
with $Sym_{\bb{Q}}(M)$, resp., $Sym_{\bb{Q}}(M_{\tau})$ the ring of polynomial functions with rational coefficients on $\sigma\in \Sigma(n)$, resp.,
 $\tau \in \Sigma(n-1)$. In other words, an element of $H^*_\bT(X;\bb{Q})$ can be thought of as a collection $\{f_\sig\}_{\sig \in \Sig(n)}$, where $f_\sig$ is a polynomial function on $\sig$, such that $f_\sig|_\tau=f_{\sig'}|_\tau$ whenever $\sig \cap \sig'=\tau \in \Sig(n-1)$.
 
 \smallskip

\noindent({\rm c}) $H^*_\bT(X;\bb{Q})$ can also be described as the $H_\bT^*(pt;\bb{Q})=(\Lambda_\bT)_{\bb{Q}}$-algebra
$$(\Lambda_\bT)_{\bb{Q}}[x_1,\dots,x_r]/\sim\:\: \simeq H^*_\bT(X;\bb{Q})$$ $$x_i\longmapsto [D_i]_\bT=c^1_\bT(\mathcal{O}(D_i)), $$
and $\sim$  generated by  the Stanley-Reisner  relation $\sim_{SR}$ as well as
\begin{equation}\label{alg}
\pm m:= c(m)= -s(m)= c^1_\bT(\bC_{\chi^m})=
\sum_{i=1}^r \; \langle m,u_i \rangle x_i  \quad \text{for all $m\in M$} \:,
\end{equation}
with $u_i$ the corresponding ray generators.
Here we consider the $+$-sign in the case when we use the isomorphism 
$$c: Sym_{\bb{Q}}(M)  \simeq  H_\bT^*(pt;\bb{Q})=:(\Lambda_\bT)_{\bb{Q}}$$ coming from $c^1_\bT$ as usual in algebraic geometry (see, e.g., \cite{AF}[Chapter 8]).
But the use of the minus-sign and the isomorphism  
$$s: Sym_{\bb{Q}}(M)  \simeq  H_\bT^*(pt;\bb{Q})=:(\Lambda_\bT)_{\bb{Q}}$$
 induced from $c^1_\bT\circ (-)^{\vee}$ (e.g., as in \cite{CLS}[Sect.12.4]) fits better with the corresponding Euler-Maclaurin formulae and the equivariant Riemann-Roch theorem (e.g., see \cite{CLS}[Sect.13.3]).

\begin{rem}
The inclusion $\bb{Q}[x_1,\dots,x_r] \subset (\Lambda_\bT)_{\bb{Q}}[x_1,\dots,x_r]$ induces the natural map
$$\bb{Q}[x_1,\dots,x_r]/\sim_{SR} \ \ \longrightarrow (\Lambda_\bT)_{\bb{Q}}[x_1,\dots,x_r]/\sim \:\:,$$
which is an isomorphism relating parts (a) and (c) above,  see Lemma \ref{SRnew} and compare also with 
\cite{AF}[Chapter 8]. Note that in (a) and (b) the $(\Lambda_\bT)_{\bb{Q}}$-algebra structure of
$H^*_\bT(X;\bb{Q})$ is not explicitly mentioned, but follows then from relation (\ref{alg})
(compare also with \cite{CLS}[Sect.12.4]). 

In the context of a complete simplicial toric variety $X=X_\Sigma$ there is also no difference between the rational equivariant (co)homology ring and the corresponding rational equivariant Chow ring of $X$ (see \cite{Br2}[Sect.5]).
Finally note that by \eqref{ki} one has that $c^1_\bT(\bC_{\chi^m})\mapsto 0$ under the natural map $H^*_\bT(X;\bb{Q}) \to H^*(X;\bb{Q})$ obtained by forgetting the $\bT$-action. 
\end{rem}

As it shall be indicated below, the equivariant Chern character $\ch^\bT(\mathcal{E})$ of a $\bT$-equivariant line or vector bundle $\mathcal{E}$ on $X$,
as well as the image of the equivariant Todd homology class $\td_*^\bT(X) \in \widehat{H}^\bT_*(X;\bQ) \simeq {\widehat{CH}_*^\bT}(X)\otimes \bQ$ under equivariant Poincar\'{e} duality,
live in the completion \index{completion}
$$\widehat{H}^*_\bT(X;\bb{Q}) := \prod_{i\geq 0} \: H^i_\bT(X;\bb{Q}),$$
with $$\widehat{H}^*_\bT(pt;\bb{Q}) \simeq  \bb{Q}[[t_1,\dots,t_n]]=:(\widehat{\Lambda}_\bT)_{\bb{Q}}\:.$$
Under the isomorphisms described in this subsection, this completion corresponds to:
\begin{enumerate}
\item[(a)] $\bb{Q}[[x_1,\dots,x_r]]/\sim_{SR}\:\: \simeq \widehat{H}^*_\bT(X;\bb{Q})$.
\item[(b)] the corresponding completion  $\widehat{R}_{\Sigma}\simeq \widehat{H}^*_\bT(X;\bb{Q})$ of the ring of continuous 
 piecewise polynomial functions with rational coefficients on the fan $\Sigma$.
\item[(c)] $(\widehat{\Lambda}_\bT)_{\bb{Q}}[[x_1,\dots,x_r]]/\sim\:\: \simeq \widehat{H}^*_\bT(X;\bb{Q})$.
\end{enumerate}
Abstractly, these are completions of connected integer graded commutative rings $R^*$ with respect to the maximal ideal $R^{>0}$ given by positive degree elements.

In fact, as we will see
 and use later on, the equivariant Chern character and Todd homology classes live in an {\it analytic subring} \index{analytic subring}
(defined as the image of the {\it analytic Stanley-Reisner ring}, see \eqref{ans}):
$$(H^*_\bT(X;\bb{Q}))^{an}\subset  \widehat{H}^*_\bT(X;\bb{Q}) \:,$$
with $ \bb{Q}\{t_1,\dots,t_n\}\simeq (H^*_\bT(pt;\bb{Q}))^{an}=:(\Lambda^{an}_\bT)_{\bb{Q}}\subset (\widehat{\Lambda}_\bT)_{\bb{Q}}$ the subring of {\it convergent power series} (around zero) with rational coefficients, i.e., after pairing with $z\in N_{\bb{K}}=N\otimes_{\bb{Z}}\bb{K}$ (for $\bb{K}=\bb{R},\bb{C}$) one gets a convergent power series {\it function} in $z$ around zero, whose corresponding Taylor polynomials have rational coefficients.

\begin{rem}\label{reminj}
The injectivity part of the sequence \eqref{loc} still holds (as in \cite{CLS}[Cor.12.4.9]) even if $X$ is only a simplicial toric variety with a fan of full-dimensional convex support, since the equivariant cohomology of $X$ is a free $(\Lambda_\bT)_{\bQ}$-module.
\end{rem}


\subsection{Equivariant Euler characteristic}\label{esHRR}

Assume that a torus $\bT$ with character lattice $M$ acts linearly on a finite dimensional complex vector space $W$, with eigenspace 
\be\label{eigen}
W_{\chi^m}:=\{w\in W \mid t\cdot w=\chi^m(t) w \ \text{ for all} \ t \in \bT\},
\ee
for $\chi^m$ a character of $\bT$. Then, as in \cite{CLS}[Prop.1.1.2], one has an {\it eigenspace} decomposition \index{eigenspace decomposition}
$$ W= \bigoplus_{m \in M} W_{\chi^m}.$$
This induces an isomorphism 
\be\label{bun}
 K^0_\bT(pt) \overset{\simeq}\lra \bb{Z}[M], \ \ [W] \mapsto \sum_{m \in M} \dim_\bC W_{\chi^m} \cdot \chi^m.
\ee

Let $X=X_{\Sig}$ be a complete toric variety of dimension $n$, with a $\bT$-equivariant coherent sheaf $\cF$. The cohomology spaces $H^i(X;\cF)$ are finite dimensional $\bT$-representations, vanishing for $i$ large enough. We define the \index{Euler characteristic, $K$-theoretic}  {\it $K$-theoretic Euler characteristic} of $\cF$ as:
\be\label{Kchi}\begin{split}
\chi^\bT(X,\cF)&:=[H^*(X;\cF)]:=\sum_i (-1)^i [H^i(X;\cF)] \in K^0_\bT(pt)\\
&=\sum_{m \in M} \sum_{i=0}^n (-1)^i \dim_\bC H^i(X;\cF)_{\chi^m} \cdot \chi^m \in \bZ[M].
\end{split}
\ee
By applying the equivariant Chern character $\ch^\bT:\bZ[M] \simeq K^0_\bT(pt) \hookrightarrow  (\Lambda^{an}_\bT)_{\bb{Q}}\subset 
 (\widehat{\Lambda}_\bT)_{\bb{Q}}$, we further get the  \index{Euler characteristic, cohomological} {\it cohomological Euler characteristic} of $\cF$:
\be\label{f41}
\chi^\bT(X,\cF)=\sum_{m \in M} \sum_{i=0}^n (-1)^i \dim_\bC H^i(X;\cF)_{\chi^m} \cdot e^{c(m)} \in (\Lambda^{an}_\bT)_{\bb{Q}}
\subset  (\widehat{\Lambda}_\bT)_{\bb{Q}}\:.
\ee

Let $D$ be a torus-invariant Cartier divisor on $X$. For each $i \geq 0$, consider the \index{weight decomposition} {\it weight} decomposition of $H^i(X;\cO_X(D))$ given by (e.g., see \cite{CLS}[Sect.9.1]):
\be\label{b30}
H^i(X;\cO_X(D))=\bigoplus_{m \in M} H^i(X;\cO_X(D))_m,
\ee
induced via a \u{C}ech resolution from 
\be\label{cek} H^0(U_\sig; \cO_X(D)) = \bigoplus_m H^0(U_\sig;\cO_X(D))_m  \subset \bC[M],\ee
where each $H^0(U_\sig;\cO_X(D))_m$ is either $0$ or $\bC \cdot \chi^m$.

For comparing this weight decomposition with the eigenspace decomposition of \eqref{eigen}, we recall here that in this paper  the torus $\bT$ acts  from the left on $\bC[M]$ (with $M\simeq \Hom(\bT,\bC^*)$) as follows: if $t \in \bT$ and $f \in \bC[M]$, then $t \cdot f \in \bC[M]$ is given by $p \mapsto f(t^{-1} \cdot p)$, for $p \in \bT$ (see \cite{CLS}[pag.18]. But some other references like \cite{BrV2} use a different convention).
 In particular, $t \cdot \chi^m = \chi^m(t^{-1}) \chi^m$, so that
$$H^i(X;\cO_X(D))_m = H^i(X;\cO_X(D))_{\chi^{-m}}.$$
So, via $s(m)=c(-m)=-c_1^\bT(\bC_{\chi^m})$,  formula \eqref{f41} translates into 
 the \index{equivariant Euler characteristic} {\it equivariant Euler characteristic} of $D$, as defined in \cite{CLS}[Def.13.3.2]:
\be\label{30}
\chi^\bT(X,\cO_X(D))=\sum_{m \in M} \sum_{i=0}^n (-1)^i \dim H^i(X;\cO_X(D))_m \cdot e^{s(m)} \in (\Lambda^{an}_\bT)_{\bb{Q}}
\subset  (\widehat{\Lambda}_\bT)_{\bb{Q}}\:.
\ee
Note that $i^*_{pt} \chi^\bT(X,\cO_X(D))=\chi(X,\cO_X(D))$, where $i^*_{pt}: (\widehat{\Lambda}_\bT)_{\bb{Q}} \to \bQ$ sends elements of positive degree to zero (i.e., one forgets the $\bT$-action). 

\begin{example}\label{lp}
In the special case when $X_P$ is the projective toric variety associated to a full-dimensional lattice polytope $P \subset M_{\bR} \simeq \bR^n$, and $D_P$ is the corresponding ample Cartier divisor, then the formulae (\ref{am1})--(\ref{am4}) yield
\be\label{am5}
\chi^\bT(X_P,\cO_{X_P}(D_P))=\sum_{m \in P \cap M} e^{s(m)}
\ee
and
\be\label{am6}
\chi^\bT(X_P,\cO_{X_P}(-D_P))=(-1)^n\sum_{m \in \Int(P) \cap M} e^{c(m)}.
\ee
\qed
\end{example}

\begin{example}\label{lpp} For later applications, let us also consider the following more general context.
Let $X$ be a toric variety and $D$ a torus-invariant Cartier divisor. Then 
\be\label{am11}
\Gamma(X;\cO_{X}(D)) = \bigoplus_{m \in P_D \cap M} \bC \cdot \chi^m \subset \bC[M],
\ee
where $\chi^m$ denotes the character defined by $m \in M$, and $P_D$ is the polyhedron associated to the divisor $D$ (see \cite{CLS}[Prop.4.3.3]). If, moreover, the fan of $X$ has full-dimensional convex support and $\cO_{X}(D)$ is globally generated, then $P_D$ is a {\it lattice} polyhedron (see \cite{CLS}[Thm.6.1.7]), and 
\be\label{am2m}
H^i(X;\cO_{X}(D))=0, \ {\rm for \ all } \ i>0
\ee
(e.g., see  \cite{CLS}[Prop.9.2.3]). Hence, for $X$ complete and $D$ as above (with $P_D$ the corresponding lattice polytope), we get
\be\label{am51}
\chi^\bT(X,\cO_{X}(D))=\sum_{m \in P_D \cap M} e^{s(m)} 
\in (\Lambda^{an}_\bT)_{\bb{Q}} \subset  (\widehat{\Lambda}_\bT)_{\bb{Q}}.
\ee
\qed
\end{example}


\subsection{Equivariant Chern character and equivariant Riemann-Roch map.}\label{seRR} 
Let $X$ be a complex algebraic variety with an algebraic action of the torus $\bT$ with character lattice $M$ (e.g., $X$ is a toric variety). For any $\bT$-equivariant vector bundle $\cE\to X$, one can define \index{equivariant Chern classes} equivariant Chern classes $c_i^\bT(\cE)$, \index{equivariant Todd classes} equivariant Todd classes ${\rm Td}^\bT(\cE)$ and an \index{equivariant Chern character} equivariant Chern character $\ch^\bT(\cE)$ in $\widehat{H}_\bT^*(X;\bQ),$ by means of the corresponding Chern roots, see \cite{EG0}[Sect.3.1] for complete details. Forgetting the $\bT$-action, these notions reduce under $i_X^*:\widehat{H}_\bT^*(X;\bQ) \to {H}^*(X;\bQ)$ to the classical non-equivariant counterparts.  
For example, if $D$ is a $\bT$-invariant Cartier divisor on $X$, then the {equivariant Chern character} of the line bundle $\cO_X(D)$ is given by:
\be
\ch^\bT(\cO_X(D))=e^{[D]_\bT}=1+[D]_\bT+\frac{1}{2!}[D]_\bT^2+\cdots \in \widehat{H}_\bT^*(X;\bQ),
\ee
where $[D]_\bT=c_1^\bT(\cO_X(D)) \in H^2_\bT(X;\bQ)$ is the equivariant cohomology class of the torus-invariant Cartier divisor $D$. Forgetting the $\bT$-action yields: $i_X^*\ch^\bT(\cO_X(D))=\ch(\cO_X(D)).$

 We denote as usual by $K_\bT^0(X)$ the Grothendieck group of $\bT$-equivariant vector bundles on $X$, and we let  $K_0^\bT(X)$ denote the Grothendieck group of $\bT$-equivariant coherent sheaves on $X$. If $X$ is smooth, then $K_\bT^0(X) \simeq K^\bT_0(X)$, since in this case any $\bT$-equivariant coherent sheaf has a finite resolution by $\bT$-equivariant locally free sheaves  (see, e.g., \cite{Ed}[page 28] and the references therein).
In general, the tensor product gives a ring structure on $K^0_\bT(X)$, and $K^\bT_0(X)$ is a module for this ring. 
Note that the Chern character $\ch^\bT$ induces a contravariant, functorial, ring homomorphism
$$\ch^\bT: K_\bT^0(X) \lra \widehat{H}_\bT^{2*}(X;\bQ).$$

Edidin-Graham defined in \cite{EG0}[Thm.3.1]  an \index{equivariant Riemann-Roch map} {\it equivariant Riemann-Roch map} which in our setup can be given as 
$$\td^\bT_*:K^\bT_0(X) \lra {\widehat{CH}_*^\bT}(X)\otimes \bQ \lra  \widehat{H}^\bT_{2*}(X;\bQ),$$
with the same functoriality as in the nonequivariant case of Baum-Fulton-MacPherson \cite{BFM}. 
Here, we use the completions
$${\widehat{CH}_*^\bT}(X)\otimes \bQ :=\prod_{i \leq \dim(X)} {CH}_i^\bT(X) \otimes \bQ, \ \ \text{and} \ \ \widehat{H}_{*}^\bT(X; \bQ) :=\prod_{i \leq \dim(X)} \widehat{H}_{i}^\bT(X;\bQ).$$ 
(These equivariant Chow and Borel-Moore homology groups can be non-zero also in negative degrees, but they vanish in degrees bigger than $\dim(X)$).
Compare also with \cite{BZ} for the homological version of this transformation.

The transformation $\td^\bT_*$ has the following properties:
\begin{enumerate}
\item[(a)] (functoriality) \ $\td^\bT_*$ is covariant for $\bT$-equivariant proper morphisms;
\item[(b)] (module property) \ if $\beta \in K_\bT^0(X)$ and $\alpha \in K_0^\bT(X)$, \index{module property} then:
\be\label{modpe}
\td^\bT_* (\beta \otimes \alpha)=\ch^\bT(\beta) \cap \td_*^\bT(\alpha).
\ee 
\end{enumerate}
Forgetting the $\bT$-action, one recovers under the forgetful map $\widehat{CH}_*^\bT(X) \to CH_*(X)$ the classical nonequivariant Todd class transformation of Baum-Fulton-MacPherson \cite{BFM}. 

The {\it equivariant Todd class} of $X$ is defined as: $$\td^\bT_*(X):=\td^\bT_*([\cO_X]_\bT]).$$
If $X$ is smooth, one also has the {\it normalization property} (see \cite{AMSS2}[Sect.6.1] or \cite{Ed}[Remark 3.2]):
\be \td^\bT_*(X) = {\rm Td}^\bT(T_X) \cap [X]_\bT,\ee
with ${\rm Td}^\bT(T_X)$ the equivariant cohomological Todd class of the tangent bundle $T_X$ of $X$.

\bex
Assume $X=pt$ is a point space. By the normalization property, $\td^\bT_*(pt) =  [pt].$ So by the module property, one gets via the equivariant Poincar\'e duality that $\td^\bT_* : K^\bT_0(pt) \to \widehat{H}_*^\bT(pt;\bQ)$ reduces to the equivariant Chern character, resp., Euler characteristic  map 
$$\ch^\bT=\chi^\bT(pt,-):\bZ[M] \simeq K^0_\bT(pt) \hookrightarrow   (\Lambda^{an}_\bT)_{\bb{Q}} \subset (\widehat{\Lambda}_\bT)_{\bb{Q}}.
$$ \qed
\eex

For a complete variety $X$, using the functoriality and module properties of $\td^\bT_*$ (cf. \cite{EG0}[Thm.3.1]) for the constant map $f:X \to pt$, one gets the following equivariant Hirzebruch-Riemann-Roch formula:
\bt[Equivariant Hirzebruch-Riemann-Roch]\label{tehrr} \ \index{equivariant Hirzebruch-Riemann-Roch theorem}
Let $X$ be a complete complex algebraic variety with an algebraic action of the torus $\bT$, and let $\cE$, resp., $\cF$, be a $\bT$-equivariant vector bundle, resp., coherent sheaf on $X$. Then
\be\label{eHRR}
\chi^\bT(X,\cE \otimes \cF)=\int_X \ch^\bT(\cE) \cap \td^\bT_*([\cF]),
\ee
where $\int_X:\widehat{H}_*^\bT(X;\bb{Q}) \to \widehat{H}_*^\bT(pt;\bb{Q})=(\widehat\Lambda_\bT)_\bQ$ is the equivariant pushforward for the constant map $X \to pt$.
\et

Example \ref{lp} and Theorem \ref{tehrr} yield the following:
\bc\label{lpc}
Let $P \subset M_{\bR} \simeq \bR^n$ be a full-dimensional lattice polytope, with corresponding projective toric variety $X=X_P$ and ample Cartier divisor $D=D_P$. Then:
\be\label{lpc1}
\sum_{m \in P \cap M} e^{s(m)}=\int_X \ch^\bT(\cO_X(D)) \cap \td^\bT_*(X), 
\ee
\be\label{lpc2}
\sum_{m \in \Int(P) \cap M} e^{s(m)}=\int_X \ch^\bT(\cO_X(D)) \cap \td^\bT_*([\omega_X]_\bT).
\ee
\ec
\begin{proof}
Formula \eqref{lpc1} follows directly from \eqref{am5} and \eqref{eHRR}. For \eqref{lpc2} we proceed as in \eqref{am4d} and \eqref{4b}, using the fact that toric Serre duality holds equivariantly.
\end{proof}

Similarly, Example \ref{lpp} and Theorem \ref{tehrr} yield:
\bc\label{lpc1c}
Let $X$ be a complete toric variety, and $D$ a globally generated torus-invariant Cartier divisor with associated lattice polytope $P_D \subset M_{\bR}$. Then 
\be\label{lpc11}
\sum_{m \in P_D \cap M} e^{s(m)}=\int_X \ch^\bT(\cO_X(D)) \cap \td^\bT_*(X).
\ee
\ec



\section{Equivariant characteristic classes of toric varieties.}\label{sec:3}

In this section, we first extend in Subection \ref{sec3.1} the characteristic class formulae from \cite{MS2, MS1} for the motivic Chern and Hirzebruch classes of a toric variety to the equivariant setting. Furthermore, we calculate the pushforward of these classes under toric fibrations, using the motivic nature of these characteristic classes. This will be used in Subsection \ref{gehrr} for applying the generalized equivariant Hirzerbruch-Riemann-Roch formula  in the context of a globally generated $\bT$-invariant Cartier divisor.
In Subsection \ref{calch}, 
 we use the global Cox construction (as reviewed in Subsection \ref{Cox}) and the equivariant Lefschetz-Riemann-Roch theorem of Edidin-Graham \cite{EG} for computing various equivariant characteristic classes in a suitable context of simplicial toric varieties, including the classical projective case, as well as the case for the complement of a $\bT$-invariant divisor.  
Alternative proofs of all these characteristic class formulae will be given in the next section via localization at the torus fixed points.

\subsection{Definition. Properties}\label{sec3.1} In this subsection we recall the definitions of equivariant motivic Chern and Hirzebruch classes of complex  varieties with an algebraic $\bT$-action.

\subsubsection{Equivariant motivic Chern class transformation.}
For $X$ a complex  algebraic variety with an algebraic $\bT$-action,
the (relative) \index{equivariant motivic Grothendieck group} equivariant motivic Grothendieck group $K_0^\bT(var/X)$ of varieties over $X$ is the free abelian group generated by symbols $[Z \to X]$, where $Z$ is a  complex algebraic $\bT$-variety and $Z \to X$ is a $\bT$-equivariant morphism, modulo the additivity relation:
$$[Z \to X]=[U \to X]+[Z \setminus U \to X],$$
for $U \subset Z$ an open $\bT$-invariant subvariety. If $X=pt$ is a point space, then $K_0^\bT(var/pt)$ is a ring with product given by the external product of morphisms, and the group $K_0^\bT(var/X)$ is a module over $K_0^\bT(var/pt)$  with respect to the external product via the identification $pt \times X \simeq X$. 

For any equivariant morphism $f:X \to Y$ of  complex algebraic $\bT$-varieties, there is a well-defined push-forward $f_!:K_0^\bT(var/X) \to K_0^\bT(var/Y)$ defined by composition. One can also define an exterior product: $K_0^\bT(var/X) \boxtimes K_0^\bT(var/X') \to K_0^\bT(var/X \times X')$ via the cross-product: $[Z \to X] \times [Z' \to X']= [Z \times Z' \to X \times X']$.

The following result provides an equivariant analogue of the motivic Chern class transformation of \cite{BSY}; see \cite{AMSS}[Thm.4.2]
for the quasi-projective context and \cite{FRW}[Sec.2.3 and 2.4] for the general context. (Note that these results use
(implicitly) the {\it equivariant weak factorization theorem} as in \cite{Ber}[Thm.3.4].)
\bt\label{emc}
Let $X$ be a complex  algebraic variety with an action of the torus $\bT$. There exists a unique natural transformation
$$mC_y^\bT:K_0^\bT(var/X) \lra K_0^\bT(X)[y]$$
satisfying the following properties:
\begin{enumerate}
\item[(a)] (functoriality) \ $mC_y^\bT$ is covariant for $\bT$-equivariant proper morphisms;
\item[(b)] (normalization) \ if $X$ is smooth, then 
\be\label{e52}
mC_y^\bT([id_X])=\Lambda_y(T^*_X):=\sum_{p=0}^{\dim(X)} [\Lambda^p T^*_X]_\bT \cdot y^p \in K^0_\bT(X)[y] \simeq K_0^\bT(X)[y].
\ee 
\end{enumerate}
Moreover,  $mC_y^\bT$ commutes with exterior products. 
\et

As a consequence of these properties, 
the transformation $mC_y^\bT$ is determined by its image on classes $[f:Z \to X]=f_*[id_Z]$, where $Z$ is a {\it smooth} 
 variety and $f:Z \to X$ is a $\bT$-equivariant {\it proper} morphism. For such $[f]$, one has: $$mC_y^\bT([f:Z \to X])=f_* mC_y^\bT([id_Z])=f_*(\Lambda_y(T^*_Z) \otimes [\mathcal{O}_Z]).$$ So the uniqueness of $mC_y^\bT$ follows from equivariant resolution of singularities.

\bd \index{equivariant motivic Chern class}
The {\it equivariant motivic Chern class} of a complex  algebraic variety $X$ with an algebraic $\bT$-action is defined as:
$$mC^\bT_y(X):=mC^\bT_y([id_X]).$$
\ed

We next discuss a calculation of motivic Chern classes in terms of cubical hyper-resolutions, which will be used for proving a generalized equivariant Hirzebruch-Riemann-Roch theorem in the toric context.
Choose an equivariant simplicial resolution $f_{\centerdot}:X_{\centerdot} \to X$ of $X$ derived from a cubical hyper-resolution in the sense of \cite{GNPP}[Th\'eor\`eme 2.15], with each $X_i$ a smooth $\bT$-variety and $f_i:X_i\to X$ a proper $\bT$-equivariant morphism (with $\dim(X_i) \leq \dim(X) - i$). Then 
\be\label{add} [id_X]=\sum_i (-1)^i [f_i:X_i\to X] \in K^\bT_0(var/X).\ee 
Indeed, \eqref{add} follows from the inductive construction of an equivariant cubical hyper-resolution, by the following abstract equivariant blowup relation. Let $f:\widetilde{X} \to X$ be a proper $\bT$-equivariant map of complex quasi-projective varieties. Let $D\subset X$ be a $\bT$-invariant subvariety, with $E:=f^{-1}(D)\subset \widetilde{X}$. Assume $f : \widetilde{X} \setminus E \to X \setminus D$ is an isomorphism. Then 
\be\label{bl}
[\widetilde{X} \to X]-[E \to X]=[id_X]-[D\to X] \in K^\bT_0(var/X).
\ee
Applying the transformation $mC_y^\bT$  to \eqref{add} yields:
\be\label{sr}
\begin{split}
mC^\bT_y(X)&=\sum_i (-1)^i mC^\bT_y([f_i:X_i\to X])\\
&=\sum_i (-1)^i {f_i}_*(\Lambda_y(T^*_{X_i}) \otimes [\mathcal{O}_{X_i}])\\
&=\sum_{p \geq 0} \left(   \sum_i (-1)^i {f_i}_*([\Omega^p_{X_i}]_\bT)   \right) \cdot y^p.
\end{split}
\ee
In particular, for any $p \geq 0$, the Grothendieck class 
$$ \sum_i (-1)^i {f_i}_*([\Omega^p_{X_i}]_\bT)=:[Rf_{\centerdot *}\Omega^p_{X_{\centerdot}}]_\bT \in K_0^\bT(X)$$
is independent of the choice of an equivariant cubical hyper-resolution, and it provides an equivariant analogue of the Grothendieck class 
$[\ \uuline{\Omega}^{p}_X] \in K_0(X)$ appearing in \eqref{dbm}, corresponding to the graded pieces of the filtered Du Bois complex of $X$. So, while an equivariant version of the Du Bois complex is not available in the literature, the Grothendieck classes of its graded pieces are well defined equivariantly by the equivariant blow-up relation. This is sufficient for the purpose of this paper (but see also \cite{MS4} for an alternative approach based on equivariant mixed Hodge modules).

As an application of the above formula \eqref{sr}, Weber \cite{W}[Thm.5.1, Rem.5.2] deduces the following result:
\bt 
 Let $X$ be a smooth complex algebraic  $\bT$-variety, and $U \subset $X an open subvariety so that  $X \setminus U=:D=\bigcup_{i=1}^m D_i$ is a $\bT$-invariant simple normal crossing divisor. Then: 
 \be\label{web}
mC^\bT_y([U \hookrightarrow X])=[\cO_X(-D) \otimes \Lambda_y \Omega^1_X(\log D)]_{\bT} \in K^0_\bT(X)[y] \simeq K_0^\bT(X)[y] \:.
\ee
\et

In fact Weber \cite{W}[Thm.5.1, Rem.5.2] formulated his results in the non-equivariant context, but his proof also works  $\bT$-equivariantly  as follows. By the inclusion-exclusion formula for the $\bT$-equivariant motivic Chern class one has
\be\label{Web5} mC^\bT_y([U \hookrightarrow X])=\sum_{I\subset \{1,\dots,m\}} (-1)^{|I|}mC^\bT_y([D_I \hookrightarrow X])
= \sum_{I\subset \{1,\dots,m\}} (-1)^{|I|} [\Lambda_y  \Omega^1_{D_I}]_{\bT} \:,\ee
with $D_I:=\bigcap_{i\in I} D_i$ smooth $\bT$-invariant closed subvarieties (and $D_{\emptyset}:=X$). Similarly
$$[\cO_X(-D)]_{\bT} = \sum_{I\subset \{1,\dots,m\}} (-1)^{|I|}[\cO_{D_I}]_{\bT} \in K^0_\bT(X) \simeq K_0^\bT(X)\:.$$
Then  Deligne's weight filtration of the sheaf of  logarithmic $p$-forms $$\Omega^p_X(\log D)=\Lambda^p \Omega^1_X(\log D)$$ is also $\bT$-equivariant, and similarly for the $\bT$-invariant normal crossing divisor $$D_{>I}:=\bigcup_{I\subset J} D_J \subset D_I,$$ so that the calculation of  Weber \cite{W}[Thm.5.1, Rem.5.2] holds $\bT$-equivariantly:
\be\label{web2b}
 \sum_{I\subset \{1,\dots,m\}} (-1)^{|I|} [\Lambda_y  \Omega^1_{D_I}]_{\bT} = [\cO_X(-D) \otimes \Lambda_y \Omega^1_X(\log D)]_{\bT} \in
 K^0_\bT(X)[y] \simeq K_0^\bT(X)[y] \:.
\ee

\subsubsection{Equivariant Hirzebruch class transformation.}
Let $X$ be a complex algebraic variety $X$ with an algebraic $\bT$-action.  An equivariant version of the Hirzebruch class transformation of \cite{BSY} can be defined as follows (see also \cite{AMSS2,W}). 
\bd 
The un-normalized {\it equivariant Hirzebruch class} transformation \index{equivariant Hirzebruch class} 
${T}^\bT_{y*}$ is the composition:
$${T}^\bT_{y*}:=\td_*^\bT \circ mC^\bT_y:K_0^\bT(var/X) \lra {\widehat{CH}_*^\bT}(X)\otimes \bQ[y] \lra \widehat{H}^\bT_{2*}(X) \otimes \bQ[y],$$
while a normalized version $\widehat{T}^\bT_{y*}$ is obtained by precomposing ${T}^\bT_{y*}$  with the normalization functor $$\Psi_{(1+y)}:{\widehat{CH}_*^\bT}(X)\otimes \bQ[y] \to {\widehat{CH}_*^\bT}(X; \bQ[y, (1+y)^{-1}]):=
\prod_{i \leq \dim(X)} {CH}_i^\bT(X) \otimes \bQ[y, (1+y)^{-1}]$$ 
given in degree $k$ by multiplication by $(1+y)^{-k}$. 
\ed
By construction, both transformations ${T}^\bT_{y*}$ and $\widehat{T}^\bT_{y*}$ are covariant for $\bT$-equivariant proper morphisms and they commute with exterior products.
The corresponding {\it equivariant Hirzebruch classes of $X$} are defined as:
$${T}^\bT_{y*}(X):={T}^\bT_{y*}([id_X]) \ , \ \ \widehat{T}^\bT_{y*}(X):=\widehat{T}^\bT_{y*}([id_X]) .$$
If $X$ is smooth, then  by \eqref{e52} we get
\be\label{38b}
{T}^\bT_{y*}(X)=\sum_{p=0}^{\dim(X)} \td_*^\bT([\Omega^p_X]_\bT) \cdot y^p
\ee

The following result is an equivariant version of \cite{MS1}[(1.1) and (2.13)], generalizing \eqref{38b} to the singular toric context:
\bp \label{p32}
Let $X$ be a  toric variety with torus $\bT$. Then 
\be\label{eq37}
mC^\bT_y(X)=\sum_{p=0}^{\dim(X)} [\widehat{\Omega}^p_X]_\bT \cdot y^p \in K_0^\bT(X)[y],
\ee
where $\widehat{\Omega}^p_X$ 
is the sheaf of Zariski $p$-forms on $X$. Moreover,
\be\label{38}
{T}^\bT_{y*}(X)=\sum_{p=0}^{\dim(X)} \td_*^\bT([\widehat{\Omega}^p_X]_\bT) \cdot y^p.
\ee
In particular, the top degree in $y$ of $mC^\bT_y(X)$, resp., ${T}^\bT_{y*}(X)$, is given by the equivariant class $[{\omega}_X]_\bT$ of the dualizing sheaf, resp., $\td_*^\bT([{\omega}_X]_\bT)$. Similarly, for $y=0$, we get $mC^\bT_0(X)=[\cO_X]_\bT$ and ${T}^\bT_{0*}(X)=\td_*^\bT(X)$.
\ep

\begin{proof}
First note that \eqref{38} is obtained by applying the transformation $ \td_*^\bT$ to \eqref{eq37}. 

Choose an equivariant cubic hyper-resolution $f_{\centerdot}:X_{\centerdot} \to X$ of $X$ (obtained by adapting to the equivariant context the results from \cite{GNPP}[Expos\'e V, Sect. 4]). 
The natural map $\widehat{\Omega}^p_X {\lra} Rf_{\centerdot *}\Omega^p_{X_{\centerdot}}$ of $\bT$-equivariant sheaf complexes is, by \cite{A}[Thm.4.3], a quasi-isomorphism after forgetting the $\bT$-action. In particular, $Rf_{\centerdot *}\Omega^p_{X_{\centerdot}}$ is concentrated in degree zero, and also given by a reflexive sheaf (since this is the case for $\widehat{\Omega}^p_X$, see \cite{CLS}[Prop.8.0.1, eqn.(8.0.5)]). 
This map 
then has to be a $\bT$-equivariant isomorphism since the restrictions of these two  
sheaves to the  $\bT$-invariant smooth locus $X_{sm}$ of $X$  are canonically isomorphic to $\Omega^p_{X_{sm}}$ as $\bT$-equivariant sheaves.
\end{proof}

\br\label{subsp}
The statement of Proposition \ref{p32} holds more generally, for 
$X$ a $\bT$-invariant closed subvariety (i.e., a closed union of torus orbits, corresponding to a star-closed subset $\Sig'\subset \Sig$) in a  toric variety with torus $\bT$ and fan $\Sig$.  
Instead of Zariski differential forms, one has to use the corresponding sheaves of Ishida differentials $\wti{\Omega}^p_X$ from \cite{Is}, with canonical sheaf $\omega_X:=\wti{\Omega}^
{\dim(X)}_X$. To endow each $\wti{\Omega}^p_X$ with a canonical $\bT$-action, one proceeds as indicated in \cite{A}[Introduction], by realizing them as subsheaves of $f_*\Omega^p_M$ with $f:M \to X$ any $\bT$-equivariant resolution of singularities. Note that these pushforward sheaves are independent of the choice of resolution by the birational invariance of $\Omega^p_M$. 
In the notations used in the proof of Proposition \ref{p32}, the natural map $\wti{\Omega}^p_X {\lra} Rf_{\centerdot *}\Omega^p_{X_{\centerdot}}$ of sheaf complexes is a quasi-isomorphism. If the $\bT$-equivariant  cubic hyper-resolution starts with a resolution of singularities $\epsilon_0:X_0 \to X$, then this morphism is even $\bT$-equivariant. This implies that
\be\label{eq37is}
mC^\bT_y(X)=\sum_{p=0}^{\dim(X)} [\wti{\Omega}^p_X]_\bT \cdot y^p \in K_0^\bT(X)[y].
\ee
Note also that $\wti{\Omega}^0_X\simeq \cO_X$, as $\bT$-equivariant sheaves. 
If $X$ is already a toric variety, these Ishida sheaves coincide with the Zariski sheaves, with the canonical $\bT$-action, as used above.
\er

Let $X_\Sig$ be a toric variety defined by a fan $\Sigma$, with $X:=X_{\Sig'} \subset X_\Sig$ a $\bT$-invariant closed algebraic subset of $X_\Sig$ defined by a star-closed subset $\Sig' \subset \Sig$. Let $O_\sig$ be the orbit of the cone $\sig \in \Sig'$, with $i_\sig:V_\sig \hookrightarrow X$ the orbit closure inclusion. Then, by additivity, the following equivariant version of  \cite{MS1}[Prop.3.1]  holds:

\bp\label{pad}
\be
mC^\bT_y(X)=\sum_{\sig \in \Sig'} mC^\bT_y([O_{\sig} \hookrightarrow X])=\sum_{\sig \in \Sig'} (i_\sig)_* mC^\bT_y([O_{\sig} \hookrightarrow V_{\sig}]).
\ee
A similar formula holds for the equivariant Hirzebruch classes ${T}^\bT_{y*}(X)$ and $\widehat{T}^\bT_{y*}(X)$.
\ep
Moreover, toric geometry can be used as in \cite{MS1}[Prop.3.2] to get the following result (see also \cite{W}[Cor.11.2]):
\bp\label{orbit} For any cone $\sig \in \Sig$, one has:
\be\label{11110} mC^\bT_y([O_{\sig} \hookrightarrow V_{\sig}])=(1+y)^{\dim(O_\sig)} \cdot [\omega_{V_\sig}]_\bT,
\ee
with $\omega_{V_\sig}$ the canonical sheaf of $V_\sig$, viewed as a $\bT=T_N$-equivariant sheaf via the quotient map $T_N\to T_{N(\sig)}$ given by \eqref{quot}. Therefore, 
\be\label{11111}
{T}^\bT_{y*}([O_{\sig} \hookrightarrow V_{\sig}])=(1+y)^{\dim(O_\sig)} \cdot \td_*^\bT([\omega_{V_\sig}]_\bT).
\ee
In particular, if $V_\sig$ is complete, then
\be\label{chiyt}
\chi_y^\bT([O_{\sig} \hookrightarrow V_{\sig}]):={T}^\bT_{y*}([O_{\sig} \hookrightarrow V_{\sig} \to pt]) = (-1-y)^{\dim(O_\sig)} \in \bQ[y] \subset (\widehat{\Lambda}_\bT)_\bQ [y].
\ee
\ep

\begin{proof}
Let $\wti{V}_{\sig} \overset{f_{\sig}}{\to} V_{\sig}$ be a toric resolution of singularities of $V_{\sig}$. Then $\wti{V}_{\sig}$ is a toric variety obtained by refining the fan of $V_{\sig}$. Let $O_{\sig} \hookrightarrow \wti{V}_{\sig}$ be the natural open inclusion, with complement the simple normal crossing divisor $D_{\sig}$ whose irreducible components correspond to the rays in the fan of $\wti{V}_{\sig}$. Note that $D_{\sig}$ is a $T_{N(\sig)}$-invariant (hence a $\bT$-invariant) divisor. Then formula (\ref{web}) applied to the open inclusion  $O_{\sig} \hookrightarrow \wti{V}_{\sig}$ yields by the functoriality of $mC^\bT_y$ with respect to the proper morphism $f_{\sig}$ that:
\be\label{6} 
mC^\bT_y\big([O_{\sig}\hookrightarrow V_{\sig}]\big)=(f_{\sig})_*\big([\cO_{\wti{V}_{\sig}}(-D_{\sig})
\otimes \Lambda_y \Omega_{\wti{V}_{\sig}}^1(\log D_{\sig})]_\bT\big).\ee
However, since $\wti{V}_{\sig}$ is a smooth toric variety, the locally free sheaf  $\Omega_{\wti{V}_{\sig}}^1(\log D_{\sig})$ is in fact $T_{N(\sig)}$-equivariantly, hence also $\bT$-equivariantly, a trivial sheaf of rank equal to $\dim(\wti{V}_{\sig})=\dim(O_{\sig})$, e.g., see \cite{CLS}[(8.1.5)]. 
So we get that:
\be
[\cO_{\wti{V}_{\sig}}(-D_{\sig})
\otimes \Lambda_y \Omega_{\wti{V}_{\sig}}^1(\log D_{\sig})]_\bT=(1+y)^{\dim(O_{\sig})} \cdot  [\cO_{\wti{V}_{\sig}}(-D_{\sig})]_\bT  \in K^{\bT}_0(\wti{V}_{\sig})[y].
 \ee

Furthermore, note that the canonical dualizing sheaf $\omega_{\wti{V}_{\sig}}$ on the toric variety $\wti{V}_{\sig}$ is precisely given by $\cO_{\wti{V}_{\sig}}(-D_{\sig})$, e.g., see \cite{CLS}[Thm.8.2.3]. And since the toric morphism $f_{\sig}:\wti{V}_{\sig} \to V_{\sig}$ is induced by a refinement of the fan of $V_{\sig}$, it follows from \cite{CLS}[Thm.8.2.15] that there is a $T_{N(\sig)}$-equivariant, hence also $\bT$-equivariant,  isomorphism:
\be\label{canonical}
{f_{\sig}}_*\omega_{\wti{V}_{\sig}} \simeq \omega_{V_{\sig}}.
\ee
Altogether, we obtain the following sequence of equalities:
\be
\begin{split}
 mC^\bT_y\big([O_{\sig}\hookrightarrow V_{\sig}]\big) &=(1+y)^{\dim(O_{\sig})} \cdot (f_{\sig})_*\big( [\cO_{\wti{V}_{\sig}}(-D_{\sig})]_\bT \big) \\
 &=(1+y)^{\dim(O_{\sig})} \cdot (f_{\sig})_* \big( [\omega_{\wti{V}_{\sig}}]_\bT \big) \\
   &=(1+y)^{\dim(O_{\sig})} \cdot  [\omega_{V_{\sig}}]_\bT ,
 \end{split}
\ee
where the last equality is a consequence of (\ref{canonical}) and the vanishing of the higher derived image sheaves of $\omega_{\wti{V}_{\sig}}$, i.e., 
$R^i(f_{\sig})_*\omega_{\wti{V}_{\sig}}=0$, for all $i >0$, 
see \cite{CLS}[Thm.9.3.12].

By applying the Todd class transformation $\td^\bT_*$ to equation (\ref{11110}), we obtain formula (\ref{11111}).

Finally, formula \eqref{chiyt} follows from
$$
\chi^\bT(V_\sig, \omega_{V_\sig})=(-1)^{\dim(O_\sig)}, 
$$
which is obtained by equivariant Serre duality and  $\chi^\bT(V_\sig, \cO_{V_\sig})=1$. The latter equality is just a special case of \eqref{am51} for $D$ the zero divisor, so that $\cO(D)=\cO$.
\end{proof}

Using the functoriality of the equivariant motivic Chern and Hirzebruch classes, we get a relative version of Proposition \ref{pad} as follows. Let $f:X \to X'$ be a proper toric morphism of  toric varieties, with the corresponding lattice homomorphism $f_N:N \to N'$ surjective (i.e., $f$ is a {\it toric fibration} \index{toric fibration}  in the sense of \cite{CMM}[Prop.2.1]),
 so that in particular $ker(\bT\to \bT')$ is connected, Let $f_{\bT}:\bT \to \bT'$ be the corresponding map of the associated tori, so that $\bT$ acts on $X'$ via $f$.
Let $\Sig$, $\Sig'$ be the fans of $X$, resp., $X'$. Since $f$ is a toric fibration, a $\bT$-orbit $O_\sig$ ($\sig \in \Sig$) is mapped by $f$ to a $\bT'$-orbit $f(O_\sig)=O_{\sig'}$ ($\sig' \in \Sig'$), such that the restriction map $f_\sig=f\vert_{O_\sig}:O_\sig \to O_{\sig'}$ is isomorphic to a projection $O_\sig\simeq O_{\sig'} \times O_{\sig/\sig'} \to O_{\sig'}$, with $O_{\sig/\sig'}\simeq (\bC^*)^\ell$ and $\ell=\dim(O_\sig) - \dim(O_{\sig'})$ the relative dimension of $f_\sig$ (see \cite{CMM}[Lem.2.6 and Prop.2.7]). Let $U\subset X$ be a locally closed $\bT$-invariant subset (i.e., a locally closed union of $\bT$-orbits of $X$), with 
\be\label{dl} d_\ell(U/\sig'):=\vert \Sigma_\ell(U/\sig') \vert \ee and 
\be\label{dl2} \Sigma_\ell(U/\sig'):=\{\sig \in \Sig \mid O_\sig \subset U, \ f(O_\sig)=O_{\sig'}, \ \ell=\dim(O_\sig) - \dim(O_{\sig'})\}.\ee

\bp\label{relmc} Under the above notations and assumptions, we have
\be\begin{split}\label{fn90}
f_* mC^\bT_y([U \hookrightarrow X])&=\sum_{\sig' \in \Sig'}  \sum_{\ell \geq 0} d_\ell(U/\sig') \cdot (-y-1)^\ell  \cdot mC^\bT_y([O_{\sig'}\hookrightarrow X'])\\
&=\sum_{\sig' \in \Sig'}  \sum_{\ell \geq 0} (-1)^\ell \cdot d_\ell(U/\sig') \cdot 
(1+y)^{\ell+\dim(O_\sig')} \cdot [\omega_{V_{\sig'}}]_\bT.
\end{split}
\ee
A similar formula holds for $f_* T^\bT_{y*}([U \hookrightarrow X])$.
\ep

\begin{proof}
By functoriality and additivity of $mC_y^\bT$, we have
$$f_* mC^\bT_y([U \hookrightarrow X])=mC^\bT_y([U \hookrightarrow X\overset{f}{\to} X'])
=\sum_{\sig' \in \Sig'}  \sum_{\ell \geq 0} \sum_{\sig \in \Sigma_\ell(U/\sig')}mC^\bT_y([O_\sig \hookrightarrow X\overset{f}{\to} X']) .$$
For $\ell \geq 0$, $\sig' \in \Sig'$ and $\sig \in \Sigma_\ell(U/\sig')$ fixed, we have
$$[O_\sig \hookrightarrow X\overset{f}{\to} X']=[O_\sig \overset{f_\sig}{\to} O_{\sig'} \hookrightarrow X'].$$
Let us choose a splitting of the surjection $f_\bT:\bT \to \bT'$ so that $\bT=\bT'\oplus \bT''$.
By using \cite{CMM}[Lem.2.6]  and the proof of \cite{CMM}[Prop.2.7], there is a $\bT$-equivariant isomorphism
$O_\sig \simeq O_{\sig'} \times O_{\sig/\sig'} \to O_{\sig'}$,
with $\bT$ acting on $O_{\sig'}$ by $f_\bT:\bT \to \bT'$ and on  $O_{\sig/\sig'}$ via the projection   $\bT\to \bT''\to O_{\sig/\sig'}$. Here, the surjective group homomorphism $\bT''\to O_{\sig/\sig'}$ is given in the proof of \cite{CMM}[Prop.2.7].
We then have
$$[O_\sig \overset{f_\sig}{\to} O_{\sig'} \hookrightarrow X']=
[O_{\sig'} \hookrightarrow X'] \times  [O_{\sig/\sig'} \to pt].$$
Using the multiplicativity of $mC_y^\bT$, we get 
$$mC_y^\bT([O_\sig \overset{f_\sig}{\to} O_{\sig'} \hookrightarrow X'])=
mC_y^\bT([O_{\sig'} \hookrightarrow X']) \boxtimes mC_y^\bT([O_{\sig/\sig'} \to pt]).$$ 
Here, $mC_y^\bT([O_{\sig'} \hookrightarrow X'])$ is calculated by \eqref{11110} using the factorization $O_{\sig'} \hookrightarrow V_{\sig'} \hookrightarrow X'$. Finally, $mC_y^\bT([O_{\sig/\sig'} \to pt])=(-1-y)^\ell$ by \eqref{chiyt} and using a projective toric compactification of $O_{\sig/\sig'}$.

The corresponding formula for the equivariant Hirzebruch class follows by applying the equivariant Todd transformation to \eqref{fn90}.
\end{proof}

\begin{rem}
By forgetting the action, the corresponding non-equivariant version of \eqref{fn90} holds with the same proof.
\end{rem}

We not get back to the general context, where a surjection of tori $\bT \to \bT'$ (as in the above results) yields an identification of equivariant characteristic classes with respect to $\bT$ and $\bT'$, as we explain next.
Let the complex torus $\bT$ act on a complex algebraic variety $X$, with a subtorus $\bT''$ acting trivially and fitting into a short exact sequence of algebraic tori
$$1\to \bT'' \to \bT \to \bT' \to 1$$
so that one has an induced action of $\bT'$ on $X$. Then a $\bT'$-equivariant coherent sheaf or vector bundle on $X$ gets via the surjective homomorphism $\bT\to \bT'$ a corresponding $\bT$-structure, inducing  natural ``change of group homomorphisms''
\be
K_{\bT'}^0(X)\to K_{\bT}^0(X) \quad \text{and} \quad K^{\bT'}_0(X)\to K^{\bT}_0(X) \:,
\ee
compatible with ring and module structure coming from the tensor product. Similarly one has in equivariant (co)homology 
natural ``change of group homomorphisms''
\be
H_{\bT'}^*(X;\bQ)\to H_{\bT}^*(X;\bQ) \quad \text{and} \quad H^{\bT'}_*(X;\bQ)\to H^{\bT}_*(X;\bQ) \:,
\ee
compatible with ring and module structure coming from the equivariant  cup and cap product, as well as compatible with the corresponding
completions. These ``change of group homomorphisms'' are induced via the following finite dimensional approximations of $E\bT$ and $E\bT'$.
Choose open subsets
$U\subset V$ (resp. $U'\subset V'$) in a linear $\bT$- (resp. $\bT'$-) representation $V$ (resp. $V'$) of high dimension so that $\bT$ (resp. $\bT'$) acts freely on $U$ (resp. $U'$), with $U\subset V$ (resp. $U'\subset V'$) of sufficiently high codimension.
Then one can also use $U\times  U'\subset V\times V'$ as an approximation of $E\bT$, together with the submersion
$$(U\times  U')\times_{\bT}X \to  U'\times_{\bT'}X$$
coming from the $\bT'$-equivariant submersion of free $\bT'$-spaces $(U/\bT'')\times U'\times X \to U'\times X$.
The induced smooth pullback (and similarly for cohomology)
$$H^{\bT'}_k(X;\bQ)=H_{k+\dim(U')-\dim(\bT')}(U'\times_{\bT'}X;\bQ) \to
H_{k+\dim(U\times U')-\dim(\bT)}((U\times U')\times_{\bT}X;\bQ) =H^{\bT}_k(X;\bQ)$$
is well defined and independent (up to isomorphism) of all choices, inducing the ``change of group homomorphisms'' as above.
Moreover, these ``change of group homomorphisms'' are injective in this context, as follows from the next result.

\bp \label{split}
Let the complex torus $\bT$ act on a complex algebraic variety $X$, with a subtorus $\bT''$ acting trivially. Let $\bT':=\bT/\bT''$ and choose a compatible  splitting of these tori and of their corresponding character lattices
$$\bT=\bT' \oplus \bT'' \ \ \text{\rm and} \ \ M=M' \oplus  M''.$$
These splittings then induce the following factorizations
$$K^0_\bT(X)  \simeq K^0_{\bT'}(X) \otimes_\bZ K^0_{\bT''}(pt)\quad \text{and} \quad 
K_0^\bT(X)  \simeq K_0^{\bT'}(X) \otimes_\bZ K_0^{\bT''}(pt)\:,$$
as well as
$$H^{*}_\bT(X,\bQ)  \simeq H^{*}_{\bT'}(X,\bQ) \otimes_\bQ (\Lambda_{\bT''})_\bQ \quad \text{and} \quad 
H_{*}^\bT(X,\bQ)  \simeq H_{*}^{\bT'}(X,\bQ) \otimes_\bQ (\Lambda_{\bT''})_\bQ\:, $$
so that the  ``change of group homomorphisms'' above are given by tensoring with the distinguished elements
 $$\bC_{\chi^0} \in K^0_{\bT''}(pt) \quad \text{ resp.} \quad   1 \in (\Lambda_{\bT''})_\bQ\:.$$
Finally, these factorizations and therefore also the ``change of group homomorphisms''
are compatible with the corresponding equivariant Chern character and Todd class transformations. 
\ep

\begin{proof}
Regard $X$ as $X\times pt$, with the $\bT$-action corresponding to the product action of $\bT'$ on $X$ and of $\bT''$ on $pt$.
 Then the factorizations in equivariant K-theory follow from suitable K\"unneth formulae.
Similarly in equivariant (co)homology, if one chooses $U\subset V$ above also of such a product type with respect to $\bT=\bT' \oplus \bT''$.
By their definitions these fit then with the corresponding  ``change of group homomorphisms'' as stated.
The last statement finally follows from the multiplicativity under cross-products of the equivariant Chern character and equivariant Todd class transformation,  see \cite{EG0}.
\end{proof}

As a consequence, we have the following.
\bc\label{csplit}
In the setup of Proposition \ref{split}, let $\cE, \cF$ be a $\bT'$-equivariant vector bundle, resp.,  coherent sheaf on $X$, with induced $\bT$-action via the projection $\bT \to \bT'$. Then 
\be
\ch^\bT([\cE]_\bT)=\ch^{\bT'}([\cE]_{\bT'}) \ \ \text{\rm and} \ \ \ \td_*^\bT([\cF]_\bT)=\td_*^{\bT'}([\cF]_{\bT'}) 
\ee
under the ``change of group homomorphisms'' as above.
\ec
\begin{proof}
Choose a splitting $\bT=\bT' \oplus \bT''$. Then we get by  Proposition \ref{split}:
\begin{equation*}\begin{split} 
\td_*^\bT([\cF]_\bT) &=\td_*^{\bT'}([\cF]_{\bT'}) \otimes \td_*^{\bT''} (\bC_{\chi^0})\\
&=\td_*^{\bT'}([\cF]_{\bT'}) \otimes 1,
\end{split}
\end{equation*}
and similarly for the equivariant Chern character.
\end{proof} 

In particular, in the notations of Proposition \ref{orbit},  the surjection of tori $\bT \to \bT':=T_{N(\sig)}$ yields an identification
\be\label{e68}
\td_*^\bT([\omega_{V_\sig}]_\bT)=\td_*^{\bT'}([\omega_{V_\sig}]_{\bT'}).
\ee


\subsection{Generalized equivariant Hirzebruch-Riemann-Roch}\label{gehrr}
Let $X_\Sig$ be a  complete toric variety defined by a fan $\Sigma$, with $X:=X_{\Sig'} \subset X_\Sig$ a $\bT$-invariant closed algebraic subset of $X_\Sig$ defined by a star-closed subset $\Sig' \subset \Sig$. 
Let $D$ be a $\bT$-invariant Cartier divisor on $X_\Sig$. The {\it equivariant Hirzebruch polynomial of $D\vert_X$} is defined by the formula:
\be
\chi^\bT_y(X,\cO_{X_\Sig}(D)\vert_X):=\sum _{p=0}^{\dim(X)} \chi^\bT(X,\wti{\Omega}^{p}_X \otimes  \cO_{X_\Sig}(D)\vert_X) \cdot y^p \in (\Lambda^{an}_\bT)_{\bb{Q}}[y]
\subset  (\widehat{\Lambda}_\bT)_{\bb{Q}}[y] \:,
\ee
with $\wti{\Omega}^{p}_X$ denoting as in Remark \ref{subsp} 
the sheaf of Ishida $p$-forms on $X$. 

Then we have the following result:
\bt\label{gHRR}(Generalized equivariant Hirzebruch-Riemann-Roch)\newline \index{eneralized equivariant Hirzebruch-Riemann-Roch} 
In the above setup, the equivariant Hirzebruch polynomial of $D\vert_X$ is computed by the formula:
\be\label{Hp}
\begin{split}
\chi^\bT_y(X,\cO_{X_\Sig}(D)\vert_X)&=\int_{X} \ch^\bT(\cO_{X_\Sig}(D)\vert_X) \cap T^\bT_{y*}(X).
\end{split}\ee
\et
\begin{proof}
This follows as in \cite{MS1}[Theorem 2.4], by making use of Proposition \ref{p32}, Remark \ref{subsp} and the module property (\ref{modpe}) of $\td_*^\bT$.
\end{proof}

As a consequence, we obtain the following weighted version of formula \eqref{am5}:

\bc\label{c38}
Let $P$ be a full-dimensional lattice polytope with associated projective toric variety $X_P$ and ample Cartier divisor $D=D_P$. Let $X:=X_{P'}$ be the $\bT$-invariant closed algebraic subset of $X_P$ corresponding to a polytopal subcomplex $P' \subset P$ (i.e., a closed union of faces of $P$). Then:
\be\label{ghrrc}
\chi^\bT_y(X,\cO_{X_P}(D)\vert_X)= \sum_{E \preceq P'} (1+y)^{\dim(E)} \cdot \sum_{m \in \Relint(E) \cap M} e^{s(m)}.
\ee
\ec

\begin{proof} For a face $E$ of $P'$, denote by $i_E:V_{\sig_E}:=X_E \hookrightarrow X$ the inclusion of the orbit closure associated to the (cone of the) face $E$. 
Note that we have $\dim(E)=\dim(O_{\sig_E}).$  Let $\bT':=T_{N(\sig_E)}$ be the quotient torus of $\bT$ corresponding to $X_E$.

Then, by Theorem \ref{gHRR}, Proposition \ref{pad} and Proposition \ref{orbit}, the following  equality holds:
\begin{equation*}\begin{split}
\chi^\bT_y(X,\cO_{X_P}(D)\vert_X) &= \int_{X} \ch^\bT (\cO_{X_P}(D)\vert_X) \cap T^\bT_{y*}(X) \\
&= \sum_{E \preceq P'} (1+y)^{\dim(E)} \int_{X} \ch^\bT(\cO_{X_P}(D)\vert_X) \cap (i_E)_*\td^\bT_*([\omega_{X_E}]_\bT).\end{split}
\end{equation*}
It remains to prove that for any face $E$ of $P'$, we have that:
\be\label{lem2}
 \int_{X} \ch^\bT(\cO_{X_P}(D)\vert_X) \cap (i_E)_*\td^\bT_*([\omega_{X_E}]_\bT) =\sum_{m \in \Relint(E) \cap M} e^{s(m)}.
\ee
This follows from the functorial properties of the cap product and formula \eqref{tran}. Indeed,
\begin{equation*}
\begin{split}
 \int_{X} \ch^\bT(\cO_{X_P}(D)\vert_X) \cap (i_E)_*\td^\bT_*([\omega_{X_E}]_\bT) 
 &= \int_{X_E} (i_E)^* \ch^\bT(\cO_{X_P}(D)\vert_X) \cap \td^\bT_*([\omega_{X_E}]_\bT) \\
 &= \int_{X_E}  \ch^\bT((i_E)^*(\cO_{X_P}(D)\vert_X)) \cap \td^\bT_*([\omega_{X_E}]_\bT) \\
&\overset{(\ref{tran})}{=} \int_{X_E}  \ch^\bT(\cO_{X_E}(D_{E_0}-div(\chi^{m_0}))) \cap \td^\bT_*([\omega_{X_E}]) \\
&=e^{-c(m_0)} \int_{X_E}  \ch^\bT(\cO_{X_E}(D_{E_0})) \cap \td^\bT_*([\omega_{X_E}]_\bT) \\
&\overset{({*})}{=} e^{-c(m_0)} \int_{X_E}  \ch^{\bT'}(\cO_{X_E}(D_{E_0})) \cap \td^{\bT'}_*([\omega_{X_E}]_{\bT'}) \\
&\overset{(\ref{lpc2})}{=} e^{s(m_0)} \cdot \left(   \sum_{m \in \Relint(E-m_0) \cap M} e^{s(m)}  \right)\\
&=\sum_{m \in \Relint(E) \cap M} e^{s(m)},
 \end{split}
\end{equation*}
where (*) uses Proposition \ref{split} and Corollary \ref{csplit}. Here $m_0$ is a vertex of $E$ as in (\ref{tran}), so that $E_0:=E-m_0$ is a full-dimensional lattice polytope  in $Span(E_0)$ relative to the lattice $Span(E_0)\cap M$, with $X_E$ the associated toric variety.
\end{proof}

\br
For future use, we include here the following formula, which one gets as in the proof of  \eqref{lem2}, but using $\cO_{X_E}$ instead of $\omega_{X_E}$:
\be\label{lem2b}
\int_{X} \ch^\bT(\cO_{X_P}(D)\vert_X) \cap \td^\bT_*((i_E)_*[\cO_{X_E}]_\bT) =\sum_{m \in E \cap M} e^{s(m)}.
\ee
This is also a special case of Corollary \ref{c38} for $y=0$ and $P'=E$.
\er

\medskip
 As another application,  we use Theorem \ref{gHRR} in the context of a {\it globally generated} $\bT$-invariant Cartier divisor $D$ on a complete toric variety $X$ with associated torus $\bT$. Let $P_D\subset M_\bR$ be the lattice polytope corresponding to $D$, and let $X_D$ be the toric variety of the lattice polytope $P_D$, defined via the corresponding \index{generalized fan} {\it generalized fan} as in \cite{CLS}[Prop.6.2.3]. By \cite{CLS}[Thm.6.2.8], there is a proper toric morphism $f:X \to X_D$, induced by the corresponding lattice projection $N \to  N_D$ given by dividing out by the minimal cone of the generalized fan of $P_D$. In particular, $f:X \to X_D$ is a toric fibration. Let $M_D \hookrightarrow M$ be the associated inclusion of dual lattices. Choosing a vertex $m_0$ of $P_D$, we get that $P':=P_D-m_0 \subset M_D$ is a full-dimensional lattice polytope. Let $D'$ be the ample divisor on $X_D$ associated to $P'$, with $\Sigma'$ the inner normal fan of $P'$ (defining $X_D$), so that there is a one-to-one correspondence between cones $\sig' \in \Sig'$ and faces $E'$ of the lattice polytope $P'$, and by translation to the faces $E$ of $P_D$. Then, by the proof of \cite{CLS}[Thm.6.2.8], one gets that $\cO(D-div(\chi^{m_0}))\simeq f^*\cO(D')$, as $\bT$-equivariant sheaves. 

Following the notations of Proposition \ref{relmc}, let $Z \subset X$ be a closed $\bT$-invariant subset with associated multiplicities $d_\ell(Z/\sig')$ as in \eqref{dl}, denoted here via the above correspondence as $d_\ell(Z/E)$. We can now prove the following.
\bc\label{gHRRnef}
With the above notations,
\be\label{nefghrr}
\chi^\bT_y(Z,\cO_{X}(D)\vert_Z)= \sum_{E \preceq P_D} \left( \sum_{\ell \geq 0} 
(-1)^\ell \cdot d_\ell(Z/E) \cdot (1+y)^{\ell + \dim(E)} \right) \cdot  \sum_{m \in \Relint(E) \cap M} e^{s(m)}.
\ee
\ec

\begin{proof}
By the generalized Hirzebruch-Riemann-Roch Theorem \ref{gHRR}, the projection formula, and functoriality of Hirzebruch classes, we have
\begin{equation*}
\begin{split}
\chi^\bT_y &(Z,\cO_{X}(D)\vert_Z) = \int_{Z} \ch^\bT (\cO_{X}(D)\vert_Z) \cap T^\bT_{y*}(Z) \\
&= \int_{X} \ch^\bT (\cO_{X}(D)) \cap T^\bT_{y*}([Z\hookrightarrow X]) \\
&= e^{s(m_0)} \int_{X} \ch^\bT (f^*\cO_{X}(D')) \cap T^\bT_{y*}([Z\hookrightarrow X]) \\
&=e^{s(m_0)} \int_{X_D} \ch^\bT (\cO_{X}(D')) \cap f_*T^\bT_{y*}([Z\hookrightarrow X])\\
&\overset{(\ast)}{=} e^{s(m_0)} \left( \sum_{E' \preceq P'} 
 \sum_{\ell \geq 0} 
(-1)^\ell \cdot d_\ell(Z/E') \cdot (1+y)^{\ell +\dim(E')}  \cdot  \sum_{m \in \Relint(E') \cap M} e^{s(m)} \right)\\
&= \sum_{E \preceq P_D} \sum_{\ell \geq 0} 
(-1)^\ell \cdot d_\ell(Z/E) \cdot (1+y)^{\ell + \dim(E)}  \cdot  \sum_{m \in \Relint(E) \cap M} e^{s(m)},
 \end{split}
\end{equation*}
where the equality $({\ast})$ follows by using \eqref{fn90} and \eqref{lem2}.
\end{proof}

\bex[Rigidity of the equivariant $\chi_y$-genus]\label{rigid}
If, in Corollary \ref{gHRRnef}, we take $D=0$, then $P_D=\{ 0 \} \subset M \subset M_\bR$, so 
$d_\ell(Z/\{0\})$ is just the number of $\ell$-dimensional torus orbits in $Z$. Hence, formula 
\eqref{nefghrr} becomes in this case
\be\label{nefex} \chi^\bT_y(Z,\cO_{Z}) = \left(  \sum_{\ell \geq 0} 
(-1)^\ell \cdot d_\ell(Z/\{0\}) \cdot (1+y)^{\ell} \right) \cdot e^{s(0)}.
\ee
Note that this formula does not take into account any equivariant information. So, forgetting the $\bT$-action,
$$  \chi_y(Z,\cO_{Z}) =
\left(  \sum_{\ell \geq 0} 
(-1)^\ell \cdot d_\ell(Z/\{0\}) \cdot (1+y)^{\ell} \right) \:.
$$
In particular, for $Z=X$ and $y=0$, this becomes $\chi(X,\cO_{X})=1$. \qed
\eex

\begin{rem}
Note that if $D$ is an ample $\bT$-invariant Cartier divisor, then the above morphism $f$ is the identity of $X$, so formula \eqref{nefghrr} reduces to 
\eqref{ghrrc} since the multiplicities $d_\ell(Z/E)$ are given as follows: $d_\ell(Z/E)=0$ for $\ell>0$, and $d_0(Z/E)$ is either $1$ or $0$, depending wether the orbit associated to $E$ is included or not in $Z$.
\end{rem}

\begin{rem}
By forgetting the $\bT$-action in formula \eqref{nefghrr}, we get the following weighted lattice point counting for lattice polytopes associated to globally generated $\bT$-invariant Cartier divisors. More precisely, in the above notations, we get the following generalization of Theorem \ref{wco} and of \cite{MS1}[Cor.1.8] for $Z$ a $\bT$-invariant closed subset in a complete toric variety $X$:
\be\label{nefghrrc}
\chi_y(Z,\cO_{X}(D)\vert_Z)= \sum_{E \preceq P_D}\left(  \sum_{\ell \geq 0} 
(-1)^\ell \cdot d_\ell(Z/E) \cdot (1+y)^{\ell + \dim(E)}  \right) \cdot  \vert \Relint(E) \cap M \vert .
\ee
\end{rem}


\subsection{Equivariant Hirzebruch and Todd characteristic classes of simplicial toric varieties}\label{calch}
The previously mentioned formulae of Subsection \ref{necl} for the Todd class and Hirzebruch classes of a simplicial toric variety hold with the same proofs (up to some small modifications, as explained below) for the equivariant versions of these characteristic classes, compare also with \cite{EG}[Rem.4.3]. {\it Different proofs of all these results}, independent of the equivariant version of the Lefschetz-Riemann-Roch from \cite{EG}[Thm.3.1 and Rem.4.3], will be given in Subsection \ref{reprloc} via localization at the torus fixed points. Nevertheless, the approach of this subsection is useful to derive global expressions for the equivariant characteristic classes of interest, which will then be localized.
 The equivariant version of the Lefschetz-Riemann-Roch of Edidin-Graham \cite{EG}[Thm.3.1 and Rem.4.3] in the context of the Cox construction should be seen as a counterpart to the corresponding Lefschetz-Riemann-Roch Theorem of Baum-Fulton-Quart \cite{BFQ}
in the context of an action by a finite group $G$, see especially \cite{BFQ}[Section 4] for the calculation of the Todd class $td_*(X/G)$ of a corresponding quotient variety $X/G$. These methods have also  been used in \cite{CMSS} for the calculation of the Hirzbruch classes
of such a quotient variety $X/G$ for $G$ finite.\\

Let $X:=X_\Sig$ be a simplicial  toric variety associated to a fan $\Sig$ of full-dimensional convex support, e.g., $X=X_P$ is the semi-projective simplicial toric variety corresponding to a simple full-dimensional lattice polyhedron $P \subset M_\bR$. This includes the cases of full-dimensional simple lattice polytopes, as well as full-dimensional rational pointed polyhedral cones. The assumption that $\Sig$ is of full-dimensional convex support implies that $X$ contains no torus factors, so we can use the Cox construction as described in Subsection \ref{Cox}. Moreover, this allows us to make use of Remark \ref{reminj}. 

In addition, we  prove formulae for 
the equivariant Hirzebruch classes  $T_{y*}^\bT([U \hookrightarrow X])$, with $U$ the open complement of a $\bT$-invariant divisor $D_K:=\bigcup_{\rho\in K} D_\rho$, for $K \subset \Sigma(1)$. Similarly, we 
indicate here the argument even for 
the Hirzebruch classes of an orbit closure $V_{\tau}$, for $\tau \in \Sig$, pointing out the needed modifications in the proof.

The arguments here are based on the Cox construction. With the notations from Subsection \ref{Cox}, let 
$\pi:W:=\bC^r \setminus Z(\Sig) \to X$ be the toric morphism, with $G$ the kernel of the corresponding map of tori $\gamma:\wti{\bT}:= (\bC^*)^r\to \bT$ (here $r=\vert \Sigma(1) \vert$). Since $X$ is a simplicial toric variety  containing no torus factor,
it follows that $X$ is the geometric quotient $W/G$, with $G$ acting with finite stabilizers.
If $\tau$ is a cone of $\Sig$, then the orbit closure $V_{\tau}$ is the image under $\pi$ of a linear subspace $W_{\tau}\subset W$. We have the following formula (in which we omit the symbols for pushforwards under closed embeddings).
\begin{lem}
\be\label{thi}
\td_*^\bT\left((\pi_*\Lambda_yT^*_{W_{\tau}})^G\right)=(1+y)^{r-n}\cdot  \td_*^\bT(mC_y^\bT(V_{\tau}))\in \widehat{H}_*^\bT(X;\bQ)[y]\simeq \widehat{H}^*_\bT(X;\bQ)[y].
\ee
\end{lem}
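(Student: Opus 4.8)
The plan is to prove the stronger statement that the asserted identity already holds in equivariant $K$-theory, namely
\[
(\pi_*\Lambda_y T^*_{W_\tau})^G = (1+y)^{r-n}\cdot mC_y^\bT(V_\tau) \in K_0^\bT(V_\tau)[y],
\]
and then to apply the equivariant Todd transformation $\td_*^\bT$ (pushed forward under the closed embedding $V_\tau\hookrightarrow X$), using that $\td_*^\bT(mC_y^\bT(V_\tau))$ is by definition the right-hand side and that the scalar $(1+y)^{r-n}\in\bQ[y]$ carries trivial equivariant weight and hence factors out. First I would record that $W_\tau$ is a smooth (open subset of a coordinate subspace of $\bC^r$) $\widetilde{\bT}$-toric variety, so that $\Lambda_y T^*_{W_\tau}=mC_y^{\widetilde{\bT}}(W_\tau)=\sum_p[\Omega^p_{W_\tau}]_{\widetilde{\bT}}\,y^p$. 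By the Cox description recalled in Subsection \ref{Cox}, $\pi$ restricts to a geometric quotient $\pi_\tau\colon W_\tau\to V_\tau=W_\tau/G$ for the reductive group $G$ acting with finite stabilizers; since $\dim W_\tau - \dim V_\tau = (r-|\tau(1)|)-(n-|\tau(1)|)=r-n=\dim G$, the map $\pi_\tau$ has relative dimension $r-n$.

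The heart of the argument is a descent computation identifying the $G$-invariant direct images of the sheaves of $p$-forms. Because $G\subset\widetilde{\bT}$ acts with finite stabilizers, the infinitesimal action gives a $\widetilde{\bT}$-equivariant trivialization of the vertical tangent sheaf, $T_{W_\tau/V_\tau}\cong \mathfrak{g}\otimes_\bC\cO_{W_\tau}$, where $\mathfrak{g}=\mathrm{Lie}(G)$ carries the trivial $\widetilde{\bT}$-weight (as $\widetilde{\bT}$ is abelian); dually $\Omega^1_{W_\tau/V_\tau}\cong \mathfrak{g}^*\otimes\cO_{W_\tau}$ is trivial of rank $r-n$ with trivial weight. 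I claim that, $\bT=\widetilde{\bT}/G$-equivariantly,
\[
(\pi_{\tau*}\Omega^p_{W_\tau})^G \;\cong\; \bigoplus_{a+b=p}\binom{r-n}{b}\,\widehat{\Omega}^{a}_{V_\tau}.
\]
To prove this I would first establish the isomorphism over the smooth locus $V_\tau^{sm}$: there the relative cotangent sequence $0\to\pi_\tau^*\Omega^1_{V_\tau}\to\Omega^1_{W_\tau}\to\Omega^1_{W_\tau/V_\tau}\to0$, together with the projection formula $(\pi_{\tau*}\pi_\tau^*\cG)^G\cong\cG$ (a consequence of $(\pi_{\tau*}\cO_{W_\tau})^G=\cO_{V_\tau}$ for a good quotient) and the triviality of $\Omega^1_{W_\tau/V_\tau}$, yields the stated splitting of $\Lambda^p\Omega^1_{W_\tau}$ after taking invariants. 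Since $V_\tau$, being the toric variety of $Star(\tau)$, is simplicial and hence an orbifold, its singular locus has codimension $\geq 2$; and both sides of the displayed isomorphism are reflexive $\bT$-equivariant sheaves (the right side by definition of Zariski forms, the left side by the same reflexivity mechanism used in the proof of Proposition \ref{p32}). A reflexive sheaf equals the direct image $j_*$ of its restriction to any open subset whose complement has codimension $\geq 2$, so the isomorphism over $V_\tau^{sm}$ extends uniquely to all of $V_\tau$.

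Summing over $p$ with weights $y^p$ and factoring the binomial generating function $\sum_b\binom{r-n}{b}y^b=(1+y)^{r-n}$ gives
\[
(\pi_{\tau*}\Lambda_y T^*_{W_\tau})^G = (1+y)^{r-n}\sum_{a\geq0}[\widehat{\Omega}^a_{V_\tau}]_\bT\,y^a = (1+y)^{r-n}\,mC_y^\bT(V_\tau),
\]
where the last equality is Proposition \ref{p32} applied to the toric variety $V_\tau$. Applying $\td_*^\bT$ and pushing forward under $V_\tau\hookrightarrow X$ then finishes the proof. The main obstacle I anticipate is the equivariant bookkeeping in the descent step: one must verify that the vertical directions contribute only the untwisted factor $(1+y)^{r-n}$ — i.e.\ that $\mathfrak{g}^*$ carries trivial weight and that passing to $G$-invariants introduces no character twist — and one must justify the reflexivity of $(\pi_{\tau*}\Omega^p_{W_\tau})^G$ together with the smooth-locus splitting on the possibly singular orbit closure $V_\tau$. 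These are precisely the ``small modifications'' of the non-equivariant argument of \cite{MS1} referred to in the text.
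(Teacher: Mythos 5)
Your proposal replaces the paper's argument (which localizes to the affine charts $U_\sigma$ using the injectivity of $\bigoplus_\sigma i_{U_\sigma}^*$ on completed equivariant cohomology, and there factors $\pi$ as a \emph{free} torus quotient followed by a \emph{finite} quotient) by a single global descent computation in $K_0^\bT(V_\tau)[y]$. The target identity $(\pi_*\Lambda_yT^*_{W_\tau})^G=(1+y)^{r-n}\cdot mC_y^\bT(V_\tau)$ is indeed correct and, if proved, gives the lemma. However, the step on which your whole argument rests --- establishing $(\pi_{\tau*}\Omega^p_{W_\tau})^G\cong\bigoplus_{a+b=p}\binom{r-n}{b}\widehat{\Omega}^{a}_{V_\tau}$ over $V_\tau^{sm}$ via the relative cotangent sequence, and then extending across the codimension~$\geq 2$ singular locus by reflexivity --- has a genuine gap. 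The cotangent sequence has the shape you claim (sub-bundle $\pi_\tau^*\Omega^1_{V_\tau}$ with locally free quotient $\mathfrak g^*\otimes\cO_{W_\tau}$) only where $\pi_\tau$ is a submersion, i.e.\ over the locus where $G$ acts freely on $W_\tau$. At a point of $W_\tau$ with nontrivial finite stabilizer acting nontrivially on $T_wW_\tau$, the map $\pi_\tau^*\Omega^1_{V_\tau}\to\Omega^1_{W_\tau}$ drops rank and $\Omega^1_{W_\tau/V_\tau}$ acquires torsion; your identification of $\Omega^1_{W_\tau/V_\tau}$ with $\mathfrak g^*\otimes\cO$ confuses the cokernel of $d\pi_\tau^\vee$ with the (genuinely trivial) vertical tangent subsheaf $\ker d\pi_\tau$, and these differ exactly at ramification points. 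Crucially, such points can lie in $V_\tau^{sm}$ and form a \emph{divisor} there, so neither the smooth-locus argument nor the codimension-$2$ extension reaches them.

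Concretely, take $N=\bZ^2$, $\sigma=\mathrm{Cone}(e_1,\,e_1+2e_2)$, $X=U_\sigma$ (so $r=n=2$, $W=\bC^2$, $G=G_\sigma\simeq\mu_2$ acting by $(x_1,x_2)\mapsto(-x_1,-x_2)$), and $\tau=\rho_1$. Then $W_\tau=\{x_1=0\}\simeq\bC$, on which $\mu_2$ acts by $x_2\mapsto-x_2$, and $V_\tau\simeq\bC$ is everywhere smooth; yet $\pi_\tau$ is the double cover $x_2\mapsto x_2^2$, ramified over the origin, which is a codimension-one subset of $V_\tau^{sm}=V_\tau$. There your splitting of $\Omega^1_{W_\tau}$ fails (e.g.\ $\pi_\tau^*\Omega^1_{V_\tau}\to\Omega^1_{W_\tau}$ is $du\mapsto 2x_2\,dx_2$), even though the final identity $(\pi_{\tau*}\Omega^1_{W_\tau})^{\mu_2}\simeq\Omega^1_{V_\tau}$ still holds (invariant forms are $x_2g(x_2^2)\,dx_2=\tfrac12 g(u)\,du$). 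What is missing from your argument is precisely the input the paper supplies at this point: the identification of the $G_\sigma$-invariant pushforward of $\Omega^p$ under a \emph{finite} toric quotient with the sheaf of Zariski $p$-forms on the simplicial quotient (Proposition~\ref{p32} and the toric description of $\widehat{\Omega}^p$), which handles the ramified finite part; the free cotangent-sequence argument is valid only for the residual torus factor $(\bC^*)^{r-n}$, which is where the $(1+y)^{r-n}$ comes from. With that substitution your descent goes through; as written, it does not.
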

\begin{proof}
By \eqref{loc}, it suffices to check formula \eqref{thi} after restriction to each $U_\sig$, with $U_\sig$ the $\bT$-invariant open affine subset of $X$ containing the corresponding $\bT$-fixed point $x_\sig$, for $\sigma \in \Sigma(n)$. We have $\pi^{-1}(U_\sig)\simeq \bC^n \times (\bC^*)^{r-n}$, with $\wti{\bT}\simeq \bT \times (\bC^*)^{r-n}=:\bT \times \bT'$ acting on the respective factors, and the factor $\bC^n$ corresponding to the rays of $\sigma$ and $\bT'=(\bC^*)^{r-n}$ acting freely by multiplication on itself. Similarly, $G\simeq G_\sigma \times \bT'$, with $G_\sig \subset \bT$ a finite subgroup. So, above $U_\sig$, $\pi$ can be factorized as a composition of the free quotient $\wti{\pi}:\bC^n \times (\bC^*)^{r-n} \to \bC^n$ by the $\bT'$-action, followed by a finite quotient map $\pi=\pi_\sig:\bC^n \to \bC^n/{G_\sig}=U_\sig$.
Similarly, $\pi^{-1}(U_\sig \cap V_{\tau})\simeq L_\tau \times (\bC^*)^{r-n}$, with $L_\tau \subset \bC^n$ a linear subspace. Then
$$\big(\wti{\pi}_*\Lambda_y T^*_{L_\tau \times (\bC^*)^{r-n}}\big)^{\bT'}=\Lambda_y T^*_{L_\tau} \cdot (1+y)^{r-n},$$
since $T^*_{(\bC^*)^{r-n}}\simeq (\bC^*)^{r-n} \times T^*_{id}$, with $\bT'$ acting by the co-adjoint action on the cotangent space $T^*_{id}$ of $\bT'$ at the identity element $id \in \bT'$. This is the trivial $\bT'$-bundle of rank $r-n$, since $\bT'$ is an abelian group, so that 
$\Lambda_yT^*_{(\bC^*)^{r-n}}=(1+y)^{r-n}$.
Finally, in $K_0^\bT(U_\sig)[y]$, we have by Proposition \ref{p32} and Corollary \ref{csplit} that
$$\left((\pi_\sig)_* \Lambda_y T^*_{L_\tau}\right)^{G_\sig}=mC^\bT_y(V_{\tau}\cap U_\sig).$$
The desired formula \eqref{thi} follows by applying the equivariant Todd transformation. 
\end{proof}

 By the equivariant version of the Lefschetz-Riemann-Roch theorem of Edidin-Graham (see \cite{EG}[Thm.3.1 and Rem.4.3]), applied to the left-hand side of formula \eqref{thi}, the proof of the formulae of Subsection \ref{necl} for the Todd class and Hirzebruch classes of 
projective simplicial toric varieties hold with the same proofs in the equivariant setting.
For later applications, we work with the cohomological images $\td_*^\bT(X) \in \widehat{H}^*_\bT(X;\bQ)$ and ${T}^\bT_{y*}(X) \in \widehat{H}^*_\bT(X;\bQ)[y]$,
resp., $\widehat{T}^\bT_{y*}(X) \in \widehat{H}^*_\bT(X;\bQ[y]):=\prod_{i \geq 0} H^i_\bT(X;\bQ[y])$
 of these classes under equivariant Poincar\'e duality.
In the notations of Subsection \ref{Cox}, one has the following equivariant counterpart of Theorem \ref{MSt1}:
\bt\label{eqHirz} Let $X:=X_\Sig$ be a simplicial  toric variety associated to a fan $\Sig$ of full-dimensional convex support.
Then the equivariant Hirzebruch classes ${T}^\bT_{y*}(X)$ and $\widehat{T}^\bT_{y*}(X)$ are computed by: \index{equivariant Hirzebruch class}
\be\label{eHirz0}
{T}^\bT_{y*}(X)= (1+y)^{n-r} \cdot \sum_{g \in G_{\Sig}}  \prod_{\rho \in \Sig(1)} \frac{ F_{\rho} \cdot 
\big( 1+y  \cdot a_{\rho}(g)  \cdot e^{-F_{\rho}}\big)}{1-a_{\rho}(g) \cdot e^{-F_{\rho}}}  \in \widehat{H}^*_\bT(X;\bQ)[y] \:,
\ee

\be\label{eHirz}
\widehat{T}^\bT_{y*}(X)=  \sum_{g \in G_{\Sig}}  \prod_{\rho \in \Sig(1)} \frac{ F_{\rho} \cdot 
\big( 1+y  \cdot a_{\rho}(g)  \cdot e^{-F_{\rho}(1+y)}\big)}{1-a_{\rho}(g) \cdot e^{-F_{\rho}(1+y)}}  \in \widehat{H}^*_\bT(X;\bQ[y]) \:,
\ee
with $r=\vert \Sigma(1)\vert$, and $F_\rho=[D_\rho]_\bT$ denoting the equivariant fundamental class of the $\bT$-invariant divisor $D_\rho$ corresponding to the ray $\rho \in \Sig(1)$.
\et

\bc\label{ceH}
If $X:=X_\Sig$ is a smooth  toric variety associated to a fan $\Sig$ of full-dimensional convex support, then $G_\Sig$ is the trivial group.
Hence, the equivariant Hirzebruch classes of $X$ are given by:
\be
{T}^\bT_{y*}(X)= (1+y)^{n-r} \cdot  \prod_{\rho \in \Sig(1)} \frac{ F_{\rho} \cdot 
\big( 1+y  \cdot e^{-F_{\rho}}\big)}{1- e^{-F_{\rho}}}  
\ee
\be\label{enHirz}
\widehat{T}^\bT_{y*}(X)=   \prod_{\rho \in \Sig(1)} \frac{ F_{\rho} \cdot 
\big( 1+y  \cdot e^{-F_{\rho}(1+y)}\big)}{1- e^{-F_{\rho}(1+y)}} \:.
\ee
\ec

\br
In particular, by setting $y=0$ in Theorem \ref{eqHirz}, we recover the equivariant Todd class formula of \cite{BrV2} given by the following expression in $\widehat{H}^*_\bT(X;\bQ)$: \index{equivariant Todd class}
\begin{equation}\label{Todd0}
\begin{split}
\td_*^\bT(X)&= \sum_{g\in G_{\Sigma}} \prod_{\rho\in \Sigma(1)} \frac{F_{\rho}}{1-a_{\rho}(g)\cdot e^{-F_{\rho}}} .
\end{split}
\end{equation}
Similarly, setting $y=0$ in Corollary \ref{ceH}, one gets the equivariant Todd class 
for such a smooth  toric variety. 
Finally, if $X$ is projective, the specialization at $y=1$ fits with suitable $L$-classes, i.e., $\widehat{T}_{1*}(X)=L(X)$ is the Thom-Milnor $L$-class of a projective toric variety $X$ (cf. \cite{CMSS}[Cor.1.2] and \cite{MS1}[Cor.1.2]).
There is also an equivariant version of formula (\ref{todd-EM-intro}), which can be proved via the  the specialization for $y=1$ of \eqref{eHirz} exactly  as in \cite{MS1}[Sec. 6],  by using the corresponding $\bT$-equivariant Mock Hirzebruch and $T$-classes. We leave the details of this calculation to the reader.

\er

Moreover, taking the top degree of $y$ in \eqref{eHirz0} and using \eqref{38}, yields the following.
\bc\label{c311}
Under the assumptions and notations of Theorem \ref{eqHirz}, we have:
\be\label{dueq}
\td_*^\bT([\omega_X]_\bT)= \sum_{g\in G_{\Sigma}} \prod_{\rho\in \Sigma(1)} \frac{a_{\rho}(g) \cdot F_{\rho} \cdot e^{-F_{\rho}}}{1-a_{\rho}(g) e^{-F_{\rho}}} \in  \widehat{H}^*_\bT(X;\bb{Q}).
\ee
\ec

\begin{rem}\label{rem:duality}
Note that formula \eqref{dueq} coincides with \eqref{Todd0} upon substituting $-F_\rho$ for $F_\rho$, for each $\rho \in \Sig(1)$, i.e., these classes are exchanged by the {\it cohomological duality involution} on $\widehat{H}^*_\bT(X;\bb{Q})$ given by multiplication with $(-1)^i$ in degree $2i$. (Recall that under our assumptions $\widehat{H}^*_\bT(X;\bb{Q})$  is even, by \eqref{mm}.
 In fact this is true in greater generality, e.g., $\widehat{H}^*_\bT(X;\bb{Q})$  is even in case $\bT$ acts on a complex algebraic rational homology manifold with finitely many orbits, see \cite{W2}.) 
\end{rem}

In Section \ref{emf} below, formula \eqref{dueq} will be used for proving Euler-Maclaurin type formulae for lattice points in the interior of a full-dimensional polytope, generalizing formula \eqref{4b} for lattice point counting. The next results are motivated by the fact that, instead of deleting all facets of the polytope $P$, one can just delete some of the facets $F_i$, $i\in K\subset \Sigma_P(1)$. 

\begin{lem} 
Let $X:=X_\Sig$ be a simplicial toric variety associated to a fan $\Sig$ of full-dimensional convex support.
Let $U\subset X$ be the open complement of the divisor $D_K:=\bigcup_{\rho\in K} D_\rho$, for $K \subset \Sigma(1)$. 
In the notations 
of the Cox construction, let $W_\rho=\{x_\rho=0\}\subset W$ be the inverse image of $D_\rho$ under the quotient map $\pi:W\to X$. Then the preimage $\wti{U}$ of $U$ under $\pi$ is the complement of the $\wti{\bT}$-invariant normal crossing divisor $W_K=\bigcup_{\rho\in K} W_\rho$ in $W$, and
\be\label{thi2}
\td_*^\bT\big((\pi_*mC_y^{\wti{\bT}}([\wti{U}\hookrightarrow W]))^G\big)=
(1+y)^{r-n}\cdot  \td_*^\bT(mC_y^\bT([U \hookrightarrow X]))\in  \widehat{H}^*_\bT(X;\bQ)[y].
\ee
\end{lem}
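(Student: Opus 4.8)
The plan is to reduce the identity \eqref{thi2} to the already-established formula \eqref{thi} for orbit closures by an inclusion--exclusion argument, instead of repeating the chartwise analysis over the $U_\sigma$. First I would settle the preliminary identification of $\wti U$. Since the Cox quotient map sends the coordinate hyperplane $\{x_\rho=0\}$ to $D_\rho$, one has $\pi^{-1}(D_\rho)=W_\rho$, and hence $\wti U=\pi^{-1}(U)=\pi^{-1}(X\setminus D_K)=W\setminus\bigcup_{\rho\in K}W_\rho=W\setminus W_K$. The divisor $W_K$ has normal crossings because its components $W_\rho$ are the traces on $W$ of distinct coordinate hyperplanes of $\bC^r$, and it is $\wti\bT$-invariant since coordinate hyperplanes are preserved by the diagonal $(\bC^*)^r$-action.

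For the characteristic class identity, the key observation is that $[\wti U\hookrightarrow W]$ and $[U\hookrightarrow X]$ are governed by the same inclusion--exclusion over subsets $I\subset K$. In $K_0^{\wti\bT}(var/W)$ the scissor relations give
\be\label{plan-ie}
[\wti U\hookrightarrow W]=\sum_{I\subset K}(-1)^{|I|}\,[W_{\sigma_I}\cap W\hookrightarrow W],
\ee
where $W_{\sigma_I}:=\bigcap_{\rho\in I}W_\rho$ is the coordinate linear subspace attached to $I$. The term indexed by $I$ is nonzero exactly when the rays in $I$ span a cone $\sigma_I\in\Sig$ (otherwise $W_{\sigma_I}\subset Z(\Sig)$, so $W_{\sigma_I}\cap W=\emptyset$), in which case $W_{\sigma_I}\cap W=\pi^{-1}(V_{\sigma_I})$. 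The completely analogous relation $[U\hookrightarrow X]=\sum_{I\subset K,\,\sigma_I\in\Sig}(-1)^{|I|}[V_{\sigma_I}\hookrightarrow X]$ holds downstairs, using $\bigcap_{\rho\in I}D_\rho=V_{\sigma_I}$. Since each $W_{\sigma_I}\cap W$ is smooth, $mC_y^{\wti\bT}([W_{\sigma_I}\cap W\hookrightarrow W])=\Lambda_y T^*_{W_{\sigma_I}}$, which is precisely the left-hand input of \eqref{thi} for the cone $\tau=\sigma_I$.

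I would then apply the assignment $[\cF]\mapsto\td_*^\bT\big((\pi_*\cF)^G\big)$ termwise to \eqref{plan-ie}; this is the additive operation already used in \eqref{thi}, since $G$ is reductive and $\pi$ is an affine geometric quotient, so that $(\pi_*-)^G$ descends to a homomorphism $K_0^{\wti\bT}(W)\to K_0^\bT(X)$, and $\td_*^\bT$ is a homomorphism. Applying \eqref{thi} to each surviving cone $\tau=\sigma_I$ yields $\td_*^\bT\big((\pi_*\Lambda_y T^*_{W_{\sigma_I}})^G\big)=(1+y)^{r-n}\,\td_*^\bT(mC_y^\bT(V_{\sigma_I}))$. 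Summing over $I\subset K$ with the signs $(-1)^{|I|}$, factoring out $(1+y)^{r-n}$, and using the additivity of $mC_y^\bT$ and $\td_*^\bT$ together with the downstairs inclusion--exclusion, the right-hand side reassembles to $(1+y)^{r-n}\,\td_*^\bT(mC_y^\bT([U\hookrightarrow X]))$, which is exactly \eqref{thi2}.

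The step requiring the most care is the bookkeeping behind \eqref{plan-ie}: one must verify that the correspondence $I\mapsto\sigma_I$ matches the surviving terms on the two sides, i.e.\ that $W_{\sigma_I}\cap W=\emptyset$ precisely when $\bigcap_{\rho\in I}D_\rho=\emptyset$ (both being controlled by whether $\{u_\rho\}_{\rho\in I}$ spans a cone of $\Sig$), and that the scissor relations apply $\wti\bT$-equivariantly to the reduced, invariant coordinate subspaces. Should a direct argument be preferred, one entirely parallel to the proof of \eqref{thi} is available: reduce by the injectivity in \eqref{loc} (Remark \ref{reminj}) to each chart $U_\sigma$, factor $\pi$ over $U_\sigma$ as the free $\bT'$-quotient followed by the finite $G_\sigma$-quotient, note that $\wti U\cap\pi^{-1}(U_\sigma)=\big(\bC^n\setminus\bigcup_{\rho\in K\cap\sigma(1)}\{x_\rho=0\}\big)\times(\bC^*)^{r-n}$, and combine the multiplicativity of $mC_y^{\wti\bT}$ with $\Lambda_y T^*_{(\bC^*)^{r-n}}=(1+y)^{r-n}$ exactly as before. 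The inclusion--exclusion route is shorter precisely because it quotes \eqref{thi} as a black box.
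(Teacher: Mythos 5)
Your proposal is correct and follows essentially the same route as the paper: the paper also expands $mC^{\wti{\bT}}_y([\wti{U}\hookrightarrow W])$ and $mC^{\bT}_y([U\hookrightarrow X])$ by inclusion--exclusion over subsets $I\subset K$ (with $W_I=\bigcap_{\rho\in I}W_\rho$ and $D_I=\bigcap_{\rho\in I}D_\rho$) and then applies formula \eqref{thi} to the matching summands. Your extra bookkeeping — that $W_I\cap W$ and $D_I$ are simultaneously empty or equal to $\pi^{-1}(V_{\sigma_I})$ and $V_{\sigma_I}$ according to whether the rays in $I$ span a cone of $\Sig$ — is a correct elaboration of what the paper leaves implicit.
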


\begin{proof}
By the inclusion-exclusion formula for the equivariant motivic Chern classes, one has as in \eqref{Web5} that
\be mC^\bT_y([U \hookrightarrow X])=\sum_{I\subset K} (-1)^{|I|}mC^\bT_y([D_I \hookrightarrow X]) \:,\ee
with $D_I=\bigcap_{\rho \in I} D_\rho$. Similarly, 
\be mC^{\wti{\bT}}_y([\wti{U} \hookrightarrow W])=
\sum_{I\subset K} (-1)^{|I|}mC^{\wti{\bT}}_y([W_I \hookrightarrow W]),\ee
with $W_I=\bigcap_{\rho \in I} W_\rho$.
The assertion follows now by applying formula \eqref{thi} to the summands on the right-hand side of the two identities above.\end{proof}

By formula \eqref{web}, in the above notations we have:
\be\label{web2}
mC^{\wti{\bT}}_y([\wti{U} \hookrightarrow W])=[\cO_W(-W_K) \otimes \Lambda_y \Omega^1_W(\log W_K)]_{\wti{\bT}} \in K_0^{\wti{\bT}}(W)[y] \:.
\ee
Then by the equivariant version of the Lefschetz-Riemann-Roch theorem of Edidin-Graham (see \cite{EG}[Thm.3.1 and Rem.4.3]), applied to the right-hand side of formula \eqref{web2}, we get the following generalization of Theorem \ref{eqHirz}: 

\bt\label{eqHirzcomp} 
Let $X:=X_\Sig$ be a simplicial  toric variety associated to a fan $\Sig$ of full-dimensional convex support.
Let $U\subset X$ be the open complement of the divisor $D_K:=\bigcup_{\rho\in K} D_\rho$, for $K \subset \Sigma(1)$. 
The equivariant Hirzebruch classes ${T}^\bT_{y*}([U\hookrightarrow X])$ and $\widehat{T}^\bT_{y*}([U\hookrightarrow X])$ are computed by:
\be\label{eHirz0comp}
\begin{split}
{T}^\bT_{y*}&([U\hookrightarrow X]) \\ &= (1+y)^{n-r} \cdot \sum_{g \in G_{\Sig}}  \prod_{\rho \in K} \frac{ F_{\rho} \cdot 
( 1+y ) \cdot a_{\rho}(g) \cdot e^{-F_{\rho}}}{1-a_{\rho}(g) \cdot e^{-F_{\rho}}} 
\prod_{\rho \notin K} \frac{ F_{\rho} \cdot 
\big( 1+y  \cdot a_{\rho}(g)  \cdot e^{-F_{\rho}}\big)}{1-a_{\rho}(g) \cdot e^{-F_{\rho}}} \:,
\end{split}
\ee

\be\label{eHirzcomp}
\begin{split}
\widehat{T}^\bT_{y*}&([U\hookrightarrow X]) \\ &=  \sum_{g \in G_{\Sig}}  \prod_{\rho \in K} \frac{ F_{\rho} \cdot 
( 1+y ) \cdot a_{\rho}(g)  \cdot e^{-F_{\rho}(1+y)}}{1-a_{\rho}(g) \cdot e^{-F_{\rho}(1+y)}} 
\prod_{\rho \notin K} \frac{ F_{\rho} \cdot 
\big( 1+y  \cdot a_{\rho}(g)  \cdot e^{-F_{\rho}(1+y)}\big)}{1-a_{\rho}(g) \cdot e^{-F_{\rho}(1+y)}}
\:,
\end{split}
\ee
with $r=\vert \Sigma(1)\vert$, and $F_\rho=[D_\rho]_\bT$ denoting the equivariant fundamental class of the $\bT$-invariant divisor $D_\rho$ corresponding to the ray $\rho \in \Sig(1)$.
\et
\begin{proof}
The proof is similar to that of Theorem \ref{eqHirz} and is based on the equivariant Lefschetz-Riemann-Roch theorem. We only indicate here the changes. Instead of calculating the equivariant twisted Chern character $ch^{\wti{\bT}}(i_g^*-)(g)$, $g \in G$, of $\Lambda_yT^*_W$ for $i_g:W^g \hookrightarrow W$ the fixed point set inclusion, one now has to calculate this for $\cO_W(-W_K) \otimes \Lambda_y \Omega^1_W(\log W_K)$. 
Here, for $\cE \in K^0_{\wti{\bT}}(W)$, the twisted equivariant Chern character is defined as
$$ch^{\wti{\bT}}(i_g^*\cE)(g)=\sum_\chi \chi(g) \cdot ch^{\wti{\bT}}(\cE_\chi),$$
where $\cE\simeq \bigoplus_\chi \cE_\chi$ is the finite decomposition of $\cE$ into sheaves $\cE_\chi$ on which $g$ acts by a (complex-valued) character $\chi$. These sheaves $\cE_\chi$ are also $\wti{\bT}$-equivariant since $\wti{\bT}$ is an abelian group.

We now have to evaluate $ch^{\wti{\bT}}(i_g^*-)(g)$ on 
$$\cO_W(-W_K) \otimes \Lambda_y \Omega^1_W(\log W_K)\simeq \prod_{\rho \in K} \big(\cO_W(-W_\rho) \otimes pr_\rho^* \Lambda_y \Omega^1_\bC(\log \{0\}) \big)
\prod_{\rho \notin K} \Lambda_y\cO_W(-W_\rho).$$
Here $pr_\rho:W \to \bC$ is the projection to the $\rho$-th factor, with $\wti{\bT} \simeq (\bC^*)^r$ acting factorwise via the projection $a_\rho:\wti{\bT} \to \bC^*$ to the corresponding $\rho$-th factor. So the calculation can be done factorwise, with 
$$ch^{\wti{\bT}}(i_g^*\Lambda_y\cO_W(-W_\rho))(g)=i_g^*\left( 1+y  \cdot a_{\rho}(g^{-1})  \cdot e^{-z_{\rho}}\right),$$
for $\rho \notin K$, and $z_\rho:=[W_\rho]_{\wti{\bT}} \in H^*_{\wti{\bT}}(W;\bQ)$ the corresponding equivariant fundamental class as in \cite{MS1}[eqn.(5.13)]. By the inclusion-exclusion formula, for $\{0\} \subset \bC$ and $\rho \in K$, we get:
$$
[\cO_W(-W_\rho) \otimes pr_\rho^* \Lambda_y \Omega^1_\bC(\log \{0\})] =
[\Lambda_y\cO_W(-W_\rho)]-[\cO_{D_\rho}].$$
So 
\begin{equation*}
\begin{split} 
& ch^{\wti{\bT}}\left(i_g^*\big(\cO_W(-W_\rho) \otimes pr_\rho^* \Lambda_y \Omega^1_\bC(\log \{0\})\big)\right)(g) \\
&=i_g^*\big( 1+y  \cdot a_{\rho}(g^{-1})  \cdot e^{-z_{\rho}}\big) - ch^{\wti{\bT}}(i_g^* \left([\cO_W]-[\cO_W(-D_{\rho})]\right))(g) \\
& = i_g^*\big( 1+y  \cdot a_{\rho}(g^{-1})  \cdot e^{-z_{\rho}}\big) - i_g^*\big( 1 -a_\rho(g^{-1}) \cdot e^{-z_{\rho}} \big) \\
& = i_g^*\left(( 1+y ) \cdot a_{\rho}(g^{-1})  \cdot e^{-z_{\rho}}\right).
\end{split} 
\end{equation*}
Then these formulae are pushed forward via $(i_g)_*$ using the projection formula, and one applies the ring isomorphism $\phi:H^{*}_{\wti{\bT}}(W;\bQ) \simeq H^{*}_{{\bT}}(X;\bQ)$.
Note that $z_\rho$ maps to $F_\rho$ under the identification $\phi:H^2_{\wti{\bT}}(W;\bQ) \simeq H^2_{{\bT}}(X;\bQ)$. 
Finally, note that $g \in G_\Sig$ if and only if $g^{-1} \in G_\Sig$. This completes the proof for the un-normalized equivariant Hirzebruch classes.
 To get the formula for the normalized equivariant Hirzebruch classes, we just have to substitute $(1+y)^{-n}\cdot [X]_{\bT}$ for $[X]_{\bT}\in H^\bT_{2n}(X;\bQ)$ (implicitly used in the equivariant Poincar\'e duality), and
 $(1+y)\cdot F_{\rho}$ for $F_{\rho}\in H^2(X;\bQ)$.
\end{proof}

For later use, and since the needed notation was already introduced in the previous result, let us sketch a proof of the following extension of the Todd class formula \eqref{Todd0} to the $\bT$-equivariant coherent sheaf $\pi_*(\cO_W \otimes \bC_{\chi^{\wti{m}}})^G$, for $\pi:W \to X$ the quotient map of the Cox construction and $\chi^{\wti{m}}$ a character of $\wti{\bT}$.
\bl\label{l329} With the above notations, we have
\be\label{la22}
\td_*^\bT\left(\pi_*(\cO_W \otimes \bC_{\chi^{\wti{m}}})^G\right)= \sum_{g\in G_{\Sigma}} \chi^{\wti{m}}(g^{-1}) \prod_{\rho\in \Sigma(1)} \frac{F_{\rho} \cdot e^{\langle \wti{m}, e_\rho \rangle \cdot F_\rho}}{1-a_{\rho}(g) e^{-F_{\rho}}} ,
\ee
with $\{e_\rho\}_{\rho \in \Sig(1)}$ the standard basis of the lattice $\wti{N}=\bZ^{\vert \Sig(1) \vert}$.
\el
\begin{proof}
Comparing to the proof of the classical Todd class formula \eqref{Todd0}, we need to calculate 
$ch^{\wti{\bT}}(i_g^*\big(\cO_W \otimes \bC_{\chi^{\wti{m}}}\big))(g)$ instead of $ch^{\wti{\bT}}(i_g^*(\cO_W))(g)=1$. This is given by
$$ch^{\wti{\bT}}(i_g^*\big(\cO_W \otimes \bC_{\chi^{\wti{m}}}\big))(g)=i_g^* \left( \chi^{\wti{m}}(g) e^{c(\wti{m})} \right),$$
with $c(\wti{m})=\sum_\rho \langle \wti{m}, e_\rho \rangle z_\rho \in H^2_{\wti{\bT}}(W;\bb{Q})$, 
e.g., see \cite{CLS}[Prop.12.4.13(b)].
\end{proof}

\br
Note that if $K=\emptyset$, Theorem \ref{eqHirzcomp} reduces to Theorem \ref{eqHirz}. At the other extreme, if $K=\Sig(1)$, one gets for the un-normalized class first by specializing to $y=0$ just Corollary \ref{c311}, and using this one then recovers for arbitrary $y$ formula \eqref{11111} in the case of the zero cone.
\er

\br
For $y=0$, both formulae of Theorem \ref{eqHirzcomp} specialize to the following Todd class type formula, which will fit later on with the Euler-Maclaurin formulae for polytopes with some facets removed:
\be\label{t0comp}
{T}^\bT_{0*}([U\hookrightarrow X])= \sum_{g \in G_{\Sig}}  \prod_{\rho \in K} \frac{ F_{\rho} \cdot 
 a_{\rho}(g) \cdot e^{-F_{\rho}}}{1-a_{\rho}(g) \cdot e^{-F_{\rho}}} 
\prod_{\rho \notin K} \frac{ F_{\rho}}{1-a_{\rho}(g) \cdot e^{-F_{\rho}}} \:.
\ee
Note that in view of Remark \ref{subsp}, one also has 
\be\label{difer} {T}^\bT_{0*}([U\hookrightarrow X])=td_*^\bT(X) - td_*^\bT(D_K).\ee
\er


\section{Localization in equivariant $K$-theory and applications}\label{sec:4}
In this section we apply localization techniques in $\bT$-equivariant $K$-theory (Subsection \ref{sec4.1}) and $\bT$-equivariant homology theory (Subsection \ref{sec4.2}) of toric varieties, due to Brion-Vergne \cite{BrV2} and, resp., Brylinski-Zhang \cite{BZ}, for the calculation of the $\bT$-equivariant motivic Chern and Hirzebruch classes in this toric context. 
Here the calculation of the  localized equivariant Hirzebruch class for a toric variety is due to Weber \cite{W}[Thm.11.3] (see also \cite{R}[Thm.6.1]). Our localization techniques in $\bT$-equivariant homology theory are selfcontained and independent of the unpublished paper of Brylinski-Zhang \cite{BZ}.
In the simplicial context, these localizations can be made explicit by using a Lefschetz type Euler characteristic in the local affine context, 
 instead of using the global equivariant Lefschetz-Riemann-Roch theorem.
Finally, we recast in Subsection \ref{reprloc} the formulae of the previous section for the 
equivariant Hirzebruch classes of simplicial toric varieties via localization at the torus fixed points.

We restrict ourselves for simplicity to the toric context, although many results hold in greater generality and have a long history.
We refer to Goresky-MacPherson-Kottwitz \cite{GKM} for a review of the  localization at torus fixed points.
General versions of these localization results can be found in Anderson-Fulton \cite{AF}[Sections 5, 7 and 17] for equivariant (co)homology, Brion 
\cite{Br2}[Cor.2, Sec.2.3] for equivariant Chow groups, and Thomason \cite{Th}[Thm.2.1] for equivariant algebraic K-theory.
See in particular \cite{AF}[Section 17.4] and \cite{Br2}[Section 4] for the general theory of {\it equivariant multiplicities} for ``nondegenerate'' torus fixed points.

\subsection{Localization in equivariant K-theory}\label{sec4.1}
Let $X=X_\Sig$ be an $n$-dimensional toric variety with torus $\bT=T_N$ such that the fixed-point set $X^\bT \neq \emptyset$, e.g., $X$ is projective.

By \cite{BrV2}[Prop.1.5], the inclusion $i: X^\bT \hookrightarrow X$ induces an injective morphism of $K^0_\bT(pt)$-modules
$$i_*:K_0^\bT(X^\bT)  \hookrightarrow K_0^\bT(X)$$
which becomes an isomorphism $$i_*:K_0^\bT(X^\bT)_S  \simeq K_0^\bT(X)_S$$
upon localization at the multiplicative subset $S\subset \bZ[M]=K_0^\bT(pt)$ generated by the elements $1-\chi^m$, for $0\neq m \in M$. Note that $$K_0^\bT(X^\bT) = \bigoplus_{x \in X^\bT} K_0^\bT(x),$$
from which one gets via the isomorphism $i_*$ a projection map of $K_0^\bT(pt)$-modules, called here the {\it $K$-theoretic localization map at $x$}, $$pr_x:K_0^\bT(X)_S \simeq  \bigoplus_{x \in X^\bT} K_0^\bT(x)_S \lra K_0^\bT(x)_S=\bZ[M]_S.$$
Let $x=x_\sig \subset U_\sig$ be the $\bT$-fixed point corresponding to a cone $\sig\in \Sig(n)$ of maximal dimension, as in \eqref{loc}, where $U_\sig \subset X$ is the unique $\bT$-invariant open affine subset containing $x_\sig=O_\sig$. Then the localization map at $x$ factorizes via restriction over $U_\sig$ as:
$$pr_x:K_0^\bT(X)_S \lra K_0^\bT(U_\sig)_S \simeq  K_0^\bT(x_\sig)_S=\bZ[M]_S.$$

We now explain a different description of the isomorphism (denoted also by $pr_x$) $$pr_x : K_0^\bT(U_\sig)_S\simeq K_0^\bT(x_\sig)_S=\bZ[M]_S$$ by using the eigenspace decomposition \eqref{eigen}. As in \cite{BrV2}[Sect.1.3] (or \cite{CLS}[Def.13.2.2]), a formal power series $f \in \bZ[[M]]$ is called {\it summable} if there is $g \in \bZ[M]$ and a finite subset $I \subset M \setminus \{0\}$ such that in $\bZ[[M]]$ one has: $f \cdot \prod_{m \in I}(1-\chi^m)=g$. Let $$\mathbb{S}(f):=g \cdot  \prod_{m \in I}(1-\chi^m)^{-1} \in \bZ[M]_S$$
be the {\it sum} of $f$, which is easily seen to be independent of the factorization. 
Let $$\bZ[[M]]_{\rm Sum} \subset \bZ[[M]]$$ be the subset of summable elements in $\bZ[[M]]$. This is a $\bZ[M]$-submodule of $\bZ[[M]]$, and the summation map $\mathbb{S}$ induces a homomorphism of $\bZ[M]$-modules $$\mathbb{S}: \bZ[[M]]_{\rm Sum} \to \bZ[M]_S.$$
Let $\cF$ be a $\bT$-equivariant coherent sheaf on $U_\sig$. Then $W:=H^0(U_\sig;\cF)$ has an eigenspace decomposition $W=\bigoplus_{m\in M} W_{\chi^m}$ (see \cite{BrV2}[Sect.1.3]) with eigenspaces $W_{\chi^m}$ of finite dimension as in \eqref{eigen} (although $W=H^0(U_\sig;\cF)$ could be infinite dimensional). Then 
\be\label{si} \chi_\sig^\bT(\cF):=\sum_{m \in M} \dim_\bC W_{\chi^m} \cdot \chi^{m} \in \bZ[[M]]
\ee
is summable (cf. \cite{BrV2}[Prop.1.3]). This then induces a homomorphism of $\bZ[M]$-modules (see \cite{BrV2}[Rem.1.3])
\be\label{bunn}
\chi_\sig^\bT : K_0^\bT(U_\sig) \lra \bb{Z}[M]_{\rm sum}
\ee
such that the composition $\mathbb{S} \circ \chi_\sig^\bT$ induces after localization the map $pr_x$, since 
\be\label{locf} \chi_\sig^\bT(i_*([\bC_{\chi^m}])=\chi^m\in \bZ[M]\subset \bb{Z}[M]_{\rm sum}.\ee 

\br\label{sum-global}
Let $\cF$ be a $\bT$-equivariant coherent sheaf on the toric variety $X$.  Then also the global cohomology 
$H^*(X;\cF)$ has an eigenspace decomposition  with all eigenspaces  of finite dimension so that the corresponding equivariant Euler characteristic $ \chi^\bT(\cF)$ is summable (see \cite{BrV2}[Cor.1.3]).
\er
\medskip

For applications to the case of simplicial cones, we next introduce a Lefschetz type variant of the Euler characteristic $\chi_\sig^\bT$, and a corresponding summation map $\mathbb{S}$. Let $\sig \in \Sig(n)$ be a simplicial cone with $u_1,\ldots, u_n \in N=N_\sig$ the generators of the the rays $\rho_j \in \sig(1)$, $j=1,\ldots,n$. Let $N'=N'_\sig$ be the finite index sublattice of $N$ generated by $u_1,\ldots, u_n$, and consider $\sig \in N'_{\bR}=N_\bR$ so that it is smooth with respect to the lattice $N'$. With $\bT$, $\bT'$ the corresponding $n$-dimensional tori of the lattices $N$, resp., $N'$, the inclusion $N' \hookrightarrow N$ induces a toric morphism $\pi: U'_\sig \to U_\sig$ of the associated affine toric varieties. Let $G_\sig$ be the finite kernel of the epimorphism $\pi:\bT'\to \bT$, so that $U'_\sig/G_\sig \simeq U_\sig$ (e.g., see \cite{CLS}[Prop.1.3.18]).

 Let $m_i\in M=M_{\sig}$, $1\leq i\leq n$, be the unique primitive elements in the dual lattice $M$ of $N$ satisfying $ \langle m_i,u_j \rangle = 0$ for $i\neq j$ and
 $q_i:= \langle m_i,u_i \rangle > 0$
 so that the dual lattice $M'=M_{\sig}'$ of $N'$ is generated by the elements $m'_j:=\frac{m_j}{q_j}.$
Let $a_{\rho_j}:G_\sig \to \bC^*$ be the characters of $G_\sig$ as introduced in \eqref{a-intr}.  
  
For $\cF'$ a $\bT'$-equivariant coherent sheaf on $U'_\sig$, the vector space $W':=H^0(U'_\sig;\cF')$ has an eigenspace decomposition $W'=\bigoplus_{m'\in M'} W'_{\chi^{m'}}$ as before. Since $\bT'$ is abelian, its finite subgroup $G_\sig$ acts on $W'$ respecting this eigenspace decomposition. We can then introduce the Lefschetz type Euler characteristic
  \be
  tr^{\bT'}_\sig(\cF'):=\frac{1}{\vert G_\sig \vert} \bigoplus_{m'\in M'} \sum_{g \in G_\sig}
  tr(g: W'_{\chi^{m'}} \to W'_{\chi^{m'}}) \cdot \chi^{m'} \in \bC[[M']]_{\rm sum}.
  \ee
  In this context, the notion of {\it summable} is defined almost as above, but using the multiplicative subset $S' \subset \bC[M']$ generated by elements $1-a\cdot \chi^{m'}$, for $0 \neq m' \in M'$ and $a \in \bC^*$. This induces a homomorphism of $\bZ[M']$-modules
  $$ tr^{\bT'}_\sig: K_0^\bT(U'_\sig) \lra \bb{C}[M']_{\rm sum}.$$
  The fact that $tr^{\bT'}_\sig(\cF')$ is summable will be explained below for sheaves $\cF'=\cO_{U'_\sig}\otimes \bC_{\chi^{m'}}$, with $m' \in M'$, whose classes generate $K_0^\bT(U'_\sig)$ (as in the proof of \cite{BrV2}[Cor.1.2]). There is also a  corresponding summation map
  $$\mathbb{S}': \bC[[M']]_{\rm sum} \to \bC[M']_{S'},$$
  so that the following diagram of $\bZ[M]$-linear maps commutes:
  \begin{equation}\begin{CD}\label{trace}
K_0^{\bT'}(U'_\sig)  @>tr^{\bT'}_\sig>> \bC[[M']]_{\rm sum} @>\mathbb{S}'>> \bC[M']_{S'} \\
@V\pi_*^{G_\sig}VV   @AAA @AAA \\
K_0^\bT(U_\sig)  @>\chi^{\bT}_\sig>> \bZ[[M]]_{\rm sum} @>\mathbb{S}>> \bZ[M]_{S} \ .
\end{CD}
\end{equation}
Here, $\pi_*^{G_\sig}$ is induced by the corresponding exact functor given by taking the $G_\sig$-invariant part of the pushforward for the finite map $\pi$, which is $\bT=\bT'/G_\sig$-equivariant. The monomorphism $\bZ[[M]]_{\rm sum} \to \bC[[M']]_{\rm sum}$  and the algebra map $\bZ[M]_{S} \to \bC[M']_{S'}$ are induced by the lattice injection $M \hookrightarrow M'$.


\bex
 If $\cF=\cO_X\vert_{U_\sig}$, then 
 $$\chi_\sig^\bT(\cO_X\vert_{U_\sig})=\sum_{m \in \sigma^\vee \cap M} \chi^{-m} \in  \bb{Z}[M]_{\rm sum}.$$
If, moreover, $\sig$ is a smooth cone with $m_{\sig,i}$, $i=1,\ldots,n$, the minimal generators of $\sig^\vee$, then
$$
\chi_\sig^\bT(\cO_X\vert_{U_\sig})=\prod_{i=1}^n (\sum_{k\geq 0} 
(\chi^{-m_{\sig,i}})^k ),
$$
hence  (as in \cite{CLS}[Lem.13.2.4])
\be\label{sumsm}
\mathbb{S}(\chi_\sig^\bT(\cO_X\vert_{U_\sig}))=\prod_{i=1}^n \frac{1}{1-\chi^{-m_{\sig,i}}}.
\ee
If $\sig$ is only a simplicial cone, with the same notation for the minimal generators of $\sig^\vee$, we get for  $\cF'=\cO_{U'_\sig}$ that
$$
tr_\sig^{\bT'}(\cO_{U'_\sig})=\frac{1}{\vert G_\sig \vert} \sum_{g \in G_\sig} \prod_{i=1}^n \left(\sum_{k\geq 0} 
(a_{\rho_i}(g^{-1}) \cdot \chi^{-m'_{\sig,i}})^k \right),
$$
hence this series is summable, and 
by applying diagram \eqref{trace} to $\cF'=\cO_{U'_\sig}$, with $\cO_{U_\sig}\simeq \pi_*^{G_\sig}(\cO_{U'_\sig})$, we recover {\it Molien's formula} (see \cite{BrV2}[page 24]):
\be\label{sumsmu}
\mathbb{S}(\chi_\sig^\bT(\cO_X\vert_{U_\sig}))=\frac{1}{\vert G_\sig \vert} \sum_{g \in G_\sig}  \prod_{i=1}^n \frac{1}{1-a_{\rho_i}(g^{-1}) \cdot \chi^{-m'_{\sig,i}}} \in \bC[M']_{S'}.
\ee
Recall here that, by our convention (following \cite{CLS}), $\bT$ acts on $\chi^m$ by $t \cdot \chi^m = \chi^m(t^{-1}) \chi^m$. Note that \cite{BrV2} uses Oda's convention \cite{O}[pag.6]: $t \cdot \chi^m = \chi^m(t) \chi^m$, which explains the sign difference. \qed
\eex

\bex
Consider the case of a simplicial cone  $\sig \in \Sig(n)$ with minimal generators of $\sig^\vee$ as before. We get for   $\cF'=\cO_{U'_\sig} \otimes \bC_{\chi^{m'}}$ (with $m' \in M'$) that
$$
tr_\sig^{\bT'}(\cO_{U'_\sig} \otimes \bC_{\chi^{m'}})=\frac{1}{\vert G_\sig \vert} \sum_{g \in G_\sig} \chi^{m'}(g) \cdot \chi^{m'} \prod_{i=1}^n (\sum_{k\geq 0} 
\left(a_{\rho_i}(g^{-1}) \cdot \chi^{-m'_{\sig,i}})^k \right),
$$
hence this series is summable, and 
by applying diagram \eqref{trace} to $\cF'=\cO_{U'_\sig}\otimes \bC_{\chi^{m'}}$, with $\cF:=\pi_*^{G_\sig}(\cO_{U'_\sig}\otimes \bC_{\chi^{m'}})$, we get
\be\label{sumsmus}
\mathbb{S}(\chi_\sig^\bT(\cF))=\frac{1}{\vert G_\sig \vert} \sum_{g \in G_\sig}   \chi^{m'}(g) \cdot \chi^{m'} \prod_{i=1}^n \frac{1}{1-a_{\rho_i}(g^{-1}) \cdot \chi^{-m'_{\sig,i}}} \in \bC[M']_{S'}.
\ee
\qed
\eex

\bex
Back to the general case, for $\sig \in \Sig(n)$, consider $\cF=\cO_X(D)\vert_{U_\sig}$, with $D$ a $\bT$-invariant Cartier divisor on $X$. Then one gets
$$\chi_\sig^\bT(\cO_X(D)\vert_{U_\sig})=\chi^{-m_\sig} \sum_{m \in \sigma^\vee \cap M} \chi^{-m} \in  \bb{Z}[M]_{\rm sum},$$ where $m_\sig \in M$ is uniquely defined so that $D\vert_{U_\sig}=div(\chi^{-m_\sig})\vert_{U_\sig}$. The sequence $\{m_\sig \in M \mid \sig \in \Sig(n)\}$ is the Cartier data of $D$, in the sense of \cite{CLS}[Thm.4.2.8]. In other words, $\cO_X(D)\vert_{U_\sig} \simeq U_\sig \times \bC_{\chi^{-m_\sig}}$ as $\bT$-equivariant line bundles (e.g., see \cite{CLS}[page 609]), so that for any $\bT$-equivariant coherent sheaf on $X$, we have 
\be\label{lun}
\chi_\sig^\bT\left((\cF \otimes \cO_X(D))\vert_{U_\sig}\right)=\chi^{-m_\sig} \cdot \chi_\sig^\bT(\cF \vert_{U_\sig}).
\ee
\qed
\eex

\bex\label{extw}
Generalizing the case of $\cO_X$, by \cite{CLS}[Prop.8.2.18], one has for any $0 \leq p \leq n=\dim(X)$:
$$\chi_\sig^\bT(\widehat{\Omega}^p_X\vert_{U_\sig})=
\sum_{\tau \preceq \sigma} {\dim(O_\tau) \choose p} 
\sum_{m \in {\rm Relint}(\sigma^\vee \cap \tau^\perp) \cap M} \chi^{-m} \in  \bb{Z}[M]_{\rm sum}.$$
Hence,
\be\label{cou}\chi_\sig^\bT(\omega_X\vert_{U_\sig})=
\sum_{m \in {\rm Relint}(\sigma^\vee) \cap M} \chi^{-m} \in  \bb{Z}[M]_{\rm sum}.\ee
In particular, if $X$ is quasi-projective, then
$$\chi_\sig^\bT(mC^\bT_y(X)\vert_{U_\sig})=
\sum_{\tau \preceq \sigma} (1+y)^{\dim(O_\tau)} 
\sum_{m \in {\rm Relint}(\sigma^\vee \cap \tau^\perp) \cap M} \chi^{-m} \in  \bb{Z}[M]_{\rm sum} \otimes_\bZ \bZ[y].$$
If, moreover, $\sig$ is a smooth cone with $m_{\sig,i}$, $i=1,\ldots,n$, the minimal generators of $\sig^\vee$, then
$$
\chi_\sig^\bT\left(mC^\bT_y(X)\vert_{U_\sig}\right)=\prod_{i=1}^n \left(1+(1+y) \cdot \sum_{k\geq 1} 
(\chi^{-m_{\sig,i}})^k \right),
$$
hence  
\be\label{sumsmmc}
\mathbb{S}\left(\chi_\sig^\bT(mC^\bT_y(X)\vert_{U_\sig})\right)=\prod_{i=1}^n \frac{1+y \cdot \chi^{-m_{\sig,i}}}{1-\chi^{-m_{\sig,i}}} \in \bZ[M]_S.
\ee
Consider now the case of a simplicial cone  $\sig$ with minimal generators of $\sig^\vee$ as before, and let $\cF'=\Omega^p_{U'_\sig}$, $p \geq 0$, resp., $[\cF']=mC^{\bT'}_y(U'_{\sig})=\sum_{p=0}^n [\Omega^p_{U'_\sig}]\cdot y^p \in K_0^{\bT'}(U'_\sig)[y]$.  Then 
$$
tr_\sig^{\bT'}\left(mC^{\bT'}_y(U'_\sig)\right)=\frac{1}{\vert G_\sig \vert} \sum_{g \in G_\sig}  \prod_{i=1}^n \left(1+(1+y) \cdot \sum_{k\geq 1} 
(a_{\rho_i}(g^{-1}) \cdot \chi^{-m'_{\sig,i}})^k \right).
$$
By applying diagram \eqref{trace} to $\cF'=\Omega^p_{U'_\sig}$, with $\cF=\pi_*^{G_\sig}(\Omega^p_{U'_\sig}) \simeq \widehat{\Omega}^p_X\vert_{U_\sig}$, resp., $[\cF']=mC^{\bT'}_y(U'_{\sig})$, with $[\cF]= mC^\bT_y(X)\vert_{U_\sig}$, 
 we get
\be\label{sumsmusd}
\mathbb{S}\left(\chi_\sig^\bT(mC^\bT_y(X)\vert_{U_\sig})\right)=\frac{1}{\vert G_\sig \vert} \sum_{g \in G_\sig}  \prod_{i=1}^n \frac{1+y \cdot a_{\rho_i}(g^{-1}) \cdot \chi^{-m'_{\sig,i}}}{1-a_{\rho_i}(g^{-1}) \cdot \chi^{-m'_{\sig,i}}} \in \bC[M']_{S'}.
\ee

Finally, one can twist the sheaves $\widehat{\Omega}^p_X$ by $\cO_X(D)$, for $D$ a $\bT$-invariant Cartier divisor, and use \eqref{lun} to get the corresponding identities. \qed
\eex

\br \label{Molien}
Let $V=\bC^n$ be a $\bT\times G$-represention with $G$ a finite group and $0$  an ``attractive'' torus fixed point (i.e., the  torus weights $m'_i$ of the coordinate $\bT$-characters  for $i=1,\dots,n$ are contained in some open half space).  For $g\in G$, denote  
by $a_{\rho_i}(g)\in \bC^*$ the values for $g$ of the corresponding characters of the cyclic subgroup $\langle g\rangle \subset G$ generated by $g$. Then the same argument of proof shows for the $\bT$-variety $X:=\bC^n/G$ (with its isolated 
``attractive'' torus fixed point $x:=0\in \bC^n/G$):
$$\mathbb{S}\left(\chi^\bT(mC^\bT_y(X)\right)=\frac{1}{\vert G\vert} \sum_{g \in G}  \prod_{i=1}^n \frac{1+y \cdot a_{\rho_i}(g^{-1}) \cdot \chi^{-m'_{i}}}{1-a_{\rho_i}(g^{-1}) \cdot \chi^{-m'_{i}}}\:.$$
In the special case of a $\bT=\bC^*$-action, such a formula is also discussed in \cite{DBW}[Thm.1] for a comparison of the  localized equivariant Hirzebruch class of $X$ with an (extended) Molien series. 
\er

\br \label{Ishida-loc}
If, more generally, we work with $X=X_{\Sig'}$ a closed $\bT$-invariant algebraic subset of 
the toric variety $X_\Sig$ corresponding to a star-closed subset $\Sig' \subset \Sig$, similar localization formulae hold for the sheaves $\wti{\Omega}^p_X$ of Ishida $p$-forms of $X$ (extended by zero to $X_\Sig$). More precisely, using now \cite{A}[Sect.3], one gets for $0 \leq p \leq \dim(X)$:
\be\label{isloc} \chi_\sig^\bT(\wti{\Omega}^p_X\vert_{U_\sig})=
\sum_{\tau \preceq \sigma, \tau \in \Sig'} {\dim(O_\tau) \choose p} 
\sum_{m \in {\rm Relint}(\sigma^\vee \cap \tau^\perp) \cap M} \chi^{-m} \in  \bb{Z}[M]_{\rm sum}.\ee
In particular, if $X=V_\tau\subset X_\Sig$ is the orbit closure for $\tau \preceq \sig$ of dimension $d$, then $\wti{\Omega}^d_X=\omega_{V_\tau}$, so we get by \eqref{11110} that
\be\label{eq111}
\chi_\sig^\bT\left(mC_0([O_\tau \hookrightarrow X_\Sig])\vert_{U_\sig}\right)=\chi_\sig^\bT(\omega_{V_\tau}\vert_{U_\sig})=
\sum_{m \in {\rm Relint}(\sigma^\vee \cap \tau^\perp) \cap M} \chi^{-m} \in  \bb{Z}[M]_{\rm sum}.
\ee
\er

\bex Let $Z=X_{\Sig'}$ be a closed $\bT$-invariant algebraic subset of $X=X_\Sig$ corresponding to a star-closed subset $\Sig' \subset \Sig$, with open complement $V:=X \setminus Z$. Then
\be\label{mclocal}
\begin{split}
\chi_\sig^\bT&\left(mC^\bT_y([V\hookrightarrow X])\vert_{U_\sig}\right)\\ &=
\sum_{\tau \preceq \sigma; \tau \notin \Sig'} (1+y)^{\dim(O_\tau)} 
\sum_{m \in {\rm Relint}(\sigma^\vee \cap \tau^\perp) \cap M} \chi^{-m} \in  \bb{Z}[M]_{\rm sum} \otimes_\bZ \bZ[y].
\end{split}
\ee
We now consider the case when $Z=D_K:=\bigcup_{\rho\in K} D_\rho$, for $K \subset \Sigma(1)$, and $\sig \in \Sig(n)$ is a smooth, resp., simplicial cone.\newline
Let us first assume that $\sig$ is smooth, with $m_{\sig,i}$, $i=1,\ldots,n$, the minimal generators of $\sig^\vee$. Then 
\begin{multline*}
\chi_\sig^\bT\left(mC^\bT_y([V\hookrightarrow X])\vert_{U_\sig}\right)\\
=\prod_{\rho_i \in \sig(1)\cap K} \left((1+y) \cdot \sum_{k\geq 1} 
(\chi^{-m_{\sig,i}})^k \right) \cdot \prod_{\rho_i \in  \sig(1)\setminus K} \left(1+(1+y) \cdot \sum_{k\geq 1} 
(\chi^{-m_{\sig,i}})^k \right),
\end{multline*}
hence  
\be\label{sumsmme}
\mathbb{S}\left(\chi_\sig^\bT(mC^\bT_y([V\hookrightarrow X])\vert_{U_\sig})\right)=
\prod_{\rho_i \in \sig(1)\cap K} 
\frac{(1+y) \cdot \chi^{-m_{\sig,i}}}{1-\chi^{-m_{\sig,i}}} 
\prod_{\rho_i \in  \sig(1)\setminus K} 
 \frac{1+y \cdot \chi^{-m_{\sig,i}}}{1-\chi^{-m_{\sig,i}}} \in \bZ[M]_S.
\ee
Consider now the case of a simplicial cone  $\sig$ with minimal generators of $\sig^\vee$ as before, and let $V'=U'_\sig \setminus D_{K'}$, with 
$D_{K'}=\bigcup_{\rho\in K'} D'_\rho$, for $K':=\sig(1) \cap K$. Let $[\cF']=mC^{\bT'}_y([V' \hookrightarrow U'_{\sig}])$.  Then 
\begin{equation*}\begin{split}
tr_\sig^{\bT'} &\left(mC^{\bT'}_y([V \hookrightarrow U'_\sig])\right)\\
&=\frac{1}{\vert G_\sig \vert} \sum_{g \in G_\sig}  \prod_{\rho_i \in K'} \left((1+y)  \sum_{k\geq 1} 
(a_{\rho_i}(g^{-1})  \chi^{-m'_{\sig,i}})^k \right) \\ & \cdot \prod_{\rho_i \in  \sig(1)\setminus K'} \left(1+(1+y)  \sum_{k\geq 1} 
(a_{\rho_i}(g^{-1})  \chi^{-m'_{\sig,i}})^k \right).
\end{split}
\end{equation*}
By applying \eqref{trace} to $[\cF']=mC^{\bT'}_y([V' \hookrightarrow U'_{\sig}])$, with $[\cF]=\pi_*^{G_\sig}([\cF'])= mC^\bT_y([V \hookrightarrow X])\vert_{U_\sig}$, 
 we get
 \begin{multline}\label{sumsmusf}
\mathbb{S}\left(\chi_\sig^\bT(mC^\bT_y([V \hookrightarrow X])\vert_{U_\sig}\right)=\\
\frac{1}{\vert G_\sig \vert} \sum_{g \in G_\sig}  \prod_{\rho_i\in \sig(1) \cap K} \frac{(1+y) \cdot a_{\rho_i}(g^{-1}) \cdot \chi^{-m'_{\sig,i}}}{1-a_{\rho_i}(g^{-1}) \cdot \chi^{-m'_{\sig,i}}} \cdot \prod_{\rho_i\in \sig(1) \setminus K} \frac{1+y \cdot a_{\rho_i}(g^{-1}) \cdot \chi^{-m'_{\sig,i}}}{1-a_{\rho_i}(g^{-1}) \cdot \chi^{-m'_{\sig,i}}}.
\end{multline}
\qed
\eex

We now explain the following equality, originally due to Brion \cite{Br1}, where we follow the approach of \cite{BrV2}[Cor.1.3] (see also \cite{CLS}[Thm.13.2.8] for a special case):
\bt\label{thloc}
Let $X=X_\Sig$ be a complete toric variety of complex dimension $n$, with $\cF$ a $\bT$-equivariant coherent sheaf on $X$. Then the $K$-theoretic Euler characteristic of $\cF$ can be calculated via localization at the $\bT$-fixed points as:
\be
\chi^\bT(X,\cF)=\sum_{\sig \in \Sigma(n)} \mathbb{S}\left(\chi_\sig^\bT(\cF\vert_{U_\sig})\right) \in \bb{Z}[M]_S.
\ee
\et

\begin{proof} Let $x_\sig=O_\sig \subset U_\sig$ be the torus fixed point corresponding to $\sig \in \Sig(n)$. The assertion follows from the commutativity of the lower  right square of the following diagram:
$$\begin{CD}
\bigoplus_{\sig  \in \Sigma(n)} K^\bT_0(x_\sig) @>i_*>> K_0^\bT(X) @>\chi^\bT>> \bZ[M] \\
@VVV @VVV @VVV \\
\bigoplus_{\sig  \in \Sigma(n)} K^\bT_0(x_\sig)_S @>\sim>> K_0^\bT(X)_S @> \chi^\bT>> \bZ[M]_S \\
@|  @V \wr VV @|\\
\bigoplus_{\sig  \in \Sigma(n)} K^\bT_0(x_\sig)_S @>\sim>> \bigoplus_{\sig  \in \Sigma(n)} K^\bT_0(U_\sig)_S @>\sum_\sig (\mathbb{S} \circ 
\chi_\sig^\bT)>> \bZ[M]_S \:.
\end{CD}$$
The vertical maps are the natural localization maps (upon restriction to the $U_\sig$'s for the middle map). The commutativity of the outer square follows from \eqref{locf}. The lower left square commutes by the functoriality of restriction and localization. This yields the desired commutativity of the lower right square.
\end{proof}

As a consequence, we get the following weighted version of Brion's equality \cite{Br1}[page 655], see also \cite{CLS}[Cor.13.2.10(a)].
\bc\label{wBr}
Let $P$ be a full-dimensional lattice polytope with associated projective toric variety $X=X_P$ and ample Cartier divisor $D=D_P$. For each vertex $v$ of $P$, consider the cone $C_v={\rm Cone}(P \cap M -v)=\sigma_v^\vee$, with faces $E_v={\rm Cone}(E \cap M -v)$ for $v\in E$. 
Then the following identity holds in $\bZ[M]_S \otimes_\bZ \bZ[y]$:
\be\label{f94} \chi^\bT\left(X, mC^\bT_y(X) \otimes \cO_X(D)\right)= \sum_{v \ \text{\rm vertex}} \chi^{-v} \cdot \mathbb{S} \left( \sum_{v \in E \preceq P} (1+y)^{\dim(E)} \cdot \sum_{m \in \Relint(E_v) \cap M} \chi^{-m} \right).
\ee
\ec

\begin{proof}
By the definition of the inner normal fan $\Sig_P$ of $P$, one has for any vertex $v$ of $P$ that $\sigma_v=C_v^\vee$, with $C_v$ defined as above (see \cite{CLS}[page 76]). By dualizing, it follows that $C_v=\sigma_v^\vee$ (cf. \cite{CLS}[Prop.1.2.4]). By \cite{CLS}[Prop.1.2.10], the faces $E_v$ of $C_v$ are exactly of the form $\sigma_v^\vee \cap \tau ^\perp$, with $\tau \preceq \sigma_v$. Note also that the Cartier data for $D=D_P$ over $U_{\sig_v}$ is exactly given by $m_{\sig_v}=v\in M$, see \cite{CLS}[(4.2.8)]. 

The desired identity \eqref{f94} follows now from Theorem \ref{thloc} together with Example \ref{extw} and formula \eqref{lun}.
\end{proof}

\br
A direct calculation of $\chi^\bT\left(X, mC^\bT_y(X) \otimes \cO_X(D)\right)$, without using localization techniques, can be obtained from \cite{CLS}[Lem.9.4.8 and Thm.9.3.1], as follows:
\be\label{f95}
\chi^\bT\left(X, mC^\bT_y(X) \otimes \cO_X(D)\right)=\sum_{E \preceq P} (1+y)^{\dim(E)} \cdot \sum_{m \in \Relint(E) \cap M} \chi^{-m} \in \bZ[M]\otimes_\bZ \bZ[y].
\ee
This is a $K$-theoretic version of Corollary \ref{c38}. For $y=0$, formula \eqref{f95} reduces to
$$\chi^\bT(X, \cO_X(D))=\sum_{m\in P \cap M} \chi^{-m} \in \bZ[M],$$
which can already be deduced from \eqref{am1} and \eqref{am2}.

For $y=0$, formula \eqref{f94} therefore reduces to Brion's equality:
\be
\sum_{m\in P \cap M} \chi^{-m} = \chi^\bT(X, \cO_X(D)) = \sum_{v \ \text{\rm vertex}} \chi^{-v} \cdot \mathbb{S} \left( \sum_{m \in C_v \cap M} \chi^{-m} \right).
\ee
\er

\br
Equation \eqref{f94} also holds for  $P$ be a full-dimensional lattice polyhedron with associated semi-projective toric variety $X=X_P$ and nef  Cartier divisor $D=D_P$ (as in Remark \ref{polyhedra}), if one uses on the left side
$$\mathbb{S} \left( \chi^\bT\left(X, mC^\bT_y(X) \otimes \cO_X(D)\right)\right)$$
as in Remark \ref{sum-global} (using \cite{BrV2}[Cor.1.3]).
\er


\subsection{Localization in equivariant homology}\label{sec4.2}
Let $X_\Sig$ be an $n$-dimensional toric variety with torus $\bT=T_N$.
Let $X=X_{\Sig'}$ be a $\bT$-invariant closed algebraic subset of $X_\Sig$, defined by a star-closed subset $\Sig' \subset \Sig$,
such that the fixed-point set $X^\bT \neq \emptyset$.

By \cite{BZ}[Lem.8.4, Lem.8.5], the inclusion $i: X^\bT \hookrightarrow X$ induces an injective morphism of $H^{*}_\bT(pt;\bQ)$-modules $$i_*:\widehat{H}_{*}^\bT(X^\bT;\bQ)  \hookrightarrow \widehat{H}_{*}^\bT(X;\bQ)$$
which becomes an isomorphism $$i_*:\widehat{H}_{*}^\bT(X^\bT;\bQ)_L  \simeq \widehat{H}_{*}^\bT(X;\bQ)_L$$
upon localization at the multiplicative subset $L\subset (\Lambda_\bT)_\bQ=H^{*}_\bT(pt;\bQ)$ generated by the elements $\pm c(m)$, for $0\neq m \in M$ (cf. \cite{BZ}[Cor.8.9]).

 In fact, for a $\bT$-orbit $O_\sigma$ with stabilizer torus $\bT_\sigma\subset \bT$ one has
$$O_\sigma \simeq  \bT / \bT_\sigma\simeq \bT\times_{\bT_\sigma}\{pt\}$$
so that 
$$H_*^{\bT, BM}(O_\sig;\bQ) \simeq H_*^{\bT_\sig, BM}(pt;\bQ) \simeq Sym_{\bQ}(M_\sig)$$
is concentrated in even degrees, with $M_\sig:=M/\sig^{\perp} \cap M$ the character lattice of $\bT_\sig$.
So for $\sig$ of positive dimension (i.e., $O_\sig$ is not a $\bT$-fixed point), there is some $0\neq m\in \sig^{\perp} \cap M$
with $\pm c(m)$ acting by zero on $H_*^{\bT, BM}(O_\sig;\bQ)$. Letting $X_i\subset X$ be the closed union of $\bT$-orbits in $X$ of dimension at most $i$, with $X_i\backslash X_{i-1}$ a disjoint union of orbits  $O_\sigma$ of dimension $i$ (and $X_{-1}:=\emptyset$), the stated results follow by induction on $i$ (with $X^\bT=X_0$) from the short exact sequences (coming from \eqref{long-BM}):
$$0 \to H^{\bT, BM}_{2k}(X_{i-1};\bQ) \to H^{\bT, BM}_{2k}(X_i;\bQ) \to H^{\bT, BM}_{2k}(X_i\backslash X_{i-1};\bQ) \to 0 \:.$$

\br In particular, by applying the localization isomorphism to both $X$ and $X_\Sig$, it follows that the homomorphism $\widehat{H}_{*}^\bT(X;\bQ)_L \to \widehat{H}_{*}^\bT(X_\Sig;\bQ)_L$ induced by inclusion is injective. So in the following it suffices to work in the localized homology $\widehat{H}_{*}^\bT(X_\Sig;\bQ)_L$ of the ambient toric variety.
\er

Let us now assume that $X=X_\Sig$. Since $$\widehat{H}_{*}^\bT(X^\bT;\bQ) = \bigoplus_{x \in X^\bT} \widehat{H}_{*}^\bT(x;\bQ),$$
one gets via the isomorphism $i_*$ a projection map of $H^{*}_\bT(pt;\bQ)$-modules, called here  the {\it homological localization map at $x$}, 
$$pr_x:\widehat{H}_{*}^\bT(X;\bQ)_L \simeq  \bigoplus_{x \in X^\bT} \widehat{H}_{*}^\bT(x;\bQ)_L \lra \widehat{H}_{*}^\bT(x;\bQ)_L = L^{-1}(\widehat{\Lambda}_\bT)_\bQ.$$

\br
If $X$ is a toric variety with ${H}_{*}^\bT(X;\bQ)$  a finitely generated free $(\Lambda_\bT)_\bQ$-module (e.g., $X$ is a complete simplicial toric variety, see \cite{CLS}[Prop.12.4.7]), then the  algebraic localization map 
$\widehat{H}_{*}^\bT(X;\bQ) \to \widehat{H}_{*}^\bT(X;\bQ)_L$ is injective.
\er

 Before giving a more classical description of the homological localization map $pr_x$ in the context of simplicial toric varieties, let us formulate the following compatibility of $pr_x:\widehat{H}_{*}^\bT(X;\bQ)_L \to \widehat{H}_{*}^\bT(x;\bQ)_L$ with cap products. As before, $X=X_\Sig$ is a toric variety, with $x \in X^\bT$, and let $i_x:\{x\} \hookrightarrow X$ be the inclusion map. Then, for any $a \in \widehat{H}^{*}_\bT(X;\bQ)_L$ and $b \in \widehat{H}_{*}^\bT(X;\bQ)_L$, we have:
\be\label{proj}
pr_x(a \cap b)=i_x^*(a) \cap pr_x(b).
\ee
This follows from the definition of $pr_x$, since the cap product $\cap$ commutes with pullback for open inclusions, together with the projection formula for a closed inclusion.

\medskip

Assume now that $X=X_\Sig$ is a {\it simplicial} toric variety, so that one has equivariant Poincar\'e duality with $\bQ$-coefficients (recall that in the non-complete case, ${H}_{*}^\bT$ denotes equivariant Borel-Moore homology):
$$\cap [X]_\bT : \widehat{H}^{*}_\bT(X;\bQ)_L  \overset{\sim}{\lra}   \widehat{H}_{*}^\bT(X;\bQ)_L.$$
For this reason, in the following we use  in this context interchangeably homology and cohomology, with a cap product in homology corresponding to multiplication in cohomology.

\bp\label{pr17}
Let $X=X_\Sig$ be a simplicial toric variety of dimension $n$, with $x_\sig \in X^\bT$ a torus fixed point and inclusion map $i_\sig:\{x_\sig\} \hookrightarrow X$. Then 
\be\label{f106}
pr_{x_\sig}=\frac{i_\sig^*}{Eu^\bT_X(x_\sig)}: \widehat{H}^{*}_\bT(X;\bQ)_L  \to \widehat{H}^{*}_\bT(x_\sig;\bQ)_L,
\ee
with the generalized Euler class  of the fixed point $x_\sig$ in $X$ defined by $$0\neq Eu^\bT_X(x_\sig):=i_\sig^* \left( \mult(\sig) \cdot \prod_{\rho \in \sig(1)} [D_\rho]_\bT\right) \in \bQ\cdot L.$$
If $X$ is, moreover, a complete simplicial toric variety, let  
 $\int_X:\widehat{H}^*_\bT(X;\bb{Q})_L \to \widehat{H}^*_\bT(pt;\bb{Q})_L=L^{-1}(\widehat\Lambda_\bT)_\bQ$ be the equivariant Gysin map (or, equivalently, the equivariant pushforward) for the constant map $X \to pt$. Then 
\be\label{f108n}
\int_X = \sum_{\sig \in \Sig(n)} \frac{i_\sig^*}{Eu^\bT_X(x_\sig)}: \widehat{H}^{*}_\bT(X;\bQ)_L  \to \widehat{H}^{*}_\bT(pt;\bQ)_L.
\ee
\ep

\begin{proof}
For $\sig \in \Sig(n)$ a cone of maximal dimension, formula \eqref{23} written in homological terms
becomes:
\[ \left( \mult(\sig) \cdot \prod_{\rho \in \sig(1)} [D_\rho]_\bT \right) \cap [X]_\bT = {(i_\sig)}_*[x_\sig]_\bT.\]
Using \eqref{proj}, we get
\[ i_\sig^* \left( \mult(\sig) \cdot \prod_{\rho \in \sig(1)} [D_\rho]_\bT \right) \cap pr_{x_\sig} ([X]_\bT)=
pr_{x_\sig}\left({(i_\sig)}_*[x_\sig]_\bT\right)=[x_\sig]_\bT,
\]
where the last equality follows from the fact that the composition $pr_{x_\sig} \circ {(i_\sig)}_*$ is by definition just the identity. The factor $ i_\sig^* \left( \mult(\sig) \cdot \prod_{\rho \in \sig(1)} [D_\rho]_\bT \right)$ is by definition the generalized Euler class $0\neq Eu^\bT_X(x_\sig)$
(this is non-zero by the formula above, since $[x_\sig]_\bT\neq 0$), and it is an element in $\bQ \cdot L$ (see \cite{CLS}[Lem..13.3.5]) since any divisor on a simplicial toric variety is $\bQ$-Cartier (\cite{CLS}[Prop.4.2.7]). So formula \eqref{f106} follows now from the projection formula \eqref{proj}, i.e.,  for $a \in \widehat{H}^{*}_\bT(X;\bQ)_L$, we have:
$$pr_{x_\sig} \left(a \cap [X]_\bT\right)=i_\sig^*(a) \cap \frac{[x_\sig]_\bT}{Eu^\bT_X(x_\sig)}.$$

For the second assertion, note that the map
 $$\int_X:\widehat{H}^*_\bT(X;\bb{Q})_L \to \widehat{H}^*_\bT(pt;\bb{Q})_L=L^{-1}(\widehat\Lambda_\bT)_\bQ$$
 corresponds (by the functoriality of pushforward)  under the localization isomorphism  $$ \widehat{H}_{*}^\bT(X;\bQ)_L \simeq \widehat{H}_{*}^\bT(X^\bT;\bQ)_L  =  \bigoplus_{x \in X^\bT} \widehat{H}_{*}^\bT(x;\bQ)_L$$
 to the sum of localization maps $pr_x$, $x \in X^\bT$, i.e.,
 $$\int_X = \sum_{\sig \in \Sig(n)} pr_{x_\sig} : \widehat{H}^{*}_\bT(X;\bQ)_L  \to \widehat{H}^{*}_\bT(pt;\bQ)_L.$$
\end{proof}

\br
If the fixed point $x_\sig$ corresponds to a smooth cone $\sig \in \Sig$, then $\mult(\sig)=1$ and the divisors $D_\rho$ ($\rho \in \sig(1)$) are Cartier divisors. Moreover, the normal bundle 
$N_{x_\sig} U_\sig$ of $x_\sig$ in the smooth affine variety $U_\sig$ is given by
$$N_{x_\sig} U_\sig=\bigoplus_{\rho \in \sig(1)} \cO(D_\rho)\vert_{x_\sig}.$$
Hence $Eu^\bT_X(x_\sig)=c^\bT_n(N_{x_\sig} U_\sig)$ is the classical equivariant Euler class of the fixed point $x_\sig$, given by the top equivariant Chern class of the normal bundle $N_{x_\sig} U_\sig$.
\er

\br The localization formula $pr_{x}=\frac{i_x^*}{Eu^\bT_X(x)}$, with  $Eu^\bT_X(x)=c^\bT_n(N_{x} X)$, holds  in general for a ``nondegenerate'' (and therefore also isolated) $\bT$-fixed point $x$ in a smooth $\bT$-variety $X$ of dimension $n$,
with $i_x: \{x\}\to X$ the inclusion and $N_xX=T_xX$ the corresponding normal or tangent space in $x$ (see, e.g., 
\cite{AF}[Prop.17.4.4 (ii)]). Here ``nondegenerate'' just means  that $c^\bT_n(N_{x} X)\neq 0$, i.e., all $\bT$-weights on $N_xX=T_xX$ are non-zero.
Similarly, this applies in an ``orbifold'' context like in Remark \ref{Molien}, but this time with a $\bT\times G$ representation $V=\bC^n$ such
that $0$ is only a ``nondegenerate'' $\bT$-fixed point with the finite group $G$ acting effectively on $V$, so that the projection map
$\pi: V\to X:=V/G$ is of degree $|G|$, i.e., $\pi_*([V]_\bT)=|G|\cdot [X]_\bT$. Here one should use then for the isolated $\bT$-fixed point
$x:=0\in V/G$ the ``generalized Euler class''   (see, e.g., \cite{AF}[Prop.17.4.4 (vi)])
$$0\neq Eu^\bT_X(x):=|G|\cdot c^\bT_n(V)\in H^{*}_\bT(0;\bQ)^G=H^{*}_\bT(x;\bQ)\:.$$
The corresponding integration formula \eqref{f108n} is often attributed to Atiyah–Bott and Berline–Vergne (see, e.g., \cite{AF}[Notes to Chapter 5]).
\er

 Let us now get back to the context of a general toric variety $X=X_\Sigma$ with $X^\bT\neq \emptyset$.
The equivariant Todd class transformation $\td_*^\bT$ commutes with pushforward for closed inclusions $i_*$ and pullback $j^*$ under open inclusions. So $\td_*^\bT$ is compatible with the two diagrams below:
\be\label{diagr} \begin{CD}
 K^\bT_0(X^\bT) @>i_*>> K_0^\bT(X)  @.  @.  \widehat{H}_{2*}^\bT(X^\bT;\bQ) @>i_*>> \widehat{H}_{2*}^\bT(X;\bQ)  \\
@VVV @VVj^*V                           @>\td_*^\bT>>           @VVV @VVj^*V      \\
 K^\bT_0(x_\sig) @>i_*>> K^\bT_0(U_\sig)  @.  @.  \widehat{H}_{2*}^\bT(x_\sig;\bQ) @>i_*>>  \widehat{H}_{2*}^\bT(U_\sig;\bQ) 
 \end{CD}\ee
 To extend this property to the localized versions, we need to show that the ring homomorphism  $\ch^\bT:\bZ[M] \simeq K_\bT^0(pt) \to (\Lambda^{an}_\bT)_\bQ \subset (\widehat{\Lambda}_\bT)_\bQ$  given by  the equivariant Chern character is compatible with the multiplicative subsets used for localization. This fact follows as in \cite{CLS}[pag.644]; indeed, if $0 \neq m\in M$, then 
 $$\ch^\bT(1-\chi^m)=1-e^{c(m)}=c(m) \frac{1-e^{c(m)}}{c(m)} \in (\Lambda^{an}_\bT)_\bQ \subset (\widehat{\Lambda}_\bT)_\bQ\:,$$
 with $c(m) \in L$ and the {\it convergent} power series $\frac{1-e^{c(m)}}{c(m)} \in  (\Lambda^{an}_\bT)_\bQ \subset (\widehat{\Lambda}_\bT)_\bQ$  being invertible (since its constant coefficient is a unit). So $\ch^\bT$ induces a ring homomorphism of localized rings
 $$\ch^\bT:\bZ[M]_S \simeq K_\bT^0(pt)_S \longrightarrow L^{-1}(\Lambda^{an}_\bT)_\bQ \subset L^{-1}(\widehat{\Lambda}_\bT)_\bQ \:.$$
 In particular, this localized version of the equivariant Chern character $\ch^\bT$ can be applied to the image of the summation map $\mathbb{S}:\bZ[[M]]_{\rm Sum} \to \bZ[M]_S$, where by abuse of notation we denote
 $$(\ch^\bT \circ \mathbb{S})\left(\sum_{m \in M} a_m \cdot \chi^m\right)=: \mathbb{S}\left(\sum_{m \in M} a_m \cdot e^{c(m)}\right),$$
 extending the corresponding formula $\ch^\bT (\sum_{m \in M} a_m  \cdot  \chi^m)=\sum_{m \in M} a_m  \cdot  e^{c(m)}$ for finite sums in $\bZ[M] \subset \bZ[[M]]_{\rm Sum}$.
 
 \begin{rem}
 In the context of diagram \eqref{trace}, we have  also to consider the extension of the Chern character to the complexified $K$-theory
$$\ch^{\bT'}:\bC[M'] \simeq K_{\bT'}^0(pt) \otimes_\bZ \bC \to (\Lambda^{an}_{\bT'})_\bC \subset (\widehat{\Lambda}_{\bT'})_\bC,$$ 
which is compatible with localization 
$$\ch^{\bT'}:\bC[M']_{S'} \simeq K_{\bT'}^0(pt)_{S'}  \otimes_\bZ \bC \longrightarrow {L'}^{-1}(\Lambda^{an}_{\bT'})_\bC \subset {L'}^{-1}(\widehat{\Lambda}_{\bT'})_\bC \:.$$
Here, $S' \subset \bC[M']$ is as before the multiplicative subset  generated by elements $1-a\cdot \chi^{m'}$, for $0 \neq m' \in M'$ and $a \in \bC^*$, and $L' \subset (\Lambda_{\bT'})_\bC=H^{*}_{\bT'}(pt;\bC)$ is the multiplicative set generated by the elements $\pm a \cdot c(m')$, for $0\neq m' \in M'$ and $a \in \bC^*$. The compatibility with localization follows as above, using in addition that 
 $$\ch^{\bT'}(1-a \cdot \chi^{m'})=1-a \cdot e^{c(m')} \in (\Lambda^{an}_{\bT'})_\bC \subset (\widehat{\Lambda}_{\bT'})_\bC\:,$$
 for $a \neq 1$ and $0 \neq m' \in M'$, is an invertible convergent power series since its constant coefficient $1-a$ is a unit
(which is also true for $a\neq 1$ and $0=m'\in M'$).  
These two Chern characters $\ch^{\bT'}$ and $\ch^{\bT}$  fit into a commutative diagram
\begin{equation}\begin{CD}\label{trace2}
 \bC[M']_{S'} @>\ch^{\bT'}>> {L'}^{-1}(\Lambda^{an}_{\bT'})_\bC @>>> {L'}^{-1}(\widehat{\Lambda}_{\bT'})_\bC \\
@AAA @AAA @AAA\\
\bZ[M]_{S} @>\ch^{\bT}>> {L}^{-1}(\Lambda^{an}_{\bT})_\bQ @>>> {L}^{-1}(\widehat{\Lambda}_{\bT})_\bQ \ .
\end{CD}
\end{equation}
The last two vertical arrows are injections (between convergent, resp., formal Laurent series with rational or complex coefficients).
 \end{rem}

 By the module property \eqref{modpe} of the equivariant Todd class transformation and the functoriality of  $\ch^\bT$ under pullbacks, it follows that $\td_*^\bT$ is compatible with the corresponding localized versions of the diagrams appearing in \eqref{diagr}, with the corresponding maps labeled $i_*$ being now isomorphisms, as used in the definitions of the localization maps $pr_x$ in $K$-theory and homology, respectively.
 
 \bp\label{pr47}
 For a $\bT$-equivariant coherent sheaf $\cF$ on $X=X_\Sig$, and $x_\sig \in X^\bT$ a given fixed point of the $\bT$-action, we have:
 \be\label{f98}
\td_*^\bT([\cF])_{x_\sig}:= pr_{x_\sig} (\td_*^\bT([\cF]) = \ch^\bT \left(( \mathbb{S} \circ \chi_\sig^\bT)(\cF)\right)
\in  L^{-1}(\Lambda^{an}_\bT)_\bQ \subset L^{-1}(\widehat{\Lambda}_\bT)_\bQ
 \ee
 and 
 \be\begin{split}
L^{-1} \td_*^\bT([\cF]) &= \bigoplus_{\sig \in \Sigma(n)} \td_*^\bT([\cF])_{x_\sig} \cdot [x_\sig]_{\bT} \\ &= \bigoplus_{\sig \in \Sigma(n)} \ch^\bT \left(( \mathbb{S} \circ \chi_\sig^\bT)(\cF)\right) \cdot [x_\sig]_{\bT} \in \widehat{H}_{*}^\bT(X;\bQ)_L.
\end{split}
 \ee
 \ep
 
 \begin{proof}
 Indeed, using the explicit calculation of the localization map in equivariant $K$-theory (as in the previous subsection), we have:
 $$ pr_{x_\sig} (\td_*^\bT([\cF]) = \td_*^\bT (pr_{x_\sig} ([\cF])) =\ch^\bT \left(( \mathbb{S} \circ \chi_\sig^\bT)(\cF)\right). $$
 The second formula just follows from the isomorphism $i_*:\widehat{H}_{*}^\bT(X^\bT;\bQ)_L  \simeq \widehat{H}_{*}^\bT(X;\bQ)_L$.
 \end{proof}
 
 \bex[Localized equivariant Hirzebruch and Todd classes] Assume now that $X_\Sig$ is a  toric variety. Let, moreover, $X:=X_{\Sig'}$ be the $\bT$-invariant closed algebraic subset defined by a star-closed subset $\Sig' \subset \Sig$, with $x_\sig \in X^\bT$ a given fixed point of the $\bT$-action.  By using \eqref{f98} and  \eqref{isloc}, the following identity holds in $L^{-1}(\Lambda^{an}_\bT)_\bQ \otimes_\bZ \bZ[y] \subset L^{-1}(\widehat{\Lambda}_\bT)_\bQ \otimes_\bZ \bZ[y]$:
 \be\label{eq147}\begin{split}
 T^\bT_{y*}(X)_{x_\sig}&:= \td_*^\bT([mC_y^\bT(X)])_{x_\sig} \\ &=\sum_{\tau \preceq \sigma, \tau \in \Sig'} (1+y)^{\dim(O_\tau)} 
\cdot (\ch^\bT \circ \mathbb{S}) \left( \sum_{m \in {\rm Relint}(\sigma^\vee \cap \tau^\perp) \cap M} \chi^{-m} \right) \\
&=\sum_{\tau \preceq \sigma, \tau \in \Sig'} (1+y)^{\dim(O_\tau)} 
\cdot \mathbb{S} \left( \sum_{m \in {\rm Relint}(\sigma^\vee \cap \tau^\perp) \cap M} e^{s(m)} \right) \:.
\end{split}
 \ee
 We thus have in $\widehat{H}_{*}^\bT(X;\bQ)_L \otimes_\bZ \bZ[y]$ (compare also with \cite{R}[Thm.5.3], \cite{W}[Thm.11.3]):
\be  L^{-1}  T^\bT_{y*}(X)= \bigoplus_{\sig \in \Sigma'(n)} \left(\sum_{\tau \preceq \sigma, \tau \in \Sig'} (1+y)^{\dim(O_\tau)} 
\cdot \mathbb{S} \left(  \sum_{m \in {\rm Relint}(\sigma^\vee \cap \tau^\perp) \cap M} e^{s(m)} \right) \right) \cdot [x_\sig]_\bT  .
\ee
Specializing the above formulae to $y=0$, or applying \eqref{f98} to the structure sheaf $\cF=\cO_X$ (extended by zero to the ambient toric variety, which now can be arbitrary) we get the following result:
\be\label{bz1}
 \td_*^\bT(X)_{x_\sig} 
=\sum_{\tau \preceq \sigma, \tau \in \Sig'}  \mathbb{S} \left( \sum_{m \in {\rm Relint}(\sigma^\vee \cap \tau^\perp) \cap M} e^{s(m)} \right) \in L^{-1}(\Lambda^{an}_\bT)_\bQ \subset L^{-1}(\widehat{\Lambda}_\bT)_\bQ \:,
 \ee
and 
\be\label{bz2}
L^{-1} \td_*^\bT(X) = \bigoplus_{\sig \in \Sigma'(n)} \left(\sum_{\tau \preceq \sigma, \tau \in \Sig'} \mathbb{S} \left(  \sum_{m \in {\rm Relint}(\sigma^\vee \cap \tau^\perp) \cap M} e^{s(m)} \right) \right) \cdot [x_\sig]_\bT \in \widehat{H}_{*}^\bT(X;\bQ)_L. \ee
In the case $\Sigma'=\Sig$, formulae \eqref{bz1} and \eqref{bz2} reduce to 
 a result of Brylinski-Zhang \cite{BZ}[Thm.9.4]:
 \be\label{bzin}
 \td_*^\bT(X_\Sig)_{x_\sig} =
\mathbb{S} \left( \sum_{m \in \sigma^\vee \cap M}  e^{s(m)} \right) \in L^{-1}(\Lambda^{an}_\bT)_\bQ \subset L^{-1}(\widehat{\Lambda}_\bT)_\bQ \:,
 \ee
 and 
  \be
L^{-1} \td_*^\bT(X_\Sig) = \bigoplus_{\sig \in \Sigma(n)} \mathbb{S} \left( \sum_{m \in \sigma^\vee \cap M}  e^{s(m)} \right)  \cdot [x_\sig]_\bT \in \widehat{H}_{*}^\bT(X_\Sig;\bQ)_L. \ee
\qed
\eex

\br We refer to the work of Rychlewicz \cite{R}[Section 7] for the dicussion of interesting {\it positiviy properties}
of the localized Hirzebruch classes $T^\bT_{y*}(X)_{x_\sig}$.
\er

\bex[Localized equivariant Hirzebruch classes for complements of divisors]
Let $X=X_\Sig$ be a toric variety with $V:=X \setminus D_K$, and $D_K=\bigcup_{\rho \in K} D_\rho$ for $K \subset \Sigma(1)$. Let $\sig \in \Sig(n)$ be a simplicial cone with $x_\sig \in X^\bT$ the corresponding $\bT$-fixed point.
Then we get by \eqref{sumsmusf} and Proposition \ref{pr47} the following identity in 
${L'}^{-1}(\Lambda^{an}_{\bT'})_\bC[y] \subset {L'}^{-1}(\widehat{\Lambda}_{\bT'})_\bC[y]$:
 \begin{multline}\label{sumsmusg}
 T^\bT_{y*}\left([V \hookrightarrow X]\right)_{x_\sig}:= \td_*^\bT\left([mC_y^\bT([V \hookrightarrow X])]\right)_{x_\sig} \\
=\frac{1}{\vert G_\sig \vert} \sum_{g \in G_\sig}  \prod_{\rho_i\in \sig(1) \cap K} \frac{(1+y) \cdot a_{\rho_i}(g^{-1}) \cdot e^{-c(m'_{\sig,i})}}{1-a_{\rho_i}(g^{-1}) \cdot e^{-c(m'_{\sig,i})}} \cdot \prod_{\rho_i\in \sig(1) \setminus K} \frac{1+y \cdot a_{\rho_i}(g^{-1}) \cdot e^{-c(m'_{\sig,i})}}{1-a_{\rho_i}(g^{-1}) \cdot e^{-c(m'_{\sig,i})}}.
\end{multline}
In particular, if $\sig$ is a smooth cone, then
 \begin{equation*} 
 \begin{split}
 T^\bT_{y*}\left([V \hookrightarrow X]\right)_{x_\sig} = \prod_{\rho_i\in \sig(1) \cap K} \frac{(1+y) \cdot e^{-c(m_{\sig,i})}}{1- e^{-c(m_{\sig,i})}} \cdot \prod_{\rho_i\in \sig(1) \setminus K} \frac{1+y \cdot e^{-c(m_{\sig,i})}}{1- e^{-c(m_{\sig,i})}},
\end{split}
\end{equation*}
with $m'_{\sig,i}=m_{\sig,i}$ for all $i$ (since $\sig$ is a smooth cone).

By specializing \eqref{sumsmusg} to $y=0$, we get a local version of the Todd type formula \eqref{t0comp}. In particular, for $y=0$ and $\sig$ smooth, we get:
\be\label{toddloc}
 T^\bT_{0*}\left([V \hookrightarrow X]\right)_{x_\sig}
 = \prod_{\rho_i\in \sig(1) \cap K} \frac{e^{-c(m_{\sig,i})}}{1- e^{-c(m_{\sig,i})}} \cdot \prod_{\rho_i\in \sig(1) \setminus K} \frac{1}{1- e^{-c(m_{\sig,i})}}.
\ee

In the case when $K=\emptyset$, we thus obtain a more explicit expression for the equivariant Hirzebruch and Todd classes of $X$ localized at $x_\sig$. \qed
\eex

 As a consequence of Proposition \ref{pr47} and Theorem \ref{thloc}, we also have the following.
 \bc\label{thlocb}
Let $X=X_\Sig$ be a complete toric variety of complex dimension $n$, with $\cF$ a $\bT$-coherent sheaf on $X$. Then the cohomological Euler characteristic of $\cF$ can be calculated via localization at the $\bT$-fixed points as:
\be\label{f104}\begin{split}
\chi^\bT(X,\cF) & =\sum_{\sig \in \Sigma(n)} (\ch^\bT \circ \mathbb{S}) \left(\chi_\sig^\bT(\cF\vert_{U_\sig})\right) \\
&=\sum_{\sig \in \Sigma(n)} \td_*^\bT([\cF])_{x_\sig} \in L^{-1}(\Lambda^{an}_\bT)_\bQ \subset L^{-1}(\widehat{\Lambda}_\bT)_\bQ \: .
\end{split}
\ee
\ec

\medskip

Translating Theorem \ref{thloc} into the (co)homological context, using Proposition \ref{pr47} and, in the simplicial context,
Proposition \ref{pr17}, we get the following consequence.
\bc\label{rigcor}
Let $X=X_\Sig$ be a complete toric variety, $D$ a $\bT$-invariant Cartier divisor, with $\cF$ a $\bT$-equivariant coherent sheaf on $X$. Then the cohomological equivariant Euler characteristic of $\cO_X(D) \otimes \cF$ is computed by
\be\label{eqec}
\chi^\bT\left(X,\cO_X(D) \otimes \cF\right)=\sum_{\sig \in \Sigma(n)} 
 pr_{x_\sig} \left(\td_*^\bT([\cO_X(D) \otimes \cF]\right) 
\in   L^{-1}(\Lambda^{an}_\bT)_\bQ .
\ee
If, moreover, $X$ is a simplicial toric variety, then 
\be\label{eqecc}
\chi^\bT\left(X,\cO_X(D) \otimes \cF\right)=\sum_{\sig \in \Sigma(n)} 
\frac{i_\sig^* (e^{[D]_\bT} \td_*^\bT([\cF]))}{Eu_X^\bT(x_\sig)} 
\in   L^{-1}(\Lambda^{an}_\bT)_\bQ .
\ee
\ec 


\subsection{Equivariant Hirzebruch classes of simplicial toric varieties via localization}\label{reprloc}
In this sub\-section, we explain how to reprove Theorems \ref{eqHirz}, \ref{eqHirzcomp} and Lemma \ref{l329}
by localization techniques, instead of using the equivariant Lefschetz-Riemann-Roch theorem of \cite{EG}[Thm.3.1 and Rem.4.3] (as done in Subsection \ref{calch}).

Let $X:=X_\Sig$ be a simplicial  toric variety associated to a fan $\Sig$ of full-dimensional convex support. Then the equivariant cohomology of $X$ is a free $(\Lambda_\bT)_\bQ$-module (by \eqref{mm}), so that the injectivity part of the sequence \eqref{loc}, resp.,  \eqref{pp}, still holds for the equivariant cohomology (as pointed out in Remark \ref{reminj}). Hence the corresponding map for the completed equivariant cohomology rings is injective, i.e.,
$$\bigoplus_{\sig \in \Sig(n)} i_\sig^* : \widehat{H}^*_\bT(X;\bQ) \hookrightarrow \bigoplus_{\sig \in \Sig(n)} \widehat{H}^*_\bT(x_\sig;\bQ). $$
By localizing at the multiplicative set $L$, and using the exactness of localization, we get an injective map
$$\bigoplus_{\sig \in \Sig(n)} i_\sig^* : \widehat{H}^*_\bT(X;\bQ)_L \hookrightarrow \bigoplus_{\sig \in \Sig(n)} \widehat{H}^*_\bT(x_\sig;\bQ)_L. $$
 So it is enough to check these characteristic class formulae by using  for each fixed point $x_\sig$, $\sig \in \Sig(n)$, the induced restriction  map $pr_{x_\sig}$ 
 \be\label{indpr} \widehat{H}^*_\bT(X;\bQ) \to \widehat{H}^*_\bT(X;\bQ)_L  \overset{pr_{x_\sig}}{\to}  \widehat{H}^*_\bT(x_\sig;\bQ)_L \to \widehat{H}^*_{\bT'}(x_\sig;\bC)_{L'},\ee
 with the middle arrow 
 $pr_{x_\sig}$  as in  \eqref{f106}.
 Even the direct sum $\bigoplus_{\sig \in \Sig(n)} pr_{x_\sig}$ of these induced restriction maps is still injective, since the localization map on the integral domain $\widehat{H}^*_\bT(x_\sig;\bQ) \to  \widehat{H}^*_\bT(x_\sig;\bQ)_L$ is injective, and $pr_{x_\sig}=\frac{i_\sig^*}{Eu^\bT_X(x_\sig)}$
 differs from $i_\sig^*$ by the unit $$Eu^\bT_X(x_\sig)=i_\sig^* \left( \mult(\sig) \cdot \prod_{\rho \in \sig(1)} [D_\rho]_\bT\right)=\vert G_\sig \vert \prod_{\rho \in \sig(1)} i_\sig^*F_\rho \in \widehat{H}^*_\bT(x_\sig;\bQ)_L.$$
Moreover, as mentioned after diagram \eqref{trace2}, no information is lost if we consider complex instead  of rational coefficients.

To reprove formula \eqref{la22} of Lemma \ref{l329}   by localization, we start with the local version discussed in the following example.
\bex\label{ex420}
Let $\sig \in \Sig(n)$ be a simplicial cone  with minimal generators $\{m_{\sig,i} \}_i$ of $\sig^\vee$ as in the context of diagram \eqref{trace}. For $\cF:=\pi_*^{G_\sig}(\cO_{U'_\sig}\otimes \bC_{\chi^{m'}})$ with $m' \in M'$, we get  by Proposition \ref{pr47} and formula \eqref{sumsmus} the identity
\be\label{s99}
\td^\bT_*([\cF])_{x_\sig}=\frac{1}{\vert G_\sig \vert} \sum_{g \in G_\sig}   \chi^{m'}(g)  e^{c(m')} \prod_{i=1}^n \frac{1}{1-a_{\rho_i}(g^{-1}) \cdot e^{-c(m'_{\sig,i})}} \in \widehat{H}^*_{\bT'}(x_\sig;\bC)_{L'}.
\ee
\qed
\eex

Using Example \ref{ex420}, we can now reprove formula \eqref{la22} of Lemma \ref{l329} by localization techniques. Recall  that the global formula 
\be\label{la22b}
\td_*^\bT\left(\pi_*(\cO_W \otimes \bC_{\chi^{\wti{m}}})^G\right)= \sum_{g\in G_{\Sigma}} \chi^{\wti{m}}(g^{-1}) \prod_{\rho\in \Sigma(1)} \frac{F_{\rho} \cdot e^{\langle \wti{m}, e_\rho \rangle \cdot F_\rho}}{1-a_{\rho}(g) e^{-F_{\rho}}} \in \widehat{H}^*_\bT(X;\bQ),
\ee
with $\{e_\rho\}_{\rho \in \Sig(1)}$ the standard basis of the lattice $\wti{N}=\bZ^{\vert \Sig(1) \vert}$ and $\widetilde{m}$ in the dual character lattice $\widetilde{M}$, is formulated in terms of  the Cox construction $\pi:W \to X$. 
So let us now compare, for a fixed $\sig \in \Sig(n)$ with $U_\sig$ the $\bT$-invariant open affine subset of $X$ containing the corresponding fixed point $x_\sig$, the local and global formulae \eqref{s99} and, resp., \eqref{la22b}. 

To simplify the notation, assume $\sig(1)=\{\rho_1,\ldots, \rho_n\}$. We next adapt the arguments of \cite{BrV2}[page 24] to our context. If $g \in G_\Sig \setminus G_\sig$, by \eqref{stab} there exists a $\rho \in \Sigma(1)$ with $a_\rho(g) \neq 1$, so that the restriction of 
$\frac{F_{\rho} \cdot e^{\langle \wti{m}, e_\rho \rangle \cdot F_\rho}}{1-a_{\rho}(g) e^{-F_{\rho}}} $ to $U_\sig$ becomes $0$. Thus, the summation on the right-hand side of \eqref{la22b} reduces after restriction to $U_\sig$ to a summation over ${g \in G_\sig}$. If $g \in G_\sig$ and $\rho \notin \sig(1)$, then $a_\rho(g)=1$, so that the restriction of 
$\frac{F_{\rho} \cdot e^{\langle \wti{m}, e_\rho \rangle \cdot F_\rho}}{1-a_{\rho}(g) e^{-F_{\rho}}} $ to $U_\sig$ becomes $1$. So the product on the right hand side of \eqref{la22b} reduces to a product over $\rho \in \sig(1)$.
As in the proof of formula \eqref{thi}, we have 
$\pi^{-1}(U_\sig)=U'_\sig \times (\bC^*)^{r-n}$, with $\wti{\bT}\simeq \bT' \times (\bC^*)^{r-n}$, $G\simeq G_\sigma \times (\bC^*)^{r-n}$, and $U'_\sig/G_\sig=U_\sig$. Let $\widetilde{m}=m' + m''\in M' \oplus M''$ be the corresponding character decomposition. Note that $\bC_{\chi^{m''}}$ is a trivial $(\bC^*)^{r-n}$-equivariant line bundle on $(\bC^*)^{r-n}$, so we can assume that $m''=0$ is the trivial character. By the projection formula, the restriction of $\pi_*(\cO_W \otimes \bC_{\chi^{\wti{m}}})^G$ to $U_\sig$ is isomorphic to the $\cF$ appearing in \eqref{s99}. Finally, 

\begingroup
\allowdisplaybreaks
\begin{align*}
& pr_{x_\sig}\left( \sum_{g\in G_{\Sigma}} \chi^{\wti{m}}(g^{-1}) \prod_{\rho\in \Sigma(1)} \frac{F_{\rho} \cdot e^{\langle \wti{m}, e_\rho \rangle \cdot F_\rho}}{1-a_{\rho}(g) e^{-F_{\rho}}} \right) \\
&= \frac{1}{Eu_X^\bT(x_\sig)} \cdot {i_\sig^*}  \left( \sum_{g\in G_{\Sigma}} \chi^{\wti{m}}(g^{-1}) \prod_{\rho\in \Sigma(1)} \frac{F_{\rho} \cdot e^{\langle \wti{m}, e_\rho \rangle \cdot F_\rho}}{1-a_{\rho}(g) e^{-F_{\rho}}} \right) \\
&=\frac{1}{Eu_X^\bT(x_\sig)} \cdot {i_\sig^*} \left( \sum_{g\in G_{\sigma}} \chi^{m'}(g^{-1}) \prod_{i=1}^n \frac{F_{\rho_i} \cdot e^{\langle {m'}, e_{\rho_i} \rangle \cdot F_{\rho_i}}}{1-a_{\rho_i}(g) e^{-F_{\rho_i}}} \right) \\
&=\frac{1}{\vert G_\sig \vert} \sum_{g\in G_{\sigma}} \chi^{m'}(g^{-1})  \cdot 
i_\sig^* \left(  \prod_{i=1}^n \frac{e^{\langle {m'}, e_{\rho_i} \rangle \cdot F_{\rho_i}}}{1-a_{\rho_i}(g) e^{-F_{\rho_i}}}  \right) \\
&=\frac{1}{\vert G_\sig \vert} \sum_{g \in G_\sig}   \chi^{m'}(g) e^{c(m')} \prod_{i=1}^n \frac{1}{1-a_{\rho_i}(g^{-1}) \cdot e^{-c(m'_{\sig,i})}}, 
\end{align*}\endgroup
where the last equality uses \eqref{alg} for the torus $\bT'$ to show that $i_\sig^* F_{\rho_i}=c(m'_{\sig,i})$ and $c(m')=\sum_{i=1}^n \langle m',  e_{\rho_i} \rangle i^*_{\sig} F_{\rho_i}$, as well as changing $g$ by $g^{-1}$ in $G_\sig$. Also note that $a_\rho(g)$ for $g \in G_\Sig$ and $\rho \in \Sig(1)$ fits with the corresponding  $a_\rho(g)$ for $g \in G_\sig$ and $\rho \in \sig(1)$, as in \eqref{a-intr}.

Altogether, the local formula \eqref{s99} is obtained from the global formula \eqref{la22b} upon applying the homological localization map $pr_{x_\sig}$ of \eqref{indpr}. \qed

 \br We leave it to the reader to give a similar proof of  formula \eqref{eHirz0comp}
of Theorem \ref{eqHirzcomp} via localization using the corresponding localized classes from  \eqref{sumsmusg}.
For the special case of  formula \eqref{eHirz0} of Theorem \ref{eqHirz} this is also worked out in 
\cite{CMSSEM}[Section 4.2]. \er


\section{Euler-Maclaurin formulae via equivariant Hirzebruch-Riemann-Roch}\label{emf}
An \index{Euler-Maclaurin formula} Euler-Maclaurin formula relates the sum $\sum_{m\in P \cap M} f(m)$ of the values of a suitable function $f$ at the lattice points in a polyhedron $P \subset M_\bR:=M\otimes\bb{R}$ to integrals over suitable polyhedra. Here, we are interested in the case where $P$ is a polytope or a cone. The corresponding results for pointed cones will fit with the localization results developed in the previous section.

We begin this section with a short overview in Subsection \ref{sec5.1} of relations between the equivariant Hirzebruch-Riemann-Roch theorem and Euler-Maclaurin type formulae obtained via polytope dilations, as indicated in works of Khovanskii-Pukhlikov \cite{KP} and Brion-Vergne \cite{BrV2}. 

In Subsection \ref{sec5.2} we extend the approach of Brion-Vergne from \cite{BrV2} by allowing {\it arbitrary} $\bT$-equivariant coherent sheaf coefficients.  The key formulae for relating exponentional integrals to toric geometry are Theorem \ref{them1}
for dilated polytopes and Theorem \ref{them1loc} for dilated tangent cones.
We derive an abstract Euler-Maclaurin type formula (even for exponential functions instead of polynomials), 
based on the equivariant Hirzebruch-Riemann-Roch theorem and the motivic formalism of equivariant characteristic classes developed in the previous sections. See  Theorem \ref{abstrEM} and Corollary \ref{abstrEMc}
for the case of dilated polytopes and Proposition \ref{pr520} for the case of dilated tangent cones.
 To deal with various convergence issues in the context of exponential functions, we introduce and use an analytic version of the $\bT$-equivariant cohomology ring (see equation \eqref{ans}).

The abstract Euler-Maclaurin formula is specialized in Subsection \ref{eEM} to various situations, recovering many known Euler-Maclaurin type formulae, as well as obtaining several new ones in a uniform way dictated by toric geometry. 
Example \ref{ex:classical1}  (resp., Example  \ref{ex:classical2}) study first the classical case of (the interior of) a simple lattice polytope.
Theorem \ref{EMfacesdel} studies the case of a simple lattice polytope with some facets removed, whereas  Theorem \ref{EMintfaceb} (resp., Theorem \ref{EMintface})
considers the case of (the relative interior of) a fixed face of a simple lattice polytope.
The final Example \ref{ex526} deals with the case of a lattice polytope associated to a globally generated $\bT$-invariant Cartier divisor
on a projective simplicial toric variety.


\subsection{Brief overview of Euler-Maclaurin formulae} \label{sec5.1}
The theory of valuations for rational polyhedra $P\subset M_\bR$ was applied by 
Khovanskii and Pukhlikov in \cite{KP} to obtain Euler-Maclaurin type formulae for Delzant polytopes 
(corresponding to smooth projective toric varieties) in terms of infinitesimal movements \index{dilated polytope} $P(h)$ of the polytope. More precisely, for a polynomial $f$ one has the identity:
\begin{equation}\label{EM-h}
Todd(\frac{\partial}{\partial h}) \left( \int_{P(h)} f(m) \ dm \right)_{|_{h=0}} = \sum_{m\in P\cap M} f(m) \:.
\end{equation}
Here, if the polytope $P$ is the lattice polytope defined by inequalities of the form
$$\langle m, u_F\rangle +c_F \geq 0,$$
with $u_F$ the facet normals, the polytope $P(h)$ with shifted faces is defined by inequalities 
$$\langle m, u_F\rangle +c_F +h_F \geq 0,$$
with $h=(h_F)_F$ a vector with real entries indexed by the facets $F$ of $P$. Moreover, $Todd(\frac{\partial}{\partial h})$ is the \index{Todd differential operator} differential operator defined by:
$$Todd(\frac{\partial}{\partial h}):=\prod_{F \ {\rm facet}} \frac{\frac{\partial}{\partial h_F}}{1-e^{-\frac{\partial}{\partial h_F}}}.$$

The relation between formula (\ref{EM-h}) and the equivariant Hirzebruch-Riemann-Roch theorem is only indicated in \cite{KP},
but not used (but see, for instance, the proof of \cite{CLS}[Thm.13.5.6]). Moreover, it is clearly pointed out that exponential functions $f(x)=e^{\langle x,z\rangle}$ are needed to work with rational  polyhedral cones. From this, one then  gets the result for polynomials $f$ (compare also with \cite{CLS}[Sect.13.5]).

For a direct extension of (\ref{EM-h}) to simple lattice polytopes, with a corresponding more complicated Todd operator
$Todd(\frac{\partial}{\partial h})$, see \cite{BrV1} as well as the nice survey \cite{KSW}.
The approach to the Euler-Maclaurin formula (\ref{EM-h}) through the equivariant Hirzebruch-Riemann-Roch theorem for projective simplicial toric varieties corresponding to simple lattice polytopes is due to Brion-Vergne, see \cite{BrV2}[Thm.4.5]. 

\subsection{Euler-Maclaurin formulae via polytope dilations}\label{sec5.2}
In the following, we explain a uniform approach to various Euler-Maclaurin formulae for \index{simple lattice polytope} simple lattice polytopes or \index{pointed cone} pointed cones, formulated in terms of dilations of the polytope or cone.
To have a unified language, assume $P$ is a full-dimensional simple lattice polyhedron in $M_\bR$ with associated \index{semi-projective} (semi-projective) toric variety $X=X_P$ and inner normal fan $\Sigma=\Sig_P$. 
Let $P(h)$ be the \index{dilation} dilation of $P$ with respect to the vector $h=(h_\rho)_{\rho \in \Sigma(1)}$ with real entries indexed by the rays of $\Sigma$ (since we use the related toric geometry), or equivalently, by the facets of $P$ (if one would like to formulate everything directly in terms of $P$). So, if $P$ is defined by inequalities of the form
$$\langle m, u_\rho\rangle +c_\rho \geq 0,$$
with $u_\rho$ the ray generators and $c_\rho \in \bZ$, for each $\rho \in \Sig(1)$, then $P(h)$  is defined by inequalities 
$$\langle m, u_\rho \rangle +c_\rho +h_\rho \geq 0,$$
for each $\rho \in \Sig(1)$.
For later use in the context of weighted Euler-Maclaurin formulae, we also define a \index{parametrized dilation} parametrized dilation $P_y(h)$ of $P$ by:
 $$P_y(h):=\{ m \in M \mid \langle m, u_\rho \rangle + (1+y) \cdot c_\rho +h_\rho \geq 0, \ {\rm for \ all } \ \rho \in \Sig(1) \},$$
with $h=(h_\rho)_\rho$ as before a vector with real entries indexed by the rays $\rho$ of $\Sigma$. Note that $P_0(h)=P(h)$, and $P_y(0)=:P_y$ is the dilation of $P$ by the factor $1+y$  (with $y$ now treated as a complex number).

\medskip

Let us first consider the case when $P$ is a full-dimensional simple lattice polytope in $M_\bR$. 
In what follows, we adapt the arguments of Brion-Vergne \cite{BrV2}[Thm.4.5] to our context. 
If $h$ is in a small enough open neighborhood $U$ of the origin in $\bR^r$ (with $r$ the number of facets of $P$), then $P(h)$ is again a simple polytope of the same combinatorial type, and similarly for $P_y(h)$.
Let us fix $h \in U\cap \bQ^r$ and $y \in \bQ$, and choose $k \in \bN$ so that $k\cdot P_y(h)$ is a lattice polytope in $M_\bR$ with ample Cartier divisor $$D_{k\cdot P_y(h)}=k \cdot D_{P_y(h)}=k \left( \sum_{\rho \in \Sig(1)} ((1+y) \cdot c_\rho+h_\rho) \cdot D_\rho \right),$$
where $D_P=\sum_{\rho \in \Sig(1)} c_\rho \cdot D_\rho$  is the ample Cartier divisor of $P$.
On a simplicial toric variety any divisor is $\bQ$-Cartier (\cite{CLS}[Prop.4.2.7]), so that $D_{P_y(h)}$ has an equivariant fundamental class with rational coefficients satisfying $k\cdot [D_{P_y(h)}]_\bT := [k\cdot D_{P_y(h)}]_\bT=c^\bT_1(\cO_X(k \cdot D_{P_y(h)}))$, e.g., see \cite{CLS}[Prop.12.4.13].
 Applying formula \eqref{lpc1} for $k\cdot P(h)$, and using the fact that the associated toric variety does not change under polytope dilation, we get
\be\label{lpc10}
\sum_{m \in k\cdot P_y(h) \cap M} e^{s(m)}=\int_X e^{k\cdot [D_{P_y(h)}]_\bT} \cap \td^\bT_*(X) \in (\Lambda^{an}_\bT)_\bQ
\subset (\widehat{\Lambda}_\bT)_\bQ \:.
\ee
A localized version of formula \eqref{lpc10} can be derived from \eqref{f108n} as:
\be\label{f110}
\sum_{m \in k\cdot P_y(h) \cap M} e^{s(m)}= \sum_{\sig \in \Sig(n)} \frac{i_\sig^* \left( e^{k\cdot [D_{P_y(h)}]_\bT}  \right)}{Eu^\bT_X(x_\sig)} \cdot i_\sig^*(\td^\bT_*(X))  \in  L^{-1}(\Lambda^{an}_\bT)_\bQ \subset L^{-1}(\widehat{\Lambda}_\bT)_\bQ \:.
\ee 
Let us now recall the identification
$s: Sym_{\bb{Q}}(M)  \simeq  H_\bT^*(pt;\bb{Q})=:(\Lambda_\bT)_{\bb{Q}}$, 
and let $z \in N_\bC:=N \otimes_\bZ \bC= \Hom_\bR(M_\bR, \bC)$ be given. By the universal property of $Sym$, $z$ induces  $\bR$-algebra homomorphisms
$$\langle -, z \rangle : Sym_{\bb{R}}(M) \to \bC \:,$$
by which we can view $\langle p, z \rangle$ for $z$ now variable and $p\in  Sym_{\bb{R}}(M)$, resp., $p\in  (Sym_{\bb{R}}(M))^{an}$ fixed,
as a $\bC$-valued polynomial on $N_\bR$, resp., as a {\it convergent  power series function} around zero in  $N_\bR$.
Assume now that $z$ is chosen so  that $\langle Eu^\bT_X(x_\sig), z \rangle \neq 0$ for each $\sig \in \Sig(n)$, i.e., 
 $\langle i_\sig^* F_\rho, z \rangle \neq 0$, for each ray $\rho\in \sigma(1)$ of $\sigma\in\Sig(n)$. Applying the function $\langle -, \frac{1}{k} \cdot z \rangle$ to \eqref{f110}, we get
 \be\label{f115}
\sum_{m \in k\cdot P_y(h) \cap M} e^{\langle m, \frac{1}{k} \cdot z\rangle}= 
\sum_{\sig \in \Sig(n)} \frac{ e^{\langle i_\sig^* [D_{P_y(h)}]_\bT,z\rangle}}{\langle Eu^\bT_X(x_\sig), \frac{1}{k} \cdot z \rangle} \cdot \langle i^*_\sig (\td^\bT_*(X)), \frac{1}{k} \cdot z \rangle. 
\ee 
Note that by the \index{Riemann sum approximation} Riemann sum approximation of an integral, we have:
$$\lim_{k\to \infty} \frac{1}{k^{n}}\sum_{m \in k\cdot P_y(h) \cap M} e^{\langle m, \frac{1}{k} \cdot z\rangle}= \lim_{k\to \infty} \frac{1}{k^{n}}\sum_{m \in P_y(h) \cap \frac{1}{k} M} e^{\langle  m, z\rangle} = \int_{P_y(h)} e^{\langle  m, z\rangle} \ dm,$$
with the Lebesgue measure $dm$ normalized so that the unit cube in $M \subset M_\bR$ has volume $1$ (which explains the use of the factor $\frac{1}{k^n}$). 
We next study the limits on the right-hand side of \eqref{f115}. Note that
$${k^n} \cdot \langle Eu^\bT_X(x_\sig), \frac{1}{k} \cdot z \rangle = \langle Eu^\bT_X(x_\sig), z \rangle$$ since 
$\langle Eu^\bT_X(x_\sig), z \rangle$ is a (non-zero) homogeneous polynomial of degree $n$ in $z$. Finally, we have that
\be\label{to} \lim_{k\to \infty} \langle i^*_\sig (\td^\bT_*(X)), \frac{1}{k} \cdot z \rangle =1.\ee
Indeed, by formula \eqref{Todd0}, we have
$$\langle i^*_\sig (\td^\bT_*(X)), \frac{1}{k} \cdot z \rangle=
\sum_{g\in G_{\Sigma}} \prod_{\rho\in \Sigma(1)} \frac{\langle i^*_\sig  F_{\rho},\frac{1}{k} \cdot z \rangle }{1-a_{\rho}(g) \cdot e^{-\langle i^*_\sig F_{\rho}, \frac{1}{k} \cdot z \rangle}},$$
with $F_\rho=[D_\rho]_\bT$ denoting the equivariant fundamental class of the $\bT$-invariant divisor $D_\rho$ corresponding to the ray $\rho \in \Sig(1)$, and note that for $k$ fixed (or $k=1$) this is a {\it convergent power series function} around zero in $z$.
Finally,  $$\lim_{k \to \infty} \frac{\langle i^*_\sig  F_{\rho},\frac{1}{k} \cdot z \rangle }{1-a_{\rho}(g) \cdot e^{-\langle i^*_\sig F_{\rho}, \frac{1}{k} \cdot z \rangle}}=
\begin{cases}
0, \ \  \text{if} \ a_\rho(g)\neq 1,\\
1, \ \  \text{if} \ a_\rho(g) = 1.
\end{cases}
$$
So only $g=id_G$ contributes a non-zero limit to \eqref{to}, and this contribution is $1$.
Altogether, we get the following result.
\bt\label{them1} In the above notations, we have
\be\label{f113}
\begin{split}
\int_{P_y(h)} e^{\langle  m, z\rangle} \ dm &= \sum_{\sig \in \Sig(n)} \frac{ e^{\langle i_\sig^* [D_{P_y(h)}],z\rangle}}{\langle Eu^\bT_X(x_\sig),  z \rangle} \\
&=
 \sum_{\sig \in \Sig(n)} \frac{ e^{\langle (1+y) \cdot i_\sig^* c^\bT_1(\cO_X(D_{P})),z\rangle}}{\langle Eu^\bT_X(x_\sig),  z \rangle} \cdot  e^{ \sum_\rho h_\rho \langle i_\sig^* F_\rho,z\rangle} \:.
 \end{split}
\ee
\et
\br\label{regular} The left hand side of \eqref{f113} is a {\it continuous} function in $h$  near zero, and for all $z \in N_\bC$, resp., in $y$, whereas the right hand side is an {\it analytic} function in $h$  near zero,
and for $z \in N_\bC$ away from the linear  hyperplanes  $\langle i_\sig^* F_\rho, z \rangle = 0$ for each ray $\rho\in \sigma(1)$ of $\sigma\in \Sig(n)$, resp., in $y$. But then both sides of this equality have to be 
{\it analytic  functions in $h$ near zero and all $z\in N_\bC$}, resp., in $y\in \bR$, with the corresponding Taylor series around zero converging uniformly on small complete neighborhoods of zero in the variables $h$ and $z$ (cf. also \cite{KSW}[page 27]).
\er

For later use, let us now state the following consequence of Theorem \ref{them1} (compare also with \cite{BrV1}[Lem.3.11]).
\bc\label{polin}
Let $f$ be a polynomial function of degree $d$ on $M_\bR$. Then $\int_{P_y(h)} f(m) \ dm$ is a polynomial function in $h$ of degree bounded above by $d+n$.
\ec
\begin{proof}
We follow the idea of proof from \cite{BrV1}[Lem.3.11], adapted to our language. By replacing in \eqref{f113} $z$ by $tz$, with $0\neq t \in \bR$ small (so that also
 $\langle i_\sig^* F_\rho, tz \rangle \neq 0$ for each ray $\rho\in \sigma(1)$ of $\sigma\in \Sig(n)$), and multiplying both sides by $t^n$ , we get an equality of analytic functions in $t$, with each term on the right hand side also analytic even in zero, since $\langle Eu^\bT_X(x_\sig),  z \rangle\neq 0$ is homogeneous in $z$ of order $n$.
Now taking the Taylor expansion at $t=0$ of these two analytic functions, we get for $z$ small and away from the linear hyperplanes $\langle i_\sig^* F_\rho, z \rangle = 0$ for each ray $\rho\in \sigma(1)$ of $\sigma\in \Sig(n)$, that the assertion holds for $f(m)=\langle m, z \rangle^k$, for any given non-negative integer $k$.
This then implies the statement for any polynomial $f(m)$.
\end{proof}

\bc\label{strc}
Let $X=X_P$ be the toric variety associated to a simple full-dimensional lattice polytope $P \subset M_\bR$, with  
$D=D_P$ the corresponding ample divisor. Let $\cF$ be a $\bT$-equivariant 
coherent sheaf on $X$. Then, for a polynomial  function $f$ on $M_\bR$, the expression
\be\label{stre} \sum_{m\in M} \left( \sum_{i=0}^n (-1)^i \cdot \dim_\bC H^i(X;\cO_X((1+y) D) \otimes \cF)_{\chi^{-m}}\right) \cdot f(m) \ee
is a polynomial in $1+y$. Moreover, the value of this polynomial at $0$ (i.e., for $y=-1$) is given by 
\[ \sum_{m\in M} \left( \sum_{i=0}^n (-1)^i \cdot \dim_\bC H^i(X;\cF)_{\chi^{-m}}\right) \cdot f(m). \]
\ec

\begin{proof}
Using the ideas of the above corollary (as in \cite{BrV1}[Prop.4.1]), the assertion follows from the formula
\begin{equation*}\begin{split}
&\sum_{m\in M} \left( \sum_{i=0}^n (-1)^i \cdot \dim_\bC H^i(X;\cO_X((1+y) D) \otimes \cF)_{\chi^{-m}}\right) \cdot e^{\langle m, z \rangle}\\
&=
\sum_{\sig \in \Sig(n)} \frac{ e^{\langle i_\sig^* [D_{P_y}],z\rangle}}{\langle Eu^\bT_X(x_\sig),  z \rangle} \cdot  \langle i_\sig^* (td_*^\bT([\cF])), z 
\rangle \\
&=
 \sum_{\sig \in \Sig(n)} \frac{ e^{\langle (1+y) \cdot i_\sig^* c^\bT_1(\cO_X(D_{P})),z\rangle}}{\langle Eu^\bT_X(x_\sig),  z \rangle} \cdot  \langle i_\sig^* (td_*^\bT([\cF])), z 
\rangle \:,
\end{split}
\end{equation*}
which can be deduced from \eqref{eqecc}, upon pairing with $z \in N_\bC$.
\end{proof}

\br
For $\cF=\cO_X$, the expression \eqref{stre} becomes
\[ \sum_{m\in M\cap P_y}f(m) \]
as in \cite{BrV1}. For the function $f=1$, this further specializes to the classical \index{Ehrhart polynomial} {\it Ehrhart polynomial}.
As another special case, for $[\cF]=mC^\bT_0([O_\tau \hookrightarrow X]) \in K_0^\bT(X)$, \eqref{stre} calculates \[ \sum_{m\in M\cap {\rm Relint}(E_y)}f(m) \]
with $E$ the face of $P$ corresponding to a cone $\tau \in \Sig$.
\er

We next explain (in Theorem \ref{them1loc} below) a result analogous to Theorem \ref{them1} for the dilation  
$$Tan(P,v)(h):=(v+C_v)(h)=v(h)+C_v$$ 
of the \index{tangent cone} {\it tangent cone}  $Tan(P,v)=v+C_v$ of a vertex $v$ of the polytope $P$, with $C_v=Cone(P \cap M-v) \subset M_\bR$ a full-dimensional simple lattice cone with vertex $0$. The arguments from the case of a full-dimensional lattice polytope apply similarly to the tangent cone, once the approximation of the integral $\int_{{C_v}(h)} e^{\langle  m, z\rangle} \ dm$ by the corresponding  Riemann sum is explained. The corresponding equivariant Hirzebruch-Riemann-Roch formula in this setting is \eqref{bzin}, with $\sig^\vee=C_v$ and $v$ corresponding to the torus fixed point $x_\sig$ in the $\bT$-invariant open affine neighborhood $U_\sig$ associated to $\sig \in \Sig(n)$.  In this local context $h=(h_\rho)_\rho$ is now a vector with real entries indexed by the rays $\rho$ of $\sig=C_v^\vee$.

\bp\label{pr54}
With the above notations, 
one has that $\sum_{m \in C_v \cap  M} e^{\langle  m, z\rangle}$ and $\int_{C_v} e^{\langle  m, z\rangle} \ dm$ are convergent (locally uniformly) to meromorphic functions in $L^{-1}(\Lambda^{an}_\bT)_\bC$, for $z\in N_\bC$ satisfying $-z \in Int(\sig)$. Moreover,
$$\lim_{k\to \infty} \frac{1}{k^{n}}\sum_{m \in k\cdot {C_v}(h) \cap M} e^{\langle m, \frac{1}{k} \cdot z\rangle}= \lim_{k\to \infty} \frac{1}{k^{n}}\sum_{m \in {C_v}(h) \cap \frac{1}{k} M} e^{\langle  m, z\rangle} = \int_{{C_v}(h)} e^{\langle  m, z\rangle} \ dm,$$
with $h\in  \bQ^r$ a small enough dilation vector, and $k \in \bN$ so that $k \cdot C_v(h)$ is a simple pointed lattice cone in $M_\bR$.  
\ep
\begin{proof}
Let $\sig\in \Sig(n)$ be a smooth $n$-dimensional cone with generators $u_{\sig,1},\ldots, u_{\sig,n}$ of the rays $\rho_1,\ldots, \rho_n \in \sig(1)$, and let $m_{\sig,1}, \ldots, m_{\sig,n}$ be the dual basis of $M=\bigoplus_{i=1}^n \bN \cdot m_{\sig,i}$. Then
$$\sum_{m \in C_v \cap  M} e^{\langle  m, z\rangle}=\prod_{i=1}^n \left( \sum_{j_i=0}^\infty (e^{\langle  m_{\sig,i}, z\rangle})^{j_i} \right) 
=\prod_{i=1}^n \frac{1}{1-e^{\langle  m_{\sig,i}, z\rangle}}=\frac{\langle i_\sig^*(\td_*^\bT(X)),z \rangle}{\langle  Eu_X^\bT(x_\sig), z \rangle },$$
with all $\langle  m_{\sig, i}, z\rangle < 0$ for $-z \in Int(\sig)$, and the corresponding geometric series locally uniformly convergent in these $z$. The last equality follows from \eqref{toddloc} (for $K=\emptyset$), using the identification $s:Sym_\bQ(M) \simeq (\Lambda_\bT)_\bQ$.
Similarly, $$\int_{C_v} e^{\langle  m, z\rangle} \ dm=\prod_{i=1}^n \frac{-1}{\langle  m_{\sig,i}, z\rangle},$$
for $-z \in Int(\sig)$ and convergence as before, with the Lebesgue measure $dm$ normalized so that the unit cube in $M \subset M_\bR$ has volume $1$ (see also \cite{GP}[eq.(8)] and \cite{KP}[Prop.1]). By the multiplicativity of the equivariant Todd class transformation, we get as in the case of a polytope that 
\begin{equation*}\begin{split}
\lim_{k\to \infty} \frac{1}{k^{n}} \sum_{m \in k\cdot {C_v}(h) \cap M} e^{\langle m, \frac{1}{k} \cdot z\rangle}
& =\lim_{k\to \infty}  \frac{1}{k^{n}} \prod_{i=1}^n \left( \frac{e^{h_i \langle m_{\sig, i}, z \rangle}}{1-e^{\langle  m_{\sig,i}, \frac{1}{k} \cdot z\rangle}} \right) \\
&= \lim_{k\to \infty}  \prod_{i=1}^n \left( \frac{e^{h_i \langle m_{\sig,i}, z \rangle} \cdot \langle  m_{\sig,i}, \frac{1}{k} \cdot z\rangle }{(1-e^{\langle  m_{\sig,i}, \frac{1}{k} \cdot z\rangle} ) \cdot \langle  m_{\sig,i}, z \rangle} \right) \\
&=\prod_{i=1}^n  \frac{- e^{h_i \langle m_{\sig,i}, z \rangle} }{ \langle  m_{\sig,i}, z \rangle} \\
&= e^{\sum_{i=1}^n h_i \langle m_{\sig,i}, z \rangle} \cdot \prod_{i=1}^n \frac{-1}{\langle  m_{\sig,i}, z\rangle} \\
&= e^{\sum_{i=1}^n h_i \langle m_{\sig,i}, z \rangle} \cdot \int_{{C_v}} e^{\langle  m, z\rangle} \ dm \\
 &=\int_{{C_v}(h)} e^{\langle  m, z\rangle} \ dm.
\end{split}
\end{equation*}

For the proof of the case of a simplicial cone by smooth subdivisions, let us remark that the above approximation by Riemann sums for the full cone $C_v$ holds similarly for the cone $C_v^K:=C_v \setminus \bigcup_{i\in K} F_i$ with some facets $F_i$ ($i \in K \subset \Sigma(1)$) removed. More precisely, one gets 
$$\sum_{m \in C^K_v \cap  M} e^{\langle  m, z\rangle}
=\prod_{i\in \sig(1) \cap K} \frac{e^{\langle  m_{\sig,i}, z\rangle}}{1-e^{\langle  m_{\sig,i}, z\rangle}} \cdot 
\prod_{i\in \sig(1) \setminus K}
\frac{1}{1-e^{\langle  m_{\sig,i}, z\rangle}}
=\frac{\langle i_\sig^*(T_{0*}^\bT([V \hookrightarrow X])),z \rangle}{\langle  Eu_X^\bT(x_\sig), z \rangle },$$
with the last equality following from \eqref{toddloc}, for $V=X \setminus \bigcup_{i\in K} D_{\rho_i}$ (with the divisors $D_{\rho_i}$ corresponding to the divisors of the original polytope $P$). The series on the left hand side are locally uniformly convergent to a meromorphic function in $L^{-1}(\Lambda^{an}_\bT)_\bC$, for $z\in N_\bC$ satisfying $-z \in Int(\sig)$. Moreover, a similar proof yields in this case that 
$$\lim_{k\to \infty} \frac{1}{k^{n}} \sum_{m \in k\cdot {C^K_v}(h) \cap M} e^{\langle m, \frac{1}{k} \cdot z\rangle}=\int_{{C_v}(h)} e^{\langle  m, z\rangle} \ dm.$$

In the general case, let $C_v$ be as before a full-dimensional  simple lattice cone with vertex $0$. The assumption that $C_v$ is simple will  be needed in the following arguments only for the property that $C_v(h)$ is then also simple for $h=(h_\rho)$ small enough.
Consider a refinement $C_v=\bigcup_{i=1}^{s} C_i$ of $C_v$ by smooth full-dimensional lattice cones with vertex $0$, see, e.g., \cite{CLS}[Thm.11.1.9] in the dual context of toric resolutions by fan refinements.
Then, considering each newly introduced facet $F$ 
for only one of the cones it belongs to, we get a disjoint union $C_v=\bigsqcup_{i=1}^{s} C^{K_i}_i$ into smooth full-dimensional lattice cones with vertex $0$ and with some facets removed. Let $z \in N_\bC$ be so that $-z \in Int(\sig)$, with $\sig=C_v^\vee$. Then we also have that $-z \in Int(\sig_i)$ with $\sig_i=C_i^\vee$. So we can apply the case of smooth cones to all of the $C_i$. Then
$$\sum_{m \in C_v \cap  M} e^{\langle  m, z\rangle}=\sum_{i=1}^s \left(\sum_{m \in C^{K_i}_i \cap  M} e^{\langle  m, z\rangle} \right)$$ is convergent (locally uniformly) to a meromorphic function in $L^{-1}(\Lambda^{an}_\bT)_\bC$, for $z\in N_\bC$ satisfying $-z \in Int(\sig)$. Moreover,
$$\lim_{k\to \infty} \frac{1}{k^{n}}\sum_{m \in k\cdot {C_v}(h) \cap M} e^{\langle m, \frac{1}{k} \cdot z\rangle}
=\sum_{i=1}^s \int_{{C_i}(h)} e^{\langle  m, z\rangle} \ dm
= \int_{{C_v}(h)} e^{\langle  m, z\rangle} \ dm,$$
with $h\in M_\bQ \simeq \bQ^r$ a small enough dilation vector, and $k \in \bN$ so that $k\cdot C_v(h)$ is a full-dimensional pointed lattice cone in $M_\bR$. 
\end{proof}

\br\label{rem55}
By the above smooth decomposition method, one can get more explicit expressions for the summation appearing in formula \eqref{bzin}. In the context of the tangent cone $Tan(P,v)$ of a simple full-dimensional lattice polytope $P$ at a vertex $v$, we also get the identification
\be\label{rem55a}
\sum_{m\in C_v\cap M} e^{\langle  m, z\rangle}  =\langle \td_*^\bT (X)_{x_\sig}, z \rangle=
\left\langle \mathbb{S}\left(\sum_{m\in \sig^\vee \cap M} e^{s(m)}\right) , z \right\rangle \in L^{-1}(\Lambda^{an}_\bT)_\bQ,
\ee
for $-z\in Int(\sig)$. The left hand side is a convergent series for such $z$, whereas the right hand side is a corresponding meromorphic function on $N_\bC$. Furthermore, still assuming $-z\in Int(\sig)$, we get more generally, for $\tau \preceq \sig$ corresponding to a face $E$ of $P$ containing the vertex $v$, a similar interpretation for the following equality:
\be\label{eq111b}
\begin{split}
\sum_{m \in {\rm Relint}(C_v \cap E) \cap M} e^{\langle  m, z\rangle} &=
\langle T_{0*}^\bT ([O_\tau \hookrightarrow  X])_{x_\sig}, z \rangle \\ &=
\left\langle \mathbb{S}\left( \sum_{{\rm Relint}(\sigma^\vee \cap \tau^\perp) \cap M} e^{s(m)} \right) , z \right\rangle \in L^{-1}(\Lambda^{an}_\bT)_\bQ.
\end{split}
\ee
\er

As a consequence of the above discussion, we can now give a geometric meaning to the localized summands of formula \eqref{f113}.  
\bt\label{them1loc} 
Let $v \in P$ be a vertex of the full-dimensional simple lattice polytope $P \subset M_\bR$, with  tangent cone $Tan(P,v)$, for $C_v=Cone(P \cap M-v) \subset M_\bR$ a full-dimensional simple lattice cone with vertex $0$. Consider its dilation 
$$Tan(P,v)(h)=(v+C_v)(h).$$
 Let $(X,D)$ be the projective toric variety with ample Cartier divisor 
associated to $P$. Denote by $x_\sig \in X$ the torus fixed point corresponding to $v \in P$, with associated cone $\sig=C_v^\vee \in \Sig(n)$. Then for $z \in N_\bC$ satisfying $-z \in Int(\sig)$, we have
\be\label{f113loc}
\begin{split}
\int_{Tan(P,v)(h)} e^{\langle  m, z\rangle} \ dm =
 \frac{ e^{\langle  v ,z\rangle}}{\langle Eu^\bT_X(x_\sig),  z \rangle} \cdot  e^{ \sum_{\rho\in \sig(1)} h_\rho \langle i_\sig^* F_\rho,z\rangle}  \:.
 \end{split}
\ee
\et

\begin{proof}
In light of the above proposition, one only has to note that 
$i_\sig^*c_1^\bT(\cO(D))=-c(v)=s(v) \in H^2_\bT(pt;\bQ)\simeq M_\bQ$.
\end{proof}

\br
The left hand side of the equality  \eqref{f113loc} is an
 analytic  function in $h$ near zero and $-z\in Int(\sig)$, resp., in $y\in \bR$. 
 Moreover, this integral is locally uniformly convergent in $z$. 
 The right hand side of \eqref{f113loc} is an
 analytic  function in $h$ near zero and $y\in \bR$, as well as a meromorphic function in $z$ which is holomorphic outside the hyperplanes $\langle i_\sig^* F_\rho, z \rangle =0$ for each ray $\rho \in \sig(1)$. 
\er

\medskip

Back to the context of a projective simplicial toric variety associated to a full-dimensional simple lattice polytope $P \subset M_\bR$, 
in order to relate the right-hand side of formula \eqref{f113} to the equivariant Hirzebruch-Riemann-Roch formulae we have to introduce the {\it analytic subring} \index{analytic subring}
\be\label{ans} \left( H^*_\bT(X;\bQ) \right)^{an} \subset \widehat{H}^*_\bT(X;\bQ)\ee
defined as the image of the \index{analytic Stanley-Reisner subring} {\it analytic Stanley-Reisner subring} (depending only on the fan $\Sig$)
$$SR^{an}_\bQ(\Sig):=\bQ\{x_\rho \mid  \rho \in \Sig(1) \}/\sim_{SR}  \ \ \subset \ \bQ[[x_\rho \mid  \rho \in \Sig(1)]]/\sim_{SR}=:\widehat{SR_\bQ(\Sig)} $$ under the isomorphism $\widehat{SR_\bQ(\Sig)} \ \simeq \widehat{H}^*_\bT(X;\bQ)$ given on generators by $x_\rho \mapsto F_\rho$, $\rho \in \Sig(1)$. Given an element $p(F_\rho) \in \left( H^*_\bT(X;\bQ) \right)^{an}$, with $p(x_\rho) \in \bQ\{x_\rho \mid  \rho \in \Sig(1) \}$ a convergent power series, one gets for the restriction $i_\sig^*$ to a fixed point $x_\sig$ that
\be\label{flip} p(i_\sig^* F_\rho)=i_\sig^*(p(F_\rho)).\ee
Here, the convergent power series on the left side corresponds to the image of $p(x_\rho)$ under the evaluation homomorphism
$$\bQ\{x_\rho \mid  \rho \in \Sig(1) \}\mapsto  (\Lambda_\bT^{an})_\bQ:\: x_\rho\mapsto i_\sig^* F_\rho \in H^2_\bT(pt;\bQ)\simeq M_\bQ\:.$$

\begin{rem}\label{trans}
Regarding $\sig \in \Sig(n)$ as a fan, we have a corresponding (analytic) Stanley-Reisner ring
$$SR^{an}_\bQ(\sig):=\bQ\{x_\rho \mid  \rho \in \sig(1) \} \ \ \subset \ \bQ[[x_\rho \mid  \rho \in \sig(1)]]=:\widehat{SR_\bQ(\sig)},$$ 
with an isomorphism $\widehat{SR_\bQ(\sig)} \ \simeq \widehat{H}^*_\bT(U_\sig;\bQ)
\overset{i_\sig^*}{\simeq} \widehat{H}^*_\bT(x_\sig;\bQ)$ given on generators by $x_\rho \mapsto i_\sig^* F_\rho$, $\rho \in \sig(1)$. Then the restriction map $\left( H^*_\bT(X;\bQ) \right)^{an} \to \left( H^*_\bT(U_\sig;\bQ) \right)^{an}$ is induced from a corresponding restriction map $SR^{an}_\bQ(\Sig) \to SR^{an}_\bQ(\sig)$, sending $x_\rho \mapsto 0$ if $\rho \notin \sig(1)$. We get the following factorization of the right hand side of \eqref{flip}:
$$i_\sig^*:\left( H^*_\bT(X;\bQ) \right)^{an} \to \left( H^*_\bT(U_\sig;\bQ) \right)^{an} \to \left( H^*_\bT(x_\sig;\bQ) \right)^{an}.$$
\end{rem}

As a consequence of Theorem \ref{them1} and Remark \ref{regular}, we get by differentiation and convergence the following.
\bc\label{convergence1}
Let $p(\frac{\partial}{\partial h}) \in \bQ\{\frac{\partial}{\partial h_\rho} \mid \rho \in \Sig(1) \} \subset \bQ[[\frac{\partial}{\partial h_\rho} \mid \rho \in \Sig(1) ]]$ be an infinite order differential operator with constant rational coefficients, i.e., obtained by substituting $x_\rho \mapsto \frac{\partial}{\partial h_\rho}$ into a convergent (near zero) power series with rational coefficients $p(x) \in \bQ\{x_\rho \mid \rho \in \Sig(1) \} \subset \bQ[[x_\rho \mid \rho \in \Sig(1) ]]$. Then, in the above notations, we get for $z$ small enough and away from the hyperplanes $\langle i_\sig^* F_\rho, z \rangle = 0$ for each ray $\rho\in \sigma(1)$ of $\sigma\in \Sig(n)$ the following formula:
\be\label{f1133}
\begin{split}
p\left(\frac{\partial}{\partial h}\right) \left( \int_{P_y(h)} e^{\langle  m, z\rangle} \ dm \right)_{\vert_{h=0}} &=
 \sum_{\sig \in \Sig(n)} \frac{ e^{\langle (1+y) \cdot i_\sig^* c^\bT_1(\cO_X(D_{P})),z\rangle}}{\langle Eu^\bT_X(x_\sig),  z \rangle} \cdot  \langle p(i_\sig^* F_\rho), z\rangle \\
&= \left\langle 
\sum_{\sig \in \Sig(n)} \frac{i_\sig^* \left( e^{(1+y) \cdot c^\bT_1(\cO_X(D))} p(F_\rho)\right)}{Eu^\bT_X(x_\sig)}  
, z  \right\rangle \\
& =\left\langle 
\int_X   e^{(1+y) \cdot c^\bT_1(\cO_X(D))} p(F_\rho), z  \right\rangle \:.
\end{split}\ee
If $p_k\in \bQ[\frac{\partial}{\partial h_\rho} \mid \rho \in \Sig(1) ]$ is the corresponding truncation of $p$ up to order $k$, then both sides of \eqref{f1133} applied to $p_k$ converge for $k\to \infty$ locally uniformly in these $z$ to \eqref{f1133} applied to $p$.
\ec

\br
By \eqref{f1133}, the operator $p(\frac{\partial}{\partial h}) \left( \int_{P_y(h)} e^{\langle  m, z\rangle} \ dm \right)_{\vert_{h=0}}$ depends only on the equivalence class of $[p(x)] \in SR^{an}_\bQ(\Sig)$ and not on the chosen convergent power series representative.
\er

\br\label{convergence2}
Assume in addition in Corollary \ref{convergence1} that $\langle \int_X   e^{(1+y) \cdot c^\bT_1(\cO_X(D))} p(F_\rho), z  \rangle$
is a convergent power series in $z$ near zero (for $y\in \bR$ a fixed parameter, e.g., $y=0$). Then one gets as an application of Cauchy's integral formula (see also \cite{KSW}[page 27]),
that  both sides of \eqref{f1133} applied to $p_k$ converge for $k\to \infty$ and $z$ small locally  uniformly  to \eqref{f1133} applied to $p$. In particular, this limit commutes with finite order  differentiations with respect to $z$ (and $z$ small enough).
\er

\bc\label{cformal1}
Let $p(\frac{\partial}{\partial h}) \in \bQ[[\frac{\partial}{\partial h_\rho} \mid \rho \in \Sig(1)]]$ be an infinite order differential operator with constant rational coefficients, i.e., obtained by substituting $x_\rho \mapsto \frac{\partial}{\partial h_\rho}$ into a formal power series with rational coefficients $p(x) \in \bQ[[x_\rho \mid \rho \in \Sig(1) ]]$. Then for a polynomial  function $f$ on $M_\bR$, we have the following formula:
\be\label{form1133}
p\left(\frac{\partial}{\partial h}\right) \left( \int_{P_y(h) }f(m) \ dm \right)_{\vert_{h=0}} 
=  f\left(\frac{\partial}{\partial z}\right) \left(
\langle \int_X   e^{(1+y) \cdot c^\bT_1(\cO_X(D))} p(F_\rho), z  \rangle \right)_{\vert_{z=0}} \:,
\ee
where on the right hand side the operator $\left( f(\frac{\partial}{\partial z}) \right)_{\vert_{z=0}}$ acts on a formal power series in $z$.
\ec

\begin{proof}
Let first  $p(\frac{\partial}{\partial h}) \in \bQ[\frac{\partial}{\partial h_\rho} \mid \rho \in \Sig(1)]$ be a finite order differential operator with constant rational coefficients, i.e., obtained by substituting $x_\rho \mapsto \frac{\partial}{\partial h_\rho}$ into a polynomial with rational coefficients 
$p(x) \in \bQ[x_\rho \mid \rho \in \Sig(1) ]$. Then the result follows from applying the operator $\left( f(\frac{\partial}{\partial z}) \right)_{\vert_{z=0}}$
to \eqref{f1133}. Here we first need to assume that $z$ is small enough and away from the hyperplanes $\langle i_\sig^* F_\rho, z \rangle = 0$ for each ray $\rho\in \sigma(1)$ of $\sigma\in \Sig(n)$, since the localization formula is used; however, formula \eqref{f1133} then holds for all $z$ small enough (and $y\in \bR$ fixed) by continuity, since the left hand side of \eqref{f1133} is analytic in $z$ near zero by Remark \ref{regular}.
 Moreover, by Corollary \ref{polin} the left hand side and therefore also the right hand side of \eqref{form1133} only depend on a truncation of $p$ up to   order $n+deg(f)$. Especially the left hand side is well defined and the stated equality holds for a formal power series with rational coefficients $p(x) \in \bQ[[x_\rho \mid \rho \in \Sig(1) ]]$. 
\end{proof}

\br
By \eqref{form1133}, the operator $p(\frac{\partial}{\partial h}) \left( \int_{P_y(h)} f(z) \ dm \right)_{\vert_{h=0}}$ depends only on the equivalence class of $[p(x)] \in \widehat{SR_\bQ(\Sig)}$ and not on the chosen formal power series representative.
\er

As a consequence of Theorem \ref{them1loc} and Remark \ref{trans}, we get the following local version of Corollary \ref{convergence1}:
\bc\label{convergence11}
Let $p(\frac{\partial}{\partial h}) \in \bQ\{\frac{\partial}{\partial h_\rho} \mid \rho \in \Sig(1) \} \subset \bQ[[\frac{\partial}{\partial h_\rho} \mid \rho \in \Sig(1) ]]$ be an infinite order differential operator with constant rational coefficients, i.e., obtained by substituting $x_\rho \mapsto \frac{\partial}{\partial h_\rho}$ into a convergent (near zero) power series with rational coefficients $p(x) \in \bQ\{x_\rho \mid \rho \in \Sig(1) \} \subset \bQ[[x_\rho \mid \rho \in \Sig(1) ]]$. Fix a vertex $v \in P$ with  tangent cone $Tan(P,v)$, and let 
$\sig \in \Sig(n)$ be the corresponding cone.
Then, for $z$ small enough and $-z \in Int(\sig)$, we have the following formula:
\be\label{f1133b}
p\left(\frac{\partial}{\partial h}\right) \left( \int_{Tan(P,v)(h)} e^{\langle  m, z\rangle} \ dm \right)_{\vert_{h=0}} =
  \left\langle 
\frac{ e^{v} \cdot \left( i_\sig^* p(F_\rho)\right)}{Eu^\bT_X(x_\sig)}  
, z  \right\rangle \:.
\ee
If $p_k\in \bQ[\frac{\partial}{\partial h_\rho} \mid \rho \in \Sig(1) ]$ is the corresponding truncation of $p$ up to order $k$, then both sides of \eqref{f1133b} applied to $p_k$ converge for $k\to \infty$ locally uniformly in these $z$ to \eqref{f1133b} applied to $p$.
\ec

We also have the following.

\bp\label{toddan}
Let $X=X_\Sig$ be a complete simplicial toric variety. For any class $[\cF] \in K^\bT_0(X)$, its equivariant Todd class is an element in the analytic cohomology ring of $X$, i.e.,
$$\td_*^\bT([\cF]) \in \left( H^*_\bT(X;\bQ) \right)^{an} \subset \widehat{H}^*_\bT(X;\bQ).$$ 
\ep
\begin{proof}
For $\cF=\cO_X$, this follows from the explicit formula given in \eqref{Todd0}. More generally, this holds for sheaves $\cF=\pi_*(\cO_W\otimes \bC_{\chi^{\wti{m}}})^G$ as in the explicit formula \eqref{la22}. As in the proof of \cite{BrV2}[Cor.1.2], the equivariant Grothendieck group $K^\bT_0(X)$ is generated by classes of sheaves of the form $\pi_*(\cO_W\otimes \bC_{\chi^{\wti{m}}})^G$.
\end{proof}

Altogether, we get the following abstract Euler-Maclaurin formula based on the equivariant Hirzebruch-Riemann-Roch theorem. \index{Euler-Maclaurin formula}
\bt\label{abstrEM}
Let $X=X_P$ be the projective simplicial toric variety associated to a full-dimensional simple lattice polytope $P\subset M_\bR$. Let $\Sig:=\Sig_P$ be the inner normal fan of $P$, and $D:=D_P$ the ample Cartier divisor associated to $P$. Let $[\cF]\in K^\bT_0(X)$ be fixed, and choose a convergent power series $p(x_\rho) \in \bQ\{x_\rho \mid \rho \in \Sig(1) \}$ so that $p(F_\rho)=\td^\bT_*([\cF]) \in \left( H^*_\bT(X;\bQ) \right)^{an}$. Then 

\be\label{f1134}
\begin{split}
p\left(\frac{\partial}{\partial h}\right) \left( \int_{P(h)} e^{\langle  m, z\rangle} \ dm \right)_{\vert_{h=0}} & = \left\langle \chi^\bT(X,\cO_X(D) \otimes \cF) , z\right\rangle \\
&= \sum_{m\in M} \left( \sum_{i=0}^n (-1)^i \cdot \dim_\bC H^i(X;\cO_X(D) \otimes \cF)_{\chi^{-m}}\right) \cdot e^{\langle m, z \rangle}
\:,
\end{split}
\ee
as analytic functions in $z$ with $z$ small enough, and with $\chi^\bT(X,\cO_X(D) \otimes \cF)\in (\Lambda^{an}_\bT)_\bQ$ the cohomological equivariant Euler characteristic of $\cO_X(D) \otimes \cF$.
\et
\begin{proof}
Equation \eqref{f1133} for $y=0$ can now be calculated  as
\begin{eqnarray*}
p\left(\frac{\partial}{\partial h}\right) \left( \int_{P(h)} e^{\langle  m, z\rangle} \ dm \right)_{\vert_{h=0}} 
&=& \left\langle 
\int_X   e^{c^\bT_1(\cO_X(D))} \td^\bT_*([\cF])
, z  \right\rangle \\
&=&  \left\langle \chi^\bT(X,\cO_X(D) \otimes \cF) , z\right\rangle ,
\end{eqnarray*}
where the last equality follows from the equivariant Hirzebruch-Riemann-Roch formula \eqref{eHRR}. The second equality of \eqref{f1134} follows from \eqref{f41} which uses the eigenspace decomposition, with the minus sign of \eqref{f1134} due to the appearance of $c(m)=-s(m)$ in \eqref{f41}. Recall that we work with the identification $s: Sym_{\bb{Q}}(M)  \simeq  H_\bT^*(pt;\bb{Q})=:(\Lambda_\bT)_{\bb{Q}}$.
Finally, in the proof we first need to assume that $z$ is small enough and away from the hyperplanes $\langle i_\sig^* F_\rho, z \rangle = 0$ for each ray $\rho\in \sigma(1)$ of $\sigma\in \Sig(n)$, since the localization formula is used; however, formula \eqref{f1134} then holds for all $z$ small enough, by Remark \ref{convergence2}.
\end{proof}

Evaluating \eqref{f1134} at $z=0$, we get the following generalized \index{volume formula} {\it volume formula}.
\bc
In the notations of the previous theorem, we have
\be\label{fvol}
p\left(\frac{\partial}{\partial h}\right) \left( {vol} \ {P(h)}  \right)_{\vert_{h=0}} = \chi(X,\cO_X(D) \otimes \cF),
\ee
with ${vol} \ {P(h)} =\int_{P(h)} dm$ the volume of $P(h)$, and the Lebesgue measure normalized so that the unit cube in $M \subset M_\bR$ has volume $1$.
\ec

\bex
The classical volume formula \cite{BrV1}[Thm.2.15] corresponds to $\cF=\cO_X$ for $p$ given by the Todd operator $Todd(\frac{\partial}{\partial h})$ of \eqref{todd2} below, with $\chi(X,\cO_X(D))=\vert P \cap M \vert$. \qed
\eex

\bc\label{abstrEMc}
Let $X=X_P$ be the projective simplicial toric variety associated to a simple full-dimensional  lattice polytope $P\subset M_\bR$. Let $\Sig:=\Sig_P$ be the inner normal fan of $P$, and $D:=D_P$ the ample Cartier divisor associated to $P$. Let $[\cF]\in K^\bT_0(X)$ be fixed, and choose a formal power series $p(x_\rho) \in \bQ[[x_\rho \mid \rho \in \Sig(1)]]$ so that $p(F_\rho)=\td^\bT_*([\cF]) \in \widehat{H}^*_\bT(X;\bQ)$. Then for a polynomial  function $f$ on $M_\bR$, we have:

\be\label{f1137}
p\left(\frac{\partial}{\partial h}\right) \left( \int_{P(h)} f(m) \ dm \right)_{\vert_{h=0}} 
= \sum_{m\in M} \left( \sum_{i=0}^n (-1)^i \cdot \dim_\bC H^i(X;\cO_X(D) \otimes \cF)_{\chi^{-m}}\right) \cdot f(m)
\:.
\ee
\ec

\begin{proof}
This follows from Corollary \ref{cformal1}
by applying the operator $\left( f(\frac{\partial}{\partial z}) \right)_{\vert_{z=0}}$ to the last term of formula \eqref{f1134},
seen as a formal power series in $z$.
\end{proof}

Let us finish this subsection with a local counterpart of Theorem \ref{abstrEM}. Using Remark \ref{trans}, Corollary \ref{convergence11} and Proposition \ref{pr47}, we get:
\begin{prop}\label{pr520} Let $X=X_P$ be the projective simplicial toric variety associated to a simple full-dimensional lattice polytope $P\subset M_\bR \simeq \bR^n$. Let $\Sig:=\Sig_P$ be the inner normal fan of $P$, and $D:=D_P$ the ample Cartier divisor associated to $P$. Let $[\cF]\in K^\bT_0(X)$ be fixed, and choose a convergent power series $p(x_\rho) \in \bQ\{x_\rho \mid \rho \in \Sig(1) \}$ so that $p(F_\rho)=\td^\bT_*([\cF]) \in \left( H^*_\bT(X;\bQ) \right)^{an}$.
Fix a vertex $v \in P$ with  tangent cone $Tan(P,v)$, and let 
$\sig \in \Sig(n)$ be the corresponding cone.
Then, for $z$ small enough with $-z \in Int(\sig)$, we have the following formula:
\be\label{f1133bv}
\begin{split}
p\left(\frac{\partial}{\partial h}\right) \left( \int_{Tan(P,v)(h)} e^{\langle  m, z\rangle} \ dm \right)_{\vert_{h=0}} &=
  \langle 
e^{v} \cdot \td_*^\bT([\cF])_{x_\sig} 
, z  \rangle  \\
&=\left\langle e^{ v} \cdot \ch^\bT\left((\mathbb{S} \circ \chi^\bT_{\sig})(\cF)\right),  z  \right\rangle 
\:.
\end{split}
\ee

\end{prop}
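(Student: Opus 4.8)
The plan is to read off the statement as the pairing with $z$ of an identity between localized equivariant Todd classes, obtained by feeding the convergent power series $p$ into the local integration formula of Corollary~\ref{convergence11} and then recognizing the resulting cohomology class as $\td^\bT_*([\cF])_{x_\sig}$. First I would apply Corollary~\ref{convergence11} verbatim: since $p(x_\rho)\in\bQ\{x_\rho\mid\rho\in\Sig(1)\}$ is convergent with $p(F_\rho)=\td^\bT_*([\cF])\in(H^*_\bT(X;\bQ))^{an}$, and $v$, $\sig$ are as in the statement, the corollary gives, for $z$ small with $-z\in\Int(\sig)$,
\[
p(\tfrac{\partial}{\partial h})\left(\int_{Tan(P,v)(h)}e^{\langle m,z\rangle}\,dm\right)_{\vert_{h=0}}=\left\langle\frac{e^{v}\cdot\bigl(i_\sig^*p(F_\rho)\bigr)}{Eu^\bT_X(x_\sig)},z\right\rangle .
\]
This reduces everything to identifying the class $\tfrac{i_\sig^*p(F_\rho)}{Eu^\bT_X(x_\sig)}$ with the localized Todd class $\td^\bT_*([\cF])_{x_\sig}$.

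For that identification I would invoke the compatibility \eqref{flip} of Remark~\ref{trans}, namely $i_\sig^*(p(F_\rho))=p(i_\sig^*F_\rho)$, which says that restriction to the fixed point intertwines the analytic functional calculus on $\widehat H^*_\bT(X;\bQ)$ with evaluation of the power series at $x_\rho\mapsto i_\sig^*F_\rho$. Applied to $p(F_\rho)=\td^\bT_*([\cF])$ it yields $i_\sig^*\td^\bT_*([\cF])=i_\sig^*p(F_\rho)$, so that dividing by the generalized Euler class and using the description $pr_{x_\sig}=i_\sig^*/Eu^\bT_X(x_\sig)$ from Proposition~\ref{pr17} (formula~\eqref{f106}) gives
\[
\frac{i_\sig^*p(F_\rho)}{Eu^\bT_X(x_\sig)}=pr_{x_\sig}\bigl(\td^\bT_*([\cF])\bigr)=:\td^\bT_*([\cF])_{x_\sig}.
\]
Pairing with $z$ then produces the first equality of \eqref{f1133bv}.

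The second equality is immediate from Proposition~\ref{pr47}: formula~\eqref{f98} identifies $\td^\bT_*([\cF])_{x_\sig}=pr_{x_\sig}(\td^\bT_*([\cF]))$ with $\ch^\bT\bigl((\mathbb S\circ\chi^\bT_\sig)(\cF)\bigr)$ as elements of $L^{-1}(\Lambda^{an}_\bT)_\bQ$, and pairing with $z$ closes the chain. I do not expect a genuine obstacle here, only a bookkeeping point about the region of validity in $z$: Corollary~\ref{convergence11} delivers an identity of functions that are analytic (meromorphic in $z$, holomorphic off the hyperplanes $\langle i_\sig^*F_\rho,z\rangle=0$) and the integral side converges only for $-z\in\Int(\sig)$, so one must confirm that the $K$-theoretic side, built from the summation map $\mathbb S$ and the localized Chern character, represents the \emph{same} meromorphic function on that cone. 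This is precisely the matching of domains underlying Remark~\ref{rem55} and Proposition~\ref{pr54}, which I would cite to guarantee that both sides agree as analytic functions of $z$ for $-z\in\Int(\sig)$ small enough.
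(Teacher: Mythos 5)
Your proposal is correct and follows exactly the route the paper indicates: it derives the statement by combining Corollary \ref{convergence11}, the compatibility \eqref{flip} of Remark \ref{trans} together with $pr_{x_\sig}=i_\sig^*/Eu^\bT_X(x_\sig)$ from Proposition \ref{pr17}, and formula \eqref{f98} of Proposition \ref{pr47} — precisely the three ingredients the paper cites in lieu of a written-out proof. No gaps; your remark on matching the domains of analyticity in $z$ is the right bookkeeping point and is covered by Proposition \ref{pr54} and Remark \ref{rem55} as you note.
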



\subsection{Examples of Euler-Maclaurin formulae}\label{eEM}
In this subsection, we explain how various special cases of Corollary \ref{abstrEMc} and Proposition \ref{pr520} yield old and new Euler-Maclaurin type formulae.

In the global context of Corollary \ref{abstrEMc},  let $X=X_P$ be the projective simplicial toric variety associated to a full-dimensional simple lattice polytope $P\subset M_\bR$. Let $\Sig:=\Sig_P$ be the inner normal fan of $P$, and $D:=D_P$ the ample Cartier divisor associated to $P$. We fix a polynomial $g$ on $M_\bR$, and let $f(m)=g(m)\cdot e^{\langle m, z \rangle}$ be a \index{quasi-polynomial} {\it quasi-polynomial}, with $z \in N_\bC$ small enough. In the context of a tangent cone at a vertex $v$ of $P$ as in Proposition \ref{pr520}, we only use an \index{exponential function} exponential function $f(m)=e^{\langle m, z \rangle}$ with $z \in N_\bC$ small enough and $-z \in Int(\sig)$. In this local case, $p(x)$ is restricted to the variables $F_\rho$, $\rho \in \sig(1)$ (as in Remark \ref{trans}), with $\sig \in \Sig(n)$ the cone corresponding to $v$. In the concrete formulae below, this amounts to using the cone $\sig$ as a fan instead of $\Sig$, and the finite group $G_\sig$ instead of $G_\Sig$.

\bex\label{ex:classical1} The first case we consider is the classical one of the \index{Euler-Maclaurin formula for a simple polytope} {\it Euler-Maclaurin formula for the polytope $P$}. Here we choose $\cF=\cO_X$ and the infinite order differential operator
\begin{equation}\label{todd2}
Todd\left(\frac{\partial}{\partial h}\right):= \sum_{g\in G_{\Sigma}} \prod_{\rho\in \Sigma(1)} 
\frac{ \frac{\partial}{\partial h_{\rho} }}{1-a_{\rho}(g)\cdot e^{-\frac{\partial}{\partial h_{\rho}} } }
\in \bb{Q}\{\frac{\partial}{\partial h_{\rho} }\;|\; \rho \in \Sigma(1)\} \:,
\end{equation}
as dictated by formula \eqref{Todd0} for $\td_*^\bT(X)$. Then the equivariant Euler characteristic formula \eqref{am5} yields the Euler-Maclaurin formula of Brion-Vergne \cite{BrV2}[Thm.4.5], extended here to a quasi-polynomial $f$:
\begin{equation}\label{EMBV}
Todd\left(\frac{\partial}{\partial h}\right) \left( \int_{P(h)} f(m) \ dm \right)_{|_{h=0}} = \sum_{m\in P\cap M} f(m) \:.
\end{equation}
As explain before, the left-hand side of \eqref{EMBV} only depends of the class of $Todd(\frac{\partial}{\partial h})$ in the analytic Stanley-Reisner ring of $X$. For example, another such representative corresponds to the power series fitting with (the equivariant version of) Theorem \ref{24}, expressing the Todd operator in terms of suitable $L$-class versions.   
If $P$ is a Delzant polytope, one recovers formula \eqref{EM-h} of Khovanskii-Pukhlikov, fitting also with the equivariant Todd class of smooth projective toric varieties (e.g., obtained by setting $y=0$ in Corollary \ref{ceH}). 

In the local case of the tangent cone $Tan(P,v)$ of $P$ at a vertex $v$, using \eqref{rem55a} we get (see also \cite{KP}[Thm.1], \cite{KSW}[eqn.(B.1)]):
\be
\begin{split}
Todd\left(\frac{\partial}{\partial h}\right) \left( \int_{Tan(P,v)(h)} e^{\langle  m, z\rangle} \ dm \right)_{\vert_{h=0}} =e^{\langle  v, z\rangle} \cdot \sum_{m\in Tan(P,v)\cap M} e^{\langle  m, z\rangle}  \:.
\end{split}
\ee
\qed
\eex

\bex\label{ex:classical2} 
An \index{Euler-Maclaurin formula for the interior of a simple polytope} Euler-Maclaurin formula for the {\it interior} of a simple lattice polytope $P$ can be obtained similarly by using the {\it dual Todd operator} \index{dual Todd operator}
\be
Todd^\vee\left(\frac{\partial}{\partial h}\right):= \sum_{g\in G_{\Sigma}} \prod_{\rho\in \Sigma(1)} 
\frac{ a_{\rho}(g) \cdot \frac{\partial}{\partial h_{\rho}}\cdot e^{-\frac{\partial}{\partial h_{\rho}} }}{1-a_{\rho}(g)\cdot e^{-\frac{\partial}{\partial h_{\rho}} } }
\in \bb{Q}\left\{\frac{\partial}{\partial h_{\rho} }\;|\; \rho \in \Sigma(1)\right\} \:,
\ee
corresponding to the sheaf $\cF=\omega_X$ and its dual Todd class $\td_*^\bT([\omega_X]_\bT)$ of formula \eqref{dueq}. In particular, by the equivariant Euler characteristic formula \eqref{am6} one gets:
\be\label{tdu}
Todd^\vee\left(\frac{\partial}{\partial h}\right) \left( \int_{P(h)} f(m) \ dm \right)_{|_{h=0}} 
 = \sum_{m\in \Int(P)\cap M} f(m) \:.
\ee

In the local case of the tangent cone $Tan(P,v)$ of $P$ at a vertex $v$, using \eqref{eq111b} we get:
\be
\begin{split}
Todd^\vee\left(\frac{\partial}{\partial h}\right) \left( \int_{Tan(P,v)(h)} e^{\langle  m, z\rangle} \ dm \right)_{\vert_{h=0}} =e^{\langle  v, z\rangle} \cdot \sum_{m\in \Int(Tan(P,v))\cap M} e^{\langle  m, z\rangle}  \:.
\end{split}
\ee

\qed
\eex

More generally, one can prove an \index{Euler-Maclaurin formula for a polytope with some facets removed} {\it Euler-Maclaurin formula for a polytope with some facets removed}, see also \cite{KSW}[Prop.7.2] for the case of Delzant polytopes. Let $P$ be a full-dimensional polytope with $r$ facets $F_1,\ldots,F_r$. For a subset $K\subseteq \{1,\ldots,r\}$, let $P^K$ be the set obtained from $P$ by removing the facets $F_i$, $i\in K$. For example, $P^\emptyset=P$ and $P^{\{1,\ldots,r\}}=\Int(P)$. Let $\Sigma(1)=\{\rho_1,\ldots,\rho_r\}$ be the rays of the inner normal fan of $P$, and denote by $h=(h_1,\ldots,h_r)$ a vector of real numbers indexed by the facets of $P$ (i.e., $h_i:=h_{\rho_i}$). Consider the following operator in $\bQ\{\frac{\partial}{\partial h_i} \mid i=1,\ldots,r\}$:
\be
Todd^K\left(\frac{\partial}{\partial h}\right):= \sum_{g\in G_{\Sigma}} \prod_{i\in K} 
\frac{ a_{\rho_i}(g) \cdot \frac{\partial}{\partial h_i}\cdot e^{-\frac{\partial}{\partial h_i} }}{1-a_{\rho_i}(g)\cdot e^{-\frac{\partial}{\partial h_i} }} \cdot \prod_{i \notin K} 
\frac{ \frac{\partial}{\partial h_i}}{1-a_{\rho_i}(g)\cdot e^{-\frac{\partial}{\partial h_i} }},
\ee
corresponding to formula \eqref{t0comp} for the equivariant Todd class of $[\cF]=mC_0^\bT([U \hookrightarrow X])$, with $U=X\setminus D_K$ the open complement of the divisor $D_K=\bigcup_{i \in K} D_i$ and $D_i:=D_{\rho_i}$. Moreover, by \eqref{difer} and Corollary \ref{c38} for $y=0$, applied to $X$ and $D_K$, one gets a corresponding equivariant Hirzebruch-Riemann-Roch formula for $\td_*^\bT(\cO_X(D) \otimes mC_0^\bT([U \hookrightarrow X]))$, namely
$$\int_X \td_*^\bT\left(\cO_X(D) \otimes mC_0^\bT([U \hookrightarrow X])\right)=\sum_{m \in P^K \cap M} e^{s(m)}.$$ For its local counterpart, we use \eqref{sumsmusg} for $y=0$.
With the above notations, together with \eqref{eq111b}, this then gives the following.
\bt\label{EMfacesdel}
Let $P$ be a full-dimensional simple lattice polytope in $M_\bR$, and let $f$ be a polynomial function  on $M_\bR$. 
Then:
\be\label{EMgen}
Todd^K\left(\frac{\partial}{\partial h}\right)\left( \int_{P(h)} f(m)\ dm \right)_{|_{h=0}} =\sum_{m \in P^K \cap M} f(m).
\ee
In the local case of the tangent cone $Tan(P,v)$ of $P$ at a vertex $v$, we get:
\be
\begin{split}
Todd^K\left(\frac{\partial}{\partial h}\right) \left( \int_{Tan(P,v)(h)} e^{\langle  m, z\rangle} \ dm \right)_{\vert_{h=0}} =e^{\langle  v, z\rangle} \cdot \sum_{m\in Tan(P,v)^K\cap M} e^{\langle  m, z\rangle}  \:.
\end{split}
\ee
\et

\medskip

We next explain an \index{Euler-Maclaurin formula for the interior of a face} {\it Euler-Maclaurin formula for the interior of a face of a simple lattice polytope}. Let $P$ be a full-dimensional simple lattice polytope in $M_\bR$, and fix a face $E$ of $P$. Let $\sigma:=\sigma_E$ be the corresponding cone in the inner normal fan $\Sigma=\Sigma_P$ of $P$, with $V_\sigma=V_{\sigma_E}=X_E$ the closure of the orbit of $\sigma$ in $X=X_P$. Denote by $i_E=i_{\sigma}: V_\sig \hookrightarrow X$ the closed inclusion map. Then $V_\sigma$ is a simplicial toric variety whose fan is $Star(\sigma)$, as defined in \eqref{star}, which is built from cones $\tau \in \Sigma$ that have $\sigma$ as a face.  
 Recall that $\bT=T_N$ acts on $V_\sigma$ via the morphism $T_N \to T_{N(\sigma)}$. Then we get, as in Corollary \ref{c311} and with the above notations, the following formula:
 \be\label{dueqe}
\td_*^\bT([\omega_{V_\sigma}]_\bT)=i_\sig^*\left( \sum_{g\in G_{Star(\sig)}} \prod_{\rho\in  Star(\sigma)(1)} \frac{a_{\rho}(g) \cdot F_{\rho} \cdot e^{-F_{\rho}}}{1-a_{\rho}(g) \cdot e^{-F_{\rho}}} \right),
\ee
with $\rho\in Star(\sigma)(1)$ being a short notation for $\rho \in \bigcup_{\sig \preceq \nu} \nu(1) \setminus \sig(1)$.
Using \eqref{23} and the projection formula, this gives the equivariant Todd class of $[\cF]=[(i_\sig)_*\omega_{V_\sigma}]$:
\be\label{dueqee}
\td_*^\bT([(i_\sig)_*\omega_{V_\sigma}])=  \sum_{g\in G_{Star(\sig)}} 
\mult(\sig) \cdot \prod_{\rho\in\sigma(1)}F_\rho \cdot
\prod_{\rho\in  Star(\sigma)(1)} \frac{a_{\rho}(g) \cdot F_{\rho} \cdot e^{-F_{\rho}}}{1-a_{\rho}(g) \cdot e^{-F_{\rho}}} .
\ee

Using \eqref{lem2}, this yields the following result (where we use the above notations):
\bt\label{EMintface}
Let $P$ be a full-dimensional simple lattice polytope in $M_\bR$, and let $f$ be a polynomial function on $M_\bR$. Then, for a fixed face $E$ of $P$, with corresponding cone $\sigma$ in the inner normal fan $\Sig$ of $P$, one has:
\be\label{EMintd}
Todd^\vee_E\left(\frac{\partial}{\partial h}\right)\left( \int_{P(h)} f(m) \ dm \right)_{|_{h=0}} =\sum_{m \in \Relint(E) \cap M} f(m),
\ee
where
\be
Todd^\vee_E\left(\frac{\partial}{\partial h}\right):= \sum_{g\in G_{Star(\sig)}} \mult(\sig) \cdot \prod_{\rho\in\sigma(1)}\frac{\partial}{\partial h_\rho} \cdot \prod_{\rho\in  Star(\sigma)(1)} \frac{ a_{\rho}(g) \cdot \frac{\partial}{\partial h_{\rho}}\cdot e^{-\frac{\partial}{\partial h_{\rho}} }}{1-a_{\rho}(g)\cdot e^{-\frac{\partial}{\partial h_{\rho}} } }.
\ee
\et

We next discuss a similar {\it Euler-Maclaurin formula for a face of a simple lattice polytope}. With the same notations, we have by \eqref{Todd0} 
the following formula:
 \be\label{dueqet}
\td_*^\bT([\cO_{V_\sigma}]_\bT)=i_\sig^*\left( \sum_{g\in G_{Star(\sig)}} \prod_{\rho\in  Star(\sigma)(1)} \frac{F_{\rho} }{1-a_{\rho}(g)\cdot e^{-F_{\rho}}} \right).
\ee
As before, the projection formula yields the equivariant Todd class of $[\cF]=[(i_\sig)_*\cO_{V_\sigma}]$:
\be\label{dueqeet}
\td_*^\bT([(i_\sig)_*\cO_{V_\sigma}])=  \sum_{g\in G_{Star(\sig)}} 
\mult(\sig) \cdot \prod_{\rho\in\sigma(1)}F_\rho \cdot
\prod_{\rho\in  Star(\sigma)(1)} \frac{F_{\rho}}{1-a_{\rho}(g)\cdot e^{-F_{\rho}}} .
\ee

Using \eqref{lem2b}, this yields the following result:
\bt\label{EMintfaceb}
Let $P$ be a full-dimensional simple lattice polytope in $M_\bR$, and let $f$ be a polynomial function on $M_\bR$. Then, for a fixed face $E$ of $P$, with corresponding cone $\sigma$ in the inner normal fan $\Sig$ of $P$, one has:
\be\label{EMint}
Todd_E\left(\frac{\partial}{\partial h}\right)\left( \int_{P(h)} f(m) \ dm \right)_{|_{h=0}} =\sum_{m \in E \cap M} f(m),
\ee
where
\be
Todd_E\left(\frac{\partial}{\partial h}\right):= \sum_{g\in G_{Star(\sig)}} \mult(\sig) \cdot \prod_{\rho\in\sigma(1)}\frac{\partial}{\partial h_\rho} \cdot \prod_{\rho\in  Star(\sigma)(1)} \frac{  \frac{\partial}{\partial h_{\rho}} }{1-a_{\rho}(g)\cdot e^{-\frac{\partial}{\partial h_{\rho}} } }.
\ee
\et

Other interesting coherent sheaves to consider are the Zariski sheaves $\widehat{\Omega}^p_X$ of $p$-forms on the toric variety $X$. In the next section, these will be considered all at once via the formal sum $$\bigoplus_p [\widehat{\Omega}^p_X]_\bT \cdot y^p \in K^\bT_0(X)[y],$$ and similarly for suitable motivic Chern classes $mC^\bT_y$ of $\bT$-invariant constructible subsets of $X$.

We leave it to the reader to specialize the generalized  volume formula \eqref{fvol} to all situations discussed in this section. 

Another way to obtain examples of explicit Euler-Maclaurin formulae is by twisting the coherent sheaf $\cF$ by $\cO_X(D'-D)$, for $D=D_P$ the original ample divisor associated to the full-dimensional lattice polytope, and $D'$ any $\bT$-invariant Cartier divisor on $X$. By the multiplicativity of the equivariant Todd class transformation for the coherent sheaf $\cF'=\cO_X(D'-D) \otimes \cF$, we have
 $$
 \td^\bT_*([\cF'])=e^{[D'-D]_\bT} \cdot \td_*^\bT([\cF]).
 $$
So, if $p(x_\rho)\in \bQ\{x_\rho \mid \rho \in \Sig(1)\}$ is a convergent  power series with $p(F_\rho)=td_*^\bT([\cF])$, then $p'(x_\rho):=e^{\sum_{\rho \in \Sig(1)} d_\rho x_\rho} \cdot p(x_\rho) \in \bQ\{x_\rho \mid \rho \in \Sig(1)\}$ is a convergent  power series with $p'(F_\rho)=td_*^\bT([\cF'])$, where $D'-D=\sum_{\rho \in \Sig(1)} d_\rho D_\rho$ as a $\bT$-invariant Cartier divisor. 

\bex\label{ex526}
As a last concrete example of this section, assume $D'$ is a \index{globally generated} {\it globally generated} $\bT$-invariant Cartier divisor on $X$, with associated (not necessarily full-dimensional) lattice polytope $P_{D'} \subset M_\bR$.
Consider the infinite order differential operator
\begin{equation}\label{todd222}
Todd'\left(\frac{\partial}{\partial h}\right):= e^{\sum_{\rho \in \Sig(1)} d_\rho \cdot \frac{\partial}{\partial h_{\rho} }} \cdot Todd(\frac{\partial}{\partial h})
\end{equation}
with $Todd(\frac{\partial}{\partial h})$ as in \eqref{todd2} and $d_\rho$'s as above.
Then the equivariant Euler characteristic formula \eqref{am51} yields the following new Euler-Maclaurin formula for a quasi-polynomial $f$.
\begin{equation}\label{EMBV2}
Todd'\left(\frac{\partial}{\partial h}\right) \left( \int_{P(h)} f(m) \ dm \right)_{|_{h=0}} = \sum_{m\in P_{D'}\cap M} f(m) \:.
\end{equation}
Note that $P_{D'}$ is an \index{$\bN$-Minkowski summand} $\bN$-Minkowski summand (in the sense of \cite{CLS}[Def.6.2.11]) of the original polytope $P$, and any such $\bN$-Minkowski summand comes from a globally generated Cartier divisor $D'$, see \cite{CLS}[Cor.6.2.15].
\qed
\eex


\section{Weighted Euler-Maclaurin formulae}\label{sec:6}
In this section we discuss several weighted (or parametrized) Euler-Maclaurin formulae, generalizing the weighted lattice point counting. For this purpose, we explain in Subsection \ref{sec6.1} the use of an additional dilation parameter $y$ fitting with the equivariant Hirzebruch classes and their renormalization. In Subsection \ref{sec:6.1}, we specialize the parametrized version of the Euler-Maclaurin formula to various situations, recovering some known weighted Euler-Maclaurin type formulae from \cite{BGM,GKO}, but also obtaining several new ones in a uniform way dictated by toric geometry. For instance, we get weighted Euler-Maclaurin type formulae for simple polytopes
in Theorem \ref{bgm1} and Corollary \ref{bgm11}, 
and for simple polytopes with some facets deleted (e.g., for the interior of a polytope) in Theorem \ref{bgm1b} and Corollary \ref{bgm11b}.
Similarly, we obtain such weighted formulae for faces of simple polytopes
 in Theorem \ref{EMintfaceh} and Corollary \ref{EMintfaceh1},
as well as for $\bN$-Minkowski summands of simple polytopes corresponding to globally generated $\bT$-invariant Cartier divisors on the corresponding toric variety in the formulae \eqref{wem01b} and \eqref{wem1b}.

\subsection{Abstract weighted Euler-Maclaurin formulae}\label{sec6.1}
Let $X=X_P$ be the projective simplicial toric variety associated to a full-dimensional simple lattice polytope $P\subset M_\bR\simeq \bR^n$. Let $\Sig:=\Sig_P$ be the inner normal fan of $P$, and $D:=D_P$ the ample Cartier divisor associated to $P$.

Let a convergent power series $p(x_\rho) \in \bQ\{x_\rho \mid \rho \in \Sig(1) \}$ be given. Additionally, we may start with a polynomial in $y$ with coefficients consisting of such power series, as we will need in the applications of this section. Define a corresponding {\it renormalized series} \index{renormalized series}
$$p_y(x_\rho):=\frac{p\left((1+y)x_\rho \right)}{(1+y)^n} \in \bQ[y,(1+y)^{-1}][[x_\rho \mid \rho \in \Sig(1)]].$$
To treat it as a convergent power series, one needs to assume that $y\in \bR\setminus \{1\}$ is fixed or it belongs to a complete subset.

Let $p_y(\frac{\partial}{\partial h})$ be the corresponding parametrized infinite order differential operator obtained  from $p_y(x_\rho)$ by substituting $x_\rho \mapsto \frac{\partial}{\partial h_\rho}$, for all $\rho \in \Sig(1)$.
Then formula \eqref{f1133} translates into the following:
\be\label{f1137w}
p_y\left(\frac{\partial}{\partial h}\right) \left( \int_{P_y(h)} e^{\langle  m, z\rangle} \ dm \right)_{\vert_{h=0}} =
 \sum_{\sig \in \Sig(n)} \frac{ e^{\langle (1+y) \cdot i_\sig^* c^\bT_1(\cO_X(D_{P})),z\rangle}}{ (1+y)^n \cdot\langle  Eu^\bT_X(x_\sig),  z \rangle} \cdot  \langle  p((1+y) \cdot i_\sig^* F_\rho), z\rangle \:.
\ee

Let $[\cF]\in K^\bT_0(X)$ be fixed, and choose a convergent power series $p(x_\rho) \in \bQ\{x_\rho \mid \rho \in \Sig(1) \}$ so that $p(F_\rho)=\td^\bT_*([\cF]) \in \left( H^*_\bT(X;\bQ) \right)^{an}$. Applying the proof of Theorem \ref{abstrEM} to formula \eqref{f1137w}, one then gets 
\be\label{f1139}
\begin{split}
p_y\left(\frac{\partial}{\partial h}\right) &\left( \int_{P_y(h)} e^{\langle  m, z\rangle} \ dm \right)_{\vert_{h=0}} \\ &= \sum_{m\in M} \left( \sum_{i=0}^n (-1)^i \cdot \dim_\bC H^i(X;\cO_X(D) \otimes \cF)_{\chi^{-m}}\right) \cdot e^{\langle (1+y)m, z \rangle}
\:.
\end{split}
\ee
Here, the use of the renormalized power series $p_y$ and polytope $P_y(h)$ correspond to multiplying each degree $2k$ equivariant cohomology class by $(1+y)^k$. (This is just the cohomological \index{Adams operation} Adams operation $\Psi^{(1+y)}$.) In particular, via the identification $s:M \simeq H^2_\bT(pt;\bZ)$, 
$m\in M$ gets multiplied by $1+y$.

For any polynomial $f$ defined on $M_\bR$, by applying the operator $\left( f(\frac{\partial}{\partial z}) \right)_{\vert_{z=0}}$ to formula \eqref{f1139}, we get the following \index{parametrized Euler-Maclaurin formula} {\it parametrized Euler-Maclaurin formula}:
 \be\label{f1140}\begin{split}
p_y\left(\frac{\partial}{\partial h}\right) &\left( \int_{P_y(h)} f(m) \ dm \right)_{\vert_{h=0}} \\&= \sum_{m\in M} \left( \sum_{i=0}^n (-1)^i \cdot \dim_\bC H^i(X;\cO_X(D) \otimes \cF)_{\chi^{-m}}\right) \cdot f\left( (1+y)m\right)\:. \end{split}
\ee
Moreover, if $f$ is homogeneous of degree $\deg(f)$, then 
 $f\left( (1+y)m\right)=(1+y)^{\deg(f)} \cdot f(m).$
 
 \begin{rem}
 In the local case of Euler-Maclaurin formulae for a tangent cone $Tan(P,v)$, we only work with the unnormalized equivariant Hirzebruch classes $T^\bT_{y*}$ and the dilation  $Tan(P,v)(h)$ of the tangent cone. In this context, a renormalization is not needed.
 \end{rem}
 

\subsection{Examples of weighted Euler-Maclaurin formulae}\label{sec:6.1}
In this subsection, we apply the above renormalization not just to classes of coherent sheaves, but directly to examples of the type $$\bigoplus_p [\widehat{\Omega}^p_X]_\bT \cdot y^p \in K^\bT_0(X)[y],$$ as well as for suitable motivic Chern classes $mC^\bT_y$ of $\bT$-invariant constructible subsets of $X$.
 
The weighted lattice point counting of Theorem \ref{wco} (and Remark \ref{wcor}), coupled with the  expressions \eqref{eHirz0} and \eqref{eHirz} from Theorem \ref{eqHirz}
 for the equivariant Hirzebruch classes, suggest that a weighted Euler-Maclaurin-type formula can be computed by using the Hirzebruch (or, parametrized Todd) differential operators defined by replacing $\frac{\partial}{\partial h_{\rho}} $ for $F_\rho$:
$${T}_{y}\left(\frac{\partial}{\partial h}\right):=(1+y)^{n-r} \cdot 
 \sum_{g \in G_{\Sig}}  \prod_{\rho \in \Sig(1)} \frac{ \frac{\partial}{\partial h_{\rho} } \cdot 
\big( 1+y  \cdot a_{\rho}(g)  \cdot e^{-\frac{\partial}{\partial h_{\rho} }}\big)}{1-a_{\rho}(g) \cdot e^{-\frac{\partial}{\partial h_{\rho} }}} $$
and
$$\widehat{T}_{y}\left(\frac{\partial}{\partial h}\right):=
 \sum_{g \in G_{\Sig}}  \prod_{\rho \in \Sig(1)} \frac{ \frac{\partial}{\partial h_{\rho} } \cdot 
\big( 1+y  \cdot a_{\rho}(g)  \cdot e^{-\frac{\partial}{\partial h_{\rho} }(1+y)}\big)}{1-a_{\rho}(g) \cdot e^{-\frac{\partial}{\partial h_{\rho} }(1+y)}} .$$
The passage from ${T}_{y}(\frac{\partial}{\partial h})$ to $\widehat{T}_{y}(\frac{\partial}{\partial h})$ is just a special case of our general procedure of 
moving from $p(\frac{\partial}{\partial h})$ to $p_{y}(\frac{\partial}{\partial h})$.  This therefore fits with the renormalization of Hirzebruch classes, from ${T}^\bT_{y*}(X)$ to $\widehat{T}^\bT_{y*}(X)$.

Weighted Euler-Maclaurin formulae were considered from a combinatorial point of view in \cite{GKO}[Thm.1.1 and Rem.1.2] for ${T}_{y}(\frac{\partial}{\partial h})$, and more recently in \cite{BGM}[Thm.1.5] for $\widehat{T}^\bT_{y*}(X)$. 
We reprove here 
the following result (see \cite{GKO}[Thm.1.1 and Rem.1.2] for ${T}_{y}(\frac{\partial}{\partial h})$, and \cite{BGM}[Thm.4.6] for $\widehat{T}_{y}(\frac{\partial}{\partial h})$), from the point of view of generalized Hirzebruch-Riemann-Roch formula:
\bt\label{bgm1} \ 
For $z\in N_\bC$ small enough, one has in the above notations: 
\be\label{wem0}
{T}_{y}\left(\frac{\partial}{\partial h}\right) \left( \int_{P(h)} e^{\langle m, z \rangle} dm \right)_{|_{h=0}} =\sum_{E \preceq P} (1+y)^{\dim(E)} \sum_{m \in \Relint(E) \cap M} e^{\langle m , z\rangle}.
\ee
\be\label{wem}
\widehat{T}_{y}\left(\frac{\partial}{\partial h}\right) \left( \int_{P_y(h)} e^{\langle m, z \rangle} dm \right)_{|_{h=0}} 
=\sum_{E \preceq P} (1+y)^{\dim(E)} \sum_{m \in \Relint(E) \cap M} e^{\langle (1+y)m , z\rangle}.
\ee
In the corresponding local case of the tangent cone $Tan(P,v)$ of $P$ at a vertex $v$, for $z\in N_\bC$ small enough with $-z \in Int(\sig)$,we get
\be
\begin{split}
T_y\left(\frac{\partial}{\partial h}\right) \left( \int_{Tan(P,v)(h)} e^{\langle  m, z\rangle} \ dm \right)_{\vert_{h=0}} =\sum_{E \preceq Tan(P,v)} (1+y)^{\dim(E)} \cdot e^{\langle v , z\rangle} \cdot\sum_{m \in \Relint(E) \cap M} e^{\langle m , z\rangle} \:.
\end{split}
\ee
\et

\begin{proof}
To deduce formula \eqref{wem0}, instead of $[\cF] \in K^\bT_0(X)$ one considers  the motivic Chern class $mC^\bT_y(X) \in K^\bT_0(X)[y]$, with 
$T_{y*}(X):=\td_*^\bT(mC^\bT_y(X))$. We work with the power series $p(x_\rho):=T_y(x_\rho) \in \bQ\{x_\rho \mid \rho \in \Sig(1) \}[y]$ fitting with the differential operator ${T}_{y}(\frac{\partial}{\partial h})$. Formula \eqref{wem0} follows now from \eqref{f1134} (linearly extended in $y$), using the equivariant Hirzebruch-Riemann-Roch formula of Corollary \ref{c38}, applied to the case $P'=P$. Formula \eqref{wem} is then obtained via renormalization using \eqref{f1139}.

For the local result, we use \eqref{eq147} together with \eqref{eq111b}.
\end{proof}

Using \eqref{f1140}, one then obtains the following result (see also \cite{GKO}[Thm.1.1 and Rem.1.2] and \cite{BGM}[Thm.1.5]): \index{weighted Euler-Maclaurin formula for a simple polytope}
\bc\label{bgm11} \ 
For any polynomial function $f$  on $M_\bR$, one has: 
\be\label{wem01}
{T}_{y}\left(\frac{\partial}{\partial h}\right) \left( \int_{P(h)} f(m) \ dm \right)_{|_{h=0}} =\sum_{E \preceq P} (1+y)^{\dim(E)} \sum_{m \in \Relint(E) \cap M} f(m).
\ee
\be\label{wem1}
\widehat{T}_{y}\left(\frac{\partial}{\partial h}\right) \left( \int_{P_y(h)} f(m) \ dm \right)_{|_{h=0}} 
=\sum_{E \preceq P} (1+y)^{\dim(E)} \sum_{m \in \Relint(E) \cap M} f((1+y)m).
\ee
\ec

Note that the Brion-Verne Euler-Maclaurin formula for the simple lattice polytope $P$ is obtained from either (\ref{wem01}) or (\ref{wem1}) by specializing to $y=0$. Moreover, for $y=1$, one gets Euler-Maclaurin formulae corresponding to operators related to suitable $L$-classes, i.e., $\widehat{T}_{1*}(X)=L_*(X)$ the Thom-Milnor $L$-class of $X$ (see \cite{MS1,CMSS}). Taking the top degree in $y$ on both sides of \eqref{wem01} recovers formula \eqref{tdu}.

\medskip

Next we prove a \index{weighted Euler-Maclaurin formula for a polytope with some facets removed} {\it weighted Euler-Maclaurin formula for a polytope with some facets removed}. Here and below, for simplicity, we only indicate such a formula for a polynomial $f$, while we leave it to the reader to formulate the corresponding exponential formula. Let $P$ be a full-dimensional polytope with $r$ facets $F_1,\ldots,F_r$. For a subset $K\subseteq \{1,\ldots,r\}$, let $P^K$ be as before the set obtained from $P$ by removing the facets $F_i$, $i\in K$. So $P^\emptyset=P$ and $P^{\{1,\ldots,r\}}=\Int(P)$. Let $\Sigma(1)=\{\rho_1,\ldots,\rho_r\}$ be the rays of the inner normal fan of $P$, and denote by $h=(h_1,\ldots,h_r)$ a vector of real numbers indexed by the facets of $P$ (i.e., $h_i:=h_{\rho_i}$). Consider the following operator in $\bQ\{\frac{\partial}{\partial h_i} \mid i=1,\ldots,r\}[y]$:
\be
T_y^K\left(\frac{\partial}{\partial h}\right):= (1+y)^{n-r} \cdot\sum_{g\in G_{\Sigma}} \prod_{i\in K} 
\frac{ (1+y) \cdot a_{\rho_i}(g) \cdot \frac{\partial}{\partial h_i}\cdot e^{-\frac{\partial}{\partial h_i} }}{1-a_{\rho_i}(g)\cdot e^{-\frac{\partial}{\partial h_i} }} \cdot \prod_{i \notin K} 
\frac{ \frac{\partial}{\partial h_i} (1+y\cdot a_{\rho_i}(g)e^{-\frac{\partial}{\partial h_i} })}
{1-a_{\rho_i}(g)\cdot e^{-\frac{\partial}{\partial h_i} }},
\ee
corresponding to formula \eqref{eHirz0comp} for the equivariant Todd class of $mC_y^\bT([U \hookrightarrow X])$, with $U=X\setminus D_K$ the open complement of the divisor $D_K=\bigcup_{i \in K} D_i$ and $D_i:=D_{\rho_i}$. By additivity,
$$\td_*^\bT(mC_y^\bT([U \hookrightarrow X]))=T_{y*}([U\hookrightarrow X])=T_{y*}(X) - T_{y*}(D_K)$$
By Corollary \ref{c38}, applied to $X$ and $D_K$, one gets a corresponding equivariant Hirzebruch-Riemann-Roch formula:
\be
\int_X \td_*^\bT\left(\cO_X(D) \otimes mC_y^\bT([U \hookrightarrow X])\right)= \sum_{E \preceq P^K} (1+y)^{\dim(E)} \cdot \sum_{m \in \Relint(E) \cap M} e^{s(m)}.
\ee
With the above notations this then gives the following.
\bt\label{bgm1b}
Let $P$ be a full-dimensional simple lattice polytope in $M_\bR$, and let $f$ be a polynomial function  on $M_\bR$. 
Then:
\be\label{wEMgen}
T_y^K\left(\frac{\partial}{\partial h}\right)\left( \int_{P(h)} f(m)\ dm \right)_{|_{h=0}} =\ \sum_{E \preceq P^K} (1+y)^{\dim(E)} \cdot \sum_{m \in \Relint(E) \cap M} f(m).
\ee
\et

By renormalization, $T_y^K(\frac{\partial}{\partial h})$ changes to 
$$\widehat{T}_y^K\left(\frac{\partial}{\partial h}\right):=
\sum_{g\in G_{\Sigma}} \prod_{i\in K} 
\frac{ (1+y) \cdot a_{\rho_i}(g) \cdot \frac{\partial}{\partial h_i}\cdot e^{-(1+y) \frac{\partial}{\partial h_i} }}{1-a_{\rho_i}(g)\cdot e^{-(1+y)\frac{\partial}{\partial h_i} }} \cdot \prod_{i \notin K} 
\frac{ \frac{\partial}{\partial h_i} (1+y\cdot a_{\rho_i}(g)e^{-(1+y)\frac{\partial}{\partial h_i} })}
{1-a_{\rho_i}(g)\cdot e^{-(1+y)\frac{\partial}{\partial h_i} }}
$$
fitting with formula \eqref{eHirzcomp}. We thus get by \eqref{f1140} the following
\bc\label{bgm11b}
Let $P$ be a full-dimensional simple lattice polytope in $M_\bR$, and let $f$ be a polynomial function on $M_\bR$. 
Then:
\be\label{wEMgenn}
\widehat{T}_y^K\left(\frac{\partial}{\partial h}\right)\left( \int_{P_y(h)} f(m)\ dm \right)_{|_{h=0}} =\ \sum_{E \preceq P^K} (1+y)^{\dim(E)} \cdot \sum_{m \in \Relint(E) \cap M} f((1+y)m).
\ee
\ec

\begin{rem}
In the corresponding local case of the tangent cone $Tan(P,v)$ of $P$ at a vertex $v$, for $z\in N_\bC$ small enough with $-z \in Int(\sig)$, we get by \eqref{mclocal}, \eqref{sumsmusf} \eqref{sumsmusg}, together with \eqref{eq111b}, the following:
\be
\begin{split}
T^K_y\left(\frac{\partial}{\partial h}\right) \left( \int_{Tan(P,v)(h)} e^{\langle  m, z\rangle} \ dm \right)_{\vert_{h=0}} =\sum_{E \preceq Tan(P,v)^K} (1+y)^{\dim(E)} \cdot e^{\langle v , z\rangle} \cdot\sum_{m \in \Relint(E) \cap M} e^{\langle m , z\rangle} \:.
\end{split}
\ee
\end{rem}

\medskip

We next discuss a \index{weighted Euler-Maclaurin formula for a face} {\it weighted Euler-Maclaurin formula for a face of a simple lattice polytope}. 
Let $P$ be a full-dimensional simple lattice polytope in $M_\bR$, and fix a face $E$ of $P$. Let $\sigma:=\sigma_E$ be the corresponding cone in the inner normal fan $\Sigma=\Sigma_P$ of $P$, with $V_\sigma=V_{\sigma_E}=X_E$ the closure of the orbit of $\sigma$ in $X=X_P$. Denote by $i_E=i_{\sigma}: V_\sig \hookrightarrow X$ the closed inclusion map. Then 
$V_\sigma$ is a simplicial toric variety whose fan is $Star(\sigma)$, as defined in \eqref{star}, which is built from cones $\tau \in \Sigma$ that have $\sigma$ as a face.  
 Recall that $\bT=T_N$ acts on $V_\sigma$ via the morphism $T_N \to T_{N(\sigma)}$. We have by \eqref{eHirz0} 
the following formula:
 \be\label{dueqetw}
T_{y*}^\bT([id_{V_\sigma}])=(1+y)^{n - r} \cdot i_\sig^*\left( \sum_{g\in G_{Star(\sig)}} \prod_{\rho\in  Star(\sigma)(1)} \frac{ F_{\rho} \cdot 
\big( 1+y  \cdot a_{\rho}(g)  \cdot e^{-F_{\rho}}\big)}{1-a_{\rho}(g) \cdot e^{-F_{\rho}}} \right).
\ee
The projection formula then yields the computation of the following equivariant Hirzebruch class:
\be\label{dueqeeh}
\begin{split}
T_{y*}^\bT&([{V_\sigma} \hookrightarrow X])\\ &= (1+y)^{n - r} \cdot \sum_{g\in G_{Star(\sig)}} 
\mult(\sig) \cdot \prod_{\rho\in\sigma(1)}F_\rho \cdot
\prod_{\rho\in  Star(\sigma)(1)} \frac{ F_{\rho} \cdot 
\big( 1+y  \cdot a_{\rho}(g)  \cdot e^{-F_{\rho}}\big)}{1-a_{\rho}(g) \cdot e^{-F_{\rho}}} .
\end{split}
\ee

Using Corollary \ref{c38}, this yields the following result:
\bt\label{EMintfaceh}
Let $P$ be a full-dimensional simple lattice polytope in $M_\bR$, and let $f$ be a polynomial function on $M_\bR$. Then, for a fixed face $E$ of $P$, with corresponding cone $\sigma$ in the inner normal fan $\Sig$ of $P$, one has:
\be\label{EMinth}
T_y^E\left(\frac{\partial}{\partial h}\right)\left( \int_{P(h)} f(m) \ dm \right)_{|_{h=0}} =
\sum_{E' \preceq E} (1+y)^{\dim(E')} \cdot \sum_{m \in \Relint(E') \cap M} f(m),
\ee
where
\be\begin{split}
T_y^E& \left(\frac{\partial}{\partial h}\right)\\ &:=(1+y)^{n - r} \cdot \sum_{g\in G_{Star(\sig)}} \mult(\sig) \cdot \prod_{\rho\in\sigma(1)}\frac{\partial}{\partial h_\rho} \cdot \prod_{\rho\in  Star(\sigma)(1)} 
\frac{  \frac{\partial}{\partial h_{\rho}} \left( 1+y\cdot a_{\rho}(g)\cdot e^{-\frac{\partial}{\partial h_{\rho}} } \right)}
{1-a_{\rho}(g)\cdot e^{-\frac{\partial}{\partial h_{\rho}} } }.\end{split}
\ee
\et
For $y=0$, formula \eqref{EMinth} specializes to \eqref{EMint}.
Taking the top degree in $y$ on both sides of \eqref{EMinth} gives back formula \eqref{EMintd}.

By renormalization, $T_y^E(\frac{\partial}{\partial h})$ changes to 
$$\widehat{T}_y^E\left(\frac{\partial}{\partial h}\right):=
\sum_{g\in G_{Star(\sig)}} \mult(\sig) \cdot \prod_{\rho\in\sigma(1)}\frac{\partial}{\partial h_\rho} \cdot \prod_{\rho\in  Star(\sigma)(1)} 
\frac{  \frac{\partial}{\partial h_{\rho}} \left( 1+y\cdot a_{\rho}(g)\cdot e^{-(1+y)\frac{\partial}{\partial h_{\rho}} } \right)}
{1-a_{\rho}(g)\cdot e^{-(1+y)\frac{\partial}{\partial h_{\rho}} } }.
$$
We thus get by \eqref{f1140} the following
\bc\label{EMintfaceh1}
In the above notations, we get
\be\label{wEMgennw}
\widehat{T}_y^E\left(\frac{\partial}{\partial h}\right)\left( \int_{P_y(h)} f(m)\ dm \right)_{|_{h=0}} =\sum_{E' \preceq E} (1+y)^{\dim(E')} \cdot \sum_{m \in \Relint(E') \cap M} f((1+y)m).
\ee
\ec
For $y=1$, one gets Euler-Maclaurin formulae corresponding to operators related to suitable $L$-classes, i.e., $\widehat{T}_{1*}(X_E)=L_*(X_E)$ the Thom-Milnor $L$-class of $X_E$.

\medskip

Another way to obtain examples of explicit weighted Euler-Maclaurin formulae is by twisting the equivariant motivic Chern class $mC_y^\bT$ by sheaves of the form $\cO_X(D'-D)$, for $D=D_P$ the original ample divisor associated to the full-dimensional lattice polytope, and $D'$ any $\bT$-invariant Cartier divisor on $X$. 
In the next example, we illustrate this principle in the case of $mC_y^\bT(X)$, corresponding to Theorem \ref{bgm1} and Corollary \ref{bgm11}.

\bex\label{ex526b} Let $D'$ be a \index{globally generated} globally generated $\bT$-invariant Cartier divisor on $X$, with associated (not necessarily full-dimensional) lattice polytope $P_{D'} \subset M_\bR$. Consider $D'-D=\sum_{\rho \in \Sig(1)} d_\rho D_\rho$ as a $\bT$-invariant Cartier divisor. 
Let $X_{D'}$ be the toric variety of the lattice polytope $P_{D'}$, defined via the corresponding {generalized fan} as in \cite{CLS}[Prop.6.2.3]. Then, by \cite{CLS}[Thm.6.2.8], there is a proper toric morphism $f:X \to X_{D'}$, induced by the corresponding lattice projection $N \to  N_{D'}$ given by dividing out by the minimal cone of the generalized fan of $P_{D'}$. In particular, $f:X \to X_{D'}$ is a toric fibration. For $\sig'$ a cone in the generalized fan of $P_{D'}$, define as in \eqref{dl}
$$d_\ell(X/\sig'):=\vert \Sigma_\ell(X/\sig') \vert$$ with
$$\Sigma_\ell(X/\sig'):=\{\sig \in \Sig \mid O_\sig \subset X, \ f(O_\sig)=O_{\sig'}, \ \ell=\dim(O_\sig) - \dim(O_{\sig'})\}.$$
Consider the infinite order differential operators
\begin{equation}\label{todd222b}
T'_y\left(\frac{\partial}{\partial h}\right):= e^{\sum_{\rho \in \Sig(1)} d_\rho \cdot \frac{\partial}{\partial h_{\rho} }} \cdot T_y\left(\frac{\partial}{\partial h}\right)
\end{equation}
and 
\begin{equation}\label{todd222c}
\widehat{T}_y'\left(\frac{\partial}{\partial h}\right):= e^{(1+y) \cdot \sum_{\rho \in \Sig(1)} d_\rho \cdot \frac{\partial}{\partial h_{\rho} }} \cdot \widehat{T}_y\left(\frac{\partial}{\partial h}\right),
\end{equation}
with $T_y(\frac{\partial}{\partial h}), \widehat{T}_y(\frac{\partial}{\partial h})$ as in Theorem \ref{bgm1} and Corollary \ref{bgm11}. For any quasi-polynomial $f$ on $M_\bR$, one then has by \eqref{nefghrr} the following new weighted Euler-Maclaurin formulae:
\begin{multline}\label{wem01b}
T'_{y}\left(\frac{\partial}{\partial h}\right) \left( \int_{P(h)} f(m) \ dm \right)_{|_{h=0}} = \\ =\sum_{E \preceq P_{D'}} \left( \sum_{\ell \geq 0} 
(-1)^\ell \cdot d_\ell(X/E) \cdot (1+y)^{\ell + \dim(E)}   \right) \cdot \sum_{m \in \Relint(E) \cap M} f(m).
\end{multline}
\begin{multline}\label{wem1b}
\widehat{T}'_{y}\left(\frac{\partial}{\partial h}\right) \left( \int_{P_y(h)} f(m) \ dm \right)_{|_{h=0}} = \\
=\sum_{E \preceq P_{D'}} \left( \sum_{\ell \geq 0} 
(-1)^\ell \cdot d_\ell(X/E) \cdot (1+y)^{\ell +\dim(E)} \right)  \cdot \sum_{m \in \Relint(E) \cap M} f((1+y)m).
\end{multline}
As noted in Example \ref{ex526}, $P_{D'}$ is an $\bN$-Minkowski summand (e.g., in the sense of \cite{CLS}[Def.6.2.11]) of the original polytope $P$, see \cite{CLS}[Cor.6.2.15].
\qed
\eex


\section{Euler-Maclaurin formulae via the Cappell-Shaneson algebra}\label{emcs}
In this section we provide generalizations of Cappell-Shaneson's Euler-Maclaurin formula \cite{CS2,S} and explain its connection to equivariant toric geometry. These Euler-Maclaurin formulae of Cappell-Shaneson type do not use a dilation of the lattice polytope, but a summation of integrals over the faces. 

In Subsection \ref{sec7.1} we recall the classical Euler-Maclaurin formula of Cappell-Shaneson \cite{CS2,S}, formulated in terms of the so-called Cappell-Shaneson algebra (cf. \cite{KSW}), and explain  in Remark \ref{CSrem}
the identification of this algebra with the completed equivariant cohomology ring of the associated simplicial toric variety (regarded as an algebra of the completed equivariant cohomology ring of a point space).

In Subsection \ref{sec7.2}, we prove  in Theorem \ref{abstrEM2} \and Corollary \ref{abstrEMc2}
another abstract Euler-Maclaurin formula, again for arbitrary $\bT$-equivariant  coherent sheaf coefficients (with the classical case corres\-ponding to the structure sheaf). The  main  result is Theorem \ref{them2}, which relates exponential integrals over the dilation of a face of a simple lattice polytope to toric geometry.
This is the key  ingredient in the proof of Stokes' type formulae given in Corollary \ref{EM-faces1} and, resp., Theorem \ref{thmStokes},
which bridge integrals over the dilated polytopes and  their faces. Putting these results together, we finally obtain in
Theorem \ref{abstrEM3} our new general Euler-MacLaurin formula
 of Cappell-Shaneson type, again for arbitrary $\bT$-equivariant  coherent sheaf coefficients (with the classical case corresponding to the structure sheaf; see Remark \ref{r716}  for more details on this classical case).
As before, we specialize this abstract formula  in \eqref{eq-todd-classical2} and Example \ref{finalex} to contexts dictated by various natural choices of coefficients attached to the projective simplicial  toric variety, i.e., with coefficients the canonical sheaf, the motivic Chern class (also of an open complement of a torus invariant divisor), or the canonical and, resp., structure sheaf of a $\bT$-orbit closure.

Finally, in Subsection \ref{sec7.3}, we  indicate an application of our abstract Euler-Maclaurin formula to generalized reciprocity for Dedekind sums.


\subsection{Cappell-Shaneson algebra vs.  completed equivariant cohomology ring}\label{sec7.1}
Let $P\subset M_\bR\simeq \bR^n$ be a full-dimensional simple lattice polytope. Instead of using a dilation $P(h)$ of the polytope $P$, the Euler-Maclaurin formula of Cappell-Shaneson \cite{CS2,S} 
uses a summation of integrals over the faces $E$ of $P$ (compare also with \cite{BrV3}[Sect.3.7]). More precisely, for any polynomial function $f$ on $M_\bR$ one has:
\begin{equation}\label{EM-CS}
\sum_{m\in P\cap M}\; f(m) = \sum_{E\preceq P}\:\int_E \left(p_{E}(\partial_i) f\right)(m) dm \:,
\end{equation}
with $p_{E}(\partial_i)\in \bb{Q}[[\partial_1,\dots,\partial_n]]$ suitable infinite order differential operators with constant rational coefficients
in the partial derivatives with respect to the coordinates of the vector space $M_\bR$.
Here the Lebesgue measure $dm$ on $E$ is normalized so that the unit cube in the lattice $Span(E_0)\cap M$ has volume $1$, with $E_0:=E-m_0$ a translation of $E$ by a vertex $m_0 \in E$.

The infinite order differential operators $p_{E}(\partial_i)$ are defined through some relations 
in what is called in \cite{KSW}[Sect.6] the {\it Cappell-Shaneson algebra} of $P$: \index{Cappell-Shaneson algebra}
$$\cA(P):= \bb{Q}[[\partial_1,\dots,\partial_n]][U_F \mid \text{$F$ a facet of $P$}]/\sim$$
with relations
\begin{equation}\label{rel1-CS}
U_{F_1}\cdots U_{F_k}=0 \quad \text{for distinct $F_i$ with \ $F_1\cap \dots \cap F_k=\emptyset$},
\end{equation}
and
\begin{equation}\label{rel2-CS}
\frac{\partial}{\partial m }+ \sum_F \; \langle m, n_F \rangle U_F = 0 \quad \text{for all $m\in M$ (or a basis of $M$).}
\end{equation}
Here $\frac{\partial}{\partial m }$ is the differentiation in the direction of $m$, with $n_F\in M$ the minimal lattice vector orthogonal to $F$ and pointing into $P$.
Let us rewrite this presentation of the algebra $\cA(P)$ (given above in terms of the polytope $P$) into the language of this paper using another description of the \index{completed Stanley-Reisner ring} completed Stanley-Reisner ring $$\bQ[[x_\rho \mid  \rho \in \Sig(1)]]/\sim_{SR}=:\widehat{SR_\bQ(\Sig)}$$ of the associated toric variety $X_P$ and the inner normal fan $\Sigma=\Sigma_P$. Note that the inner normal vector $n_F$ corresponds to the generator $u_\rho$ of the ray $\rho\in \Sigma(1)$  corresponding to $F$.
For simplicity, we fix a basis $m_1,\dots,m_n$ of $M\simeq \bb{Z}^n$, with $t_i$ the corresponding coordinates on $M_\bK$ (for $\bK=\bQ,\bR$) with  respect to this basis, so that $\frac{\partial}{\partial t_i }= \partial_i$ for $i=1,\dots,n$. Then
$$\bb{Q}[t_1,\dots,t_n, x_\rho \mid \text{$\rho\in \Sigma(1)$}] \simeq 
\bb{Q}[\partial_1,\dots,\partial_n, U_F \mid \text{$F$ a facet of $P$}]$$
via $t_i\mapsto \partial_i$ and $x_\rho \mapsto U_{F}$, with $F$ the facet of $P$ corresponding to $\rho \in \Sig(1)$. 
The relation (\ref{rel1-CS}) corresponds to the Stanly-Reisner relation $\sim_{SR}$, and the relation
(\ref{rel2-CS}) translates into
\begin{equation}\label{rel2-SR}
t_i + \sum_{\rho\in \Sigma(1)} \; \langle m_i, n_\rho \rangle x_\rho = 0 \quad \text{for $i=1,\dots,n$.}
\end{equation}
Denote by $\sim$ both relations together on $\bb{Q}[t_1,\dots,t_n, x_\rho \mid \text{$\rho\in \Sigma(1)$}]$. 
These are also homogeneous for the usual grading doubled (i.e., with $x_{\rho}$ and $t_i$ of degree two). Then one gets
(see also  \cite{AF}[Lemma 8.3.2]):

\begin{lem}\label{SRnew}
The inclusion $\bb{Q}[x_\rho \mid \text{$\rho\in \Sigma(1)$}] \hookrightarrow \bb{Q}[t_1,\dots,t_n, x_\rho \mid \text{$\rho\in \Sigma(1)$}]$
induces an isomorphism of graded rings
$$
SR_\bQ(\Sig)=\bb{Q}[x_\rho \mid \text{$\rho\in \Sigma(1)$}] /\sim_{SR}\: \stackrel{\sim}{\lra} 
\bb{Q}[t_1,\dots,t_n, x_\rho \mid \text{$\rho\in \Sigma(1)$}]/\sim
$$
and their formal power series completions
$$
\widehat{SR_\bQ(\Sig)}=\bb{Q}[[x_\rho \mid \text{$\rho\in \Sigma(1)$}]] /\sim_{SR}\: \stackrel{\sim}{\lra} 
\bb{Q}[[t_1,\dots,t_n, x_\rho \mid \text{$\rho\in \Sigma(1)$}]]/\sim \:,
$$
with the {\it analytic Stanley-Reisner subring} $SR^{an}_\bQ(\Sig)=\bb{Q}\{x_\rho \mid \text{$\rho\in \Sigma(1)$}\} /\sim_{SR}$ identified with the image of the projection from $\bb{Q}\{t_1,\dots,t_n, x_\rho \mid \text{$\rho\in \Sigma(1)$}\}$.
\end{lem}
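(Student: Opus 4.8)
The plan is to exploit the fact that, under the dictionary $t_i\mapsto \partial_i$, $x_\rho\mapsto U_F$, the relation $\sim$ on $\bb{Q}[t_1,\dots,t_n,x_\rho\mid \rho\in\Sig(1)]$ is generated by the Stanley--Reisner relation $\sim_{SR}$ (which involves only the $x_\rho$, cf.\ \eqref{rel1-CS}) together with the $n$ linear forms $\ell_i:=t_i+\sum_{\rho\in\Sig(1)}\langle m_i,u_\rho\rangle\,x_\rho$ of \eqref{rel2-SR}. Since each $\ell_i$ contains the single $t$-variable $t_i$ with coefficient $1$ and is otherwise linear in the $x_\rho$, the system $\{\ell_i\}$ is triangular and lets one eliminate $t_1,\dots,t_n$ outright; the asserted isomorphism is then just this elimination, carried out compatibly in the graded, completed, and analytic settings.

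Concretely, first I would introduce the substitution homomorphism $\phi\colon \bb{Q}[t_i,x_\rho]\to \bb{Q}[x_\rho]$ determined by $x_\rho\mapsto x_\rho$ and $t_i\mapsto -\sum_{\rho}\langle m_i,u_\rho\rangle\, x_\rho$. By construction $\phi(\ell_i)=0$ and $\phi$ fixes each generator of $\sim_{SR}$, so $\phi$ descends to $\bar\phi\colon \bb{Q}[t_i,x_\rho]/\!\sim\;\to SR_\bQ(\Sig)$; in the opposite direction the inclusion $\bb{Q}[x_\rho]\hookrightarrow \bb{Q}[t_i,x_\rho]$ followed by the quotient projection gives $\bar\iota\colon SR_\bQ(\Sig)\to \bb{Q}[t_i,x_\rho]/\!\sim$. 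Then I would check that these are mutually inverse: $\bar\phi\circ\bar\iota$ fixes every $x_\rho$, hence is the identity, while $\bar\iota\circ\bar\phi$ fixes every $x_\rho$ and sends $t_i$ to $-\sum_\rho\langle m_i,u_\rho\rangle x_\rho \equiv t_i \pmod{\ell_i}$, hence is the identity modulo $\sim$. As $t_i$, $x_\rho$, and the $\ell_i$ are all homogeneous of degree two, both maps preserve the grading, yielding the graded isomorphism.

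Next I would pass to the completions. Since $\phi$ sends each variable into the maximal ideal generated by the $x_\rho$, it is continuous for the $\mathfrak m$-adic topologies and extends to $\hat\phi\colon \bb{Q}[[t_i,x_\rho]]\to\bb{Q}[[x_\rho]]$; the inclusion $\iota$ is likewise continuous. Because $\bb{Q}[[t_i,x_\rho]]$ is Noetherian and complete, the finitely generated ideal $\sim$ is closed (Krull intersection), so $\bb{Q}[[t_i,x_\rho]]/\!\sim$ is the completion of the polynomial quotient, and the mutually inverse identities above extend by continuity to give the isomorphism onto $\widehat{SR_\bQ(\Sig)}$. Finally, because the substitution is \emph{linear} in the $x_\rho$, it carries a convergent power series to a convergent one, so $\hat\phi$ restricts to $\bb{Q}\{t_i,x_\rho\}\to\bb{Q}\{x_\rho\}$; combined with the inclusion $\bb{Q}\{x_\rho\}\subset\bb{Q}\{t_i,x_\rho\}$, this identifies the image of $\bb{Q}\{t_i,x_\rho\}$ in $\widehat{SR_\bQ(\Sig)}$ with $SR^{an}_\bQ(\Sig)$.

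I expect the only genuinely delicate point to be the completed assertion, namely justifying that ``$/\!\sim$'' for the power series rings refers to the quotient by the honest (closed) ideal, so that completion and quotient commute and the inverse maps extend continuously; the graded polynomial statement and the analytic refinement are then formal consequences of the clean triangular shape $\ell_i=t_i+(\text{terms in }x_\rho)$ of the linear relations.
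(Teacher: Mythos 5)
Your proof is correct and follows essentially the same route as the paper: the paper performs the invertible triangular change of variables $t_i\mapsto t_i'=t_i+\sum_\rho\langle m_i,n_\rho\rangle x_\rho$, after which the relations become $\sim_{SR}$ together with $t_i'=0$, which is exactly the elimination of the $t_i$ that your mutually inverse maps $\bar\phi$ and $\bar\iota$ implement explicitly. Your extra care about the completed and analytic cases (closedness of the ideal, linearity of the substitution preserving convergence) is sound and merely spells out what the paper dismisses as obvious.
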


\begin{proof}
Using the invertible  variable  transformation \begin{center} $x_\rho\mapsto x_\rho, \ t_i\mapsto t'_i:=t_i + \sum_{\rho\in \Sigma(1)} \; \langle m_i, n_\rho \rangle x_\rho$ \end{center}
(for $i=1,\dots,n$ and $\rho\in \Sigma(1)$),
the claim is reduced to the corresponding inclusion  $$\bb{Q}[x_\rho \mid \text{$\rho\in \Sigma(1)$}] \hookrightarrow 
\bb{Q}[t'_1,\dots,t'_n, x_\rho \mid \text{$\rho\in \Sigma(1)$}]\:,$$
 with $\sim$ given by $\sim_{SR}$ and  $t'_i=0$ for $i=1,\dots,n$.
In this case the results are obvious.
\end{proof}

\begin{cor}
Using the isomorphism
$$s: Sym_\bQ(M) \simeq  (\Lambda_\bT)_{\bb{Q}}= H_\bT^*(pt;\bb{Q}); \ \ m \mapsto -c^1_\bT(\bC_{\chi^m})$$
together with $\bQ[t_1,\dots,t_n]= Sym_\bQ(M) $, one gets isomorphisms of $(\Lambda_\bT)_{\bb{Q}}$-algebras 
$$ SR_\bQ(\Sig)  \simeq\bb{Q}[t_1,\dots,t_n, x_\rho \mid \text{$\rho\in \Sigma(1)$}] / \sim \:\simeq  
(\Lambda_\bT)_{\bb{Q}}[x_\rho \mid \text{$\rho\in \Sigma(1)$}]/\sim\: \simeq H^*_\bT(X;\bQ)$$
and
$$\widehat{SR_\bQ(\Sig)}  \simeq \bb{Q}[[t_1,\dots,t_n, x_\rho \mid \text{$\rho\in \Sigma(1)$}]]/\sim \: \simeq 
(\widehat{\Lambda}_\bT)_{\bb{Q}}[[x_\rho \mid \text{$\rho\in \Sigma(1)$}]]/\sim\: \simeq \widehat{H}^*_\bT(X;\bQ)\:,$$
with $t_i \mapsto s(m_i)=-c^1_\bT(\bC_{\chi^{m_i}})$ for $i=i,\dots,n$, and 
$x_\rho\mapsto F_\rho=[D_\rho]_\bT=c^1_\bT(\mathcal{O}(D_\rho)) $ for $\rho\in \Sigma(1)$.
\end{cor}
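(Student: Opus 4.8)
The plan is to assemble the stated chain of isomorphisms from three ingredients already available: Lemma \ref{SRnew}, the identification of the symmetric algebra with the polynomial part of equivariant cohomology of a point, and the presentation (c) of $H^*_\bT(X;\bQ)$ from Subsection \ref{eqcoh}. First I would invoke Lemma \ref{SRnew} to obtain the leftmost isomorphism $SR_\bQ(\Sig) \xrightarrow{\sim} \bQ[t_1,\dots,t_n, x_\rho \mid \rho \in \Sig(1)]/\!\sim$ together with its completed counterpart, recording that these are isomorphisms of \emph{graded} rings (with all $x_\rho$ and $t_i$ in degree two), a point that will matter for passing to completions.

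Next I would use the algebra isomorphism $s\colon Sym_\bQ(M) \simeq (\Lambda_\bT)_\bQ$ together with the identification $Sym_\bQ(M) = \bQ[t_1,\dots,t_n]$ coming from the chosen basis $m_1,\dots,m_n$, in order to rewrite $\bQ[t_1,\dots,t_n, x_\rho]/\!\sim$ as $(\Lambda_\bT)_\bQ[x_\rho]/\!\sim$ via $t_i \mapsto s(m_i) = -c^1_\bT(\bC_{\chi^{m_i}})$. Under this substitution the Cappell--Shaneson relation \eqref{rel2-SR} becomes precisely the relation \eqref{alg} in its $s$-form: since the inner normal $n_\rho$ equals the ray generator $u_\rho$ and $s(m) = -c(m)$, the identity $t_i + \sum_{\rho} \langle m_i, u_\rho\rangle x_\rho = 0$ is exactly $s(m_i) + \sum_\rho \langle m_i, u_\rho\rangle x_\rho = 0$. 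This is the one place where the two sign conventions ($c$ versus $s$) must be tracked consistently. Finally I would identify $(\Lambda_\bT)_\bQ[x_\rho]/\!\sim$ with $H^*_\bT(X;\bQ)$ by invoking presentation (c) of Subsection \ref{eqcoh}, which applies because $X = X_P$ is a projective, hence complete, simplicial toric variety; under it $x_\rho \mapsto F_\rho = [D_\rho]_\bT = c^1_\bT(\cO(D_\rho))$, as asserted.

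For the completed statements I would observe that every ring in the chain is a connected graded $\bQ$-algebra and that all the defining relations ($\sim_{SR}$, the combined relation $\sim$, and \eqref{alg}) are homogeneous; consequently each of the graded isomorphisms above is compatible with completion at the maximal ideal of positive-degree elements. Since the completion of $H^*_\bT(X;\bQ)$ is $\widehat{H}^*_\bT(X;\bQ)$ by definition and that of $(\Lambda_\bT)_\bQ$ is $(\widehat{\Lambda}_\bT)_\bQ$, applying the completion functor termwise yields the second chain of isomorphisms. The expected main obstacle is therefore not substantive but purely bookkeeping, namely verifying that the sign conventions line up so that \eqref{rel2-SR} matches \eqref{alg}; the genuine content — the ring presentation and the fact that the $t_i$-variables split off via the invertible change of variables — is already carried by Lemma \ref{SRnew} and the description of $H^*_\bT(X;\bQ)$.
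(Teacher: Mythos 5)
Your proposal is correct and follows essentially the same route as the paper, whose entire proof is the one-line observation that relation \eqref{rel2-SR} corresponds to \eqref{alg}; your additional remarks on Lemma \ref{SRnew}, the sign convention $s=-c$, and the homogeneity of the relations for passing to completions are exactly the implicit ingredients the paper leaves to the reader.
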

\begin{proof}
It suffices to note that the relation (\ref{rel2-SR}) corresponds to (\ref{alg}).
\end{proof}

\br\label{CSrem}
Let $P \subset M_{\bR}$ be a simple full-dimensional lattice polytope with associated projective toric variety $X=X_P$ and the inner normal fan 
$\Sigma=\Sigma_P$. Fix a basis $m_1,\dots,m_n$ of $M\simeq \bb{Z}^n$. There is an algebra homomorphism
from  the Cappell-Shaneson algebra $\cA(P)$ to the completed  equivariant cohomology ring 
\begin{equation}\label{main}
\begin{split}
\cA(P)&=\bb{Q}[[\partial_1,\dots,\partial_n]][U_F]/\sim \: 
\to  \widehat{SR_\bQ(\Sig)} \simeq \ \widehat{H}^*_\bT(X;\bb{Q});\: \\ 
& U_F\mapsto x_\rho \simeq  F_\rho= [D_\rho]_\bT=c^1_\bT(\mathcal{O}(D_\rho))
\end{split}\end{equation}
as a $\bb{Q}[[\partial_1,\dots,\partial_n]]\simeq \widehat{H}_\bT^*(pt;\bb{Q})\simeq (\widehat{\Lambda}_\bT)_{\bb{Q}}$-algebra by 
$\partial_i \simeq  t_i  \simeq  s(m_i)= -c^1_\bT(\chi^{m_i})$ for $i=1,\ldots,n$,
and with the facet $F$ of $P$ corresponding to the ray $\rho\in \Sigma(1)$.  
As we shall now explain, the algebra map \eqref{main} is in fact an isomorphism.
In abstract terms, the completions $\widehat{SR_\bQ(\Sig)} \simeq \ \widehat{H}^*_\bT(X;\bb{Q})$ are completions of connected integer graded commutative rings $R^*$ with respect to the maximal ideal $R^{>0}$ given by positive degree elements, whereas $\cA(P)$ uses the completion with respect to the maximal ideal $I_\bT$ of  $\bb{Q}[\partial_1,\dots,\partial_n]\simeq (\Lambda_\bT)_{\bb{Q}}$. As $(\Lambda_\bT)_{\bb{Q}}[U_F]$ is not a finitely generated $
(\Lambda_\bT)_{\bb{Q}}$-module, the completion functor with respect to $I_\bT$ is only left exact. In particular, there is an injective algebra homomorphism $\cA(P) \hookrightarrow \left(\widehat{H^*_\bT(X;\bb{Q})} \right)_{I_\bT}$, which factorizes \eqref{main}.
Using the relations defining $\cA(P)$, one gets as in \cite{CS2}[Proposition on page 888] or \cite{S}[6.1, page 621] that the image of this monomorphism is the $(\widehat{\Lambda}_\bT)_{\bb{Q}}$-submodule generated by the equivalence classes  $[V_E]\in \cA(P)$ associated to all faces $E$ of $P$. This also shows that one gets a natural algebra homomorphism $\bQ[[U_F]]  \to \cA(P)$ inducing algebra homomorphisms 
$$\widehat{H}^\bT_*(X;\bQ) \to \cA(P) \to (\widehat{\Lambda}_\bT)_{\bb{Q}}[[x_\rho]]/\sim.$$
Since this composition is an isomorphism, the first arrow is injective and the second is surjective.
Using the fact that $H^*_\bT(X;\bQ) \simeq (\Lambda_\bT)_{\bb{Q}} \otimes_\bQ H^*(X;\bQ)$, one also has that $\left(\widehat{H^*_\bT(X;\bb{Q})} \right)_{I_\bT}\simeq (\widehat{\Lambda}_\bT)_{\bb{Q}} \otimes_\bQ H^*(X;\bQ)$, which is also generated as a $(\widehat{\Lambda}_\bT)_{\bb{Q}}$-module by  the equivariant fundamental classes of the orbit closures $[V_{\sig_E}]_\bT$ associated to all faces $E$ of $P$ (since $H^*(X;\bQ)$ is generated as a $\bQ$-vector space by the fundamental classes $[V_{\sig_E}]$). This shows that $\cA(P) \simeq
 \left(\widehat{H^*_\bT(X;\bb{Q})} \right)_{I_\bT}$. Finally, the natural algebra homomorphism 
 $ \left(\widehat{H^*_\bT(X;\bb{Q})} \right)_{I_\bT} \to \widehat{H}^*_\bT(X;\bb{Q})$ is an isomorphism by \cite{Br0}[Prop.1.4]. Therefore, the homomorphism \eqref{main} is an isomorphism.
\er


\subsection{Euler-Maclaurin formulae via the Cappell-Shaneson algebra}\label{sec7.2}
In this subsection we provide a new proof of the Euler-Maclaurin formula of Cappell-Shaneson \cite{CS2,S}, as well as several generalizations.

Let us now introduce the key functionals following  \cite{KSW}[Section 6].
\begin{df}
Let $$p(t_i,x_\rho):=\sum_{\alpha=(\alpha_i)\in \bb{N}_0^{n}}\: p_{\alpha}(x_\rho)\prod_{i=1}^n t_i^{\alpha_i}\in 
\bQ\{t_1,\dots,t_n, x_\rho \mid \text{$\rho\in \Sigma(1)$}\},$$ resp., $\bQ[[t_1,\dots,t_n, x_\rho \mid \text{$\rho\in \Sigma(1)$}]]$
be a convergent, resp., formal power series in the $t_i, x_\rho$. Then
\be\label{functional1}
p\left(\partial_i, \frac{\partial}{\partial h}\right) \left( \int_{P(h)} e^{\langle  m, z\rangle} \ dm \right)_{\vert_{h=0}}:=
\sum_{\alpha=(\alpha_i)\in \bb{N}_0^{n}}\:p_{\alpha}( \frac{\partial}{\partial h}) \left( \int_{P(h)} 
\prod_{i=1}^n \partial_i^{\alpha_i}e^{\langle  m, z\rangle} \ dm \right)_{\vert_{h=0}}
\ee
resp.,
\be\label{functional2}
p\left(\partial_i, \frac{\partial}{\partial h}\right) \left( \int_{P(h)} f(m) \ dm \right)_{\vert_{h=0}}:=
\sum_{\alpha=(\alpha_i)\in \bb{N}_0^{n}}\:p_{\alpha}( \frac{\partial}{\partial h}) \left( \int_{P(h)} 
\prod_{i=1}^n \partial_i^{\alpha_i} f(m) \ dm \right)_{\vert_{h=0}}
\ee
for $f$ a polynomial function on $M_\bR$. Note that, in the second case, in the last term only finitely many summands are non-zero due to Corollary \ref{polin}.
\end{df}

Of course in \eqref{functional1} one has to explain in which sense the series is convergent, as will be discussed in the following.
As a consequence of Theorem \ref{them1} (with $y=0$), Remark \ref{regular} and Corollary \ref{convergence1}, we get by differentiation and convergence the following.
\bc\label{convergence3}
Let $p(\partial_i,\frac{\partial}{\partial h}) \in \bQ\{\partial_1,\dots,\partial_n, \frac{\partial}{\partial h_\rho} \mid  \rho \in \Sig(1) \}$ be an infinite order differential operator with constant rational coefficients, i.e., obtained by substituting $t_i\mapsto \partial_i, x_\rho \mapsto \frac{\partial}{\partial h_\rho}$ into a convergent  power series  $p(t_i,x_\rho) \in \bQ\{t_1,\dots,t_n, x_\rho \mid \rho \in \Sig(1) \}$ with rational coefficients. Then, in the above notations, we get for $z$ small enough and away from the hyperplanes $\langle i_\sig^* F_\rho, z \rangle = 0$ for each ray $\rho\in \sigma(1)$ of $\sigma\in \Sig(n)$ the following formula:
\be\label{cs1133}
\begin{split}
&p\left(\partial_i, \frac{\partial}{\partial h}\right) \left( \int_{P(h)} e^{\langle  m, z\rangle} \ dm \right)_{\vert_{h=0}} =
\sum_{\alpha=(\alpha_i)\in \bb{N}_0^{n}}\:p_{\alpha}\left( \frac{\partial}{\partial h}\right) \left( \int_{P(h)} \prod_{i=1}^n \partial_i^{\alpha_i}
e^{\langle  m, z\rangle} \ dm \right)_{\vert_{h=0}} \\
&= \sum_{\alpha=(\alpha_i)\in \bb{N}_0^{n}}\:\prod_{i=1}^n \langle  m_i, z\rangle^{\alpha_i}  \left(
\sum_{\sig \in \Sig(n)} \frac{ e^{\langle  i_\sig^* c^\bT_1(\cO_X(D_{P})),z\rangle}}{\langle Eu^\bT_X(x_\sig),  z \rangle} \cdot  \langle
 p_{\alpha}(i_\sig^* F_\rho), z\rangle \right)\\
&= \left\langle 
\sum_{\sig \in \Sig(n)} \frac{i_\sig^* \left( e^{(c^\bT_1(\cO_X(D))}\left(  \sum_{\alpha=(\alpha_i)\in \bb{N}_0^{n}}\:\prod_{i=1}^n  s(m_i)^{\alpha_i}
 p_\alpha(F_\rho) \right)\right)}{Eu^\bT_X(x_\sig)}  
, z  \right\rangle \\
& = \left\langle 
\int_X   e^{(c^\bT_1(\cO_X(D))} p(s(m_i),F_\rho), z  \right\rangle \:.
\end{split}\ee
If $p_k\in \bQ[\partial_1,\dots,\partial_n,\frac{\partial}{\partial h_\rho} \mid \rho \in \Sig(1) ]$ is the corresponding truncation of $p$ up to order $k$, then both sides of \eqref{cs1133} applied to $p_k$ converge for $k\to \infty$ locally uniformly in these $z$ to \eqref{cs1133} applied to $p$.
\ec

\begin{proof}
The second equality  follows from $ \prod_{i=1}^n \partial_i^{\alpha_i}e^{\langle  m, z\rangle} = \prod_{i=1}^n \langle  m_i, z\rangle^{\alpha_i} e^{\langle  m, z\rangle}$ and  Corollary \ref{convergence1}.
The third equality uses  $m_i=i_\sig^*s(m_i)\in M\simeq H^2_\bT(pt;\bQ)$ for $s(m_i)\in H^2_\bT(X;\bQ)$.
The last equality follows from equation \eqref{f108n},  where  the element $p(s(m_i),F_\rho)$ corresponds to the image of $p(t_i,x_\rho)$ under the evaluation homomorphism
$$\bQ\{t_1,\dots,t_n,x_\rho \mid  \rho \in \Sig(1) \}\mapsto \left(H^{*}_\bT(X;\bQ)\right)^{an}:\: t_i\mapsto s(m_i),
x_\rho\mapsto F_\rho \:.$$
\end{proof}

\br
By \eqref{cs1133}, the operator $p(\partial_i, \frac{\partial}{\partial h}) \left( \int_{P(h)} e^{\langle  m, z\rangle} \ dm \right)_{\vert_{h=0}}$ depends only on the equivalence class of $[p(t_i,x_\rho)] \in SR^{an}_\bQ(\Sig)\simeq \bb{Q}\{t_1,\dots,t_n, x_\rho \mid \text{$\rho\in \Sigma(1)$}]\}/ \sim$ and not on the chosen convergent power series representative.
\er

\br\label{convergence4}
Assume in addition in Corollary \ref{convergence3}, that $\langle \int_X   e^{(c^\bT_1(\cO_X(D))} p(s(m_i),F_\rho), z  \rangle$
is a convergent power series in $z$ near zero. Then one gets as an application of Cauchy's integral formula (see also \cite{KSW}[p.27]),
that  both sides of \eqref{cs1133} applied to $p_n$ converge for $n\to \infty$ and $z$ small locally  uniformly  to \eqref{cs1133} applied to $p$. In particular, this limit commutes with finite order  differentiations with respect to $z$ (and $z$ small enough).
\er

\bc\label{cformal2}
Let $p(\partial_i, \frac{\partial}{\partial h}) \in \bQ[[\partial_1,\dots,\partial_n,\frac{\partial}{\partial h_\rho} \mid \rho \in \Sig(1)]]$ be an infinite order differential operator with constant rational coefficients, i.e., obtained by substituting $t_i\mapsto \partial_i, x_\rho \mapsto \frac{\partial}{\partial h_\rho}$ into a formal power series $p(t_i,x_\rho) \in \bQ[[t_1,\dots,t_n,x_\rho \mid \rho \in \Sig(1) ]]$ with rational coefficients. Then for a polynomial  function $f$ on $M_\bR$, we have the following formula:
\be\label{form2133}
p\left(\partial_i, \frac{\partial}{\partial h}\right) \left( \int_{P(h) }f(m) \ dm \right)_{\vert_{h=0}} 
=  f\left(\frac{\partial}{\partial z}\right) \left(
\left\langle \int_X   e^{c^\bT_1(\cO_X(D))} p(s(m_i),F_\rho), z  \right\rangle \right)_{\vert_{z=0}} \:,
\ee
where on the right hand side the operator $\left( f(\frac{\partial}{\partial z}) \right)_{\vert_{z=0}}$ acts on a formal power series in $z$.
\ec

\begin{proof}
The proof is similar to that of Corollary \ref{cformal1}.
\end{proof}

\br
By \eqref{form2133}, the operator $p(\partial_i, \frac{ \partial}{\partial h}) \left( \int_{P(h)} f(z) \ dm \right)_{\vert_{h=0}}$ depends only on the equivalence class of $[p(t_i, x_\rho)] \in \widehat{SR_\bQ(\Sig)}$ and not on the chosen formal power series representative.
\er

Altogether, we get the following second abstract \index{Euler-Maclaurin formula} Euler-Maclaurin formula based on the equivariant Hirzebruch-Riemann-Roch theorem.
\bt\label{abstrEM2}
Let $X=X_P$ be the projective simplicial toric variety associated to a simple full-dimensional lattice polytope $P\subset M_\bR$. Let $\Sig:=\Sig_P$ be the inner normal fan of $P$, and $D:=D_P$ the ample Cartier divisor associated to $P$. Let $[\cF]\in K^\bT_0(X)$ be fixed, and choose a convergent power series $p(t_i,x_\rho) \in \bQ\{t_1,\dots,t_n,x_\rho \mid \rho \in \Sig(1) \}$ so that $p(s(m_i),F_\rho)=\td^\bT_*([\cF]) \in \left( H^*_\bT(X;\bQ) \right)^{an}$. Then 

\be\label{cs1134}
\begin{split}
p\left(\partial_i,\frac{ \partial}{\partial h}\right) &\left( \int_{P(h)} e^{\langle  m, z\rangle} \ dm \right)_{\vert_{h=0}}  =
 \left\langle \chi^\bT(X,\cO_X(D) \otimes \cF) , z\right\rangle \\
&= \sum_{m\in M} \left( \sum_{i=0}^n (-1)^i \cdot \dim_\bC H^i(X;\cO_X(D) \otimes \cF)_{\chi^{-m}}\right) \cdot e^{\langle m, z \rangle}
\:,
\end{split}
\ee
as analytic functions in $z$ with $z$ small enough, and with $\chi^\bT(X,\cO_X(D) \otimes \cF)\in (\Lambda^{an}_\bT)_\bQ$ the cohomological equivariant Euler characteristic of $\cO_X(D) \otimes \cF$.
\et
\begin{proof}
Equation \eqref{cs1133}  can now be calculated  as
\begin{eqnarray*}
p\left(\partial_i,\frac{\partial}{\partial h}\right) \left( \int_{P(h)} e^{\langle  m, z\rangle} \ dm \right)_{\vert_{h=0}} 
&=& \left\langle 
\int_X   e^{c^\bT_1(\cO_X(D))} \td^\bT_*([\cF]), z  \right\rangle \\
&=&  \left\langle \chi^\bT(X,\cO_X(D) \otimes \cF) , z\right\rangle ,
\end{eqnarray*}
where the last equality follows from the equivariant Hirzebruch-Riemann-Roch formula \eqref{eHRR} as in the proof of Theorem \ref{abstrEM}.
Finally, in the proof we first need to assume that $z$ is small enough and away from the hyperplanes $\langle i_\sig^* F_\rho, z \rangle = 0$ for each ray $\rho\in \sigma(1)$ of $\sigma\in \Sig(n)$, since the localization formula is used; however, formula \eqref{cs1134} then holds for all $z$ small enough, by Remark \ref{convergence4}.
\end{proof}

Note that Theorem \ref{abstrEM2}  reduces to Theorem \ref{abstrEM} in the case when 
$$p(t_i,x_\rho) \in \bQ\{x_\rho \mid \rho \in \Sig(1) \} \subset 
\bQ\{t_1,\dots,t_n,x_\rho \mid \rho \in \Sig(1) \}$$ does not depend on the variables $t_i$. Similarly, the next Corollary reduces in this case to Corollary \ref{abstrEMc}.

\bc\label{abstrEMc2}
Let $X=X_P$ be the projective simplicial toric variety associated to a simple full-dimensional lattice polytope $P\subset M_\bR$. Let $\Sig:=\Sig_P$ be the inner normal fan of $P$, and $D:=D_P$ the ample Cartier divisor associated to $P$. Let $[\cF]\in K^\bT_0(X)$ be fixed, and choose a formal power series $p(t_i, x_\rho) \in \bQ[[t_1,\dots,t_n,x_\rho \mid \rho \in \Sig(1) ]]$ so that $p(s(m_i),F_\rho)=td^\bT_*([\cF]) \in \widehat{ H}^*_\bT(X;\bQ) $. Then for a polynomial  function $f$ on $M_\bR$, we have:

\be\label{cs1137}\begin{split}
p\left(\partial_i, \frac{\partial}{\partial h}\right) & \left( \int_{P(h)} f(m) \ dm \right)_{\vert_{h=0}} \\
&= \sum_{m\in M} \left( \sum_{i=0}^n (-1)^i \cdot \dim_\bC H^i(X;\cO_X(D) \otimes \cF)_{\chi^{-m}}\right) \cdot f(m)\:.\end{split}
\ee
\ec

\begin{proof}
This follows from Corollary \ref{cformal2} by applying the operator $\left( f(\frac{\partial}{\partial z}) \right)_{\vert_{z=0}}$ to the last term of formula \eqref{cs1134}, viewed as a formal power series in $z$.
\end{proof}

For the use of integration over the faces $E$ of $P$ instead of dilation of the facets of $P$ by a parameter $h_\rho$ ($\rho\in \Sigma(1)$),
we next prove an analogue of Theorem \ref{them1} for faces of a polytope.

Assume $P$ is a full-dimensional simple lattice polytope in $M_\bR$ with associated toric variety $X=X_P$ and inner normal fan $\Sigma=\Sig_P$ . 
Let $P(h)$ be the dilation of $P$ with respect to the vector $h=(h_\rho)_{\rho \in \Sigma(1)}$ with real entries indexed by the rays of $\Sigma$. So, if $P$ is defined by inequalities of the form
$$\langle m, u_\rho\rangle +c_\rho \geq 0,$$
with $u_\rho$ the ray generators and $c_\rho \in \bZ$, for each $\rho \in \Sig(1)$, then $P(h)$  is defined by inequalities 
$$\langle m, u_\rho \rangle +c_\rho +h_\rho \geq 0,$$
for each $\rho \in \Sig(1)$. Similarly, we consider the dilation $E(h)$ of a fixed face $E$ of $P$, and let $\sig_E \in \Sig$ be the cone corresponding to $E$.

\bt\label{them2} In the above notations, we have
\be\label{f113b}
\begin{split}
\int_{E(h)} e^{\langle  m, z\rangle} \ dm &= {\rm mult}(\sig_E) \cdot \sum_{\sig \in \Sig(n)} \frac{ e^{\langle i_\sig^* c^\bT_1(\cO_X(D_{P(h)})),z\rangle}}{\langle Eu^\bT_X(x_\sig),  z \rangle} \cdot  \prod_{\rho \in \sig_E(1)} \langle i_\sig^* F_\rho, z\rangle \\
&=
 {\rm mult}(\sig_E) \cdot \sum_{\sig \in \Sig(n)} \frac{ e^{\langle (i_\sig^* c^\bT_1(\cO_X(D_{P})),z\rangle}}{\langle Eu^\bT_X(x_\sig),  z \rangle} \cdot  e^{ \sum_\rho h_\rho \langle i_\sig^* F_\rho,z\rangle} \cdot  \prod_{\rho \in \sig_E(1)} \langle i_\sig^* F_\rho, z\rangle \:.
 \end{split}
\ee
\et
\begin{proof}
The proof is similar to that of Theorem \ref{them1}, with the following modifications. Instead of $\td_*^\bT(X)$, we use $\td_*^\bT([(i_E)_*\cO_{V_{\sig_E}}])$, which is given by formula \eqref{dueqeet}, together with the corresponding equivariant Riemann-Roch formula of \eqref{lem2b} which is used for the calculation of Riemann sums approximating the integral. The Lebesgue measure $dm$ on $E(h)$ is now normalized so that the unit cube in the lattice $Span(E_0)\cap M$ has volume $1$, with $E_0:=E-m_0$ a translation of $E$ by a vertex $m_0 \in E$. As before, $\langle Eu^\bT_X(x_\sig),  z \rangle$ is a homogeneous polynomial of degree $n$ in $z$, but the additional factor $\prod_{\rho \in \sig_E(1)} \langle i_\sig^* F_\rho, z\rangle$ is homogeneous of degree $\codim(E)$, fitting with the use of the multiplication factor $\frac{1}{k^{\dim(E)}}$ needed for the Riemann sums considered here. Finally, we have
\be \lim_{k\to \infty}\left( k^{\codim(E)}\cdot  \langle i^*_\sig (\td_*^\bT([(i_E)_*\cO_{V_{\sig_E}}])), \frac{1}{k} \cdot z \rangle\right)
 ={\rm mult}(\sig_E) \cdot \prod_{\rho \in \sig_E(1)} \langle i_\sig^* F_\rho, z\rangle.
\ee
This follows from formula \eqref{dueqeet} by the same calculation as in the proof of Theorem \ref{them1}.
\end{proof}

By combining the results of Theorem \ref{them1} and Theorem \ref{them2}, we obtain as an application a formula which relates integration over faces with a differentiation with respect to the corresponding $h$'s.
\bc\label{EM-faces1} Let $f$ be a polynomial on $M_\bR$. Then
\be\label{rel1}
\int_{E(h)} f(m) e^{\langle m, z \rangle}  \ dm = {\rm mult}(\sig_E) \cdot \prod_{\rho \in \sig_E(1)} \frac{\partial}{\partial h_\rho} \int_{P(h)} f(m) e^{\langle m, z \rangle}  \ dm \:,
\ee
as analytic functions in $z$ and $h$ near zero. In particular,
\begin{itemize}
\item[(a)] 
\be\label{rel1a}
\begin{split}
p\left(\frac{\partial}{\partial h}\right)& \left( \int_{E(h)} f(m)e^{\langle m, z \rangle}  \ dm \right)_{\vert_{h=0}}\\ &= {\rm mult}(\sig_E) \cdot \prod_{\rho \in \sig_E(1)} \frac{\partial}{\partial h_\rho} \cdot  p\left(\frac{\partial}{\partial h}\right)\left( \int_{P(h)} f(m)e^{\langle m, z \rangle}  \ dm \right)_{\vert_{h=0}}\:,
\end{split}
\ee
for $p(x_\rho)$ a polynomial in the variables $x_\rho$, $\rho \in \Sig(1)$  (or a convergent power series as in the context of Remark \ref{convergence2}).
\item[(b)]
\be\label{rel1b}
p\left(\frac{\partial}{\partial h}\right)\left( \int_{E(h)} f(m)  \ dm \right)_{\vert_{h=0}} = {\rm mult}(\sig_E) \cdot \prod_{\rho \in \sig_E(1)} \frac{\partial}{\partial h_\rho} \cdot  p\left(\frac{\partial}{\partial h}\right)\left( \int_{P(h)} f(m)  \ dm \right)_{\vert_{h=0}}\:,
\ee
for a formal power series $p(x_\rho)$ in the variables $x_\rho$, $\rho \in \Sig(1)$.
\end{itemize}
\ec
 \begin{proof}
 For $f=1$, formula \eqref{rel1} follows by comparing the results of Theorem \ref{them1} and Theorem \ref{them2}. In the general case, we apply the operator $f(\frac{\partial}{\partial z})$ to both sides of the formula obtained for $f=1$. In (b) we finally evaluate the result at $z=0$. By Corollary \ref{polin}, a formal power series (as opposed to a convergent one) can be used in \eqref{rel1b}.
 \end{proof}

In particular,
\begin{equation}\label{rel2}
 \frac{\partial}{\partial h_{\rho_1}}\cdots \frac{\partial}{\partial h_{\rho_k}}   \int_{P(h)} f (m)e^{\langle m, z \rangle}  \ dm\:= 0 
\end{equation}
if the corresponding facets 
for different $\rho_i$ ($i=1,\dots,k$) do not intersect.\\

As another application of Corollary \ref{EM-faces1}, we can mention the following reformulation of the classical Euler-Maclaurin formula 
\eqref{EMBV} of Brion-Vergne (using \eqref{last}), for $f$ a polynomial function on $M_\bR$ (see also \cite{KSW}[page 22]):
\be\label{Guill}
\begin{split}
& \sum_{m\in P\cap M} f(m) = \sum_{E\preceq P}  \sum_{g\in G_{\sig_E}^{\circ}}  \prod_{\rho\in \Sigma(1)} 
\frac{ \frac{\partial}{\partial h_{\rho} }}{1-a_{\rho}(g)e^{-\frac{\partial}{\partial h_{\rho}} } }\left( \int_{P(h)} f(m) \ dm \right)_{|_{h=0}}\\
&= \sum_{E\preceq P} \frac{1}{{\rm mult}(\sigma_E)}
\sum_{g\in G_{\sig_E}^{\circ}} 
\prod_{\rho\not\in \sigma_E(1)} 
\frac{ \frac{\partial}{\partial h_{\rho} }}{1-e^{-\frac{\partial}{\partial h_{\rho}} } }
\prod_{\rho\in \sigma_E(1)} 
\frac{ 1}{1-a_{\rho}(g)\cdot e^{-\frac{\partial}{\partial h_{\rho}} } }\left( \int_{E(h)} f(m) \ dm \right)_{|_{h=0}}\:.
\end{split}
\ee
The Euler-Maclaurin formula using the last operator is due to Guillemin, see \cite{G}[Thm.1.3 and Eqn.3.28].

As a further application of Theorem \ref{them2}, we give an algebro-geometric proof of the {\it Stokes' formula for polytopes} \index{Stokes' formula}
(see also \cite{KSW}[Prop.6.1]).
\bt\label{thmStokes}
Let $P\subset M_\bR$ be a full-dimensional simple lattice polytope, with corresponding inner fan $\Sig$. Let $E_\rho$ denote the facet of $P$ corresponding to the ray $\rho \in \Sig(1)$.
For a fixed $m_0 \in M$, let $\frac{\partial}{\partial m_0 }$ be the differentiation in direction of $m_0$. Then,
\be\label{stok}
\int_{P(h)} \frac{\partial}{\partial m_0 }e^{\langle m, z \rangle}  \ dm = -\sum_{\rho \in \Sig(1)} \langle m_0, u_\rho \rangle \cdot \int_{E_\rho(h)} e^{\langle m, z \rangle}  \ dm.
\ee
\et
\begin{proof}
Recall that $$s(m_0) = -c^1_\bT(\bC_{\chi^{m_0}})=-\sum_{\rho \in \Sig(1)} \langle m_0 , u_\rho \rangle \cdot F_\rho \in H^*_\bT(X;\bQ).$$
Then

\begingroup
\allowdisplaybreaks
\begin{equation*}
\begin{split}
\int_{P(h)} \frac{\partial}{\partial m_0 } e^{\langle  m_, z\rangle} \ dm &= \int_{P(h)} \langle  m_0, z\rangle \cdot e^{\langle  m, z\rangle} \ dm \\
&\overset{\eqref{f113}}{=}
\sum_{\sig \in \Sig(n)} \frac{ e^{\langle i_\sig^* c^\bT_1(\cO_X(D_{P(h)})),z\rangle}}{\langle Eu^\bT_X(x_\sig),  z \rangle} \cdot \langle  m_0, z\rangle \\
&= \sum_{\sig \in \Sig(n)} \frac{ e^{\langle i_\sig^* c^\bT_1(\cO_X(D_{P(h)})),z\rangle}}{\langle Eu^\bT_X(x_\sig),  z \rangle} \cdot \langle  i_\sig^* s(m_0), z\rangle \\
&= -\sum_{\rho \in \Sig(1)} \langle m_0 , u_\rho \rangle \cdot \sum_{\sig \in \Sig(n)} \frac{ e^{\langle i_\sig^* c^\bT_1(\cO_X(D_{P(h)})),z\rangle}}{\langle Eu^\bT_X(x_\sig),  z \rangle} \cdot \langle  i_\sig^* F_\rho, z\rangle \\
&\overset{\eqref{f113b}}{=} -\sum_{\rho \in \Sig(1)} \langle m_0, u_\rho \rangle \cdot \int_{E_\rho(h)} e^{\langle m, z \rangle}  \ dm ,
\end{split}
\end{equation*}
\endgroup
as desired.
\end{proof}

Applying the operator $f(\frac{\partial}{\partial z })$ to formula \eqref{stok} for $f$ a polynomial function on $M_\bR$, we get the following:
\be\label{stok2}
\int_{P(h)} \frac{\partial}{\partial m_0 }f(m)e^{\langle m, z \rangle}  \ dm = 
-\sum_{\rho \in \Sig(1)} \langle m_0, u_\rho \rangle \cdot \int_{E_\rho(h)} f(m)e^{\langle m, z \rangle}  \ dm
\ee
as analytic functions in $h$ and $z$ near zero. Further evaluation at $z=0$ yields the identity
\be\label{stok3}
\int_{P(h)} \frac{\partial}{\partial m_0 }f(m) \ dm = 
-\sum_{\rho \in \Sig(1)} \langle m_0, u_\rho \rangle \cdot \int_{E_\rho(h)} f(m) \ dm ,
\ee
as polynomial functions in $h$ near zero.\\

\begin{rem}\label{lastrem}
By Remark \ref{CSrem}, there is a surjection $\cA(P) \to \widehat{H}^*_\bT(X;\bQ)$, as $(\widehat{\Lambda}_\bT)_\bQ$-modules, with $\cA(P)$ generated by the equivalence classes of $[V_{E}] \in \cA(P)$, which are mapped to the equivariant fundamental classes $[V_{\sigma_E}]_\bT$ of orbit closures corresponding to all faces $E$ of $P$. Hence, $\widehat{H}^*_\bT(X;\bQ)$ is generated as a $(\widehat{\Lambda}_\bT)_\bQ$-module by the equivariant fundamental classes 
$[V_{\sigma_E}]_\bT$. 
\end{rem}

Let $[\cF]\in K^\bT_0(X)$ be fixed, and choose 
elements $$p_{E}(t_i)\in \widehat{H}^*_\bT(pt;\bQ)= (\widehat{\Lambda}_\bT)_\bQ\simeq \bQ[[t_1,\dots,t_n]]$$ with
\begin{equation}\label{main-todd}
\td_*^\bT([\cF])= \sum_{E\preceq P} \: p_{E}(t_i)[V_{\sigma_E}]_\bT \in \widehat{H}^*_\bT(X;\bQ)\:.
\end{equation}
Then $$\td_*^\bT([\cF])=p(s(m_i),F_\rho) \in \widehat{ H}^*_\bT(X;\bQ),$$ for
\be\label{main-todd2}
p(t_i,x_\rho):=\sum_{E\preceq P} \: \left( {\rm mult}(\sig_E) \cdot \prod_{\rho \in \sig_E(1)} x_\rho\right) \cdot p_{E}(t_i) \in \bQ[[t_1,\dots,t_n,x_\rho \mid \rho \in \Sig(1) ]]\:.
\ee

Altogether, with these notations,  we get our third and final   abstract Euler-Maclaurin formula based on the equivariant Hirzebruch-Riemann-Roch theorem, which provides a {\it generalization of the Cappell-Shaneson Euler-Maclaurin formula} (see Remark \ref{r716} below).
\bt\label{abstrEM3}
Let $X=X_P$ be the projective simplicial toric variety associated to a simple full-dimensional  lattice polytope $P\subset M_\bR$. Let $\Sig:=\Sig_P$ be the inner normal fan of $P$, and $D:=D_P$ the ample Cartier divisor associated to $P$. Let $[\cF]\in K^\bT_0(X)$ be fixed, and choose the formal power series $p(t_i, x_\rho) \in \bQ[[t_1,\dots,t_n,x_\rho \mid \rho \in \Sig(1) ]]$ as in \eqref{main-todd2}. Then for a polynomial  function $f$ on $M_\bR$, we have:
\be\label{cs2137}
\sum_{E\preceq P} \:  \int_{E} \left(p_{E}(\partial_i) f\right)(m) \ dm 
= \sum_{m\in M} \left( \sum_{i=0}^n (-1)^i \cdot \dim_\bC H^i(X;\cO_X(D) \otimes \cF)_{\chi^{-m}}\right) \cdot f(m)\:.
\ee
\et

\begin{proof}
The assertion follows from Corollary \ref{EM-faces1} by the following calculation:
\be\begin{split}
&p\left(\partial_i, \frac{\partial}{\partial h}\right) \left( \int_{P(h)} f(m) \ dm \right)_{\vert_{h=0}}\\ 
&= \sum_{E\preceq P} \: \left( {\rm mult}(\sig_E) \cdot \prod_{\rho \in \sig_E(1)} \frac{\partial}{\partial h_\rho}\right) 
\left(\int_{P(h)} \left(p_{E}(\partial_i) f\right)(m) \ dm \right)_{\vert_{h=0}} \\
 &= \sum_{E\preceq P} \: 
\left(\int_{E(h)} \left(p_{E}(\partial_i) f\right)(m) \ dm \right)_{\vert_{h=0}} \\
 &= \sum_{E\preceq P} \: 
\int_{E} \left(p_{E}(\partial_i) f\right)(m) \ dm \:.
\end{split}\ee
Here the first equality follows from the definition \eqref{functional2} of 
$p(\partial_i, \frac{\partial}{\partial h}) \left( \int_{P(h)} f(m) \ dm \right)_{\vert_{h=0}}$ and the choice \eqref{main-todd2} of $p(t_i,x_\rho)$.
Then the second equality follows from \eqref{rel1b}, and the final equality is just continuity in $h$.
\end{proof}

\begin{rem}\label{r716} In the classical case $\cF:=\cO_X$,
this is just Cappell-Shaneson's recipe for the definition of the differential operators $p_{E}(\partial_i)$, described here geometrically in terms of 
the equivariant Todd class $\td_*^\bT(X):=\td_*^\bT([\cO_X])\in  \widehat{H}^*_\bT(X;\bb{Q})$  (see \cite{CS2}[Theorem 2]). More precisely, the differential operator $\mathcal{E}=\sum_{E \preceq P} P_E \cdot U_E \in \bQ[[\partial_1,\dots,\partial_n]][U_F]$ used in \cite{S}[6.2, page 622] and \cite{CS2}[page 888] maps under the homomorphism \eqref{main} to a presentation of the equivariant Todd class $\td_*^\bT(X)=\sum_{E\preceq P} \: p_{E}(t_i)[V_{\sigma_E}]_\bT \in \widehat{H}^*_\bT(X;\bQ)$. Indeed, the classes $[V_{\sigma_E}]_\bT$  are the images of the  elements $U_E \in \bQ[[\partial_1,\dots,\partial_n]][U_F]$ used in loc.cit., and the operators $P_{E}\in \bQ[[\partial_1,\dots,\partial_n]]$ are deduced in \cite{CS2,S} from a representative $T(E) \in \bQ[[\partial_1,\dots,\partial_n]][U_F]$ mapping to the equivariant Todd class $\td_*^\bT(X)$ calculated via the equivariant version of Theorem \ref{24}. So we can choose $p_{E}$ to be the images of these $P_E$ in $(\widehat{\Lambda}_\bT)_\bQ$ under \eqref{main}. Hence, the presentation
$$p(t_i,x_\rho):=\sum_{E\preceq P} \: \left( {\rm mult}(\sig_E) \cdot \prod_{\rho \in \sig_E(1)} x_\rho\right) \cdot p_{E}(t_i) \in \bQ[[t_1,\dots,t_n,x_\rho \mid \rho \in \Sig(1) ]] $$
chosen here is exactly the image of $\mathcal{E}$ in $\bQ[[t_1,\dots, t_n]][[x_\rho]]$.

In this case, when $\cF:=\cO_X$, the right hand side of \eqref{cs2137} becomes 
$\sum_{m\in P\cap M} f(m)$ by Example \ref{ex:classical1}.
Specializing further to $f=1$ yields then the classical formula (see, e.g., \cite{F1}[page 112]):
\begin{equation}\label{17}
\vert P \cap M \vert=\sum_{E\preceq P}   r_E \cdot vol(E)\:,
\end{equation}
with $r_E=p_{\sig_E}(0)$ the constant coefficient of $p_{\sig_E}$ and
\begin{equation}\label{ne-todd}
\td_*(X)= \sum_{E\preceq P} \: r_E [V_{\sigma_E}]  \in H^*(X;\bQ)
\end{equation}
the non-equivariant version of
\begin{equation}\label{eq-todd-classical1}
\td_*^\bT([\cO_X])= \sum_{E\preceq P} \: p_{E}(t_i)[V_{\sigma_E}]_\bT \in \widehat{H}^*_\bT(X;\bQ)\:.
\end{equation}
\end{rem}

When $\cF:=\omega_X$, the right hand side of \eqref{cs2137} becomes 
$\sum_{m\in \Int(P)\cap M} f(m)$ by Example \ref{ex:classical2}.
Moreover, here we can choose
\begin{equation}\label{eq-todd-classical2}
\td_*^\bT([\omega_X])= \sum_{E\preceq P} \: p_{E}(-t_i)\cdot (-1)^{\codim(E)}[V_{\sigma_E}]_\bT \in \widehat{H}^*_\bT(X;\bQ)\:,
\end{equation}
by Remark \ref{rem:duality}, with $p_{E}(t_i)$ as in (\ref{eq-todd-classical1}).
Specializing further to $f=1$ yields the formula
\begin{equation}\label{17b}
\vert \Int(P) \cap M \vert=\sum_{E\preceq P}  (-1)^{\codim(E)}\cdot r_E \cdot vol(E)\:,
\end{equation}
with $r_E$ as in (\ref{ne-todd}).

\bex\label{finalex} Here we list further specializations of formula \eqref{cs2137}
for appropriate choices of $[\cF]\in K^\bT_0(X)$, resp.,  $[\cF]\in K^\bT_0(X)[y]$, with
$\td_*^\bT([\cF])= \sum_{E\preceq P} \: p_{E}(t_i)[V_{\sigma_E}]_\bT \in \widehat{H}^*_\bT(X;\bQ)$, resp., $\widehat{H}^*_\bT(X;\bQ)[y]$
for some elements $p_{E}(t_i)\in \widehat{H}^*_\bT(pt;\bQ)$, resp., $\widehat{H}^*_\bT(pt;\bQ)[y]$.
\begin{enumerate}
\item For $[\cF]=mC_y^\bT(X)\in K^\bT_0(X)[y]$,  \eqref{cs2137} becomes by equation \eqref{wem01}:
$$\sum_{E\preceq P} \:  \int_{E} \left(p_{E}(\partial_i) f\right)(m) \ dm = \sum_{E'' \preceq P} (1+y)^{\dim(E'')} \sum_{m \in \Relint(E'') \cap M} f(m)\:.$$
\item For $[\cF]=mC_0^\bT([U \hookrightarrow X])\in K^\bT_0(X)$, with $U=X\setminus D_K$ the open complement of the divisor $D_K=\bigcup_{i \in K} D_i$ and $D_i:=D_{\rho_i}$,   \eqref{cs2137} becomes by equation \eqref{EMgen}:
$$\sum_{E\preceq P} \:  \int_{E} \left(p_{E}(\partial_i) f\right)(m) \ dm = \sum_{m \in P^K \cap M} f(m)\:.$$
Here $P^K$ is the set obtained from $P$ by removing the facets $F_i$ for $i\in K$.
More generally, for $[\cF]=mC_y^\bT([U \hookrightarrow X])\in K^\bT_0(X)[y]$, \eqref{cs2137} becomes by equation \eqref{wEMgen}:
$$\sum_{E\preceq P} \:  \int_{E} \left(p_{E}(\partial_i) f\right)(m) \ dm =  \sum_{E'' \preceq P^K} (1+y)^{\dim(E'')} \cdot \sum_{m \in \Relint(E'') \cap M} f(m)\:.$$
\item For $[\cF]=[(i_\sig)_*\omega_{V_\sigma}]\in K^\bT_0(X)$, with $V_\sigma=V_{\sigma_{E'}}$ the closure of the orbit of $\sigma$ in $X=X_P$ corresponding to the face $E'$ of $P$,   \eqref{cs2137} becomes by equation \eqref{EMintd}:
$$\sum_{E\preceq P} \:  \int_{E} \left(p_{E}(\partial_i) f\right)(m) \ dm = \sum_{m \in \Relint(E') \cap M} f(m)\:.$$
\item For $[\cF]=[(i_\sig)_*\cO_{V_\sigma}]\in K^\bT_0(X)$, with $V_\sigma=V_{\sigma_{E'}}$ the closure of the orbit of $\sigma$ in $X=X_P$ corresponding to the face $E'$ of $P$,   \eqref{cs2137} becomes by equation \eqref{EMint}:
$$\sum_{E\preceq P} \:  \int_{E} \left(p_{E}(\partial_i) f\right)(m) \ dm = \sum_{m \in E' \cap M} f(m)\:.$$
More generally, for $[\cF]=(i_\sig)_*mC_y^\bT(V_\sigma) \in K^\bT_0(X)[y]$, \eqref{cs2137} becomes by equation \eqref{EMinth}:
$$\sum_{E\preceq P} \:  \int_{E} \left(p_{E}(\partial_i) f\right)(m) \ dm =
\sum_{E'' \preceq E'} (1+y)^{\dim(E'')} \cdot \sum_{m \in \Relint(E'') \cap M} f(m)\:.$$
\end{enumerate}
\qed
\eex

\subsection{Generalized Reciprocity for Dedekind Sums via Euler-Maclaurin formulae  }\label{sec7.3}

We conclude this paper with the following application of formula \eqref{cs2137}, with $D$ replaced by $(1+y)D$.

\bc\label{corec} In the context of Theorem \ref{abstrEM3}, we have an equality of polynomials in $1+y$,
\be\label{273}\begin{split}
\sum_{E_y \preceq P_y} \:  \int_{E_y} \left( p_{E_y}(\partial_i) f\right) &(m) \ dm 
\\ &= \sum_{m\in M} \left( \sum_{i=0}^n (-1)^i \cdot \dim_\bC H^i(X;\cO_X((1+y)D) \otimes \cF)_{\chi^{-m}}\right) \cdot f(m)\:.\end{split}
\ee
When evaluating these polynomials at zero (i.e., for $y=-1$), one gets the following identity:
\be\label{274}
\sum_{v \in P} \left( p_{v}(\partial_i) f \right) (0) = \sum_{m\in M} \left( \sum_{i=0}^n (-1)^i \cdot \dim_\bC H^i(X;\cF)_{\chi^{-m}}\right) \cdot f(m).
\ee
where the left hand sum is over the vertices of $P$.
\ec

\begin{proof}
The right hand sides of the above expressions are calculated in Corollary \ref{strc}.
For the left hand side of \eqref{273}, note that for a given face $E$ of $P$,  if $f$ is a homogeneous polynomial of degree $d_f$, and $p(\partial_i)$ is a homogeneous differential operator of degree $d\leq d_f$, then
\be\label{275}
  \int_{E_y} \left(p(\partial_i) f\right)(m) \ dm = (1+y)^{\dim(E) + d_f - d}   \int_{E} \left(p(\partial_i) f\right)(m) \ dm.
\ee
This explains the polynomial behavior of the left hand side of  \eqref{273}, and it yields  \eqref{274} upon evaluation at zero (since by letting $y=-1$, all terms on the right hand side of \eqref{275} vanish except for $\dim(E)=0$ and $d=d_f$).
\end{proof}

\bex
If $\cF=\cO_X$ in \eqref{274}, one gets by Example \ref{rigid} the following identity:
\be
\sum_{v \in P} \left( p_{v}(\partial_i) f \right) (0) = f(0).
\ee
For instance, in the case of lattice polygones, this formula yields generalizations of \index{reciprocity law} reciprocity laws for classical \index{Dedekind sum} {\it Dedekind sums} (using, e.g., the explicit description of the operators $p_{v}(\partial_i)$ from \cite{CS2}[page 889]). Details will be explained in forthcoming work.
\qed
\eex



\end{document}